\newtheorem{lemma}{Lemma}[section]
\newtheorem{theorem}[lemma]{Theorem}
\newtheorem{proposition}[lemma]{Proposition}
\newtheorem{corollary}[lemma]{Corollary}
\newtheorem{problem}{Problem}
\theoremstyle{definition}
\newtheorem{definition}[lemma]{Definition}
\newtheorem{example}[lemma]{Example}
\newtheorem*{remark}{Remark}
\newtheorem*{convention}{Convention}
\numberwithin{equation}{section}
\newcommand{\comment}[1]{}
\newcommand{\cA}{{\mathcal A}}
\newcommand{\cC}{{\mathcal C}}
\newcommand{\cS}{{\mathcal S}}
\newcommand{\cF}{{\mathcal F}}
\newcommand{\cH}{{\mathcal H}}
\newcommand{\cM}{{\mathcal M}}
\newcommand{\cQ}{{\mathcal Q}}
\newcommand{\cD}{{\mathcal D}}
\newcommand{\Dm}{\mathcal{D}_E}
\newcommand{\Qm}{\mathcal{Q}_E}
\newcommand{\Qmc}{\mathcal{Q}^c_E}
\newcommand{\Wm}{W_{\mathrm{min}}}
\newcommand{\Z}{{\mathbb Z}}
\newcommand{\R}{{\mathbb R}}
\newcommand{\IC}{{\mathbb C}}
\newcommand{\N}{{\mathbb N}}
\newcommand{\Hmv}{{\mathcal H}_{\mu,V}}
\newcommand{\Mm}{{\mathcal M}_{\mu,\Phi,W}}
\newcommand{\Deg}{{\mathrm {Deg}}}
\newcommand{\as}[1]{\langle #1\rangle}
\newcommand{\av}[1]{\left\vert #1\right\vert}
\newcommand{\Hm}[1]{\leavevmode{\marginpar{\tiny%
$\hbox to 0mm{\hspace*{-0.5mm}$\leftarrow$\hss}%
\vcenter{\vrule depth 0.1mm height 0.1mm width \the\marginparwidth}%
\hbox to 0mm{\hss$\rightarrow$\hspace*{-0.5mm}}$\\\relax\raggedright
#1}}}
\begin{document}

\title{On the existence and uniqueness of self-adjoint realizations of discrete (magnetic) Schrödinger operators}

\author{Marcel Schmidt}

\maketitle

\begin{abstract}
In this expository paper we  answer  two fundamental questions concerning discrete magnetic Schrödinger operator associated with weighted graphs. We discuss when formal expressions of such operators give rise to self-adjoint operators, i.e., when they have self-adjoint restrictions. If such self-adjoint restrictions exist, we explore when they are unique. 
\end{abstract}




\section{Introduction}

Discrete Laplacians and discrete magnetic Schrödinger operators feature in many different areas of mathematics. They are used in combinatorics and computer science, appear as discretizations of (pseudo-)differential operators on Riemannian manifolds, serve as   toy models for Hamiltonians in mathematical physics and play an important role in the study of random walks - just to name a few.  Even though discrete operators are used for very different means, their basic  structure is always the same. Given a graph $G = (X,E)$ consisting of vertices $X$ and edges $E$ one defines the discrete Laplacian $\Delta$ acting on functions on $X$ (up to a sign) by 
$$\Delta f(x) = \sum_{(x,y) \in E} (f(x) - f(y)).$$
Since this operator itself is often not flexible enough, one then introduces various weights to decorate the discrete Laplacian to obtain a discrete  magnetic Schrödinger operator $\cM$ of the form
$$\cM f(x) = \frac{1}{\mu(x)} \sum_{(x,y) \in E} b(x,y) (f(x) - e^{i \theta(x,y)}f(y)) + V(x)f(x).$$
Here,  $\mu$ is a weight on the vertices, $b$ is an edge weight, $e^{i \theta(x,y)}$ is a magnetic field and $V$ is a potential. The operator with  vanishing $V$ and $\theta$ is called weighted discrete Laplacian. For other appropriate choices of weights these operators include the adjacency operator and the transition operator of Markov chains.

From the viewpoint of applications a very important case is when the formal expression for $\cM$ gives rise to a self-adjoint operator on $\ell^2(X,\mu)$. Of course, this puts some symmetry restrictions on the weights; the edge weight $b$ has to be symmetric, $\theta$ has to be anti-symmetric and $V$ needs to be real-valued. In fact, if $X$ is finite, any self-adjoint matrix is a discrete magnetic Schrödinger operator of this form and if $X$ is infinite, the same holds true for self-adjoint finite range operators on $\ell^2(X,\mu)$.

This text discusses the two most basic questions about discrete magnetic Schrödinger operators.
\begin{enumerate} 
 \item Given $\mu,b,\theta,V$ does there exist a self-adjoint restriction of $\cM$ on $\ell^2(X,\mu)$?
 \item If there exists a self-adjoint restriction of $\cM$ on $\ell^2(X,\mu)$, is it unique?
\end{enumerate}
It is our goal to provide  expository answers to both questions, which are as comprehensive as possible. We present the best known answer to the first question with two restrictions on the considered operators. We  only treat the existence of lower semi-bounded restrictions of $\cM$ whose domain of the associated quadratic form  contains the finitely supported functions; we call such operators  {\em realizations} of $\cM$. While the first assumption is technical, the second is natural since  on discrete spaces functions with finite support can be seen as test functions.  For the second question there is vast amount of results  and we had to make some choices. Here we focus on current results that  involve the  geometry of the weighted graph and omit some more abstract criteria.  

For the sake of being expository we repeat several known arguments and summarize some basic definitions and properties of quadratic forms in Appendix~\ref{appendix:quadratic forms}. However, we would like to stress that everything that we call a theorem in this text goes in one form or another beyond what can be found in the literature.

The weighted graphs that we treat are not assumed to be locally finite.  This has several reasons which are based on applications; here we name three. First of all, the weighted discrete Laplacian of any given graph can be approximated (in the strong resolvent sense) by  bounded weighted discrete Laplacians. If the given graph is connected, the graphs of the approximating operators are not locally finite. Secondly, approximations of long-range non-local pseudo-differential  operators by weighted discrete Laplacians often make use of graphs that are  not locally finite. For concrete examples  we refer to \cite{CTW,CKK}, where convergence to the generator of stable-like processes is studied. Thirdly, fractional powers of discrete Laplacians, the discrete analogue of generators of stable-like processes, tend to be weighted discrete Laplacians of not locally finite graphs.  

 Since there has recently been some interest in discrete magnetic Schrödinger operators on   functions taking values in vector bundles, see \cite{MT,GMT} and references therein, everything is formulated in this slightly more general setting, see Section~\ref{section:discrete magnetic schroedinger operators}.

 It is folklore that the existence (and uniqueness) of bounded realizations of the weighted discrete Laplacian is equivalent to the weighted vertex degree being bounded, see e.g. \cite{KL}. In Section~\ref{section:bounded realizations} we extend this result to  magnetic Schrödinger operators. Here, the difficulty lays in treating potentials without a fixed sign. If the weighted  vertex degree is unbounded, the weighted graph Laplacian is an unbounded operator and so answers to our questions naturally become more difficult.

 In the case when the underlying graph is locally finite, it is possible to obtain the existence of realizations of a magnetic Schrödinger operator by studying self-adjoint extensions of the corresponding minimal  magnetic Schrödinger operator acting on finitely supported functions, see Subsection~\ref{subsection:admissible endomorphisms and fc}. Since finitely supported functions are the test functions on discrete spaces, in principle this theory works exactly as for Schrödinger operators on open subsets of Euclidean space.
 
 The real challenge for answering the first question lies with graphs that are not locally finite. In this case discrete magnetic Schrödinger operators need not map finitely supported functions to $\ell^2(X,\mu)$. Therefore, the strategy of the locally finite case, namely  seeking for self-adjoint extensions of the minimal operator, does not work. A first systematic study of discrete Schrödinger operators with nonnegative potentials on not necessarily locally finite graphs via Dirichlet forms  is contained in \cite{KL}. Discrete magnetic Schrödinger operators on such graphs with potentials without a fixed sign but with small negative part were then treated in \cite{GKS,GMT} by means of perturbation theory of quadratic forms. The results on the existence of realizations of \cite{KL,GKS,GMT} are presented in Subsection~\ref{subsection:Nonnegative endomorphisms and small perturbations}. For discrete (magnetic) Schrödinger operators whose negative part of the potential cannot be treated through perturbation theory the existence of realizations was still open. We settle this issue and prove existence for a very general class of so-called admissible potentials in Subsection~\ref{subsection:admissible endomorphisms and domination}. In the scalar case without magnetic field our results are optimal, see Subsection~\ref{subsection:summary and examples}. As a byproduct we obtain an entirely analytic proof for the closability of  associated quadratic forms on finitely supported functions, see Theorem~\ref{theorem:existence of realizations for magnetic operators} and Theorem~\ref{theorem:existence of realizations for scalar operators}, which was only previously obtained through probabilistic arguments, see \cite{GKS}.

Our second question on uniqueness of self-adjoint restrictions of discrete Laplacians and discrete magnetic Schrödinger operators has seen quite some attention in recent years, see \cite{TTV,TTVa,TTVIII, Gol,Gol2,GS,GKS,HKLW,HKMW,Jor,JP,KL2,KL,KS,Mas,Mil,Mil2,MT2,MT,Tor,Web,Woj2,Woj} among others.

There are different classes of operators in which uniqueness  can be studied. If the graph is locally finite,  we study essential self-adjointness of the minimal operator - the magnetic Schrödinger operator on finitely supported functions. This corresponds to uniqueness in the class of all self-adjoint operators. As discussed above, for general not locally finite graphs the minimal operator need not exist and so the concept of essential self-adjointness of the minimal operator makes no sense. In this case one can still ask for uniqueness of realizations. By definition (see above) this corresponds to uniqueness in the class of semi-bounded self-adjoint operators.  For both cases we present two criteria (and corollaries) involving the geometry of the weighted graph and properties of the underlying measure. 

In Subsection~\ref{subsection:a measure space criterion} we show that if the measure is large enough along infinite paths, the discussed uniqueness properties hold, see Theorem~\ref{theorem:measure space uniqueness} and Corollary~\ref{corollary:measure space criterion}. Our presentation is based on   results for Schrödinger operators with nonnegative potentials from \cite{KL} and for scalar magnetic Schrödinger operators from \cite{Gol2}. Subsection~\ref{subsection:a metric space criterion} is devoted to uniqueness criteria that involve the metric geometry with respect to an intrinsic metric. We prove uniqueness if there is no metric boundary or if the potential is growing fast enough towards the boundary, see Theorem~\ref{theorem:metric space uniqueness} and Corollary~\ref{corollary:completeness}. The presented proofs are based on \cite{MT}, which treats path metrics only. We allow arbitrary intrinsic metrics. As one possible application for this more general situation we discuss metrics coming from embeddings of the graph into Euclidean space or into complete Riemannian manifolds.

In the scalar case with vanishing magnetic field and nonnegative potential an important class of realizations are nonnegative ones whose associated quadratic form is a Dirichlet form.   They correspond to Markov processes on the underlying space through the Feynman-Kac formula and are therefore called Markovian realizations. Section~\ref{section:Markovian realizations} focuses on on them. In Subsection~\ref{subsection:structure of Markovian realizations} we show that the quadratic form of a  Markovian realization lies between a minimal Dirichlet form, the one with ``Dirichlet boundary conditions at infinity'', and a maximal Dirichlet form, the one with ``Neumann boundary conditions at infinity''. In this generality our result is new,  for locally finite graphs it is contained in \cite{HKLW}. Our discussion is based on methods developed in \cite{Schmi3}.

In Subsection~\ref{subsection:markov uniqueness} we study Markov uniqueness, i.e., uniqueness of Markovian realizations. By the previous discussion this is equivalent to the maximal and the minimal Dirichlet form being equal. Of course, the results of Section~\ref{section:uniqueness} can always be applied, but in general their assumptions are stronger than what is actually needed for Markov uniqueness. We show that smallness of ``the boundary", in the sense that its capacity vanishes, is equivalent to Markov uniqueness, see Theorem~\ref{theorem:capacity criterion boundary}. Here,  ``the boundary" can have two forms; it can be either a boundary with respect to a compactification or, in the locally finite case,  a metric boundary with respect to a path metric. For the abstract boundary coming from a compactification the characterization of Markov uniqueness is taken from \cite{Schmi3}, while the path metric case is treated in \cite{HKMW}. In Theorem~\ref{theorem:markov uniqueness 2} we characterize Markov uniqueness in the case when the underlying measure is finite. In particular, we relate Markov uniqueness for all finite measures to uniqueness of arbitrary realizations and the existence of intrinsic metrics with finite distance balls with respect to particular finite measures. This part is taken from \cite{Puc}.

Even though we try to give comprehensive answers to our two questions, there are several problems that remain unresolved. We collect the - in our view - most important ones in Section~\ref{section:open problems} and comment on their relevance and expected outcome. 

For the convenience of the reader (and to fix notation) we recall some basic properties of quadratic forms in Appendix~\ref{appendix:quadratic forms}.

{\em Acknowledgements. }  The author is indebted to Matthias Keller for asking him several of the questions answered in this article and for encouraging him to write it. Moreover, he expresses his gratitude to Simon Puchert for sharing new ideas on Markov uniqueness and for  allowing him to use some of the results of the Master's thesis \cite{Puc}.  Furthermore, several discussions with   Melchior Wirth on domination of quadratic forms and with  Daniel Lenz and Radoslaw Wojciechowski on  related topics resulted in various insights that improved this article.

\section{Discrete (magnetic) Schrödinger operators  - the setup and basic properties} \label{section:discrete magnetic schroedinger operators}
 
In this section we introduce weighted graphs, discrete magnetic Schrödinger operators and Schrödinger forms.  We discuss their connections via Green's formulae and Kato's inequality. The contents of this section are all known in one form or another. 

\subsection{Graphs and discrete Schrödinger operators} \label{subsection:graph laplacians} In this subsection we introduce graphs and the associated (formal) discrete Schrödinger operators.

A {\em (symmetric) weighted graph} $(X,b)$ consists of a countable set of {\em vertices} $X \neq \emptyset$ and an {\em edge weight function} $b:X \times X \to [0,\infty)$ with the following properties
\begin{itemize}
 \item[(b0)] $b(x,x) = 0$ for all $x \in X$,
 \item[(b1)] $b(x,y) = b(y,x)$ for all $x,y \in X$,
 \item[(b2)] $\sum_{z \in X} b(x,z) < \infty$ for all $x \in X$.
\end{itemize}
 The {\em vertex degree} of a vertex $x \in X$ is
$$\deg(x) := \sum_{z \in X} b(x,z).$$
We say that two vertices $x,y \in X$ are {\em connected by an edge} if $b(x,y) >0$. In this case, we write $x \sim y$. A {\em path} is a finite or infinite sequence of vertices $(x_1,x_2,\ldots)$  such that $x_j \sim x_{j + 1}$ for all $j  = 1,2,\ldots$

For $U \subseteq X$ we define the {\em combinatorial neighborhood of} $U$ by
$$N(U) := U \cup \{x \in X \mid \text{there ex. } y \in U \text{ with } y \sim x\}.$$
A weighted graph $(X,b)$ is called {\em locally finite}, if for any $x \in X$ the set $N(\{x\})$ is finite, i.e., if all vertices have only finitely many neighbors. A vertex $x \in X$ is {\em isolated} if $N(\{x\}) = \emptyset.$
%

We equip the vertex set $X$ with the discrete topology and write $C(X)$ for the space of all complex-valued functions on $X$ (the space of continuous complex-valued functions with respect to the discrete topology). Its subspace of functions with finite support (the functions of compact support for the discrete topology) is denoted by $C_c(X)$.  Moreover, we write $\ell^\infty(X)$ for the space of bounded complex-valued functions on $X$. 

Given two real-valued functions $f,g \in C(X)$ we denote by $f \wedge g:= \mathrm{min} \{f,g\}$ their minimum and by $f \vee g := \mathrm{max} \{f,g\}$ their maximum. Moreover, we let $f_+ = f \vee 0$ and $f_- = (-f) \vee 0$ the {\em positive part} and the {\em negative part} of $f$, respectively.

For a set $K \subseteq X$ we let $1_K:X \to \{0,1\}$ be the {\em indicator function of $K$}, i.e., $1_K(x) = 1$, if $x \in K$, and $1_K(x) = 0$, else. The indicator function of the singleton set $\{x\}$ is denoted by $\delta_x$.

A strictly positive function $\mu:X \to (0,\infty)$ is called {\em weight}. Any weight $\mu$ induces a Radon measure of full support on all subsets of $X$ by letting
$$\mu(A) := \sum_{x \in A} \mu(x), \quad A \subseteq X.$$
In what follows we shall not distinguish between the weight and the measure it induces. 

If we equip $C(X)$ with the topology of pointwise convergence,   its continuous dual space $C(X)'$ is isomorphic to $C_c(X)$. Given a weight $\mu$, the dual pairing $(\cdot,\cdot):C_c(X) \times C(X) \to \IC$,
$$(\varphi,f) := \sum_{x \in X} \overline{\varphi(x)} f(x) \mu(x)$$
induces an anti-linear isomorphism via $C_c(X) \to C(X)', \varphi \mapsto (\varphi,\cdot)$. Note that $(\cdot,\cdot)$ depends on the choice of $\mu$.

The space of square integrable functions with respect to a weight $\mu$ is denoted by 
$$\ell^2(X,\mu) := \{f \in C(X) \mid \sum_{x \in X} |f(x)|^2 \mu(x) < \infty\}.$$
It is a Hilbert space when equipped with the inner product 
$$\as{f,g}_2 := \sum_{x \in X} \overline{f(x)}g(x) \mu(x).$$
The norm induced by $\as{\cdot,\cdot}_2$ is denoted by $\|\cdot\|_2$. Note that for $\varphi \in C_c(X)$ and $f \in \ell^2(X,\mu)$ we have $\as{\varphi,f}_2 = (\varphi,f)$. As for the inner product $\as{\cdot,\cdot}_2$, throughout the text all inner products are assumed to be linear in the second argument and anti-linear in the first. 

To a graph $(X,b)$, a weight $\mu$ and a real-valued $V \in C(X)$ we associate the {\em formal discrete Schrödinger operator} $\cH = \cH_{\mu,V}:\cF  \to C(X)$, where
$$\cF := \{f \in C(X) \mid \sum_{y \in X} b(x,y) |f(y)| < \infty \text{ for all }x \in X\},$$
on which $\cH$ acts by
$$\cH f(x) := \frac{1}{\mu(x)} \sum_{y \in X} b(x,y) (f(x) - f(y)) + V(x) f(x), \quad x \in X.$$
The definition of $\cF$ ensures that this sum  converges absolutely. Bounded functions are always contained in $\cF$ and we have $\cF = C(X)$ if and only if the graph $(X,b)$ is locally finite. Sometimes, a real-valued function $V$ as  above is simply called {\em potential}.

%

\subsection{Discrete magnetic Schrödinger operators} \label{subsection:magnetic laplacians} In this subsection we introduce  discrete magnetic Schrödinger operators. They are  vector-valued versions of discrete Schrödinger operators with an additional magnetic interaction term.

A {\em Hermitian vector bundle} over a countable discrete base space $X$ is a collection $E = 
(E_x)_{x \in X}$ of isomorphic complex Hilbert spaces of dimension greater or equal to $1$. We denote the  inner product on $E_x$ by $\as{\cdot,\cdot}_x$ and the induced norm by $|\cdot|_x$. Often it is clear from the context in which of the $E_x$ we consider the inner product and the norm. If this is the case, we drop the subscript $x$.  We write
$$\Gamma(X; E) := \{f:X \to \bigsqcup_{x \in X} E_x \mid f(x) \in E_x\}$$
for the space of all {\em sections} of the bundle $E$. The subspace of all {\em sections with finite support} is denoted by $\Gamma_c(X; E)$.  For a given weight $\mu$ on $X$ we define $(\cdot,\cdot)_E:\Gamma_c(X; E) \times  \Gamma(X; E)  \to\IC$  by
$$(\varphi,f)_E = \sum_{x \in X} \as{\varphi(x),f(x)}\mu(x).$$
%
%
 Scalar valued functions naturally act on the space of all sections by pointwise multiplication so that $\Gamma(X; E)$ is a module over $C(X)$. More precisely, for $\varphi \in C(X)$ and $f \in \Gamma(X; E)$ we define $\varphi  f \in \Gamma(X; E)$ by  $(\varphi f)(x)  := \varphi(x)f(x)$.  
 
 For $f \in \Gamma(X; E)$ we let $|f| \in C(X)$ be given by $|f|(x) = |f(x)|$ and $\mathrm{sgn}f \in \Gamma(X; E)$ by
$$\mathrm{sgn} f (x) =  \begin{cases}
                     \frac{1}{|f(x)|} f(x) &\text{if } f(x) \neq 0\\
                     0 &\text{if } f(x) = 0
                    \end{cases}.
$$
With this notation any $f \in \Gamma(X; E)$ can be written as $f = |f| \mathrm{sgn} f$.

Given a weight $\mu$ on $X$ the corresponding space of {\em $\ell^2$-sections} is
$$\ell^2(X,\mu; E) := \{f \in \Gamma(X; E) \mid \sum_{x \in X} |f(x)|^2 \mu(x) < \infty\}.$$
It is a Hilbert space when equipped with the inner product
$$\as{f,g}_{2;\,E} := \sum_{x \in X} \as{f(x),g(x)} \mu(x).$$
The norm induced by $\as{\cdot,\cdot}_{2;\, E}$ is denoted by $\|\cdot\|_{2;\, E}$. 
\begin{remark}
 The identity $\as{\varphi,f}_{2;\, E} = (\varphi,f)_E$ holds for $\varphi \in \Gamma_c(X; E)$ and $f \in \ell^2(X,\mu; E)$ and a similar statement is valid for $\as{\cdot, \cdot}_2$ and $(\cdot,\cdot)$ in the scalar case, cf. Subsection~\ref{subsection:graph laplacians}. Because of this it is tempting to abuse notation and denote both pairings by the same symbol. In this text we introduced different notation for a reason. Often we can easily perform pointwise computations that lead to an identity or inequality on the level of $(\varphi,f)_E$ for pairs $\varphi \in \Gamma_c(X; E), f\in S$, where $S$ is some subspace of $\Gamma(X; E)$. However, in applications, we are truly interested in these identities on the level of $\as{g,f}_{2;\, E}$ for pairs $g \in G$ and $f \in F$, where $F$ and $G$ are subspaces of $\ell^2(X,\mu; E)$ with  $\Gamma_c(X; E) \subseteq F,G$. In order to lift results from $(\cdot,\cdot)_E$ to $\as{\cdot,\cdot}_{2;\, E}$ usually additional arguments are needed. In order to make such arguments more transparent, we distinguish the pairings.  

\end{remark}

A {\em unitary connection} on a Hermitian bundle $E = (E_x)_{x \in X}$ over $X$ is a family $\Phi = (\Phi_{x,y})_{x,y \in X}$ of unitary maps $\Phi_{x,y} :E_y \to E_x$ such that $\Phi_{x,y} = \Phi_{y,x}^{-1}$. A {\em bundle endomorphism} on  a Hermitian vector bundle $E = (E_x)_{x \in X}$ over $X$ is a collection $W = (W_x)_{x \in X}$ of bounded linear maps $W_x: E_x \to E_x$. It is called {\em self-adjoint}, if for all $x \in X$ the operator $W_x$ is self-adjoint. 

Let $(X,b)$ a graph and $E$ a Hermitian vector bundle over $X$. To a weight $\mu$ on $X$, a self-adjoint bundle endomorphism $W$ and a unitary connection $\Phi$ on $E$ we associate the {\em formal magnetic Schrödinger operator} $\cM = \Mm:\cF_E \to \Gamma(X; E)$, where 
$$\cF_E  := \{f \in  \Gamma(X; E) \mid \sum_{y \in X} b(x,y) |f(y)| < \infty \text{ for all } x \in X\}, $$
on which $\cM$  acts by
$$\cM f(x) :=  \frac{1}{\mu(x)} \sum_{y \in X} b(x,y) (f(x) - \Phi_{x,y}f(y)) + W_x f(x), \quad x \in X.$$
Note that since $|\Phi_{x,y}f(y)|_x = |f(y)|_y$,  for each $f \in \cF_E$ the sum in the definition of $\cM f(x)$ converges absolutely in the Hilbert space $E_x$. 

\begin{remark}[Scalar magnetic Schrödinger operators]
 If $E_x = \IC$ for all $x \in X$, then $\Gamma(X; E)$ can be identified with $C(X)$. In this case, a self-adjoint bundle endomorphism $W$ of $E$ acts on $\Gamma(X; E)$ as multiplication by a potential, i.e., there exists a real-valued $V \in C(X)$ such that $W_x f(x) = V(x)f(x)$ for $x \in X$.  Moreover, any connection $\Phi$ of $E$ is parametrized by a function 
 $$\theta:X \times X \to \R / 2\pi \Z,$$
 with $\theta(x,y) = - \theta(x,y)$ for all $x,y \in X$, via the identity
 $$\Phi_{x,y} z = e^{i \theta(x,y)} z, \quad z \in \IC.$$
 The corresponding magnetic Schrödinger operator is called  {\em scalar magnetic Schrödinger operator} and is denoted by $\cM_{\mu,\theta,V}$ . If $\theta = 0$, then $\Phi_{x,y} = \mathrm{Id}$ and $\cM_{\mu,0,V} =    \Hmv$. Therefore, the discrete Schrödinger operators discussed in Subsection~\ref{subsection:graph laplacians} are a special case of discrete magnetic Schrödinger operators.
\end{remark}

\subsection{Standing assumptions and notation}

Unless otherwise specified we always assume the following.

\begin{itemize}
 \item $(X,b)$ is a weighted graph.
 \item  $\mu$ is a weight on $X$ and $V \in C(X)$ is real-valued. 
 \item $\cH = \cH_{\mu, V}$ is the associated formal discrete Schrödinger operator.
 \item $E = (E_x)_{x \in X}$ is a Hermitian vector bundle over $X$, $\Phi = (\Phi_{x,y})_{x,y \in X}$ is a unitary connection  on $E$  and $W$  is a self-adjoint bundle endomorphism on $E$. 
 \item $\cM = \cM_{\mu, \Phi,W }$ is the associated formal discrete Schrödinger operator.
\end{itemize}

Whenever no confusion can arise we suppress the dependence of objects from these data in our notation. Often we deal with complex-valued functions and sections ($E$-valued functions) at the same time. Therefore, a subscript $E$ indicates spaces of $E$-valued functions or functionals on $E$-valued functions.

\subsection{Green's formula and  domination} In this subsection we discuss  Green's formula for the magnetic Schrödinger operator $\cM$, which is in principle an integration-by-parts formula. Moreover, we give a version of Kato's inequality on the level of the pairing $(\cdot,\cdot)_E$.

The following integration-by-parts formula is an extension of \cite[Lemma~4.7]{HK} to magnetic Schrödinger operators.

\begin{lemma}[Green's formula] \label{lemma:Green's formula general}
For all  $f\in  \cF_E$ and $\varphi \in \Gamma_c(X; E)$ the following sums converge absolutely and satisfy the stated identities.
\begin{align*}
 (\varphi, \cM f)_E &= \sum_{x\in X} \as{\varphi(x),\cM f (x)} \mu(x) = \sum_{x\in X} \as{ \cM \varphi(x), f(x)} \mu(x) \\
&=\frac{1}{2} \sum_{x,y \in X} b(x,y) \as{\varphi(x) - \Phi_{x,y} \varphi(y),f(x) - \Phi_{x,y} f(y)} + \sum_{x \in X} \as{W_x \varphi(x),f(x)} \mu(x). 
\end{align*}
\end{lemma}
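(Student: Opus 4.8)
The plan is to reduce the vector-valued Green's formula to a pointwise absolute-convergence argument followed by a symmetrization (Fubini) step, exactly as in the scalar case \cite[Lemma~4.7]{HK}, being careful that the magnetic terms are handled by the unitarity of $\Phi$. First I would fix $\varphi \in \Gamma_c(X;E)$ with $K := \supp \varphi$ finite, and $f \in \cF_E$. The first equality $(\varphi,\cM f)_E = \sum_x \as{\varphi(x),\cM f(x)}\mu(x)$ is just the definition of $(\cdot,\cdot)_E$, and since $\varphi$ has finite support this is a finite sum over $x \in K$; for each such $x$ the inner sum defining $\cM f(x)$ converges absolutely in $E_x$ because $f \in \cF_E$ and $|\Phi_{x,y}f(y)| = |f(y)|$, so there is no convergence issue here. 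Expanding $\cM f(x)$ and pulling the (bounded, finite) sum over $K$ inside gives
\begin{align*}
 \sum_{x \in X} \as{\varphi(x),\cM f(x)}\mu(x) &= \sum_{x \in K}\sum_{y \in X} b(x,y)\as{\varphi(x), f(x) - \Phi_{x,y}f(y)} + \sum_{x \in K}\as{\varphi(x),W_x f(x)}\mu(x).
\end{align*}
The double sum is absolutely convergent: $\sum_{x \in K}\sum_{y} b(x,y)|\varphi(x)|\,(|f(x)| + |f(y)|) < \infty$ because $K$ is finite, $f \in \cF_E$, and (b2) holds.

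Next I would symmetrize. Since the double sum converges absolutely I may rewrite it over all pairs $(x,y) \in X \times X$ (the summand vanishes unless $x \in K$), and then apply the change of variables $(x,y) \mapsto (y,x)$ together with $b(x,y) = b(y,x)$, $\Phi_{y,x} = \Phi_{x,y}^{-1} = \Phi_{x,y}^*$, and the fact that $\Phi_{x,y}$ is an isometry. The standard identity
$$\as{a_1 - \Phi b_1, a_2 - \Phi b_2} = \as{\Phi^* a_1 - b_1, \Phi^* a_2 - b_2}$$
for a unitary $\Phi$, applied with $\Phi = \Phi_{x,y}$, lets me average the $(x,y)$ and $(y,x)$ contributions and obtain
$$\sum_{x,y} b(x,y)\as{\varphi(x), f(x) - \Phi_{x,y}f(y)} = \frac12 \sum_{x,y} b(x,y)\as{\varphi(x) - \Phi_{x,y}\varphi(y), f(x) - \Phi_{x,y}f(y)},$$
which is the first summand of the claimed symmetric expression. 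The potential term $\sum_{x \in K}\as{\varphi(x), W_x f(x)}\mu(x) = \sum_{x \in X}\as{W_x\varphi(x), f(x)}\mu(x)$ already has the stated form since each $W_x$ is self-adjoint. Running the same computation starting from $\sum_x \as{\cM\varphi(x), f(x)}\mu(x)$ — note $\varphi \in \Gamma_c(X;E) \subseteq \cF_E$, so $\cM\varphi$ is defined — and using that the symmetric bilinear form in the middle line is symmetric in the roles of $\varphi$ and $f$ (again via unitarity of $\Phi$ and self-adjointness of $W_x$), yields the remaining equality $(\varphi,\cM f)_E = \sum_x \as{\cM\varphi(x), f(x)}\mu(x)$.

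The only genuinely delicate point is bookkeeping with the unitary connection: one must consistently use $\Phi_{y,x} = \Phi_{x,y}^{*}$ and the isometry property $|\Phi_{x,y}v| = |v|$ both to justify absolute convergence (so that Fubini/relabeling is legal) and to carry out the algebraic identity above; compared with the scalar case of \cite{HK}, where $\Phi_{x,y}$ is multiplication by a unimodular scalar, there is no real new difficulty, merely the need to respect operator ordering. I expect this $\Phi$-bookkeeping, rather than any analytic estimate, to be the main thing to get right.
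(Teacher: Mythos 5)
Your proposal is correct and follows essentially the same route as the paper: the same two absolute-convergence estimates (finite support of $\varphi$ together with (b2) for the diagonal terms, $f \in \cF_E$ with Cauchy--Schwarz and unitarity of $\Phi$ for the cross terms), followed by Fubini-type relabeling using $b(x,y)=b(y,x)$, $\Phi_{y,x}=\Phi_{x,y}^{*}$ and self-adjointness of $W_x$. The paper merely runs the rearrangement in the opposite direction, starting from the symmetric expression and recovering the two one-sided sums, which is the same argument.
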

\begin{proof}
The first identity is just writing out the definition of the pairing $(\cdot,\cdot)_E$. Since the support of $\varphi$ is finite,  we have
$$\sum_{x,y \in X} b(x,y)\av{\as{\varphi(x),f(x)}} =  \sum_{x \in X} \av{\as{\varphi(x),f(x)}} \deg (x) < \infty.$$
Moreover, the finiteness of the support of $\varphi$, $f \in \cF_E$ and the Cauchy-Schwarz inequality yield
$$\sum_{x,y \in X} b(x,y)\lvert\as{\Phi_{x,y} \varphi(y),f(x)}\rvert \leq \sum_{y \in X} |\varphi(y)| \sum_{x \in X} b(x,y) |f(x)| < \infty.$$
With these integrability properties at hand, the convergence of the sums and the other identities follow from rearranging the sum
$$\frac{1}{2} \sum_{x,y \in X} b(x,y) \as{\varphi(x) - \Phi_{x,y} \varphi(y),f(x) - \Phi_{x,y} f(x)} + \sum_{x \in X} \as{W_x \varphi(x), f(x)} \mu(x)$$
with the help of Fubini's theorem.
\end{proof}
 For a bundle endomorphism $W$ and a potential $V$ we write $W \geq V$ if for all sections $f \in \Gamma(X; E)$ and $x \in X$ the inequality $\as{W_xf(x),f(x)} \geq V(x) |f(x)|^2$ holds.  The following lemma provides some form of Kato's inequality for the magnetic operator $\cM = \Mm$ and the discrete Schrödinger operator $\cH = \Hmv$ when $W \geq V$.  The presented proof is taken from \cite{LSW1}.
\begin{lemma}[Kato's inequality]\label{lemma:Katos inequality}
 Let $W \geq V$. If  $f \in \cF_E$ and $\varphi \in \Gamma_c(X; E)$ with $\as{f(x),\varphi(x)} = |f(x)| |\varphi(x)|$ for all $x \in X$, then
 $$\mathrm{Re}\, (\varphi,\cM f)_E \geq  (|\varphi|,\cH |f|).$$
\end{lemma}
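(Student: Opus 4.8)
The plan is to peel off both sides of the claimed inequality with Green's formula and then verify what remains summand by summand. Apply Lemma~\ref{lemma:Green's formula general} to the pair $\varphi,f$, and apply it a second time in the scalar case $E_x=\IC$, $\Phi=\mathrm{Id}$, $W=V$ to the pair $|\varphi|,|f|$ — this is legitimate because $f\in\cF_E$ forces $|f|\in\cF$ and $|\varphi|\in C_c(X)$. The asserted inequality $\mathrm{Re}\,(\varphi,\cM f)_E\geq(|\varphi|,\cH|f|)$ then reads: the quantity
$$\mathrm{Re}\,\frac{1}{2}\sum_{x,y\in X}b(x,y)\as{\varphi(x)-\Phi_{x,y}\varphi(y),\, f(x)-\Phi_{x,y}f(y)}+\mathrm{Re}\sum_{x\in X}\as{W_x\varphi(x),f(x)}\mu(x)$$
is no smaller than the analogous scalar expression whose summands are $b(x,y)(|\varphi(x)|-|\varphi(y)|)(|f(x)|-|f(y)|)$ and $V(x)|\varphi(x)||f(x)|\mu(x)$. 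All four sums converge absolutely — the two edge sums by the magnetic and scalar forms of Lemma~\ref{lemma:Green's formula general}, the two potential sums because $\varphi$ has finite support — so $\mathrm{Re}$ may be moved inside each sum, and since $b(x,y)\geq 0$ and $\mu(x)>0$ it suffices to prove the inequality for each individual summand.

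For the potential summands I would use the hypothesis $\as{f(x),\varphi(x)}=|f(x)||\varphi(x)|$. When both vectors are nonzero this is the equality case of Cauchy--Schwarz in $E_x$, which forces $\mathrm{sgn}\,\varphi(x)=\mathrm{sgn}\,f(x)=:u_x$ with $|u_x|=1$; hence $\varphi(x)=|\varphi(x)|u_x$, $f(x)=|f(x)|u_x$, and $\mathrm{Re}\as{W_x\varphi(x),f(x)}=|\varphi(x)||f(x)|\as{W_x u_x,u_x}\geq|\varphi(x)||f(x)|V(x)$, using that $W_x=W_x^*$ makes $\as{W_x u_x,u_x}$ real and that $W\geq V$ gives $\as{W_x u_x,u_x}\geq V(x)|u_x|^2$. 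If $\varphi(x)=0$ or $f(x)=0$, both summands vanish.

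For the edge summands I would expand $\as{\varphi(x)-\Phi_{x,y}\varphi(y),\, f(x)-\Phi_{x,y}f(y)}$ into four terms. Unitarity of $\Phi_{x,y}$ together with the sign hypothesis turns the two diagonal terms into $|\varphi(x)||f(x)|+|\varphi(y)||f(y)|$, which is precisely the diagonal part of $(|\varphi(x)|-|\varphi(y)|)(|f(x)|-|f(y)|)$; for the off-diagonal terms, Cauchy--Schwarz and the norm-preservation of $\Phi_{x,y}$ give $\mathrm{Re}\as{\varphi(x),\Phi_{x,y}f(y)}\leq|\varphi(x)||f(y)|$ and $\mathrm{Re}\as{\Phi_{x,y}\varphi(y),f(x)}\leq|\varphi(y)||f(x)|$. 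Combining the four estimates yields $\mathrm{Re}\as{\varphi(x)-\Phi_{x,y}\varphi(y),\, f(x)-\Phi_{x,y}f(y)}\geq(|\varphi(x)|-|\varphi(y)|)(|f(x)|-|f(y)|)$ for every edge, and summing completes the argument.

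I expect the only genuine subtlety to be the bookkeeping: confirming the absolute convergence that licenses interchanging $\mathrm{Re}$ with the sums and comparing them termwise — all of which is supplied by Lemma~\ref{lemma:Green's formula general} in its magnetic and scalar incarnations. The mathematical content is short: the equality case of Cauchy--Schwarz for the potential part, and the sign-sensitive expansion of the magnetic difference for the edge part, which is the step where I would take the most care.
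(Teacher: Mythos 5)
Your proof is correct, and it reaches the inequality by a slightly different decomposition than the paper. The paper never symmetrizes: it works directly with the defining sums of the pairing, writing $\mathrm{Re}\,(\varphi,\cM f)_E$ as a sum over $x$ in the support of $\varphi$ and $y\in X$ of terms $b(x,y)\,\mathrm{Re}\as{\varphi(x),f(x)-\Phi_{x,y}f(y)}$ plus the potential terms, and bounds each edge term below by $b(x,y)\,|\varphi(x)|\bigl(|f(x)|-|f(y)|\bigr)$ using the hypothesis at $x$ and a single Cauchy--Schwarz estimate per edge; the potential term is treated exactly as you do, via the equality case of Cauchy--Schwarz and $W\geq V$. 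You instead pass both sides through Green's formula (Lemma~\ref{lemma:Green's formula general}) and prove the per-edge inequality $\mathrm{Re}\as{\varphi(x)-\Phi_{x,y}\varphi(y),\,f(x)-\Phi_{x,y}f(y)}\geq\bigl(|\varphi(x)|-|\varphi(y)|\bigr)\bigl(|f(x)|-|f(y)|\bigr)$, which uses the hypothesis at both endpoints and two Cauchy--Schwarz applications, with the absolute convergence asserted in Lemma~\ref{lemma:Green's formula general} licensing the termwise comparison. Both arguments rest on the same two ingredients, so the difference is one of bookkeeping rather than substance: the paper's route is leaner, needing no rearrangement beyond what is built into the definitions of $\cM f(x)$ and the pairing, whereas your route yields the symmetrized pointwise inequality between the form summands, which is in effect the estimate exploited again when domination of the associated forms is established (Lemma~\ref{lemma:domination}).
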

\begin{proof}
It follows from the definitions that $f \in \cF_E$ implies $|f| \in \cF$. We have $\as{f(x),\varphi(x)} = |f(x)| |\varphi(x)|$ by assumption. Moreover, the Cauchy-Schwarz inequality and $|\Phi_{x,y}f(y)|_x = |f(y)|_y$ yields 
$$-\mathrm{Re} \as{\Phi_{x,y} f(y),\varphi(x)} \geq - |f(y)| |\varphi(x)|.$$
Summing up these inequalities, multiplying them by $b(x,y)$ and summing over $x,y \in X$ shows the desired inequality for the operators $\cM_{\mu,\Phi,0}$ and $\cH_{\mu,0}$.

It remains to prove the inequality $\mathrm{Re}\, \as{W_xf(x), \varphi(x)} \geq  V(x) |f(x)||\varphi(x)|.$ Without loss of generality we can assume $f(x) \neq 0 \neq \varphi(x)$. Since 
$$|f(x)| |\varphi(x)| = \as{f(x),\varphi(x)} = |f(x)| |\varphi(x)| \as{\mathrm{sgn} f(x),\mathrm{sgn} \varphi(x)},$$
we have $\mathrm{sgn} f(x) = \mathrm{sgn} \varphi(x)$. Using $W \geq V$ and $|\mathrm{sgn} f(x)| = 1$, we obtain 
$$\as{W_xf(x), \varphi(x)} = |f(x)| |\varphi(x)| \as{W_x \mathrm{sgn}f(x),\mathrm{sgn} f(x)} \geq |f(x)| |\varphi(x)| V(x).$$   
This finishes the proof.
\end{proof}

As a corollary we obtain that solutions to the eigenvalue equation with respect to $\cM$ yield nonnegative subsolutions to the eigenvalue   equation with respect to $\cH$. 

\begin{corollary}
 Let $f \in \cF_E$ and $\lambda \in \R$  with $\cM f = \lambda f$. Then $\cH |f| \leq \lambda |f|$.
\end{corollary}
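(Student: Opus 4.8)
The plan is to deduce this Corollary directly from Kato's inequality (Lemma~\ref{lemma:Katos inequality}) by testing it against a well-chosen family of compactly supported sections. Given $f \in \cF_E$ with $\cM f = \lambda f$ and a fixed vertex $x_0 \in X$, I would like to plug $\varphi = c\,\delta_{x_0}\,\mathrm{sgn} f$ into the inequality, where $c \geq 0$ is a scalar; this $\varphi$ lies in $\Gamma_c(X;E)$ and satisfies $\as{f(x),\varphi(x)} = c\,|f(x)|^2 = |f(x)|\,|\varphi(x)|$ for all $x$, so the hypotheses of Lemma~\ref{lemma:Katos inequality} are met.

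First I would compute the right-hand side: $(|\varphi|,\cH|f|) = c\,\mu(x_0)\,\overline{|f(x_0)|}\,\cH|f|(x_0) = c\,\mu(x_0)\,|f(x_0)|\,\cH|f|(x_0)$, using that $|\varphi| = c\,|f(x_0)|\,\delta_{x_0}$ and the definition of the pairing $(\cdot,\cdot)$ in Subsection~\ref{subsection:graph laplacians}. Next I would compute the left-hand side using $\cM f = \lambda f$: since $\varphi$ is supported at $x_0$, $(\varphi,\cM f)_E = \as{\varphi(x_0), \cM f(x_0)}\mu(x_0) = \lambda\,\as{\varphi(x_0), f(x_0)}\mu(x_0) = \lambda\,c\,|f(x_0)|^2\,\mu(x_0)$, which is real (as $\lambda \in \R$), so $\mathrm{Re}\,(\varphi,\cM f)_E = \lambda\,c\,|f(x_0)|^2\,\mu(x_0)$. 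Combining the two sides, Lemma~\ref{lemma:Katos inequality} gives $\lambda\,c\,|f(x_0)|^2\,\mu(x_0) \geq c\,\mu(x_0)\,|f(x_0)|\,\cH|f|(x_0)$ for every $c \geq 0$, hence, dividing by $c\,\mu(x_0)$ (both strictly positive once $c>0$), $\lambda\,|f(x_0)|^2 \geq |f(x_0)|\,\cH|f|(x_0)$.

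To finish I would distinguish the two cases at $x_0$: if $|f(x_0)| > 0$, divide by $|f(x_0)|$ to get $\cH|f|(x_0) \leq \lambda\,|f(x_0)|$; if $|f(x_0)| = 0$, then $\lambda\,|f|(x_0) = 0$ and I need $\cH|f|(x_0) \leq 0$, which follows directly from the defining formula $\cH|f|(x_0) = \frac{1}{\mu(x_0)}\sum_{y} b(x_0,y)(|f(x_0)| - |f(y)|) + V(x_0)|f(x_0)| = -\frac{1}{\mu(x_0)}\sum_y b(x_0,y)|f(y)| \leq 0$ since all terms are nonnegative. As $x_0$ was arbitrary, this proves $\cH|f| \leq \lambda|f|$ pointwise on $X$.

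The only mild subtlety — and the step I would be most careful about — is verifying that $|f| \in \cF$ so that $\cH|f|$ is actually defined; but this is exactly the observation recorded at the start of the proof of Lemma~\ref{lemma:Katos inequality} (namely $f \in \cF_E \Rightarrow |f| \in \cF$), so it is available for free. Everything else is a routine evaluation of the two pairings at a single vertex, so I do not anticipate any real obstacle here; the Corollary is essentially just Kato's inequality specialized to one-point test sections combined with the eigenvalue equation.
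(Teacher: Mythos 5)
Your proposal is correct and follows essentially the same route as the paper: Kato's inequality (Lemma~\ref{lemma:Katos inequality}) tested against sections supported at a single vertex, the only difference being that at a vertex with $f(x_0)=0$ the paper stays inside the Kato framework by using a test section built from an arbitrary unit vector $\xi \in E_{x_0}$, while you dispose of that case by the direct computation $\cH|f|(x_0) = -\mu(x_0)^{-1}\sum_{y} b(x_0,y)|f(y)| \leq 0$, which is equally valid. One cosmetic slip: with $\varphi = c\,\delta_{x_0}\,\mathrm{sgn}\, f$ one has $\as{f(x_0),\varphi(x_0)} = c\,|f(x_0)|$ rather than $c\,|f(x_0)|^2$, so your displayed constants actually correspond to the (equally admissible) choice $\varphi = c\,\delta_{x_0} f$; either way the hypothesis of the lemma holds and the final conclusion is unchanged.
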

\begin{proof}
For $z \in X$ let $\xi \in E_z$ with $|\xi| = 1$ and consider the section $\varphi \in \Gamma_c(X; E)$ that is given by
$$\varphi(x) = \begin{cases}
                \frac{1}{\mu(z) |f(z)|}f(z) &\text{if } x = z \text{ and } f(z) \neq 0\\
                \frac{1}{\mu(z)} \xi &\text{if } x = z \text{ and } f(z) = 0\\
                0 &\text{else}
               \end{cases}.
 $$
 It satisfies $|\varphi(z)| = \mu(z)^{-1}$ and $\varphi(x) = 0$ for $x \neq z$. Therefore, $\as{f(x),\varphi(x)} = 0 = |\varphi(x)||f(x)|$ if $x \neq z$ and $\as{\varphi(z),f(z)} = \mu(z)^{-1} |f(z)| = |\varphi(z)| |f(z)|$.  From Kato's inequality we infer 
\begin{align*}
 \lambda |f(z)| = (\varphi,\lambda f)_E = (\varphi,\cM f)_E \geq (|\varphi|,\cH |f|) =\cH |f|(z).
\end{align*}
This finishes the proof.
\end{proof}

\subsection{Schrödinger forms and a ground state transform} \label{subsection:Schrödinger forms}

In this subsection we introduce Schrödinger forms, which are related to magnetic Schrödinger operators by Green's formula. One possible domain for such forms are the functions of finite support another choice are functions of finite energy.

If $A$ is bounded and self-adjoint, the operator $|A|$ is defined by means of the functional calculus. Moreover, we let $A_+ = \frac{1}{2} (|A| + A)$ and $A_- =  \frac{1}{2}(|A| - A)$. Then $|A|,A_+,A_-$ are nonnegative self-adjoint operators and $A = A_+ - A_-$. 

The space of {\em sections of  finite magnetic energy} $\Dm = \cD_{\Phi,W;\, E}$ is defined by
$$\Dm := \{f \in \Gamma(X; E) \mid \sum_{x,y \in X}b(x,y)|f(x) - \Phi_{x,y} f(y)|^2 + \sum_{x \in X} \as{|W_x|f(x),f(x)}\mu(x) < \infty\}.$$
On it we introduce the sesquilinear {\em magnetic Schrödinger form} $\Qm = \mathcal{Q}_{\Phi,W;\, E}:\cD_E \times \cD_E \to \IC$, which acts by
$$\Qm(f,g) := \frac{1}{2} \sum_{x,y \in X} b(x,y) \as{f(x) - \Phi_{x,y} f(y),g(x) - \Phi_{x,y} g(y)} + \sum_{x \in X} \as{W_x f(x),g(x)} \mu(x).$$
It follows from the summability condition (b2) of the graph $(X,b)$ that $\Gamma_c(X; E) \subseteq \Dm.$ We denote the restriction of $\Qm$ to $\Gamma_c(X; E)$ by $\Qmc = \mathcal{Q}^c_{\Phi,W;\, E}$. The next lemma is a consequence of Green's formula. It relates $\Qm$ and $\Qmc$ to the operator $\cM$.

\begin{lemma}\label{lemma:greens formula forms} The inclusion $\Dm \subseteq \cF_E$ holds and for all $\varphi \in \Gamma_c(X; E)$ and $f \in \Dm$ we have $\Qm (\varphi,f) = (\varphi,\cM f)_E$. In particular, if $f \in \Gamma_c(X; E)$, then $\Qm^c (\varphi,f) = (\varphi,\cM f)_E$.
\end{lemma}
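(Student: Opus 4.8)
The plan is to prove the claim in two steps: first the inclusion $\Dm \subseteq \cF_E$, then the identity $\Qm(\varphi,f) = (\varphi,\cM f)_E$ for $\varphi \in \Gamma_c(X;E)$, $f \in \Dm$; the final "in particular" sentence will then be immediate since $\Gamma_c(X;E) \subseteq \Dm$.

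For the inclusion, let $f \in \Dm$ and fix $x \in X$. I want $\sum_{y} b(x,y)|f(y)| < \infty$. Write $|f(y)| \leq |f(y) - \Phi_{x,y}f(x)| + |\Phi_{x,y}f(x)| = |f(y) - \Phi_{x,y}f(x)| + |f(x)|$ (using that $\Phi_{x,y}$ is unitary, hence an isometry). Then
\begin{align*}
\sum_{y} b(x,y)|f(y)| &\leq \sum_{y} b(x,y)|f(y) - \Phi_{x,y}f(x)| + |f(x)|\sum_{y} b(x,y).
\end{align*}
The second term is $|f(x)|\deg(x)$, which is finite by (b2). For the first term, since $\Phi_{x,y}$ is unitary we have $|f(y) - \Phi_{x,y}f(x)| = |\Phi_{y,x}f(y) - f(x)|$; actually more directly, note $|f(y) - \Phi_{x,y}f(x)|_x = |\Phi_{y,x}(f(y) - \Phi_{x,y}f(x))|_y = |\Phi_{y,x}f(y) - f(x)|_y$, wait — I should simply apply Cauchy–Schwarz in the form $\sum_y b(x,y) a_y \leq (\sum_y b(x,y))^{1/2}(\sum_y b(x,y) a_y^2)^{1/2}$ with $a_y = |f(y) - \Phi_{x,y}f(x)|$. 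The factor $(\sum_y b(x,y))^{1/2} = \deg(x)^{1/2}$ is finite by (b2), and $\sum_y b(x,y)|f(y) - \Phi_{x,y}f(x)|^2 \leq \sum_{u,v} b(u,v)|f(u) - \Phi_{u,v}f(v)|^2 < \infty$ by the definition of $\Dm$ (using symmetry $b(x,y) = b(y,x)$ and $|f(y) - \Phi_{x,y}f(x)| = |f(x) - \Phi_{y,x}f(y)|$ since $\Phi_{x,y}$ is an isometry with inverse $\Phi_{y,x}$). Hence $f \in \cF_E$.

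For the identity, with $\Dm \subseteq \cF_E$ in hand and $\varphi \in \Gamma_c(X;E) \subseteq \cF_E$, Green's formula (Lemma~\ref{lemma:Green's formula general}) applies and gives exactly
\begin{align*}
(\varphi,\cM f)_E = \frac{1}{2}\sum_{x,y} b(x,y)\as{\varphi(x) - \Phi_{x,y}\varphi(y), f(x) - \Phi_{x,y}f(y)} + \sum_x \as{W_x\varphi(x), f(x)}\mu(x),
\end{align*}
and the right-hand side is by definition $\Qm(\varphi,f)$. The only thing to check is that the sums defining $\Qm(\varphi,f)$ converge absolutely, so that this genuinely equals the value of the form; but the first sum has at most finitely many nonzero terms involving $\varphi$, and for those the Cauchy–Schwarz estimate already used in the proof of Green's formula (together with $f \in \cF_E$) bounds them, while the $W$-term is a finite sum over $\supp\varphi$ with each summand bounded by $\|W_x\|\,|\varphi(x)|\,|f(x)|$. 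The "in particular" statement follows by taking $f \in \Gamma_c(X;E)$, which lies in $\Dm$ by the remark following the definition of $\Qm$, so that $\Qm(\varphi,f) = \Qmc(\varphi,f)$.

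The main obstacle is the inclusion $\Dm \subseteq \cF_E$ — specifically, correctly using the unitarity of $\Phi_{x,y}$ and the symmetry of $b$ to bound $\sum_y b(x,y)|f(y) - \Phi_{x,y}f(x)|^2$ by the (finite) magnetic-energy sum, since the energy sum is written with the difference $f(x) - \Phi_{x,y}f(y)$ rather than $f(y) - \Phi_{x,y}f(x)$; the identity $|f(y) - \Phi_{x,y}f(x)|_x = |f(x) - \Phi_{y,x}f(y)|_y$, valid because $\Phi_{x,y}$ is unitary with $\Phi_{x,y}^{-1} = \Phi_{y,x}$, resolves this. Everything after that is a routine application of Green's formula plus the definition of $\Qm$.
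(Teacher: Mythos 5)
Your proposal is correct and follows essentially the same route as the paper: bound $|f(y)|$ by a difference term plus $|f(x)|$ via the triangle inequality and unitarity, apply the weighted Cauchy--Schwarz inequality together with the finiteness of the energy sum to get $\Dm \subseteq \cF_E$, and then invoke Green's formula (Lemma~\ref{lemma:Green's formula general}), whose conclusion is by definition $\Qm(\varphi,f)$. The only blemish is the ill-typed expression $\Phi_{x,y}f(x)$ (since $\Phi_{x,y}:E_y \to E_x$ it should be $\Phi_{y,x}f(x)$, or, as the paper does, one writes $|f(y)| = |\Phi_{x,y}f(y)|$ and compares with $f(x)$ inside $E_x$, which avoids the index swap altogether), but your use of unitarity makes clear that the intended estimate is exactly the paper's.
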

\begin{proof}
 Because of Lemma~\ref{lemma:Green's formula general} it suffices to prove the inclusion $\Dm \subseteq \cF_E$. The identity $|\Phi_{x,y} f(y)| = |f(y)|$ and the Cauchy-Schwarz inequality yield
 \begin{align*}
  \sum_{y \in X} b(x,y) |f(y)| &\leq \sum_{y \in X} b(x,y) |f(x) - \Phi_{x,y} f(y)| + |f(x)| \deg (x)\\
  &\leq \deg(x)^{1/2}  \left(\sum_{y \in X} b(x,y) |f(x) - \Phi_{x,y} f(y)|^2\right)^{1/2} + |f(x)| \deg (x).
 \end{align*}
This finishes the proof.
\end{proof}

  As remarked in Subsection~\ref{subsection:magnetic laplacians}, if $E = (\IC)_{x \in X}$, then $\Gamma(X; E) = C(X)$ and $\Gamma_c(X; E) = C_c(X)$. Moreover, the unitary connection $\Phi = (\Phi_{x,y})_{x,y}$ is parametrized by an antisymmetric function $\theta:X \times X \to \R / 2\pi \Z$ via $\Phi_{x,y} = e^{i \theta(x,y)}$ and the bundle endomorphism $W$ is  induced by a real valued function $V$. In this case, we drop the subscript $E$ and write $\cQ_{\theta, V}$ respectively $\cQ^c_{\theta, V}$ for the associated {\em scalar magnetic Schrödinger forms}.

  If, moreover,  $\Phi_{x,y} = \mathrm{Id}$, which is the same as $\theta = 0$,  we also drop the subscript $\theta$ and write $\cQ = \cQ_V := \cQ_{0,V} $ respectively $\cQ^c = \cQ^c_V := \cQ^c_{0, V}$ for the associated {\em Schrödinger forms}.  The corresponding space of {\em functions of finite energy} $\mathcal{D} = \mathcal{D}_{V}$ is 
  $$\cD = \{f \in C(X) \mid \sum_{x,y \in X}b(x,y)|f(x) - f(y)|^2 + \sum_{x \in X} |V(x)| |f(x)|^2 \mu(x) < \infty\},$$
  and $\cQ$ acts upon $\cD$ by
  $$\cQ(f,g) = \frac{1}{2}\sum_{x,y \in X}b(x,y)\overline{(f(x) - f(y))}(g(x)-g(y)) + \sum_{x \in X} V(x) \overline{f(x)} g(x) \mu(x).$$
  In this situation  the  formula discussed in Lemma~\ref{lemma:greens formula forms} reads
$$\cQ^c(f,g) = (f,\cH g), \quad f,g \in C_c(X).$$

We finish this section with the discussion of a ground state transform for the operator $\cH$ and the form $\cQ^c$. For a nonnegative function $f$ we set
$$b^{(f)}:X \times X \to [0,\infty),\, b^{(f)}(x,y) = f(x)f(y) b(x,y).$$
Clearly, $b^{(f)}(x,x) = 0$ and $b^{(f)}(x,y) = b^{(f)} (y,x)$ for all $x,y \in X$. Therefore,  $b^{(f)}$ satisfies assumptions (b0) and (b1). It satisfies (b2)  if and only if  $f \in \cF$. In this case, $(X,b^{(f)})$ is a weighted graph in the sense of Subsection~\ref{subsection:graph laplacians}. The Schrödinger form on $C_c(X)$ that is associated with the graph $(X,b^{(f)})$ and the potential $V = 0$ is denoted by $\cQ^{c, f}$. If $f$ is a subsolution to an eigenvalue equation, then $\cQ^c$ and $\cQ^{c, f}$ are related through the following lemma.   The presented proof is taken from \cite{HK}, which contains a version for nonnegative potentials.

\begin{lemma}[Ground state transform]\label{lemma:ground state transform}
 Let $f \in \cF$ nonnegative and $\lambda \in \R$ such that $\cH f \leq \lambda f$. For all $\varphi \in C_c(X)$ the inequality
 $$\cQ^c(f \varphi) \leq \cQ^{c,f}(\varphi) + \lambda \|f \varphi\|_2^2.$$
 holds.
\end{lemma}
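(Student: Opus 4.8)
The plan is to prove the ground state transform by a direct pointwise computation on the finitely supported function $f\varphi$, expanding the difference $f(x)\varphi(x) - f(y)\varphi(y)$ and comparing it term by term with the analogous expression for the graph $(X, b^{(f)})$. First I would write out
$$\cQ^c(f\varphi) = \frac{1}{2} \sum_{x,y} b(x,y) |f(x)\varphi(x) - f(y)\varphi(y)|^2 + \sum_x V(x) f(x)^2 |\varphi(x)|^2 \mu(x),$$
all sums being finite since $\varphi \in C_c(X)$. The algebraic heart of the matter is the identity
$$|f(x)\varphi(x) - f(y)\varphi(y)|^2 = f(x)f(y)|\varphi(x) - \varphi(y)|^2 + f(x)(f(x) - f(y))|\varphi(x)|^2 + f(y)(f(y) - f(x))|\varphi(y)|^2,$$
which one checks by expanding both sides (the cross terms $-2\mathrm{Re}(\overline{f(x)\varphi(x)} f(y)\varphi(y))$ match). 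The first term on the right, after multiplying by $\tfrac12 b(x,y)$ and summing, is exactly $\cQ^{c,f}(\varphi)$.

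Next I would collect the remaining two terms. By symmetrizing in $x \leftrightarrow y$, the sum $\tfrac12 \sum_{x,y} b(x,y)\big[ f(x)(f(x)-f(y))|\varphi(x)|^2 + f(y)(f(y)-f(x))|\varphi(y)|^2 \big]$ equals $\sum_{x,y} b(x,y) f(x)(f(x)-f(y))|\varphi(x)|^2$, which we can rewrite as $\sum_x |\varphi(x)|^2 f(x) \big(\sum_y b(x,y)(f(x)-f(y))\big)$. Recognizing the inner sum as $\mu(x)\big(\cH f(x) - V(x)f(x)\big)$ from the definition of $\cH$, this becomes $\sum_x |\varphi(x)|^2 f(x)\big(\cH f(x) - V(x)f(x)\big)\mu(x)$. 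Adding back the potential term $\sum_x V(x) f(x)^2 |\varphi(x)|^2 \mu(x)$ from $\cQ^c(f\varphi)$, the $V$-contributions cancel and we are left with
$$\cQ^c(f\varphi) = \cQ^{c,f}(\varphi) + \sum_x |\varphi(x)|^2 f(x)\, \cH f(x)\, \mu(x).$$
Finally, invoking the hypothesis $\cH f \le \lambda f$ together with $f \ge 0$ and $|\varphi(x)|^2 \ge 0$ gives $\sum_x |\varphi(x)|^2 f(x)\cH f(x)\mu(x) \le \lambda \sum_x |\varphi(x)|^2 f(x)^2 \mu(x) = \lambda \|f\varphi\|_2^2$, which yields the claimed inequality.

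The main obstacle, such as it is, lies not in any convergence issue — since $\varphi$ has finite support every sum here is a finite sum, so Fubini and rearrangements are free — but in being careful that $f\varphi \in \cF$ (indeed $f\varphi$ has finite support, hence is bounded, hence in $\cF$) so that $\cH(f\varphi)$ and the form $\cQ^c(f\varphi)$ make sense, and in handling the vertices where $f(x) = 0$: there the formula $b^{(f)}(x,y) = 0$ simply kills those edges in $\cQ^{c,f}$, and the pointwise identity above still holds trivially, so no separate case analysis is needed. One should also note that $f \in \cF$ guarantees $(X, b^{(f)})$ is a genuine weighted graph as observed in the text preceding the lemma, so $\cQ^{c,f}$ is well-defined. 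The only genuinely substantive input is the elementary quadratic identity, and after that the proof is bookkeeping plus one application of the subsolution hypothesis.
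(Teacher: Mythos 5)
Your proof is correct and is essentially the paper's argument: the algebraic identity you expand (after symmetrizing in $x\leftrightarrow y$) is exactly the identity $(f(x)-f(y))(f(x)|\varphi(x)|^2-f(y)|\varphi(y)|^2)=|f(x)\varphi(x)-f(y)\varphi(y)|^2-f(x)f(y)|\varphi(x)-\varphi(y)|^2$ used in the paper, the only cosmetic difference being that the paper starts from $\lambda\|f\varphi\|_2^2\geq (f|\varphi|^2,\cH f)$ and invokes Green's formula, while you re-derive that pairing by hand inside the computation. One justification should be repaired, though: your claim that ``every sum here is a finite sum'' is false when the graph is not locally finite (for $x$ in the support of $\varphi$ the inner sums over $y$ run over all of $X$), so the splitting of the sum, the symmetrization, and the recognition of $\sum_y b(x,y)(f(x)-f(y))$ as $\mu(x)(\cH f(x)-V(x)f(x))$ must instead be justified by absolute convergence, which follows from $f\in\cF$, condition (b2) and the finite support of $\varphi$ --- precisely the point the paper's proof makes explicit.
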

 \begin{proof}
  Since $f$ is nonnegative and $f|\varphi|^2$ has compact support, the  inequality $\cH f \leq \lambda f$ yields
  $$ \lambda \|f \varphi \|^2_2 =  (f|\varphi|^2,\lambda f) \geq (f|\varphi|^2, \cH f).$$
  From Green's formula (Lemma~\ref{lemma:Green's formula general}) we infer
  \begin{align*}
   (f|\varphi|^2, \cH f) = \frac{1}{2} \sum_{x,y \in X}  b(x,y) (f(x) - f(y)) (f(x)|\varphi(x)|^2 - f(y) |\varphi(y)|^2) + \sum_{x \in X} V(x) |f\varphi| (x)^2.
  \end{align*}
  With this at hand, the desired inequality follows from the identity
  $$(f(x) - f(y)) (f(x)|\varphi(x)|^2 - f(y) |\varphi(y)|^2) = |f(x) \varphi(x) - f(y)\varphi(y)|^2 - f(x)f(y) |\varphi(x) - \varphi(y)|^2$$
  and the fact that all occurring sums converge due to $f \in \cF$ and the finite support of $f \varphi$.
 \end{proof}

\section{Existence of realizations of $\cH$ and $\cM$}\label{section:existence of realizations}

In this section we discuss the existence of self-adjoint restrictions of the formal operators $\cH$ and $\cM$  with two additional properties. The operators that we seek for are lower semi-bounded and they  reflect the discrete nature of the underlying space, i.e., we assume that test functions are cores in the domains of the associated quadratic forms. This is made precise in  Definition~\ref{definition:discrete magnetic Schrödinger operators}. In Subsection~\ref{subsection:Nonnegative endomorphisms and small perturbations} we prove that such realizations exist for nonnegative bundle endomorphisms and potentials, see Proposition~\ref{proposition:nonnegative endomorphisms}, and we use perturbation theory of quadratic forms to lift this result to  bundle endomorphisms and potentials with small negative part, see Proposition~\ref{proposition:small negative part}. In Subsection~\ref{subsection:admissible endomorphisms and domination} we prove the existence of realizations for scalar Schrödinger operators with admissible potentials, see  Theorem~\ref{theorem:existence of realizations for scalar operators}, and lift this result to magnetic Schrödinger operators with the help of domination of the associated quadratic forms, see Theorem~\ref{theorem:existence of realizations for magnetic operators}. Domination can be avoided when the underlying graph is locally finite or satisfies the weaker finiteness condition (FC); this is the content of Subsection~\ref{subsection:admissible endomorphisms and fc}.  Instead of discussing  literature and examples in the main text,  for the most part we postpone them to Subsection~\ref{subsection:summary and examples}.

\begin{definition}[Discrete magnetic Schrödinger operator] \label{definition:discrete magnetic Schrödinger operators} A self-adjoint operator $M$ on $\ell^2(X,\mu; E)$ is called a {\em realization of $\cM$} if the following assertions are satisfied.
\begin{enumerate}
 \item[(R1)] $M$ is semi-bounded from below.
 \item[(R2)] The domain of the quadratic form of $M$ contains $\Gamma_c(X; E)$.
 \item[(R3)] The inclusion $D(M) \subseteq \cF_E$ holds  and $Mf = \cM f$ for all $f \in D(M)$.
\end{enumerate}
 A self-adjoint operator is called {\em discrete magnetic Schrödinger operator} if it is a realization of a formal magnetic Schrödinger operator. 
\end{definition}

\begin{remark}
\begin{enumerate}
 \item Condition (R1) is technical and restricts the class of  operators to the ones that can be treated with quadratic form methods. In most applications the considered discrete magnetic Schrödinger operators satisfy this assumption. However, there is one instance where this is not the case. The adjacency operator that acts formally on functions in $\cF$ by
 $$A f (x) = \frac{1}{\mu(x)}\sum_{y \in X} b(x,y) f(y) $$
 may have self-adjoint restrictions without being lower semi-bounded, see \cite{Gol}. Note that $A$ equals the scalar magnetic Schrödinger operator $ \cM_{\mu,-\pi,-\mathrm{Deg}}$, where $\mathrm{Deg} = \mu^{-1} \deg$.
 
 \item Condition (R2) pays tribute to the fact that we deal with  discrete operators.  The space   $\Gamma_c(X; E)$ can be seen as the space of test functions (sections).  It is  tempting to replace (R2) by the stronger $\Gamma_c(X; E) \subseteq D(M)$. However, we shall  see below that for graphs which are not locally finite there  exist realizations $M$ of $\cM$ that satisfy (R2) but not  $\Gamma_c(X; E) \subseteq D(M).$ For the existence part see Theorem~\ref{theorem:existence of realizations for magnetic operators} and for the statement about the domain see Corollary~\ref{corollary:operator domain (FC)}.
 
 \item Since the operator $\cH$ is a special instance of the operator $\cM$, see Subsection~\ref{subsection:magnetic laplacians}, this also defines  realizations of $\cH$. They are called {\em discrete Schrödinger operators}. 
\end{enumerate}
\end{remark}

If there exist realizations of $\cM$, then they are necessarily restrictions of the following operator. 
\begin{definition}[Maximal restriction of $\cM$]
The domain of the {\em maximal restriction $M^{\mathrm{max}}$ of $\cM$}  is  $D(M^{\mathrm{max}}) = \{f \in  \cF_E \cap \ell^2(X,\mu; E) \mid \cM f \in \ell^2(X,\mu; E)\}$, on which it acts by $ M^{\mathrm{max}} f = \cM f$.  In the scalar case the maximal restriction of $\cH$ is denoted by $H^{\mathrm{max}}$.  
\end{definition}

\begin{remark}
 In general $\Gamma_c(X; E)$ does not belong to $D(M^{\mathrm{max}})$, see Subsection~\ref{subsection:admissible endomorphisms and fc}. Nevertheless, it can be checked that it is always densely defined. It is unclear whether $M^{\mathrm{max}}$ is a closed operator on $\ell^2(X,\mu; E)$.
\end{remark}

 Typically realizations of $\cM$ and $\cH$ come from closed lower semi-bounded quadratic forms on $\ell^2(X,\mu; E)$ that extend $\cQ^c_E$, respectively forms  on $\ell^2(X,\mu)$ that extend $\cQ^c$.   If this is the case, $\cQ^c_E$ and $\cQ^c$ need to be semi-bounded and closable. Their closures and associated operators deserve a name.

\begin{definition}[Closure of $\cQ^c_E$ and $\cQ^c$]
 If $\cQ^c_E = \cQ^c_{\Phi,W; E}$ is lower semi-bounded and closable on $\ell^2(X,\mu; E)$, we denote its closure by $Q^0_E= Q^0_{\mu,\Phi,W; E}$  and write $M^0 = M^0_{\mu,\Phi,W}$ for the associated self-adjoint operator. Likewise, if $\cQ^c = \cQ^c_V$ is lower semi-bounded and closable on $\ell^2(X,\mu)$, we denote its closure by $Q^0= Q^0_{\mu,V}$  and write $H^0 = H^0_{\mu,V}$ for the associated self-adjoint operator.
\end{definition}
\begin{remark}
In all the cases where we can prove the  closability of $\cQ^0_E$ it will turn out that $M^0$ is indeed a realization of $\cM$. We can think of $M^0$ as the realization of $\cM$ with Dirichlet boundary conditions at infinity.
\end{remark}
  The remaining course of this section is as follows. The simplest situation for proving the existence of realizations of $\cM$ is when the bundle endomorphism $W$ is nonnegative. In this case, the closability of $\cQ^c_E$ follows from Fatou's lemma and that $M^0$ is a realization of $\cM$ is a consequence of Green's formula.  For endomorphisms with form small negative part, see Definition~\ref{definition:admissible endomorphisms} below, the corresponding statements follow from standard perturbation theory. 
  
  If the negative part of the endomorphism is not form small but only such that $\cQ^c_E$ is lower semi-bounded on $\ell^2(X,\mu;E)$, then the situation is more delicate. In this case, unconditionally we can only treat scalar Schrödinger operators; if $V$ is such that $\cQ^c_V$ is lower semi-bounded on $\ell^2(X,\mu)$, we prove its closability and that $H^0$ is a realization of $\cH$. Given such a potential $V$ and a bundle endomorphism $W \geq V$, we then use domination of the associated resolvents to prove the closability of $\cQ^c_{E}$ and that $M^0$ is a realization of $\cM$. 
  
  If the underlying weighted graph satisfies the finiteness condition of Definition~\ref{definition:finiteness condition}, which depends on $b$ and $\mu$, no condition on $W$ except that $\cQ^c_E$ is lower semi-bounded is necessary. In this case,  the form $\cQ^c_E$ is induced by a symmetric lower semi-bounded operator and we employ Friedrichs' extension theorem to obtain a realization of $\cM$. Locally finite graphs   satisfy the finiteness condition for all choices of $\mu$.

\subsection{Nonnegative endomorphisms and small perturbations} \label{subsection:Nonnegative endomorphisms and small perturbations}

\begin{proposition}[Realization for nonnegative endomorphisms]\label{proposition:nonnegative endomorphisms}
If $W \geq 0$, then $\cQ^c_E$ is nonnegative and closable. Moreover, $Q^0_E$ is a restriction of $\cQ_E$  and $M^0$ is a realization of $\cM$.
\end{proposition}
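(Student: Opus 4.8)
The plan is to reduce everything to one lower-semicontinuity observation. For $g\in\Gamma(X;E)$ let $q(g)\in[0,\infty]$ denote the value of
\[
\frac12\sum_{x,y\in X}b(x,y)\,|g(x)-\Phi_{x,y}g(y)|^2+\sum_{x\in X}\as{W_xg(x),g(x)}\,\mu(x),
\]
which is well defined in $[0,\infty]$ precisely because $W\geq0$; by definition $g\in\Dm$ iff $q(g)<\infty$, and then $q(g)=\cQ_E(g,g)$. Since $\cQ_E$ is a nonnegative sesquilinear form on the vector space $\Dm$, the map $\cQ_E(\cdot)^{1/2}$ is a seminorm there. Moreover, because convergence in $\ell^2(X,\mu;E)$ on the discrete set $X$ implies pointwise convergence and all summands in $q$ are nonnegative, Fatou's lemma gives the key fact: if $g_n\to g$ pointwise, then $q(g)\leq\liminf_n q(g_n)$. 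Nonnegativity of $\cQ^c_E$ is immediate from $W\geq0$.

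First I would prove closability. Let $(f_n)\subseteq\Gamma_c(X;E)$ with $\|f_n\|_{2;\,E}\to0$ be Cauchy for $\cQ^c_E$; we must show $\cQ^c_E(f_n)\to0$. Then $f_n\to0$ pointwise, so for fixed $n$ the sequence $(f_n-f_m)_m$ tends to $f_n$ pointwise as $m\to\infty$. Given $\eps>0$, choose $N$ with $\cQ^c_E(f_n-f_m)\leq\eps$ for $n,m\geq N$; lower semicontinuity of $q$ then yields, for $n\geq N$,
\[
\cQ^c_E(f_n)=q(f_n)\leq\liminf_{m\to\infty}q(f_n-f_m)=\liminf_{m\to\infty}\cQ^c_E(f_n-f_m)\leq\eps,
\]
so $\cQ^c_E(f_n)\to0$. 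To see that $Q^0_E$ is a restriction of $\cQ_E$, take $f\in D(Q^0_E)$ and $(f_n)\subseteq\Gamma_c(X;E)$ converging to $f$ in the form norm $(\|\cdot\|_{2;\,E}^2+\cQ^c_E(\cdot))^{1/2}$, so $f_n\to f$ pointwise and $\cQ^c_E(f_n)\to Q^0_E(f)$. Applying lower semicontinuity to $(f_n-f_m)_m\to f_n-f$ gives $q(f_n-f)\leq\liminf_m\cQ^c_E(f_n-f_m)$, which tends to $0$ as $n\to\infty$ by the Cauchy property; in particular $f_n-f\in\Dm$, hence $f\in\Dm$ since $\Gamma_c(X;E)\subseteq\Dm$ and $\Dm$ is a linear space. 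Using the seminorm $\cQ_E(\cdot)^{1/2}$ we then get $\cQ_E(f_n)^{1/2}\to\cQ_E(f)^{1/2}$, whence $Q^0_E(f)=\lim_n\cQ^c_E(f_n)=\cQ_E(f,f)$; polarization upgrades this to $Q^0_E=\cQ_E$ on $D(Q^0_E)$.

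It then remains to verify (R1)--(R3) for $M^0$. Property (R1) holds because $Q^0_E\geq0$ forces $M^0\geq0$, and (R2) holds since $\Gamma_c(X;E)=D(\cQ^c_E)\subseteq D(Q^0_E)$ by construction of the closure. For (R3), let $f\in D(M^0)$; then $f\in D(Q^0_E)\subseteq\Dm\subseteq\cF_E$, the last inclusion by Lemma~\ref{lemma:greens formula forms}. For every $\varphi\in\Gamma_c(X;E)$, the definition of the operator associated with $Q^0_E$, the identity $Q^0_E=\cQ_E$ on $D(Q^0_E)$, and Green's formula for forms (Lemma~\ref{lemma:greens formula forms}) give
\[
(\varphi,M^0f)_E=\as{\varphi,M^0f}_{2;\,E}=Q^0_E(\varphi,f)=\cQ_E(\varphi,f)=(\varphi,\cM f)_E.
\]
Since the pairing $(\cdot,\cdot)_E$ is non-degenerate in its second argument (test against sections supported at a single vertex), this forces $\cM f=M^0f$ pointwise; in particular $\cM f=M^0f\in\ell^2(X,\mu;E)$, which is exactly (R3).

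I expect no serious obstacle here: the entire argument rests on the Fatou-type lower semicontinuity of $q$ together with Green's formula for forms. The one point that needs care is the final step, where the \emph{a priori} inclusion $D(M^0)\subseteq\Dm\subseteq\cF_E$ is what makes $\cM f$ meaningful at all and allows the form identity to be promoted to the pointwise operator identity $M^0f=\cM f$.
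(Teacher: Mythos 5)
Your proof is correct and follows essentially the same route as the paper: nonnegativity of $W$ plus Fatou's lemma under pointwise convergence yields closability and the fact that $Q^0_E$ restricts $\cQ_E$ (via the seminorm estimate), and Green's formula for forms (Lemma~\ref{lemma:greens formula forms}) then gives (R1)--(R3) for $M^0$. The only cosmetic difference is that you verify closability through the Cauchy-sequence criterion rather than through lower semicontinuity of $\cQ^c_E$ on its domain, but the underlying Fatou argument is identical.
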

\begin{proof}
Since $W \geq 0$, the form $\cQ^c_E$ is nonnegative and, in particular, lower semi-bounded. We prove that $\cQ^c_E$ is closable by showing that it is lower semicontinuous on its domain $\Gamma_c(X; E)$ with respect to $\ell^2(X,\mu; E)$-convergence. Let $\varphi, \varphi_n \in \Gamma_c(X; E)$ with $\varphi_n \to \varphi$ in $\ell^2(X,\mu; E)$. Since $W_x :E_x \to E_x$ and $\Phi_{x,y} :E_y \to E_x$ are continuous, for $x,y \in X$ we have $|\varphi_n (x) - \Phi_{x,y} \varphi_n(y)| \to   |\varphi (x) - \Phi_{x,y} \varphi(y)|$ and $\as{W_x \varphi_n(x),\varphi_n(x)} \to \as{W_x \varphi(x),\varphi(x)}$, as $n \to \infty$. With this and $W \geq 0$ at hand, we infer from Fatou's lemma 
$$\cQ^c_E(\varphi)  \leq \liminf_{n\to \infty} \cQ^c_E(\varphi_n).$$
This shows closability of $\cQ^c_E$.

Next we prove $D(Q^0_E) \subseteq \mathcal{D}_E$ and $Q^0_E = \cQ_E$ on $D(Q^0_E)$. To this end, let $f \in D(Q^0_E)$ and let $\varphi_n \in \Gamma_c(X; E)$ with $\varphi_n \to f$ with respect to the form norm. In particular, $\cQ^c_E(\varphi_m-\varphi_n)$ is arbitrarily small for large enough $n,m$ and $\cQ^c_E(\varphi_n) \to Q^0_E(f)$, as $n \to \infty$. Let $f_n := f-\varphi_n$.  Another application of Fatou's lemma as above shows
\begin{align*}
 \frac{1}{2} \sum_{x,y \in X}b(x,y)|f_n(x) - \Phi_{x,y} f_n(y)|^2 + \sum_{x \in X} \as{W_xf_n(x),f_n(x)}\mu(x) \leq \liminf_{m\to \infty} \cQ^c_E(\varphi_m - \varphi_n).
\end{align*}
Therefore, $f \in \cD_E$ and the left hand side of the previous inequality equals $\cQ_E(f - \varphi_n)$. Since the square roots of $\cQ^c_E$ and $\cQ_E$ are semi-norms that agree on $\Gamma_c(X; E)$, the above also implies
$$|\cQ_E(f)^{1/2} - \cQ_E^c(\varphi_n)^{1/2}| \leq \cQ_E(f - \varphi_n)^{1/2} \leq \liminf_{m \to \infty}\cQ_E^c(\varphi_m - \varphi_n)^{1/2}. $$
This shows 
$$Q^0_E(f) = \lim_{n \to \infty} \cQ^c_E(\varphi_n) = \cQ_E(f). $$
Since the off-diagonal values of $\cQ_E$ and $Q^0_E$ can be recovered from on-diagonal values via polarization, we obtain that $Q^0_E$ is a restriction of $\cQ_E$.

It remains to prove the claim about the operator $M^0$. Since $Q^0_E$ is nonnegative, $M^0$ is lower semi-bounded and therefore satisfies (R1). By definition we have $\Gamma_c(X; E) \subseteq D(Q^0_E)$ such that (R2) holds also.   Moreover, the general inclusion $D(M^0) \subseteq D(Q^0_E)$ and the already proven $D(Q^0_E) \subseteq \cD_E$ combined with  Lemma~\ref{lemma:greens formula forms} yield $D(M^0) \subseteq \cF_E$.  With all this at hand, another application of Lemma~\ref{lemma:greens formula forms}  shows that for $f \in D(M^0)$ and $\varphi \in \Gamma_c(X; E) \subseteq D(Q^0_E)$ we have
$$\langle\varphi,M^0 f\rangle_{2;\, E} = Q^0_E(\varphi, f) = \cQ_E(\varphi,f) = (\varphi, \cM f)_E.$$
This implies $M^0 f = \cM f$ and finishes the proof.
\end{proof}
As already mentioned at the beginning of this section, we can only treat special classes of  bundle endomorphism that are not nonnegative.  This is discussed next. 

To a self-adjoint bundle endomorphism $W$ we  associate the quadratic form $q_W$ on $\ell^2(X,\mu; E)$ with domain 
$$D(q_W) = \{f \in \ell^2(X,\mu; E) \mid \sum_{x \in X} \as{|W_x|f(x),f(x)} \mu(x) < \infty\},$$
on which it acts by
$$q_W(f) = \sum_{x \in X} \as{W_xf(x),f(x)} \mu(x).$$ 
Recall that $q_{W_-}$ is {\em relatively form bounded} with respect to $\cQ^c_{\Phi, W_+;\, E}$ on $\ell^2(X,\mu; E)$ with relative bound $\alpha \geq 0$,  if there exists $C \in \R$ such that
$$q_{W_-} (f) \leq \alpha \cQ^c_{\Phi, W_+;\, E}(f) + C\|f\|_{2;\, E}^2 \text{ for all }f \in \Gamma_c(X; E).$$
We shall deal with the following classes of endomorphisms.
\begin{definition}[Admissible endomorphisms]\label{definition:admissible endomorphisms}
A self-adjoint bundle endomorphism $W$ is called {\em admissible} with respect to $\mu$ and  $\Phi$, if $\cQ^c_{\Phi, W;\, E}$ is lower semi-bounded on  $\ell^2(X,\mu; E)$. The class of all admissible bundle endomorphisms with respect to $\mu$ and $\Phi$ is denoted by $\cA_{\mu, \Phi; E}$.  We say that an admissible endomorphism $W$ has a {\em form small negative part} if $q_{W_-}$ is relatively form bounded  with respect to $\cQ^c_{\Phi, W_+;\, E}$ on $\ell^2(X,\mu; E)$ with relative bound $\alpha < 1$. The class of endomorphisms with form small negative part is denoted by $\cS_{\mu, \Phi; E}$.
\end{definition}
In the scalar case when $E = (\IC)_{x \in X}$ and $\Phi = (\mathrm{Id})_{x,y \in X}$ we drop the subscripts for the connection and the bundle and write $\cA_{\mu}$ for the class of {\em admissible potentials} $\cA_{\mu, \mathrm{Id}; (\IC)}$ and we write $\cS_\mu$ for the class of potentials with form small negative part $\cS_{\mu, \mathrm{Id}; (\IC)}$.

\begin{remark}
The name admissible potentials is borrowed from \cite{KLVW}.      
\end{remark}
To a self-adjoint bundle endomorphism $W$ on $E$ we associate the function $\Wm:X \to \R$  that is  given by
$$\Wm(x) := \inf\{ \as{W_x\xi,\xi} \mid \xi \in E_x \text{ with } |\xi| = 1\}. $$
Since $W_x$ is bounded and self-adjoint, $\Wm(x)$ is the minimum of the spectrum of $W_x$.  If $W$ acts on sections by multiplication with a real-valued   $V \in C(X)$, then $\Wm(x) = V(x)$. 

In general it is nontrivial to determine whether  a given endomorphism belongs to one of the discussed classes. However, if there is some information in the scaler case, Kato's inequality provides some information in the magnetic case. More precisely, the following proposition  characterizes relative boundedness  (uniform in the connection) of magnetic Schrödinger forms with endomorphism $W$  in terms of the scalar Schrödinger form with potential $\Wm$. 

\begin{proposition} \label{proposition:semiboundedness magnetic forms and domination}
The following assertions are equivalent.
\begin{enumerate}[(i)]
 \item  $\Wm \in \cA_{\mu}$, i.e., $\cQ^c_{\Wm}$ is lower semi-bounded on $\ell^2(X,\mu)$.
 \item  There exists $C \in \R$ such that for all unitary connections $\Phi$ on $E$ and all $\varphi \in \Gamma_c(X; E)$  we have
 $$C\|\varphi \|_{2;\, E}^2 \leq \cQ^c_{\Phi, W;\, E}(\varphi ).$$
\end{enumerate}
\end{proposition}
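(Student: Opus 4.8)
The plan is to prove the two implications separately, using Kato's inequality (Lemma~\ref{lemma:Katos inequality}) as the bridge between the magnetic and scalar settings.

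For the implication (i)$\Rightarrow$(ii), suppose $\Wm \in \cA_\mu$, so there is $C \in \R$ with $C\|\psi\|_2^2 \leq \cQ^c_{\Wm}(\psi)$ for all $\psi \in C_c(X)$. Fix an arbitrary unitary connection $\Phi$ and $\varphi \in \Gamma_c(X;E)$. I would test Kato's inequality with $f = \varphi$ and the section $\varphi$ itself; more precisely, one wants a section $\psi \in \Gamma_c(X;E)$ with $\as{\varphi(x),\psi(x)} = |\varphi(x)||\psi(x)|$ for all $x$, and the natural choice is $\psi = \varphi$ (so that the hypothesis of Lemma~\ref{lemma:Katos inequality} is trivially satisfied since $\as{\varphi(x),\varphi(x)} = |\varphi(x)|^2$). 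Kato's inequality then gives
$$\mathrm{Re}\,(\varphi,\cM f)_E \geq (|\varphi|,\cH|\varphi|),$$
but here I must be a little careful: Kato's inequality as stated compares $\cM = \cM_{\mu,\Phi,W}$ with $\cH = \cH_{\mu,V}$ under the hypothesis $W \geq V$, and the cleanest choice is $V = \Wm$, for which $W \geq \Wm$ holds by the very definition of $\Wm$ (indeed $\as{W_x\xi,\xi} \geq \Wm(x)|\xi|^2$ for all $\xi \in E_x$). Combining Green's formula (Lemma~\ref{lemma:Green's formula general}, applied with $\varphi = f$, which is legitimate since $\Gamma_c(X;E) \subseteq \cF_E$) with Kato's inequality yields
$$\cQ^c_{\Phi,W;E}(\varphi) = (\varphi,\cM\varphi)_E = \mathrm{Re}\,(\varphi,\cM\varphi)_E \geq (|\varphi|,\cH|\varphi|) = \cQ^c_{\Wm}(|\varphi|) \geq C\||\varphi|\|_2^2 = C\|\varphi\|_{2;E}^2,$$
where I used that $\cQ^c_{\Phi,W;E}(\varphi)$ is real (it equals $\Qm(\varphi,\varphi)$, a diagonal value of a Hermitian form built from a self-adjoint endomorphism) and that $\||\varphi|\|_2 = \|\varphi\|_{2;E}$. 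This gives (ii) with the same constant $C$, uniformly in $\Phi$.

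For the converse (ii)$\Rightarrow$(i), I would specialize to the trivial bundle. The hypothesis holds for \emph{all} Hermitian bundles $E$ with the self-adjoint endomorphism $W$ and \emph{all} connections $\Phi$; but the statement is about a fixed $E$ and a fixed $W$. The clean route is: given $\psi \in C_c(X)$ real-valued, one wants to produce $\varphi \in \Gamma_c(X;E)$ with $\cQ^c_{\Phi,W;E}(\varphi)$ close to (or bounded by a controlled multiple of) $\cQ^c_{\Wm}(\psi)$ and $\|\varphi\|_{2;E}^2 = \|\psi\|_2^2$. For each $x \in \supp\psi$ pick a unit vector $\xi_x \in E_x$ realizing (or nearly realizing, up to $\eps$) the infimum defining $\Wm(x)$, i.e.\ $\as{W_x\xi_x,\xi_x} \leq \Wm(x) + \eps$, and set $\varphi(x) = \psi(x)\xi_x$. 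Then $\sum_x \as{W_x\varphi(x),\varphi(x)}\mu(x) \leq \sum_x \psi(x)^2(\Wm(x)+\eps)\mu(x)$. For the hopping term, I would \emph{choose the connection $\Phi$} so that $\Phi_{x,y}\xi_y$ is as aligned as possible with $\xi_x$ — concretely, one can choose $\Phi_{x,y}$ on the (finitely many) relevant pairs to be a unitary sending $\xi_y$ to $\xi_x$ (extending arbitrarily, consistently with $\Phi_{x,y} = \Phi_{y,x}^{-1}$, on the rest); then $|\varphi(x) - \Phi_{x,y}\varphi(y)|^2 = |\psi(x)\xi_x - \psi(y)\xi_x|^2 = |\psi(x) - \psi(y)|^2$. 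With these choices, $\cQ^c_{\Phi,W;E}(\varphi) \leq \cQ^c_{\Wm}(\psi) + \eps\|\psi\|_2^2$, and (ii) gives $C\|\psi\|_2^2 \leq \cQ^c_{\Wm}(\psi) + \eps\|\psi\|_2^2$; letting $\eps \to 0$ yields $\cQ^c_{\Wm}(\psi) \geq C\|\psi\|_2^2$ for all real-valued $\psi \in C_c(X)$, hence (by polarization and taking real/imaginary parts) lower semi-boundedness of $\cQ^c_{\Wm}$, i.e.\ $\Wm \in \cA_\mu$.

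The main obstacle I anticipate is the bookkeeping in (ii)$\Rightarrow$(i): one must check that a unitary connection $\Phi$ with the desired alignment property on finitely many edges genuinely exists and is compatible with the constraint $\Phi_{x,y} = \Phi_{y,x}^{-1}$ (it does — since each $E_x$ is a Hilbert space of dimension $\geq 1$, there is a unitary $\Phi_{x,y}:E_y \to E_x$ with $\Phi_{x,y}\xi_y = \xi_x$ whenever $\xi_x,\xi_y$ are unit vectors, and one defines $\Phi_{y,x}$ as its inverse; off the finite set one takes any fixed family of unitaries), and that one may do this simultaneously for all edges in $\supp\psi \times \supp\psi$ since the alignment constraint at the pair $(x,y)$ only involves the single vectors $\xi_x,\xi_y$ and no consistency condition around cycles is required (the connection is not assumed flat). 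A minor secondary point is justifying that $\cQ^c_{\Phi,W;E}$ is real-valued on the diagonal and that all the sums converge, both of which follow from $W$ self-adjoint, finiteness of supports, and (b2), exactly as in the proof of Lemma~\ref{lemma:Green's formula general}.
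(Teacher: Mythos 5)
Your proof is correct and follows essentially the same route as the paper: (i)$\Rightarrow$(ii) via Green's formula and Kato's inequality with $W \geq \Wm$, and (ii)$\Rightarrow$(i) by testing with $\varphi = \psi\,\xi$ for near-minimizing unit vectors $\xi_x$ and a connection aligned so that $\Phi_{x,y}\xi_y = \xi_x$, which collapses the magnetic form to the scalar one up to $\varepsilon\|\psi\|_2^2$. The only difference from the paper is cosmetic: there the near-minimizing unit vectors are chosen as a single global section $\eta$ with a globally aligned connection, rather than per test function on its support.
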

\begin{proof}
 (i) $\Rightarrow$ (ii):  Let $\varphi \in \Gamma_c(X; E)$. We use Green's formula (Lemma~\ref{lemma:greens formula forms}) and Kato's inequality together with the bound $W \geq \Wm$ to obtain
 \begin{align*}
  \cQ^c_{\Phi,W; E}(\varphi) &= (\varphi, \cM_{\mu,\Phi,W} \varphi)_E \geq (|\varphi|, \cH_{\mu, \Wm} |\varphi|)_E  = \cQ^c_{\Wm}(|\varphi|).
 \end{align*}
  Since $\||\varphi| \|_2 = \|\varphi\|_{2; \, E}$, assertion (ii) follows from  this inequality and $\Wm \in \cA_\mu$.
 
 (ii) $\Rightarrow$ (i): Let $\varepsilon > 0$. Choose $\eta \in \Gamma(X;E)$ with $|\eta| = 1$  such that for all $x \in X$ it satisfies $\Wm(x) \geq \as{W_x\eta(x),\eta(x)} - \varepsilon$. Moreover, choose a unitary connection $\Phi$ such that for all $x,y \in X$ we have $\Phi_{x,y} \eta (y) = \eta (x)$. By assumption there exists $C \in \R$ such that for all $\varphi \in C_c(X)$ we have 
 \begin{align*}
  C\|\varphi\|^2_2  = C\|\varphi\eta\|_{2;\, E}^2 \leq\cQ^c_{\Phi,W; E}(\varphi\eta) = \cQ^c_{0}(\varphi) + q_{W}(\varphi \eta),
 \end{align*}
 where in the last step we used
 $$|\varphi(x) \eta(x) - \varphi(y) \Phi_{x,y} \eta(y)|^2= |\varphi(x) \eta(x) - \varphi(y)\eta(x)|^2 = |\varphi(x) - \varphi(y)|^2.$$
 Since 
 $$q_W(\varphi\eta) = \sum_{x \in X} |\varphi(x)|^2 \as{W_x\eta(x),\eta(x)} \mu(x) \leq \sum_{x \in X} |\varphi(x)|^2 (\Wm + \varepsilon) \mu(x)$$
 and $\varepsilon > 0$ was arbitrary, this shows that $\cQ^c_{\Wm}$ is lower semi-bounded on $\ell^2(X,\mu)$. 
\end{proof}

\begin{remark}
\begin{enumerate}
 \item The previous proposition shows that $\Wm \in \cA_{\mu}$ implies $W \in  \cA_{\mu, \Phi; E}$ for all unitary connections $\Phi$ on $E$. For scalar magnetic Schrödinger operators also the converse implication is true, but for general magnetic Schrödinger operators the situation is unclear.
%
 \item Even if  $\Wm \not \in \cA_{\mu}$, there can still exist connections $\Phi$ for which  $\cQ^c_{\Phi,W; E}$ is lower semi-bounded on $\ell^2(X,\mu; E)$, see Example~\ref{example:adjacency matrix}. 
\end{enumerate}
\end{remark}

The following proposition shows that closability of forms with nonnegative endomorphisms is preserved under perturbations by small negative endomorphisms.

\begin{proposition}[Realization for form small negative parts] \label{proposition:small negative part}
 Let $W \in \cS_{\Phi;E}.$ Then $\cQ^c_{E} = \cQ^c_{\Phi,W; E}$ is lower semi-bounded and closable on $\ell^2(X,\mu; E)$. Its closure $Q^0_E$ is the restriction of $\cQ_E$ to $D(Q^0_{\mu,\Phi, W_+; E})$ and the associated operator $M^0$ is a realization of $\cM$.
\end{proposition}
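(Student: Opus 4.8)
The plan is to obtain everything from Proposition~\ref{proposition:nonnegative endomorphisms} via the KLMN theorem on form perturbations. Applying that proposition to the nonnegative endomorphism $W_+$ shows that $\cQ^c_{\Phi, W_+;\, E}$ is closable with closure $Q^0_{\mu,\Phi,W_+; E}$, which is a restriction of $\cQ_{\Phi,W_+; E}$; in particular $\Gamma_c(X; E)$ is a form core for $Q^0_{\mu,\Phi,W_+; E}$. The first step is to propagate the relative form bound for $q_{W_-}$ from $\Gamma_c(X; E)$ to all of $D(Q^0_{\mu,\Phi,W_+; E})$: given $f$ in this domain, choose $\varphi_n \in \Gamma_c(X; E)$ converging to $f$ in the form norm of $Q^0_{\mu,\Phi,W_+; E}$, hence in $\ell^2(X,\mu; E)$, hence (along a subsequence) pointwise. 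Fatou's lemma applied to the sum defining $q_{W_-}$, together with the bound $q_{W_-}(\varphi_n) \leq \alpha\,\cQ^c_{\Phi,W_+; E}(\varphi_n) + C\|\varphi_n\|_{2;\, E}^2$, then yields $f \in D(q_{W_-})$ and $q_{W_-}(f) \leq \alpha\, Q^0_{\mu,\Phi,W_+; E}(f) + C\|f\|_{2;\, E}^2$ with the same $\alpha < 1$.

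With this at hand, the KLMN theorem shows that $\widetilde Q := Q^0_{\mu,\Phi,W_+; E} - q_{W_-}$ with domain $D(Q^0_{\mu,\Phi,W_+; E})$ is a closed, lower semi-bounded sesquilinear form. Since $\widetilde Q$ agrees with $\cQ^c_{\Phi,W; E} = \cQ^c_{\Phi,W_+; E} - q_{W_-}$ on $\Gamma_c(X; E)$, it is a closed extension of $\cQ^c_E$, so $\cQ^c_E$ is lower semi-bounded and closable. To identify $Q^0_E$ with $\widetilde Q$, I would note that the relative bound $\alpha < 1$, combined with $0 \leq q_{W_-} \leq \alpha\, Q^0_{\mu,\Phi,W_+; E} + C\|\cdot\|_{2;\, E}^2$ and semi-boundedness, makes the form norms of $Q^0_{\mu,\Phi,W_+; E}$ and of $\widetilde Q$ equivalent; since $\Gamma_c(X; E)$ is dense in $D(Q^0_{\mu,\Phi,W_+; E})$ for the first of these norms, it is also dense for the second, i.e.\ $\Gamma_c(X; E)$ is a form core for $\widetilde Q$. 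Hence $Q^0_E = \widetilde Q$ on $D(Q^0_{\mu,\Phi,W_+; E})$. Moreover, for $f$ in this domain, Proposition~\ref{proposition:nonnegative endomorphisms} gives $f \in \cD_{\Phi,W_+; E}$ with $Q^0_{\mu,\Phi,W_+; E}(f) = \cQ_{\Phi,W_+; E}(f)$, and the first step gives $q_{W_-}(f) < \infty$; adding the two summability conditions shows $f \in \cD_E$ and $Q^0_E(f) = \cQ_{\Phi,W_+; E}(f) - q_{W_-}(f) = \cQ_E(f)$. Thus $Q^0_E$ is the restriction of $\cQ_E$ to $D(Q^0_{\mu,\Phi,W_+; E})$.

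It remains to verify that $M^0$ is a realization of $\cM$. Condition (R1) holds because $Q^0_E$ is lower semi-bounded, and (R2) holds because $\Gamma_c(X; E) \subseteq D(Q^0_{\mu,\Phi,W_+; E}) = D(Q^0_E)$. For (R3), take $f \in D(M^0) \subseteq D(Q^0_E) = D(Q^0_{\mu,\Phi,W_+; E}) \subseteq \cD_E \subseteq \cF_E$, where the last inclusion is Lemma~\ref{lemma:greens formula forms}. For every $\varphi \in \Gamma_c(X; E)$, Lemma~\ref{lemma:greens formula forms} and the identification $Q^0_E = \cQ_E$ on $D(Q^0_E)$ give $\langle\varphi, M^0 f\rangle_{2;\, E} = Q^0_E(\varphi, f) = \cQ_E(\varphi, f) = (\varphi, \cM f)_E$. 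Testing against $\varphi = \delta_x \xi$ with $x \in X$ and $\xi \in E_x$ forces $\cM f(x) = M^0 f(x)$ for every $x \in X$, so $\cM f = M^0 f \in \ell^2(X,\mu; E)$, which is (R3).

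The computations are all routine given the earlier results; the only point requiring a little care is the first step, namely extending the relative form bound from the test sections to the form domain $D(Q^0_{\mu,\Phi,W_+; E})$ so that both KLMN and the form-core identification apply. This is handled by the Fatou argument above, just as in the standard proof of the KLMN theorem.
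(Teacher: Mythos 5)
Your proposal is correct, and it follows the same perturbative strategy as the paper: the norm equivalence of $\cQ^c_E$ and $\cQ^c_{\Phi,W_+;\,E}$ on $\Gamma_c(X;E)$ coming from the relative bound $\alpha<1$, a Fatou argument to get $q_{W_-}(f)<\infty$ (with the quantitative bound) on the closure domain, and Green's formula (Lemma~\ref{lemma:greens formula forms}) for the operator statement. The one structural difference is in how the identification $Q^0_E=\cQ_E$ on $D(Q^0_{\mu,\Phi,W_+;\,E})$ is carried out: you invoke the KLMN theorem to obtain the closed form sum $\widetilde Q=Q^0_{\mu,\Phi,W_+;\,E}-q_{W_-}$, identify it with $Q^0_E$ by the form-core argument, and then the restriction property drops out from Proposition~\ref{proposition:nonnegative endomorphisms} applied to $W_+$ together with $\cD_{\Phi,W_+;\,E}\cap D(q_{W_-})\subseteq\cD_E$; the paper does not name KLMN and instead proves $Q^0_E(f)=\cQ_E(f)$ directly by a square-root/triangle-inequality limit estimate, using the Fatou bound applied to $q_{W_-}(f-\varphi_n)$. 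Both routes rest on exactly the same two pillars, so this is a reorganization rather than a new idea; your version makes the identification cleaner at the cost of citing an external theorem, while the paper's is self-contained. Your individual steps check out (the pointwise convergence needed for Fatou follows from $\ell^2$-convergence since $\mu>0$ and each $(W_-)_x$ is bounded, the closure of $\cQ^c_E$ is indeed the restriction of any closed extension in which $\Gamma_c(X;E)$ is a form core, and testing against $\delta_{x,\xi}$ legitimately yields (R3)); just note explicitly that the on-diagonal identity $Q^0_E=\cQ_E$ extends to the sesquilinear forms by polarization before you use $Q^0_E(\varphi,f)=\cQ_E(\varphi,f)$.
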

\begin{proof}
Since $W \in \mathcal{S}_{\mu, \Phi; E}$, there exists an $\varepsilon > 0$ and constants $K' > K > 0$ such that for all $\varphi \in \Gamma_c(X;E)$ we have
$$0\leq \varepsilon \cQ_{\Phi, W_+; E}^c (\varphi) + K\|\varphi\|_{2; E}^2 \leq \cQ_{E}^c (\varphi) + K'\|\varphi\|_{2; E}^2 \leq  \cQ_{\Phi, W_+; E}^c (\varphi) + K'\|\varphi\|_{2; E}^2. $$
Therefore, the form norms of $\cQ_{E}^c$ and $\cQ_{\Phi, W_+; E}^c$ are equivalent. Since $\cQ_{\Phi, W_+; E}^c$ is closable on $\ell^2(X,\mu; E)$ with closure $Q^0_{\mu,\Phi, W_+; E}$, it follows that $\cQ_{E}^c$ is closable and that its closure $Q^0_E$ satisfies $D(Q^0_E) = D(Q^0_{\mu,\Phi, W_+; E})$. 

Next we prove that $Q^0_E$ is a restriction  of $\cQ_E$. Let $f \in D(Q^0_E) = D(Q^0_{\mu,\Phi, W_+; E})$ and choose a sequence $(\varphi_n)$ in $\Gamma_c(X; E)$ that converges to $f$ with respect to the form norm of $Q^0_E$. We have $\varphi_n \to f$ in $\ell^2(X,\mu; E)$, $\cQ^c_E(\varphi_n)  \to Q^0_E(f)$ and, by the equivalence of the form norms, $\cQ_{\Phi, W_+; E}^c(\varphi_n) \to Q^0_{\mu,\Phi, W_+; E}(f)$, as $n \to \infty$. 
Fatou's lemma and the properties of $(\varphi_n)$ imply
\begin{align*}
 \sum_{x \in X} \as{W_-(x)f(x),f(x)} &\leq \liminf_{n\to \infty} q_{W_-}(\varphi_n)\\
 &\leq  \liminf_{n\to \infty} \left(\cQ_{\Phi, W_+; E}^c(\varphi_n) + K' \|\varphi_n\|_{2; E}^2\right)\\
 &=  Q^0_{\mu,\Phi, W_+; E}(f) +  K' \|f\|_{2; E}^2.
\end{align*}
This shows $f \in D(q_{W_-})$ and, since $f \in D(Q^0_E)$ was arbitrary, also
\begin{align}
q_{W_-}(f - \varphi_n) \leq Q^0_{\mu,\Phi, W_+; E}(f- \varphi_n) +  C \|f - \varphi_n\|_{2; E}^2.\label{inequality:negatice part of form} \tag{$\triangle$}
\end{align}
 In Proposition~\ref{proposition:nonnegative endomorphisms} we proved $D(Q^0_{\mu,\Phi, W_+; E}) \subseteq \mathcal{D}_{\Phi, W_+; E}$. Since $\mathcal{D}_{\Phi, W_+; E} \cap D(q_{W_-}) \subseteq \mathcal{D}_{\Phi, W; E}$, we obtain $f \in  \mathcal{D}_{\Phi, W; E}$. The properties of $(\varphi_n)$ and that $q_{W_-}$ is a quadratic form yield 
\begin{align*}
 |Q^0_E(f)^{1/2} &- \cQ_E(f)^{1/2}| = \lim_{n\to \infty} |\cQ^c_E(\varphi_n)^{1/2} - \cQ_E(f)^{1/2}| \\
 &\leq \limsup_{n\to \infty} |\cQ^c_{\Phi, W_+; E}(\varphi_n)^{1/2} -  \cQ_{\Phi, W_+; E}(f)^{1/2}| + \limsup_{n\to \infty} |q_{W_-}(\varphi_n)^{1/2} - q_{W_-}(f)^{1/2}|.\\
 &\leq |Q^0_{\mu,\Phi, W_+; E}(f)^{1/2} - \cQ_{\Phi, W_+; E}(f)^{1/2}| + \limsup_{n\to \infty} q_{W_-}(\varphi_n - f)^{1/2}=0. 
\end{align*}
The last equality follows from the fact that $Q^0_{\mu,\Phi, W_+; E}$ is a restriction of $\cQ_{\Phi, W_+; E}$, see Proposition~\ref{proposition:nonnegative endomorphisms}, and Inequality~\eqref{inequality:negatice part of form}. Therefore, $Q^0_E$ is a restriction of $\cQ_E$. With this at hand, the statement on the operator $M^0$ follows from Green's formula (Lemma~\ref{lemma:greens formula forms}), cf. the end of the proof of Proposition~\ref{proposition:nonnegative endomorphisms}.
\end{proof}

If the endomorphism only belongs to $\cA_{\mu, \Phi; E}$ and not to $\cS_{\mu, \Phi; E}$, the closability of $\cQ^c_E$ and the existence of realizations of $\cM$ is more delicate. Even if $\cQ^c_E$ is closable, the existence of realizations of $\cM$ is nontrivial. This is due to the following observation.  Suppose that $\cQ^c_E$  is closable. In this case it can happen   that there exist $\varphi_n \in  \Gamma_c(X; E)$ and $f \in D(Q^0_E)$ such that $\varphi_n \to f$  with respect to the form norm, $q_{W_-}(\varphi_n) \to \infty$ and $\cQ_{\Phi, W_+; E}^c(\varphi_n) \to \infty$, as $n \to \infty$, see Example~\ref{example:hardy}. For such a sequence we still have 
$$Q^0_E(f) = \lim_{n \to \infty} \cQ^c_E(\varphi_n),$$
but  $f$ does not belong to $\cD_E$. In particular, $Q^0_E$ is not a restriction if  $\cQ_E$ and we cannot infer from Green's formula  that $M^0$ is a realization of $\cM$. There are essentially two ways for dealing with this situation, which are discussed in the subsequent subsections.
%
\subsection{Admissible endomorphisms and domination} \label{subsection:admissible endomorphisms and domination}

In this subsection we use domination of resolvents of magnetic operators by resolvents of scalar operators to prove closability of $\cQ^c_E$ and to show that the  operator associated to its closure is a realization of $\cM$. The main result of this subsection is the following.

\begin{theorem}[Realization of discrete magnetic Schrödinger operators]\label{theorem:existence of realizations for magnetic operators}
 Let $W_{\mathrm{min}}\in \cA_{\mu}$. Then $\cQ^c_E$ is lower semi-bounded and closable on $\ell^2(X,\mu; E)$ and $M^0$ is a realization of $\cM$.
\end{theorem}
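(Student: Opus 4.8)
The plan is to compare the magnetic form $\cQ^c_E$ with the scalar Schrödinger form $\cQ^c_{\Wm}$ via domination, transferring closability and the realization property from the scalar operator $H^0$ to the magnetic operator $M^0$. The first step is to establish the scalar base case: since $\Wm \in \cA_{\mu}$, I would invoke the scalar analogue of the present theorem (Theorem~\ref{theorem:existence of realizations for scalar operators}, referenced in the text as already available) to conclude that $\cQ^c_{\Wm}$ is lower semi-bounded and closable on $\ell^2(X,\mu)$, with closure $Q^0_{\mu,\Wm}$ and associated realization $H^0 = H^0_{\mu,\Wm}$ of $\cH_{\mu,\Wm}$. Lower semi-boundedness of $\cQ^c_E$ itself is immediate from Proposition~\ref{proposition:semiboundedness magnetic forms and domination}, which gives a constant $C$ with $C\|\varphi\|_{2;E}^2 \le \cQ^c_E(\varphi)$ for all $\varphi \in \Gamma_c(X;E)$.

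\textbf{Closability via domination.} The heart of the argument is to show $\cQ^c_E$ is closable using that its semigroup (or resolvent) is dominated, in the sense of positivity-preserving domination of forms, by the semigroup of $H^0$. The mechanism is Kato's inequality (Lemma~\ref{lemma:Katos inequality}): for $\varphi \in \Gamma_c(X;E)$ one has $\mathrm{Re}\,(\varphi,\cM\varphi)_E \ge (|\varphi|,\cH_{\mu,\Wm}|\varphi|)$, i.e. $\cQ^c_E(\varphi) \ge \cQ^c_{\Wm}(|\varphi|)$, together with the reverse-type control needed to show that a form-Cauchy sequence $\varphi_n \to 0$ in $\ell^2(X,\mu;E)$ with $\cQ^c_E(\varphi_n - \varphi_m) \to 0$ must have $\cQ^c_E(\varphi_n) \to 0$. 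Concretely, I would pass to the shifted nonnegative forms (adding a large multiple of $\|\cdot\|_{2;E}^2$ to make everything $\ge 0$), exploit that $|\varphi_n|$ is then a form-bounded sequence for the closable scalar form $\cQ^c_{\Wm}$ converging to $0$ in $\ell^2$, hence $\cQ^c_{\Wm}(|\varphi_n|) \to 0$, and feed this back through Kato's inequality. The subtle point — and the step I expect to be the main obstacle — is that Kato's inequality only controls $\cQ^c_E$ from below by the scalar form of $|\varphi_n|$, so one needs a genuinely two-sided comparison: a standard route is to show the resolvents $(H^0 + \lambda)^{-1}$ dominate $(M^0_{\text{form}} + \lambda)^{-1}$ pointwise in modulus, invoke the abstract principle that a positivity-preserving contraction semigroup dominating a closed form's semigroup forces the dominated form to be closable, and then identify its closure's form with $Q^0_E$. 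This requires care because $M^0$ is a priori only defined via a form that we are in the process of proving closable; one must instead work with the maximal form or a Friedrichs-type regularization and bootstrap.

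\textbf{The realization property.} Once $\cQ^c_E$ is known closable with closure $Q^0_E$ and associated self-adjoint operator $M^0$, it remains to verify (R1)--(R3) of Definition~\ref{definition:discrete magnetic Schrödinger operators}. Property (R1) is lower semi-boundedness, already secured; (R2) holds by construction since $\Gamma_c(X;E) \subseteq D(\cQ^c_E) \subseteq D(Q^0_E)$. For (R3) I would argue that $D(M^0) \subseteq \cF_E$ and $M^0 f = \cM f$ on $D(M^0)$: given $f \in D(M^0)$ and $\varphi \in \Gamma_c(X;E)$, one has $\langle \varphi, M^0 f\rangle_{2;E} = Q^0_E(\varphi, f)$, and the task is to show this equals $(\varphi, \cM f)_E$ via Green's formula (Lemma~\ref{lemma:greens formula forms}). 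Unlike the nonnegative case of Proposition~\ref{proposition:nonnegative endomorphisms}, here $D(Q^0_E)$ need not lie inside $\cD_E$, so Green's formula is not directly applicable; the fix is to use the domination-derived fact that $D(M^0)$ consists of $\ell^2$-limits controlled by $Q^0_{\mu,\Wm}$, combined with the scalar statement that $H^0$ is a realization of $\cH_{\mu,\Wm}$ and an approximation of $\varphi$ by cutoffs, reducing the identity to the already-established pairing identity $\cQ^c_E(\psi, g) = (\psi, \cM g)_E$ for $g \in \cD_E$. Passing to the limit in the measure-finite-support pairing $(\cdot,\cdot)_E$ — rather than the $\ell^2$-pairing — is what makes this limiting argument legitimate, exactly as foreshadowed in the remark distinguishing the two pairings; closing that gap cleanly is the remaining technical delicacy.
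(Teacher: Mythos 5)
Your overall philosophy (scalar base case via Theorem~\ref{theorem:existence of realizations for scalar operators}, lower semi-boundedness via Proposition~\ref{proposition:semiboundedness magnetic forms and domination}, then transfer to the magnetic form by domination) is the right one, and your treatment of (R1)--(R3) is close in spirit to what actually works. But there is a genuine gap at the closability step, and you have in fact put your finger on it yourself without closing it. Kato's inequality only gives the one-sided bound $\cQ^c_E(\varphi)\geq \cQ^c_{\Wm}(|\varphi|)$, which can never force $\cQ^c_E(\varphi_n)\to 0$ along a form-Cauchy sequence tending to $0$ in $\ell^2$; and the "standard route'' you then invoke is circular, exactly as you note: domination of resolvents is a statement about two \emph{already closed} forms, so the resolvent of $M^0$ is simply not available before closability is established. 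There is also no abstract principle of the kind you appeal to ("a positivity-preserving semigroup dominating a form's semigroup forces the dominated form to be closable''); domination does not by itself produce closability of a non-closed form. Saying one should "work with the maximal form or a Friedrichs-type regularization and bootstrap'' names the obstacle but is not an argument.

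The missing idea is a concrete approximation scheme that breaks this circularity. Set $K_n:=\{x\in X\mid \lambda_0(W_x)\geq -n\}$ and $W_n:=W_+-1_{K_n}W_-$. Each $W_n$ has uniformly bounded negative part, hence $W_n\in\cS_{\mu,\Phi;E}$, so Proposition~\ref{proposition:small negative part} gives closed forms $Q^n_E$ with resolvents $(G^n_\alpha)$, and $W_n\geq \Wm$ gives a common lower bound via Proposition~\ref{proposition:semiboundedness magnetic forms and domination}. The forms $Q^n_E$ decrease monotonically, so Lemma~\ref{lemma:monotone convergence of forms} produces a closed limit form $Q_E$ with operator $M$ and resolvent $(G_\alpha)$. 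Lemma~\ref{lemma:domination} (applied with $V=\Wm$) dominates every $G^n_\alpha$, hence $G_\alpha$, by the scalar resolvent $R_\alpha$ of $H^0_{\mu,\Wm}$; this, together with dominated convergence and Green's formula (Lemma~\ref{lemma:greens formula forms}), yields (R3) for $M$ and the identity $Q_E(\varphi)=\cQ^c_E(\varphi)$ on $\Gamma_c(X;E)$ — so $Q_E$ is a closed extension of $\cQ^c_E$ and closability follows. One further point your sketch omits entirely: to conclude that $M$ really is $M^0$, i.e.\ that $Q_E$ is the \emph{closure} of $\cQ^c_E$, one must show $\Gamma_c(X;E)$ is dense in $D(Q_E)$ in form norm; the paper gets this from the domination of $(G_\alpha)$ by $(R_\alpha)$ together with the density of $C_c(X)$ in $D(Q^0_{\mu,\Wm})$ (via the result of \cite{LSW2}), or alternatively by a magnetic analogue of Lemma~\ref{lemma:h0 a realization}. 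Without the truncation-plus-monotone-convergence construction and this density step, your argument does not go through.
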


 In order to employ domination for proving closability we need several technical lemmas, which may be of interest on their own right.  For a background on the Beurling-Deny criteria we refer to Appendix~\ref{appendix:Beruling Deny}.
 
\begin{lemma}\label{lemma:qc satisfies first beurling-deny criterion}
Let $V \in \cA_{\mu}$. If $\cQ^c$ is closable on $\ell^2(X,\mu)$, its closure $Q^0$ satisfies the first Beurling-Deny criterion. In particular, for $\alpha > - \lambda_0(H^0)$ the resolvent $(H^0 + \alpha)^{-1}$ is positivity preserving, i.e., $f \geq 0$ implies $(H^0 + \alpha)^{-1} f\geq 0$.
\end{lemma}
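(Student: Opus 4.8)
The goal is to show that the closed form $Q^0$ (the closure of $\cQ^c = \cQ^c_V$ on $\ell^2(X,\mu)$, which exists by hypothesis) satisfies the first Beurling--Deny criterion, i.e.\ that it is a Dirichlet-type form with respect to taking positive parts: for real $f \in D(Q^0)$ one has $f_+ \in D(Q^0)$ and $Q^0(f_+) \le Q^0(f)$ (or more precisely the normal-contraction condition $Q^0(|f|) \le Q^0(f)$, together with reality). Once this is established, the standard correspondence between closed forms satisfying the first Beurling--Deny criterion and positivity preserving resolvents (recalled in Appendix~\ref{appendix:Beruling Deny}) gives at once that $(H^0+\alpha)^{-1}$ is positivity preserving for $\alpha > -\lambda_0(H^0)$. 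So the real content is the form inequality.

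First I would verify the criterion on the core $C_c(X)$. For $\varphi \in C_c(X)$ real-valued, the pointwise inequality $|\varphi(x)_+ - \varphi(y)_+| \le |\varphi(x) - \varphi(y)|$ handles the off-diagonal (hopping) term, and $V(x)\varphi(x)_+^2 \le V(x)^+ \varphi(x)^2 \le \ldots$ is not immediately sign-definite because $V$ need not be nonnegative — but note that $|\varphi(x)_+|^2 \le |\varphi(x)|^2$ always, so for the potential term we get $V(x)\,\varphi(x)_+^2 \le V(x)\,\varphi(x)^2$ when $V(x) \ge 0$, and when $V(x) < 0$ we have $V(x)\,\varphi(x)_+^2 \ge V(x)\,\varphi(x)^2$ pointwise; this looks like it could fail. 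The resolution is that one should normalize first: since $V = W_{\mathrm{min}} \in \cA_\mu$ means $\cQ^c_V$ is lower semi-bounded, there is $\lambda \in \R$ with $\cQ^c_V + \lambda \ge 0$; it suffices to prove the first Beurling--Deny criterion for $Q^0 + \lambda = Q^0_{\mu, V + \lambda}$, and the constant shift $\lambda$ contributes $\lambda \|\varphi_+\|_2^2 \le \lambda \|\varphi\|_2^2$ only if $\lambda \ge 0$, which one can arrange. Actually the cleaner route: the first Beurling--Deny criterion is stable under adding a constant $\ge 0$ to the form, and is a statement about the form $\cQ^c$ itself; one checks it directly for $\cQ^c$ by writing $\cQ^c(\varphi) - \cQ^c(\varphi_+) = \cQ^c(\varphi_-) - (\text{cross terms})$, where the cross terms $\sum_{x,y} b(x,y)(\varphi(x)_+ - \varphi(y)_+)(\varphi(x)_- - \varphi(y)_-)$ are $\le 0$ term by term because $\varphi_+$ and $\varphi_-$ have disjoint supports (so each product is $-\varphi(x)_+\varphi(y)_- - \varphi(y)_+\varphi(x)_- \le 0$), the potential cross term vanishes since $\varphi_+\varphi_- \equiv 0$, and $\cQ^c(\varphi_-) \ge 0$ would require lower semi-boundedness applied to $\varphi_-$ — again the shift fixes this. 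So the correct statement to prove on the core is $(\cQ^c+\lambda\,\|\cdot\|_2^2)(\varphi_+) \le (\cQ^c + \lambda\,\|\cdot\|_2^2)(\varphi)$ for $\varphi \in C_c(X)$ real and suitable $\lambda \ge 0$.

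Second, I would pass from the core to $D(Q^0)$ by a standard closure argument: given real $f \in D(Q^0)$, take $\varphi_n \in C_c(X)$ real with $\varphi_n \to f$ in the form norm of $Q^0$ (possible since $C_c(X)$ is a core and one may take real parts). Then $\varphi_{n,+} \to f_+$ in $\ell^2(X,\mu)$ because $t \mapsto t_+$ is a contraction on $\R$. By the criterion on the core, $\sup_n Q^0(\varphi_{n,+}) \le \sup_n Q^0(\varphi_n) < \infty$ (after the shift), so $(\varphi_{n,+})$ is bounded in the form norm; by the Banach--Saks or weak-compactness / Cesàro-mean trick for closed forms one extracts a subsequence whose Cesàro means converge in form norm to some $g \in D(Q^0)$, and since they also converge in $\ell^2$ to $f_+$ we get $g = f_+ \in D(Q^0)$ with $Q^0(f_+) \le \liminf_n Q^0(\varphi_{n,+}) \le \liminf_n Q^0(\varphi_n) = Q^0(f)$ (again modulo the harmless constant). \textbf{The main obstacle} I anticipate is precisely the bookkeeping around lower semi-boundedness: unlike the nonnegative-potential case, here the potential term has no sign, so one cannot naively invoke Fatou/pointwise monotonicity, and every inequality must be carried out for the shifted form $\cQ^c + \lambda$ with $\lambda$ chosen large enough that the shifted form is nonnegative \emph{and} the constant can only help under the contraction — making sure the shift is consistent throughout (in particular $\lambda \ge -\lambda_0(H^0)$, equivalently $\lambda \ge $ the lower bound of $\cQ^c$) is the delicate point. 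The convergence-extraction step is routine abstract form theory and I would just cite the relevant lemma from Appendix~\ref{appendix:quadratic forms}.
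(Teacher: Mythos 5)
Your proposal goes off track at the very first step, and the obstacle you spend most of the write-up circumventing is not actually there. The first Beurling--Deny criterion, as defined in Appendix~\ref{appendix:Beruling Deny} and as used in Proposition~\ref{propostion:beurling deny}, concerns the modulus $|f|$, not the positive part $f_+$. For the modulus the potential term of $\cQ^c$ causes no problem at all, regardless of the sign of $V$: since $\bigl| |\varphi|(x)\bigr|^2 = |\varphi(x)|^2$, the term $\sum_x V(x)|\varphi(x)|^2\mu(x)$ is literally unchanged under $\varphi \mapsto |\varphi|$, while the hopping term decreases pointwise by the reverse triangle inequality $\bigl||\varphi(x)|-|\varphi(y)|\bigr| \le |\varphi(x)-\varphi(y)|$. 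Hence $\cQ^c(|\varphi|)\le \cQ^c(\varphi)$ for every (complex) $\varphi\in C_c(X)$, with no shift by $\lambda$ and no case distinction on the sign of $V$. From there the argument is short: for $f\in D(Q^0)$ take $\varphi_n\to f$ in form norm; then $|\varphi_n|\to|f|$ in $\ell^2(X,\mu)$, and the $\ell^2$-lower semicontinuity of the closed form $Q^0$ (Lemma~\ref{lemma:characterization closedness}) yields $Q^0(|f|)\le\liminf_n\cQ^c(|\varphi_n|)\le\liminf_n\cQ^c(\varphi_n)=Q^0(f)$. Your Banach--Saks/Ces\`aro extraction is therefore unnecessary (though not wrong); plain lower semicontinuity suffices.

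The more serious issue is that what your argument actually establishes, namely $(Q^0+\lambda\|\cdot\|_2^2)(f_+)\le (Q^0+\lambda\|\cdot\|_2^2)(f)$ for real $f$ and a suitable $\lambda\ge 0$, is not the first Beurling--Deny criterion as defined in this paper, so your concluding appeal to the appendix correspondence does not apply as stated: Proposition~\ref{propostion:beurling deny} requires $|f|\in D(Q^0)$ with $Q^0(|f|)\le Q^0(f)$ for all $f\in D(Q^0)$, complex $f$ included. The gap can be bridged --- for real $f$ apply your inequality to $f_+ - t f_-$ and let $t\to 0+$ to get $Q^0(f_+,f_-)\le 0$, hence $Q^0(|f|)\le Q^0(f)$; the complex case then still needs reality of $Q^0$ and an additional argument or a different characterization of positivity preservation --- but none of this is in your proposal, and all of it becomes superfluous once one works with $|\varphi|$ from the start.
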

\begin{proof}
Let $f \in D(Q^0)$ and let $(\varphi_n)$ a sequence in  $C_c(X)$ that converges to $f$ with respect to the form norm. It follows from the definition of $\cQ^c$ that
$$\cQ^c(|\varphi_n|) \leq \cQ^c(\varphi_n), \quad n\in \N.$$
The $\ell^2$-lower semicontinuity of $Q^0$ implies
$$Q^0(|f|) \leq \liminf_{n\to \infty} Q^0(|\varphi_n|) = \liminf_{n\to \infty}\cQ^c(|\varphi_n|) \leq \liminf_{n\to \infty}\cQ^c(\varphi_n) = Q^0(f) < \infty. $$
This shows $|f| \in D(Q^0)$ and the desired inequality. The statement on the resolvent follows from Proposition~\ref{propostion:beurling deny}.
\end{proof}

  A {\em self-adjoint  restriction of $\cH$} is a self-adjoint operator $H$ on $\ell^2(X,\mu)$ with $D(H) \subseteq \cF$ and $H f = \cH f$ for all $f \in D(H)$. The following lemma shows that self-adjoint restrictions whose forms satisfy the first Beurling-Deny criterion are indeed realizations in the sense of Definition~\ref{definition:discrete magnetic Schrödinger operators}.

\begin{lemma}\label{lemma:q and extension of qc}
 Let $V \in C(X)$ real-valued. Let $H$ be a lower semi-bounded self-adjoint restriction of $\cH$ such that the associated quadratic form $Q$ satisfies the first Beurling-Deny criterion. Then $Q$ is an extension of $\cQ^c$, $V \in \cA_{\mu}$ and $H$ is a realization of $\cH$. 
\end{lemma}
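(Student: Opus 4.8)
The statement to prove is Lemma~\ref{lemma:q and extension of qc}: if $V\in C(X)$ is real-valued and $H$ is a lower semi-bounded self-adjoint restriction of $\cH$ whose quadratic form $Q$ satisfies the first Beurling-Deny criterion, then $Q$ extends $\cQ^c$, $V\in\cA_\mu$, and $H$ is a realization of $\cH$.

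The plan is to first show that $C_c(X)\subseteq D(Q)$ and that $Q$ agrees with $\cQ^c$ on $C_c(X)$; granting this, $V\in\cA_\mu$ is immediate (since $\cQ^c=\cQ^c_V$ inherits lower semi-boundedness from $Q$ on its form core $C_c(X)$), condition (R1) holds by hypothesis, (R2) follows from $C_c(X)\subseteq D(Q)$, and (R3) is part of the hypothesis that $H$ is a self-adjoint restriction of $\cH$ — so $H$ is a realization of $\cH$. Thus the whole lemma reduces to the inclusion $C_c(X)\subseteq D(Q)$ together with $Q(\varphi,\psi)=\cQ^c(\varphi,\psi)$ for $\varphi,\psi\in C_c(X)$.

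To prove $\delta_x\in D(Q)$ for each $x\in X$ (which suffices by linearity), I would use positivity of the resolvent. Fix $\alpha>-\lambda_0(H)$ so that $(H+\alpha)^{-1}$ is bounded, positivity preserving (first Beurling-Deny criterion, via Proposition~\ref{propostion:beurling deny}), and maps into $D(H)\subseteq D(Q)$. Set $u:=(H+\alpha)^{-1}\delta_x\geq 0$; then $u\in D(H)\subseteq\cF$ and $(\cH+\alpha)u=\delta_x$. The idea is to compare $u$ with $\delta_x$ itself to deduce that $\delta_x\in D(Q)$ — concretely, I expect one can show $u\geq c\,\delta_x$ for some $c>0$ (from $(\cH+\alpha)u=\delta_x\geq 0$ and $u\geq 0$, looking at the value at $x$, so that $u(x)(\deg(x)/\mu(x)+V(x)+\alpha)=1/\mu(x)+\sum_{y}\frac{b(x,y)}{\mu(x)}u(y)\geq 1/\mu(x)$, hence $u(x)>0$ provided the coefficient is positive, which can be arranged by increasing $\alpha$). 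A cleaner route: since $C(X)'\cong C_c(X)$ and $\delta_x=(\cH+\alpha)u$ with $u\in D(H)$, test $Q(u,\psi)=\langle(H+\alpha)u,\psi\rangle-\alpha\langle u,\psi\rangle=\langle\delta_x,\psi\rangle-\alpha\langle u,\psi\rangle$ against $\psi\in C_c(X)$; combined with Green's formula $\cQ^c(\psi,u)=(\psi,\cH u)$ (from Lemma~\ref{lemma:greens formula forms}, as $u\in\cF$ and $u$ of finite energy once we know $u\in D(Q)\subseteq\cD$ — circular, so care is needed here). The robust approach is: approximate $u$ by $\varphi_n\in C_c(X)$ in form norm; use $\cQ^c(|{\cdot}|)\leq\cQ^c(\cdot)$ and $\ell^2$-lower semicontinuity (as in Lemma~\ref{lemma:qc satisfies first beurling-deny criterion}) to control things; and invoke that $u(x)>0$ to extract $\delta_x$.

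The main obstacle will be exactly this last extraction — showing $\delta_x\in D(Q)$ rather than merely that $D(Q)$ contains enough functions. I expect the honest argument to run: $u=(H+\alpha)^{-1}\delta_x\geq 0$ and, for any $\psi\in C_c(X)$ with $\psi\geq 0$, by Green's formula applied to $u\in\cF$ and the positivity $(\cH+\alpha)u=\delta_x$, one gets $Q(u,\psi)+\alpha\langle u,\psi\rangle=\psi(x)\geq 0$; then monotonicity/Markov-type manipulations of $Q$ (available from the first Beurling-Deny criterion) let one replace $u$ by a multiple of $\delta_x$ and conclude. Once $C_c(X)\subseteq D(Q)$ is established, identifying $Q|_{C_c(X)}$ with $\cQ^c$ is a direct computation: for $\varphi,\psi\in C_c(X)$, $Q(\varphi,\psi)=\langle H\varphi,\psi\rangle$ would require $\varphi\in D(H)$ which need not hold, so instead write $\varphi=(H+\alpha)^{-1}g$ is not available either — rather, use that for $\varphi\in C_c(X)\subseteq D(Q)$ and $f\in D(H)$ one has $Q(\varphi,f)=\langle\varphi,Hf\rangle=\langle\varphi,\cH f\rangle=(\varphi,\cH f)=\cQ^c(\varphi,f)$ by Green's formula, and since $D(H)$ is a form core, this extends by continuity to all of $D(Q)$, in particular $Q(\varphi,\psi)=\cQ^c(\varphi,\psi)$ for $\varphi,\psi\in C_c(X)$. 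This yields $\cQ^c=\cQ^c_V$ is lower semi-bounded (being the restriction of the lower semi-bounded closed form $Q$), i.e.\ $V\in\cA_\mu$, and completes the verification of (R1)--(R3).
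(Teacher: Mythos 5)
Your reduction of the lemma to ``$C_c(X)\subseteq D(Q)$ and $Q=\cQ^c$ on $C_c(X)$'' is fine, and you correctly identify positivity preservation of the resolvent as the key tool, but the central step is not actually proved. Your route to $\delta_x\in D(Q)$ is: set $u=(H+\alpha)^{-1}\delta_x\geq 0$, note $u(x)>0$, and then ``extract'' $\delta_x$ from $u\geq c\,\delta_x$ by ``monotonicity/Markov-type manipulations''. No such manipulation is available: the first Beurling--Deny criterion gives $|f|\in D(Q)$ and, via Lemma~\ref{lemma:maxima and minima energy inequality}, stability of $D(Q)$ under $\wedge$ and $\vee$ of elements \emph{already in} $D(Q)$, but it does \emph{not} make $D(Q)$ a lattice ideal, so $0\leq c\,\delta_x\leq u$ with $u\in D(Q)$ does not yield $\delta_x\in D(Q)$ (this implication is false even for Dirichlet forms). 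Your alternative sketch ``approximate $u$ by $\varphi_n\in C_c(X)$ in form norm'' presupposes exactly what is to be shown, as you yourself note. So the main claim is left without an argument.

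There is a second gap in the identification step: from $Q(\varphi,f)=(\varphi,\cH f)$ for $f\in D(H)$ you want to pass to $f=\psi\in C_c(X)$ ``by continuity'' using that $D(H)$ is a form core. But the functional $f\mapsto(\varphi,\cH f)$ is not known to be continuous for the form norm of $Q$: controlling it along a form-norm convergent sequence requires either $\cH\varphi\in\ell^2(X,\mu)$ or $b(x,\cdot)/\mu\in\ell^2(X,\mu)$, i.e.\ condition (FC), which is precisely what this lemma must do without (in the locally finite/(FC) case the whole statement is easy, cf.\ Proposition~\ref{proposition:realization finiteness condition}). The paper's proof avoids both problems at once: it uses the approximating forms, $Q(\varphi)=\lim_{\alpha\to\infty}\alpha\as{\varphi-\alpha G_\alpha\varphi,\varphi}_2=\lim_{\alpha\to\infty}(\varphi,\cH\alpha G_\alpha\varphi)$, and shows $\cH\alpha G_\alpha\varphi\to\cH\varphi$ pointwise by producing a dominating function $2G_\beta\psi\in\cF$ with $G_\beta\psi\geq|\varphi|$, via the resolvent identity and positivity preservation, so that $\alpha|G_\alpha\varphi|\leq 2G_\beta\psi$ for $\alpha>2\beta$; dominated convergence then gives $Q(\varphi)=(\varphi,\cH\varphi)=\cQ^c(\varphi)$, which simultaneously proves membership $\varphi\in D(Q)$ and the equality of the forms, with no need to first place $\delta_x$ in $D(Q)$ or to invoke a form core. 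You would need to supply an argument of this type (or an equivalent one) for your proposal to close.
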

\begin{proof}
 For $\alpha > -\lambda_0(H)$ let $G_\alpha:= (H + \alpha)^{-1}$ the associated resolvent. Since $H$ is a restriction of $\cH$, $D(H) \subseteq \cF$ so that $G_\alpha$ maps $\ell^2(X,\mu)$ to $\cF$.
 
 We show that for $\varphi \in C_c(X)$ the convergence $\cH \alpha G_\alpha \varphi \to \cH \varphi$  holds pointwise, as  $\alpha \to \infty$. Once this is proven, Green's formula (Lemma~\ref{lemma:greens formula forms}) and the characterization of $Q$ via approximating forms (cf. Appendix~\ref{appendix:basics}) yield  
 $$\cQ^c(\varphi) = (\varphi, \cH \varphi) = \lim_{\alpha \to \infty} (\varphi, \cH \alpha G_\alpha \varphi) =  \lim_{\alpha \to \infty} \as{\varphi, H \alpha G_\alpha \varphi}_2 = \lim_{\alpha \to \infty} \alpha \as{\varphi - \alpha G_\alpha \varphi,\varphi}_2 = Q(\varphi).$$
 This shows $\varphi \in D(Q)$ and that $Q$ is an extension of $\cQ^c$. Since $Q$ is lower semi-bounded, so is $\cQ^c$ and we obtain $V \in \cA_{\mu}$. Therefore, $H$  is a realization of $\cH$ in the sense of Definition~\ref{definition:discrete magnetic Schrödinger operators}. 
 
 Let now $\varphi \in C_c(X)$. The strong continuity of the resolvent implies $\alpha G_\alpha \varphi \to \varphi$ pointwise, as $\alpha \to \infty$. We construct a function $f \in \cF$ that dominates $ \alpha G_\alpha \varphi$ for all $\alpha$ large enough. The pointwise convergence $\cH \alpha G_\alpha \varphi \to \cH \varphi$, as  $\alpha \to \infty$,  then follows from Lebesgue's theorem. 
 
 We choose a nonnegative $\psi \in \ell^2(X,\mu)$ and $\beta > -\lambda_0(H)$ such that $G_\beta \psi \geq |\varphi|$. Such  $\psi$ and $\beta$ exist because $(G_\alpha)$ is positivity preserving (Proposition~\ref{propostion:beurling deny}) and strongly continuous and  $\varphi$ has finite support.  Using the resolvent identity and that $G_\alpha$ is positivity preserving, for $\alpha > 2\beta$ we obtain
 $$ \alpha/2 |G_\alpha \varphi| \leq (\alpha - \beta) G_\alpha |\varphi| \leq    (\alpha - \beta) G_\alpha G_\beta \psi = G_\beta \psi - G_\alpha \psi \leq G_\beta \psi.$$
 This shows that for $\alpha > 2 \beta$ the function $\alpha G_\alpha \varphi$ is dominated by  $f = 2 G_\beta \psi \in \cF$ and finishes the proof.
 \end{proof}
 
 \begin{remark}
  It is important to note that in the lemma we do not assume that $H$ is a realization of $\cH$. We only assume that $H$ satisfies (R1) and (R3). The lemma says that they imply (R2) if $Q$ satisfies the first Beurling-Deny criterion.  
 \end{remark}

\begin{lemma} \label{lemma:h0 a realization}
Let $V \in \cA_{\mu}$. If $\cQ^c$ is closable on $\ell^2(X,\mu)$, the self-adjoint operator $H^0$ associated with the closure $Q^0$ is a realization of $\cH$.
\end{lemma}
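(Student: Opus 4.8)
The plan is to verify (R1), (R2), (R3) for $H^0$ directly from what has already been established. Conditions (R1) and (R2) are immediate: since $V \in \cA_\mu$ the form $\cQ^c$ is lower semi-bounded, hence so is its closure $Q^0$, giving (R1); and $C_c(X) \subseteq D(\cQ^c) \subseteq D(Q^0)$ gives (R2). So the real content is (R3), namely $D(H^0) \subseteq \cF$ together with $H^0 f = \cH f$ for all $f \in D(H^0)$.

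For (R3) I would argue as in the end of the proof of Proposition~\ref{proposition:nonnegative endomorphisms}, but the obstacle is that here we do not know $D(Q^0) \subseteq \cD$ (this is exactly the delicate point flagged just before Subsection~\ref{subsection:admissible endomorphisms and domination}), so we cannot invoke Lemma~\ref{lemma:greens formula forms} to pass from the form to the operator $\cH$. Instead I would exploit the Beurling-Deny structure. By Lemma~\ref{lemma:qc satisfies first beurling-deny criterion}, $Q^0$ satisfies the first Beurling-Deny criterion, so for $\alpha > -\lambda_0(H^0)$ the resolvent $G_\alpha := (H^0+\alpha)^{-1}$ is positivity preserving. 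The idea is to show first that $D(H^0) \subseteq \cF$; for this it suffices to show $G_\alpha$ maps $\ell^2(X,\mu)$ into $\cF$. Fix $g \in \ell^2(X,\mu)$, $g \geq 0$; since $\delta_x \in C_c(X) \subseteq D(Q^0)$ and $G_\alpha$ is positivity preserving, one bounds $G_\alpha g$ pointwise. More usefully, writing $G_\alpha g$ as the form-norm limit of $C_c(X)$-functions and using that the form $Q^0(\cdot,\delta_x) = \cQ^c(\cdot,\delta_x)$ is given by the explicit expression $\sum_y b(x,y)(\,\cdot\,(x) - \,\cdot\,(y))/\mu(x) + V(x)\,\cdot\,(x)$ (times $\mu(x)$), together with $\langle \delta_x, H^0 G_\alpha g\rangle_2 = \langle \delta_x, g - \alpha G_\alpha g\rangle_2 \mu(x) < \infty$, one reads off that $\sum_y b(x,y)|G_\alpha g(y)| < \infty$ for every $x$; hence $G_\alpha g \in \cF$ and $\cH G_\alpha g(x) = (g - \alpha G_\alpha g)(x)$ at every $x$, i.e.\ $H^0 f = \cH f$ on $D(H^0)$.

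To make the last step rigorous I would proceed as follows. For $f \in D(H^0)$ and $\varphi \in C_c(X) \subseteq D(Q^0)$ we have $\langle \varphi, H^0 f\rangle_2 = Q^0(\varphi, f)$. Approximate $f$ in the form norm by $\varphi_n \in C_c(X)$; then $Q^0(\varphi,f) = \lim_n \cQ^c(\varphi,\varphi_n) = \lim_n (\varphi, \cH \varphi_n)$ by Lemma~\ref{lemma:greens formula forms}, and since $\varphi$ has finite support the pairing $(\varphi, \cH\varphi_n)$ depends, via Green's formula (Lemma~\ref{lemma:Green's formula general}), only on the values of $\varphi_n$ on the finite set $N(\supp\varphi)$, where $\varphi_n \to f$ pointwise because form-norm convergence implies $\ell^2$-convergence and hence pointwise convergence. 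Therefore $\lim_n (\varphi, \cH\varphi_n) = (\varphi, \cH_{\mathrm{loc}} f)$, where $\cH_{\mathrm{loc}} f(x) := \frac{1}{\mu(x)}\sum_y b(x,y)(f(x)-f(y)) + V(x)f(x)$ computed with the (a priori possibly non-absolutely-convergent) sum interpreted via the Green's-formula rearrangement; choosing $\varphi = \delta_x$ and using that the left side $\langle \delta_x, H^0 f\rangle_2$ is finite shows the relevant sum converges absolutely, so $f \in \cF$ and $\cH f(x) = H^0 f(x)$ for all $x$. The main obstacle, as anticipated, is precisely this absolute-convergence/$\cF$-membership issue — it is not automatic and must be extracted from the finiteness of $\langle \delta_x, H^0 f\rangle_2$ together with a careful bookkeeping of the Green's-formula sum — but once $D(H^0) \subseteq \cF$ is in hand the identity $H^0 f = \cH f$ follows by the density of $C_c(X)$ in $\ell^2(X,\mu)$.
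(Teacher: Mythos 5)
You set the problem up correctly (only (R3) is at issue) and you reach for the right tools — the first Beurling--Deny criterion and positivity preserving resolvents — but the step that actually carries the lemma is not closed, and one of its ingredients is false. With $\varphi=\delta_x$ the pairing is $\cQ^c(\delta_x,\varphi_n)=\sum_y b(x,y)(\varphi_n(x)-\varphi_n(y))+V(x)\varphi_n(x)\mu(x)$, so it depends on the values of $\varphi_n$ at \emph{all} neighbours of $x$; for a graph that is not locally finite the set $N(\supp\varphi)$ is infinite, so your claim that the pairing only sees finitely many values of $\varphi_n$ (and hence passes to the limit by pointwise convergence) breaks down — and the locally finite case is precisely the one where the lemma is easy anyway, via (FC). What remains is exactly the point you defer to ``careful bookkeeping'': from the convergence of the numbers $\cQ^c(\delta_x,\varphi_n)=\mu(x)\cH\varphi_n(x)$ along an arbitrary form-convergent sequence one cannot conclude $\sum_y b(x,y)\,|f(y)|<\infty$, because the sums $\sum_y b(x,y)\varphi_n(y)$ may converge only through sign cancellations while the pointwise limit fails to lie in $\cF$; the finiteness of the single number $\as{\delta_x,H^0f}_2$ gives no control over this.

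The paper closes this gap with two moves your proposal does not make. First, since the resolvent is positivity preserving, every $f\in D(H^0)$ is a linear combination of four nonnegative elements of $D(H^0)$, so one may assume $f\ge 0$. Second, the approximants are replaced by the truncations $\psi_n:=(\varphi_n\wedge f)\vee 0\in C_c(X)$, which satisfy $0\le\psi_n\le f$, converge to $f$ in $\ell^2(X,\mu)$, and — by the first Beurling--Deny criterion together with Lemma~\ref{lemma:maxima and minima energy inequality} — are bounded in form norm, hence (along a subsequence) weakly form-convergent to $f$; this is enough to obtain $\mu(x)H^0f(x)=\lim_n\cQ^c(\delta_x,\psi_n)=\lim_n\mu(x)\cH\psi_n(x)$. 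Nonnegativity then lets Fatou's lemma upgrade the convergence of $\sum_y b(x,y)\psi_n(y)$ to $\sum_y b(x,y)f(y)<\infty$, i.e.\ $f\in\cF$, and the sandwich $0\le\psi_n\le f$ lets dominated convergence identify the limit as $\cH f(x)$, giving $H^0f=\cH f$. Without the reduction to nonnegative $f$ and the dominated, nonnegative approximating sequence, your argument cannot recover either of these two steps.
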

\begin{proof}
  By definition $H^0$ is lower semi-bounded and $C_c(X) \subseteq D(Q^0)$. Hence, it suffices to prove $D(H^0) \subseteq \cF$ and $H^0 f = \cH f$ for $f \in D(H^0)$.  

Let $f \in D(H^0)$ and let $\alpha > - \lambda_0(H^0)$. Then $f = (H^0 + \alpha)^{-1}g$ for some $g \in \ell^2(X,\mu)$. Since $(H^0 + \alpha)^{-1}$ is positivity preserving, this shows that there exist nonnegative $f_i\in D(H^0),$ $i = 1,\ldots,4,$ with $f = f_1 - f_2 + i (f_3 - f_4)$. We can therefore assume $f \geq 0$. Let now $(\varphi_n)$ a sequence in $C_c(X)$ that converges to $f$ with respect to the form norm. Then $\psi_n :=  (\varphi_n \wedge f) \vee 0$ belongs to $C_c(X)$ and converges to $f$ in $\ell^2(X,\mu)$. Since by Lemma~\ref{lemma:qc satisfies first beurling-deny criterion} the form $Q^0$ satisfies the first Beurling-Deny criterion, we obtain
$$\|\psi_n\|_{Q^0}  \leq \|\varphi_n\|_{Q^0} + \|f\|_{Q^0}$$
from Lemma~\ref{lemma:maxima and minima energy inequality}. Thus (after taking a suitable subsequence) we can assume $\psi_n \to f$ weakly with respect to the form inner product. Using Green's formula (Lemma~\ref{lemma:Green's formula general}), for $x \in X$ we obtain
\begin{align}
 \mu(x) H^0 f(x) = Q^0(\delta_x,f) = \lim_{n \to \infty} Q^0(\delta_x,\psi_n) = \lim_{n \to \infty} \cQ^c(\delta_x,\psi_n) = \lim_{n \to \infty} \mu(x) \cH \psi_n(x). \label{equation:H idenity} \tag{$\diamondsuit$}   
\end{align}
In particular, this shows that  $\lim_n  \mu(x)\cH \psi_n(x)$ exists. Moreover,
$$ \mu(x) \cH \psi_n(x) = \deg(x) \psi_n(x) - \sum_{y \in X} b(x,y) \psi_n(y) + \mu(x) V(x) \psi_n(x).$$
Since the $\psi_n$ also converge pointwise towards $f$,  we obtain that $\sum_{y \in X} b(x,y) \psi_n(y)$ converges, as $n \to \infty.$ Fatou's lemma yields
$$0 \leq \sum_{y \in X}b(x,y) f(y) \leq \liminf_{n \to \infty} \sum_{y \in X}b(x,y) \psi_n(y) < \infty. $$
Since $x \in X$ was arbitrary, we infer $f \in \cF$. By construction we have $|\psi_n| \leq f$. Therefore, Lebesgue's dominated convergence theorem yields $\cH \psi_n(x) \to \cH f(x)$, as $n\to \infty.$ With Equation~\eqref{equation:H idenity} we arrive at  $H^0f(x) = \cH f(x)$ and the lemma is proven.
\end{proof}

\begin{lemma}[Domination]\label{lemma:domination}
Let $V \in \cA_{\mu}$ and let $W \geq V$. Suppose that $\cQ^c$ and $\cQ^c_E$ are closable. For any $\alpha > - \lambda_0(H^0)$ the value $- \alpha$ belongs to the resolvent set of $M^0$ and the resolvent $(H^0 + \alpha)^{-1}$ dominates $(M^0 + \alpha)^{-1}$, i.e., for any $f \in \ell^2(X,\mu; E)$ the following inequality holds
$$|(M^0 + \alpha)^{-1} f| \leq (H^0 + \alpha)^{-1}|f|.$$
\end{lemma}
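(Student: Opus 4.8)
The plan is to prove the two assertions separately: that $-\alpha\in\rho(M^0)$ is immediate from semi-boundedness, while the domination is an Ouhabaz-type argument built on Kato's inequality and the first Beurling-Deny criterion for $Q^0$.

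\emph{Resolvent set.} Since $\Gamma_c(X;E)$ is a form core for $Q^0_E$ and $C_c(X)$ one for $Q^0$, one has $\lambda_0(M^0)=\inf\{\cQ^c_E(\varphi):\varphi\in\Gamma_c(X;E),\ \|\varphi\|_{2;E}=1\}$ and likewise for $\lambda_0(H^0)$. The computation in the proof of Proposition~\ref{proposition:semiboundedness magnetic forms and domination} gives $\cQ^c_E(\varphi)\ge\cQ^c_{\Wm}(|\varphi|)$, and $\Wm\ge V$ yields $\cQ^c_{\Wm}(|\varphi|)\ge\cQ^c_V(|\varphi|)\ge\lambda_0(H^0)\|\varphi\|_{2;E}^2$; hence $\lambda_0(M^0)\ge\lambda_0(H^0)$, so that for $\alpha>-\lambda_0(H^0)$ the number $-\alpha<\lambda_0(M^0)$ lies in $\rho(M^0)$. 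Put $\delta:=\lambda_0(H^0)+\alpha>0$.

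\emph{Domination.} Fix $f\in\ell^2(X,\mu;E)$ and set $u:=(M^0+\alpha)^{-1}f$, $w:=(H^0+\alpha)^{-1}|f|$; by Lemma~\ref{lemma:qc satisfies first beurling-deny criterion} the operator $(H^0+\alpha)^{-1}$ is positivity preserving, so $w\ge0$. Write $h:=|u|-w$ and $g:=h_+$; it suffices to show $g=0$. I would first establish three domain facts. (a) $|u|\in D(Q^0)$: choosing $\varphi_n\in\Gamma_c(X;E)$ with $\varphi_n\to u$ in the form norm of $Q^0_E$, we have $|\varphi_n|\to|u|$ in $\ell^2(X,\mu)$ and $\cQ^c(|\varphi_n|)\le\cQ^c_E(\varphi_n)\to Q^0_E(u)$ by the estimate above, so the $\ell^2$-lower semicontinuity of the closed form $Q^0$ gives $|u|\in D(Q^0)$. (b) Hence $h\in D(Q^0)$, and since $Q^0$ satisfies the first Beurling-Deny criterion (Lemma~\ref{lemma:qc satisfies first beurling-deny criterion}), also $g=h_+\in D(Q^0)$ and $Q^0(h_-,h_+)\le0$ (Lemma~\ref{lemma:maxima and minima energy inequality}). (c) The section $v:=g\,\mathrm{sgn}(u)$ belongs to $D(Q^0_E)$. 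Granting (c), we note $0\le g\le|u|$, $|v|=g$ pointwise, and $\mathrm{Re}\as{u(x),v(x)}=g(x)|u(x)|$ for all $x$ (both equalities are trivial where $u(x)=0$, since there $w(x)\ge|u(x)|$ forces $g(x)=0$). Inserting $v$ into the resolvent identity $Q^0_E(u,v)+\alpha\as{u,v}_{2;E}=\as{f,v}_{2;E}$, taking real parts and using $\mathrm{Re}\as{f,v}_{2;E}\le\as{|f|,g}_2$, we get $\mathrm{Re}\,Q^0_E(u,v)+\alpha\as{|u|,g}_2\le\as{|f|,g}_2$, while inserting $g$ into the resolvent identity for $H^0$ gives $\as{|f|,g}_2=Q^0(w,g)+\alpha\as{w,g}_2$. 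The decisive point is the first-order inequality $\mathrm{Re}\,Q^0_E(u,v)\ge Q^0(|u|,g)$; feeding it in yields $Q^0(h,h_+)+\alpha\as{h,h_+}_2\le0$, and since $\as{h,h_+}_2=\|h_+\|_2^2$ and $Q^0(h,h_+)=Q^0(h_+)-Q^0(h_-,h_+)\ge Q^0(h_+)\ge\lambda_0(H^0)\|h_+\|_2^2$, this forces $\delta\|h_+\|_2^2\le0$, hence $h_+=0$ and $|u|\le w$, which is exactly $|(M^0+\alpha)^{-1}f|\le(H^0+\alpha)^{-1}|f|$.

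\emph{The obstacle.} What remains — and this is the technical heart — is to justify (c) together with the first-order inequality $\mathrm{Re}\,Q^0_E(u,g\,\mathrm{sgn}\,u)\ge Q^0(|u|,g)$. At the level of the form cores this is precisely Kato's inequality: for $\varphi\in\Gamma_c(X;E)$ and $0\le\psi\in C_c(X)$ the section $\psi\,\mathrm{sgn}(\varphi)\in\Gamma_c(X;E)$ satisfies the hypothesis of Lemma~\ref{lemma:Katos inequality} relative to $\varphi$, and combining that lemma with Green's formula (Lemma~\ref{lemma:greens formula forms}) gives $\mathrm{Re}\,\cQ^c_E(\varphi,\psi\,\mathrm{sgn}\,\varphi)=\mathrm{Re}\,(\psi\,\mathrm{sgn}\,\varphi,\cM\varphi)_E\ge(\psi,\cH|\varphi|)=\cQ^c(|\varphi|,\psi)$. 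The difficulty is to pass from the cores to $u\in D(Q^0_E)$ and $g\in D(Q^0)$: because admissible endomorphisms may force $D(Q^0_E)\not\subseteq\Dm$ (cf. the discussion after Proposition~\ref{proposition:small negative part} and Example~\ref{example:hardy}), no naive pointwise manipulation on elements of $D(Q^0_E)$ is available. I would circumvent this by running the whole argument with the bounded truncations $g\wedge k$ ($k\in\N$) in place of $g$ — for which $g\wedge k\in D(Q^0)\cap\ell^\infty$ and $(g\wedge k)\,\mathrm{sgn}(u)$ is approximated in $D(Q^0_E)$ by sections $\psi_n\,\mathrm{sgn}(\varphi_n)$ with $0\le\psi_n\le k$ in $C_c(X)$ — deducing $|u|\wedge(w+k)\le w$ for every $k$ and letting $k\to\infty$. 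Alternatively, and more cleanly, one invokes an abstract domination criterion of Ouhabaz type (cf.\ Appendix~\ref{appendix:Beruling Deny}) for the pair $(Q^0,Q^0_E)$, whose two hypotheses are exactly ``$u\in D(Q^0_E)\Rightarrow|u|\in D(Q^0)$'' and the first-order inequality, the latter being verified on the core by the Kato computation just described. Everything else is routine bookkeeping with the two resolvent identities.
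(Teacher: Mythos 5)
Your overall skeleton --- reduce the domination to a generalized-ideal condition plus the first-order inequality $\mathrm{Re}\,Q^0_E(u,\,g\,\mathrm{sgn}\,u)\ge Q^0(|u|,g)$, verify it via Kato's inequality (Lemma~\ref{lemma:Katos inequality}) combined with Green's formula (Lemma~\ref{lemma:greens formula forms}), and finish with the $h_+=0$ computation --- is the right one, and your resolvent-set argument and final computation are fine (incidentally, $Q^0(h_-,h_+)\le 0$ follows from the first Beurling--Deny criterion by expanding $Q^0(|h|)\le Q^0(h)$, not from Lemma~\ref{lemma:maxima and minima energy inequality}). But the proof as written has a genuine gap at exactly the point you flag yourself: you never establish (c), i.e.\ $g\,\mathrm{sgn}(u)\in D(Q^0_E)$, nor the first-order inequality for actual domain elements $u\in D(Q^0_E)$, $g\in D(Q^0)$, and neither proposed repair closes it. The truncation route hinges on the claim that $(g\wedge k)\,\mathrm{sgn}(u)$ is approximated in the form norm of $Q^0_E$ by sections $\psi_n\,\mathrm{sgn}(\varphi_n)$ with $\psi_n\in C_c(X)$, $\varphi_n\in\Gamma_c(X;E)$; since $Q^0_E$ is defined only as a closure and in general $D(Q^0_E)\not\subseteq\Dm$, there is no pointwise expression with which to control $\cQ^c_E(\psi_n\,\mathrm{sgn}\,\varphi_n-\psi_m\,\mathrm{sgn}\,\varphi_m)$, and this approximation claim is essentially equivalent to the generalized-ideal property you are trying to avoid proving. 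The ``cleaner alternative'' is likewise not yet a proof: the classical Ouhabaz domination criterion requires both hypotheses --- including the membership $\varphi\,\mathrm{sgn}\,u\in D(Q^0_E)$ --- on the full form domains, which is precisely what is inaccessible here, and the appendix you point to contains only the Beurling--Deny criteria, no domination criterion; your hybrid formulation (ideal condition on domains, inequality on the core) is not the hypothesis set of a theorem you can invoke.

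What closes the gap in the paper is a core version of the domination criterion: by \cite[Theorem~4.1]{MVV} it suffices to exhibit form-norm dense subspaces $U\subseteq D(Q^0_E)$ and $V\subseteq D(Q^0)$ such that $U$ is a generalized ideal in $V$ and $\mathrm{Re}\,Q^0_E(\varphi\,\mathrm{sgn}\,f,f)\ge Q^0(\varphi,|f|)$ for $f\in U$, $\varphi\in V$ with $0\le\varphi\le|f|$ --- both elements taken from the cores. With $U=\Gamma_c(X;E)$ and $V=C_c(X)$ these conditions are exactly your Kato-plus-Green computation, so your core verification is the entire content of the paper's proof; the only further device is embedding $\ell^2(X,\mu;E)$ into $\ell^2(X,\mu;\widetilde E)$ with $\widetilde E=\bigoplus_{x\in X}E_x$ so that the fixed-Hilbert-space framework of \cite{MVV} applies. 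In short: your strategy and decisive computation coincide with the paper's, but the abstract input that makes core-level verification sufficient must be named and used (or reproved); without it, step (c) and the domain-level first-order inequality remain unproven and the argument does not go through.
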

\begin{proof}
We employ the theory of domination developed in \cite{MVV}. In contrast to our situation this paper deals with (not necessarily densely defined) forms on functions taking values in a fixed Hilbert space. If we let $\widetilde{E} = \bigoplus_{x \in X} E_x$ the direct sum of Hilbert spaces, then $\ell^2(X,\mu; E)$ isometrically embeds into the Hilbert space of square summable $\widetilde{E}$-valued functions $\ell^2(X,\mu; \widetilde{E})$. Therefore, $\cQ_E^0$ can be viewed as a not densely defined closed form on the Hilbert space $\ell^2(X,\mu; \widetilde{E})$ and the theory of \cite{MVV} can be applied.

According to \cite[Theorem~4.1]{MVV} we need to prove that there are form norm dense subspaces $U \subseteq D(Q^0_E)$ and $V \subseteq D(Q^0)$ such that the following holds:
\begin{enumerate}[(a)]
 \item $U$ is a generalized ideal in $V$, i.e.,  
 \begin{itemize}
  \item $f \in U$ implies $|f| \in V$,
  \item $f \in U, \varphi \in V$ and $|\varphi| \leq |f|$ implies $\varphi\, \mathrm{sgn}\, f \in U$.
 \end{itemize}
 \item For all $f \in U$ and $\varphi \in V$ with $0 \leq \varphi \leq |f|$ we have
 $$\mathrm{Re}\, Q_E^0( \varphi\, \mathrm{sgn}\, f, f) \geq Q^0(\varphi,|f|).$$
\end{enumerate}
We choose $U = \Gamma_c(X; E)$ and $V = C_c(X)$. By definition they are form dense subspaces of the forms $Q^0_E$ respectively $Q^0$. Moreover, $\Gamma_c(X; E)$ is  a generalized ideal in $C_c(X)$. The required inequality follows from Kato's inequality and Green's formula. More precisely, for $f \in \Gamma_c(X; E)$ and $\varphi \in C_c(X)$ with $0 \leq \varphi \leq |f|$ we have $\as{f(x), \varphi(x)\, \mathrm{sgn}\, f(x)} = |\varphi(x)||f(x)|$ and $|\varphi(x)\, \mathrm{sgn}\, f(x)| = |\varphi(x)| = \varphi(x)$. Since also $W \geq V$, Kato's inequality (Lemma~\ref{lemma:Katos inequality}) implies
$$\mathrm{Re}\,(\varphi\, \mathrm{sgn}\, f,\cM f )_E \geq (\varphi, \cH |f|).$$
With this at hand Green's formula (Lemma~\ref{lemma:greens formula forms}) yields the desired statement.
\end{proof}

\begin{theorem}[Realization of discrete Schrödinger operator]\label{theorem:existence of realizations for scalar operators}
 Let $V \in \cA_{\mu}$. Then $\cQ^c$ is lower semi-bounded and closable on $\ell^2(X,\mu)$ and $H^0$ is a realization of $\cH$. Moreover, the associated quadratic form $Q^0$ satisfies the first Beurling-Deny criterion.
\end{theorem}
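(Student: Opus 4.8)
The strategy is to first build, for an arbitrary admissible $V$, a lower semi-bounded self-adjoint \emph{restriction} $H$ of $\cH$ whose quadratic form $Q$ satisfies the first Beurling--Deny criterion, and then to feed this into the machinery already established. Given such an $H$, Lemma~\ref{lemma:q and extension of qc} shows that $Q$ is a closed lower semi-bounded extension of $\cQ^c$; since a lower semi-bounded symmetric form that admits a closed lower semi-bounded extension is closable (see Appendix~\ref{appendix:quadratic forms}), $\cQ^c$ is closable. Lower semi-boundedness of $\cQ^c$ is just the hypothesis $V \in \cA_\mu$, and once closability is known, Lemma~\ref{lemma:h0 a realization} gives that $H^0$ is a realization of $\cH$ while Lemma~\ref{lemma:qc satisfies first beurling-deny criterion} gives that $Q^0$ satisfies the first Beurling--Deny criterion. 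So everything reduces to the construction of $H$.

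To construct $H$ I would truncate the negative part of $V$ from above: put $V_n := V_+ - (V_-\wedge n)$. Since $V_-\wedge n$ is bounded, $V_n \in \cS_\mu$, so by Proposition~\ref{proposition:small negative part} the form $\cQ^c_{V_n}$ is closable with closure $Q^0_n$ and associated operator $H_n$, which is a realization of the formal Schrödinger operator $\cH_{\mu,V_n}$. From $\cQ^c_{V_n} \ge \cQ^c_V$ on $C_c(X)$ together with $V\in\cA_\mu$ one obtains a uniform lower bound $H_n \ge -C_0$, and by Lemma~\ref{lemma:qc satisfies first beurling-deny criterion} (applied to the admissible potential $V_n$) the resolvents $G^n_\alpha := (H_n+\alpha)^{-1}$, $\alpha>C_0$, are positivity preserving. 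Because $V_{n+1}\le V_n$ with bounded nonnegative difference $c_n := (V_-\wedge(n+1))-(V_-\wedge n)$, one has $H_{n+1} = H_n - c_n$ and hence the resolvent identity $G^{n+1}_\alpha - G^n_\alpha = G^n_\alpha\, c_n\, G^{n+1}_\alpha$, whose right-hand side (a product of positivity preserving operators with multiplication by the nonnegative function $c_n$) is positivity preserving; thus $(G^n_\alpha)_n$ is increasing in operator order and, applied to nonnegative functions, increasing pointwise. Being monotone and uniformly bounded, $G^n_\alpha$ converges strongly to a positivity preserving operator $G_\alpha$ with $\|G_\alpha\|\le(\alpha-C_0)^{-1}$, and passing to the limit in the resolvent identity shows that $(G_\alpha)$ is a pseudo-resolvent.

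The crucial point, and the only place admissibility is genuinely consumed, is that $\ker G_\alpha = \{0\}$: if $G_\alpha f = 0$ then $0 = \langle G_\alpha f, f\rangle = \sup_n \langle G^n_\alpha f, f\rangle$, so $\langle G^n_\alpha f, f\rangle = 0$ for every $n$, whence $G^n_\alpha f = 0$ and, by injectivity of $G^n_\alpha$, $f = 0$. Therefore $G_\alpha = (H+\alpha)^{-1}$ for a self-adjoint operator $H$ with $H \ge -C_0$ and positivity preserving resolvent. It remains to check that $H$ is a restriction of $\cH = \cH_{\mu,V}$. Since the resolvent is positivity preserving one may reduce to $u = G_\alpha f$ with $f\ge 0$; then $u_n := G^n_\alpha f \uparrow u$ pointwise, $u_n\in D(H_n)$ and $\mu(x)(f(x)-\alpha u_n(x)) = \deg(x)u_n(x) - \sum_y b(x,y)u_n(y) + \mu(x)V_n(x)u_n(x)$ for every $x$. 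Letting $n\to\infty$, using $V_n(x)\to V(x)$ and the monotone convergence $\sum_y b(x,y)u_n(y)\uparrow\sum_y b(x,y)u(y)$, one reads off that this last sum is finite, i.e.\ $u\in\cF$, and that $\cH u = f - \alpha u = Hu$. Hence $H$ is a lower semi-bounded self-adjoint restriction of $\cH$ with positivity preserving resolvent, so by the Beurling--Deny characterization (Proposition~\ref{propostion:beurling deny}) its form satisfies the first Beurling--Deny criterion, and the proof is completed as in the first paragraph.

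I expect the main obstacle to be exactly this passage to the monotone limit: establishing that $G_\alpha$ is injective (the step where the uniform bound $H_n\ge -C_0$, and hence admissibility, is used) and that the limiting operator $H$ is a restriction of $\cH$, for which the pointwise monotone convergence $u_n\uparrow u$ is what makes the passage to the limit in $\cH_{\mu,V_n}u_n = f-\alpha u_n$ possible. Everything else is bookkeeping with Propositions~\ref{proposition:nonnegative endomorphisms} and~\ref{proposition:small negative part} and Lemmas~\ref{lemma:qc satisfies first beurling-deny criterion}--\ref{lemma:h0 a realization}.
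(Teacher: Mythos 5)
Your overall route is the paper's own: truncate the negative part via $V_n=\max\{V,-n\}$ (your $V_+-(V_-\wedge n)$ is the same function), obtain closable forms and realizations $H_n$ from Proposition~\ref{proposition:small negative part}, use the uniform lower bound coming from $V\in\cA_\mu$, pass to a monotone strong limit of the resolvents, identify the limit operator as a lower semi-bounded self-adjoint restriction of $\cH$ with positivity preserving resolvent, and then conclude through Lemma~\ref{lemma:q and extension of qc}, Lemma~\ref{lemma:h0 a realization} and Lemma~\ref{lemma:qc satisfies first beurling-deny criterion}. Your identification of the limit as a restriction of $\cH$ (pointwise monotone convergence $u_n\uparrow u$ and monotone convergence of $\sum_y b(x,y)u_n(y)$) is exactly the paper's computation, and your derivation of the pointwise monotonicity of $G^n_\alpha f$ for $f\ge 0$ via the second resolvent identity with the bounded difference $c_n$ is a legitimate substitute for the paper's use of the domination lemma.

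The one place you genuinely deviate is how you certify that the strong limit $G_\alpha$ is the resolvent of a self-adjoint operator, and there your argument has a gap. You infer that $(G^n_\alpha)_n$ is ``increasing in operator order'' from the fact that $G^{n+1}_\alpha-G^n_\alpha=G^n_\alpha c_n G^{n+1}_\alpha$ is positivity preserving. Positivity preserving does not imply nonnegative in the quadratic form sense: on $\ell^2(\{1,2\})$ the self-adjoint matrix $\begin{pmatrix}0&1\\1&0\end{pmatrix}$ maps nonnegative vectors to nonnegative vectors but has eigenvalue $-1$. Your kernel argument uses precisely the form-order statement, since $\langle G_\alpha f,f\rangle=\sup_n\langle G^n_\alpha f,f\rangle$ requires the numbers $\langle G^n_\alpha f,f\rangle$ to be nondecreasing for arbitrary $f$, not just $f\ge 0$; with only pointwise monotonicity on nonnegative functions you cannot exclude that these nonnegative numbers tend to $0$ without vanishing. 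The inequality $G^n_\alpha\le G^{n+1}_\alpha$ in form order is nevertheless true, because $Q^0_{n+1}=Q^0_n-q_{c_n}\le Q^0_n$ with equal domains, and monotone decreasing closed forms have resolvents that increase in operator order; even more directly, Lemma~\ref{lemma:monotone convergence of forms} applies verbatim to the decreasing sequence $(Q^0_n)$ with its common lower bound and yields at once that the strong limit of the $G^n_\alpha$ is the resolvent of a closed, densely defined, lower semi-bounded form, which makes the injectivity argument unnecessary. The paper closes the same step differently, by proving strong continuity $\alpha G_\alpha f\to f$ as $\alpha\to\infty$ using the domination $0\le G^0_\alpha f\le G^n_\alpha f$ by the resolvent for the potential $V_+$. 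With any of these repairs your proof is complete and essentially coincides with the paper's.
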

\begin{proof}
 Let $\lambda_0 := \lambda_0(\cQ^c)$ the largest lower bound of $\cQ^c$ on $\ell^2(X,\mu)$. For $n \in \N_0$ we let $V_n := \max\{V,-n\}$. Since $(V_n)_-$ is bounded, it is readily verified that $V_n$ has a form small negative part so that $V_n \in \cS_\mu$. By Proposition~\ref{proposition:small negative part} the form $\cQ^c_{V_n}$ is closable; we denote its closure by $Q_n^0$ and the associated operator by $H^0_n$.  Since $V \leq V_n$, we have $\lambda_0 \leq \lambda_0(Q^0_n)$, so that for $\alpha > -\lambda_0$ the resolvent $G_\alpha^n:= (H^0_n + \alpha)^{-1}$ exists. As a first step we prove that for each $\alpha > -\lambda_0$ the sequence $(G_\alpha^n)_n$ converges strongly to some operator $G_\alpha$ and that $(G_\alpha)_{\alpha > -\lambda_0}$ is a strongly continuous resolvent family.

 If $m\geq n,$ we have $V_+ = V_0\geq V_n \geq V_m$.  For  $f\geq 0$  and  $\alpha > -\lambda_0$ Lemma~\ref{lemma:domination} applied to the scalar situation yield  
 $$0 \leq G^0_\alpha f \leq G^n_\alpha f \leq G^m_\alpha f.$$
 The bound $\lambda_0 \leq \lambda_0(H^0_n)$ implies the  bound for the operator norm
 $\|G^n_\alpha\| \leq  (\lambda_0 + \alpha)^{-1}.$
 Hence, it follows from the monotone convergence theorem that for each $f \in \ell^2(X,\mu)$ and $\alpha > -\lambda_0$ the limit
 $$G_\alpha f := \lim_{n \to \infty} G^n_\alpha f$$
 exists. It is readily verified that $(G_\alpha)$ is a family of self-adjoint bounded operators with $\|G_\alpha\| \leq  (\lambda_0 + \alpha)^{-1}$ and that it satisfies the resolvent identity. Next we prove that it is strongly continuous. To this end, we let $f \geq 0$ and use domination to estimate 
 \begin{align*}
  \|\alpha G_\alpha f - \alpha G^0_\alpha f\|^2 &= \|\alpha G_\alpha f\|^2 - 2 \as{\alpha G_\alpha f,\alpha G^0_\alpha f} + \|\alpha G^0_\alpha f\|^2\\
  &\leq \frac{\alpha^2}{(\lambda_0 + \alpha)^2} \|f\|_2^2 -   \|\alpha G^0_\alpha f\|^2.
 \end{align*}
Since $(G^0_\alpha)$ is strongly continuous, this inequality shows $\alpha G_\alpha f \to f$, as $\alpha \to \infty$, i.e., the strong continuity of $(G_\alpha)$.

Let now $Q$ be the lower semi-bounded closed quadratic form that is associated with $(G_\alpha)$ and denote by $H$ the associated self-adjoint operator. We prove that $H$ is a restriction of $\cH$ (not a realization cf. the remark after Lemma~\ref{lemma:q and extension of qc}).
  Since $G_\alpha$ is surjective onto $D(H)$, it suffices to show that $G_\alpha$ maps $\ell^2(X,\mu)$ to $\cF$ and to verify the equality $\cH G_\alpha f = f - \alpha G_\alpha f$. To this end, let $f \in \ell^2(X,\mu)$ nonnegative  and set $\mathrm{Deg} = \mu^{-1} \deg$. We already know from Proposition~\ref{proposition:small negative part} that $H^0_n$ is a restriction of $\cH_{\mu, V_n}$. For $x \in X$ the monotone convergence theorem yields
\begin{align*}
 \frac{1}{\mu(x)} \sum_{y \in X} b(x,y) G_\alpha f(y) &= \lim_{n \to \infty} \frac{1}{\mu(x)} \sum_{y \in X} b(x,y) G^n_\alpha f(y) \\
 &= \lim_{n \to \infty} \left( - \cH_{\mu, V_n} G^n_\alpha f(x) + \mathrm{Deg}(x)  G^n_\alpha f(x) + V_n(x) G^n_\alpha f(x) \right)\\
 &= \lim_{n\to \infty} \left( - f(x) + \alpha G_\alpha^n f(x) + \mathrm{Deg}(x)  G^n_\alpha f(x) + V_n(x) G^n_\alpha f(x) \right)\\
 &= - f + \alpha G_\alpha f(x) + \mathrm{Deg}(x)  G_\alpha f(x) + V(x) G_\alpha f(x).
\end{align*}
This computation implies $G_\alpha f \in \cF$ and  $\cH G_\alpha f = f - \alpha G_\alpha f$. 

The operator $H$ is a restriction of $\cH$ and as a monotone limit of positivity preserving resolvents, its resolvent is positivity preserving. Therefore, the associated form $Q$ satisfies the first Beurling-Deny criterion, see Proposition~\ref{propostion:beurling deny}. Lemma~\ref{lemma:q and extension of qc} yields that  $Q$ is an extension of $\cQ^c$ and, in particular, that $\cQ^c$ is closable. According to Lemma~\ref{lemma:h0 a realization} the corresponding operator $H^0$ is a realization of $\cH$. This finishes the proof.
 \end{proof}
\begin{remark}
 \begin{enumerate}
  \item The arguments for proving that $(G_\alpha^n)$ converges monotone to a strongly continuous resolvent are taken from \cite{KLVW}. 
  \item The form $Q$ constructed in the proof is indeed the closure of $\cQ^c$.  Proving this would have saved us from using Lemma~\ref{lemma:q and extension of qc} and Lemma~\ref{lemma:h0 a realization}. We chose this alternative presentation for two reasons. Lemma~\ref{lemma:q and extension of qc} is used below and Lemma~\ref{lemma:h0 a realization} might be interesting on its own right. If anyone comes up with a different proof of the closability of $\cQ^c,$ Lemma~\ref{lemma:h0 a realization} shows that the associated operator is a realization of $\cH.$ One such alternative proof, which uses the Feynman-Kac formula, can be found in \cite{GKS}.
 \end{enumerate}

\end{remark}
\begin{corollary} \label{corollary:realization with first beurling deny}
The operator $\cH$ has a realization whose associated quadratic form satisfies the first Beurling-Deny criterion if and only if $V \in \cA_{\mu}$. 
\end{corollary}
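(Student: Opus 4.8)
The plan is to derive both implications directly from the results already established in this subsection, so that essentially no new work is needed.

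For the ``if'' direction, suppose $V \in \cA_{\mu}$. Then Theorem~\ref{theorem:existence of realizations for scalar operators} applies verbatim: it yields that $\cQ^c$ is lower semi-bounded and closable on $\ell^2(X,\mu)$, that the operator $H^0$ associated with the closure $Q^0$ is a realization of $\cH$, and that $Q^0$ satisfies the first Beurling-Deny criterion. Hence $H^0$ is the desired realization and this direction is immediate.

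For the ``only if'' direction, suppose $H$ is a realization of $\cH$ whose associated quadratic form $Q$ satisfies the first Beurling-Deny criterion. By Definition~\ref{definition:discrete magnetic Schrödinger operators}, conditions (R1) and (R3) say precisely that $H$ is a lower semi-bounded self-adjoint operator with $D(H) \subseteq \cF$ and $H f = \cH f$ for all $f \in D(H)$; in other words, $H$ is a lower semi-bounded self-adjoint restriction of $\cH$ in the sense made precise before Lemma~\ref{lemma:q and extension of qc}. That lemma then applies to this $H$ and gives, among other things, that $V \in \cA_{\mu}$, which is all we need here.

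There is no genuine obstacle: the entire content of the corollary has been absorbed into Theorem~\ref{theorem:existence of realizations for scalar operators} and Lemma~\ref{lemma:q and extension of qc}. The only point requiring a moment's attention is the logical bookkeeping — one must note that ``realization'' in the statement is exactly the notion of Definition~\ref{definition:discrete magnetic Schrödinger operators}, so that (R1) and (R3) are available as hypotheses of Lemma~\ref{lemma:q and extension of qc}, and that the conclusion $V \in \cA_{\mu}$ is literally part of that lemma's statement rather than merely the semi-boundedness of $\cQ^c$ (which, by the definition of $\cA_{\mu}$, is the same assertion). Once this identification is made explicit, the proof reduces to a two-line citation of the two results above.
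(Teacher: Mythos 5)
Your proposal is correct and follows essentially the same route as the paper: Theorem~\ref{theorem:existence of realizations for scalar operators} for the ``if'' direction and Lemma~\ref{lemma:q and extension of qc} for the ``only if'' direction, with the correct observation that (R1) and (R3) are exactly the hypotheses of that lemma. The only cosmetic difference is that the paper additionally cites Lemma~\ref{lemma:qc satisfies first beurling-deny criterion} for the Beurling-Deny property of $Q^0$, which is redundant since the theorem already asserts it, as you note.
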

\begin{proof}
If $V \in \cA_{\mu}$, the previous theorem and Lemma~\ref{lemma:qc satisfies first beurling-deny criterion} show that $H^0$ is a realization of $\cH$ whose associated form satisfies the first Beurling-Deny criterion.

If $H$ is a realization of $\cH$ whose associated quadratic form $Q$ satisfies the first Beurling-Deny criterion, then Lemma~\ref{lemma:q and extension of qc} implies that $Q$ is an extension of $\cQ^c$. Hence, $\cQ^c$ is lower semi-bounded, i.e., $V \in \cA_{\mu}$.
\end{proof}

\begin{proof}[Proof of Theorem~\ref{theorem:existence of realizations for magnetic operators}]
By  assumption we have $\Wm \in \cA_{\mu}$. Therefore, Theorem~\ref{theorem:existence of realizations for scalar operators} shows that $H^0 := H^0_{\mu, \Wm}$ is a realization of $\cH_{\mu, \Wm}$. For $\alpha > - \lambda_0(H^0)$ we denote the associated resolvent by $R_\alpha:= (H^0 + \alpha)^{-1}$.

Let 
$K_n := \{x \in X \mid \lambda_0(W_x) \geq -n\}$
and let  $W_n := W_+ - 1_{K_n} W_-$. Since $1_{K_n} W_-$ is uniformly bounded, it is readily verified that $W_n \in \mathcal{S}_{\mu, \Phi; E}$. We denote the closure of $\cQ^c_{\Phi, W_n; E}$ on $\ell^2(X,\mu; E)$ by $Q^n_E$ and write $(G_\alpha^n)$ for the corresponding resolvent; their existence follows from Proposition~\ref{proposition:small negative part}. Since $W_n \geq \Wm$, Proposition~\ref{proposition:semiboundedness magnetic forms and domination} shows that all the  $Q^n_E$ have  a common lower bound. Hence, the forms $(Q^n_E)$ fulfill the assumptions of Lemma~\ref{lemma:monotone convergence of forms} on monotone convergence of quadratic forms. We obtain that the  resolvents $(G_\alpha^n)$ strongly converge to a resolvent $(G_\alpha)$ with associated lower semi-bounded closed quadratic form $Q_E$  and self-adjoint operator $M$. 

We prove that $M$ is a realization of $\cM$. Condition (R1) is trivially satisfied since $Q_E$ is lower semi-bounded. For $\varphi \in \Gamma_c(X; E)$ it follows from Lemma~\ref{lemma:monotone convergence of forms} that 
$$Q_E(\varphi) \leq \liminf_{n \to \infty} Q_E^n(\varphi) = \cQ^c(\varphi) < \infty,$$
showing (R2). Since $W_n \geq \Wm,$  Lemma~\ref{lemma:domination} yields that the resolvent $R_\alpha$ dominates $G_\alpha^n$  and hence also $G_\alpha$, i.e., 
$$|G^n_\alpha f|, |G_\alpha f| \leq R_\alpha |f|, \quad  f \in \ell^2(X,\mu; E). $$
 The operator $H^0$ is a realization of $\cH$ so that $R_\alpha |f| \in D(H^0) \subseteq \cF$ for all $f \in \ell^2(X,\mu; E)$.  These two observations imply that the image of $(G_\alpha)$ is contained in $\cF_E$ so that $D(M) = G_\alpha \ell^2(X,\mu; E) \subseteq \cF_E$. Moreover, domination of the resolvents and Lebesgue's dominated convergence theorem show that for $x \in X$  and $f \in \ell^2(X,\mu; E)$ in the space $E_x$  we have
 $$\sum_{y \in X } b(x,y)\Phi_{x,y} G_\alpha f(y) =  \lim_{n\to \infty} \sum_{y \in X} b(x,y) \Phi_{x,y} G^n_\alpha f(y). $$
 For $x \in X$ this implies 
 \begin{align*}
  \cM G_\alpha f(x)  &= \lim_{n\to \infty} \cM G^n_\alpha f(x) \\
  &=\lim_{n\to \infty} \left(\cM_{\mu, \Phi, W_n} G_\alpha^n f (x) + (W_x - (W_n)_x)G_\alpha^n f(x) \right)\\
  &= \lim_{n\to \infty} \left(f(x) - \alpha G^n_\alpha f(x) +   (1_{K_n}(x) - 1) (W_-)_x G_\alpha^n f(x) \right)\\
  &= f(x) - \alpha G_\alpha f(x)\\
  &= M G_\alpha f (x).
 \end{align*}
 For the third to last equality we used the definition of $W_n$ and that the operator associated with $Q^n_E$ is a realization of $\cM_{\mu, \Phi, W_n}$, see Proposition~\ref{proposition:small negative part}. From this computation it follows that $M$ is a restriction of $\cM$ so that $M$ satisfies (R3).

It remains to show $Q_E = Q^0_E$.  \cite[Corollary~2.4]{LSW2} implies that $\Gamma_c(X; E)$ is dense in  $D(Q_E)$. More precisely, the resolvent $(G_\alpha)$ of $Q_E$ is dominated by the resolvent $(R_\alpha)$ of $Q_{\mu, \Wm}^0$ and $C_c(X) = \ell^2_c(X,\mu)$ (the $\ell^2$-functions with compact support in $X$) is dense in $D(Q^0_{\mu,\Wm})$ with respect to the form norm. It then follows from \cite[Corollary~2.4]{LSW2} that $ \Gamma_c(X; E) = \ell_c^2(X,\mu; E)$ (the $\ell^2$-sections with compact support in $X$) is dense in $Q_E$ with respect to the form norm.  

Let now $\varphi \in \Gamma_c(X; E)$. Below we prove that for every $x \in X$ we have $\cM \alpha G_\alpha \varphi(x)  \to \cM \varphi(x)$ in $E_x$, as $\alpha \to \infty$.  Since $M$ is a realization of $\cM$ and $\varphi$ has finite support, this implies
$$Q_E(\varphi) = \lim_{\alpha \to \infty} Q_E(\varphi, \alpha G_\alpha  \varphi) = \lim_{\alpha \to \infty} \as{\varphi, \cM \alpha G_\alpha \varphi}_{2; E} = (\varphi,\cM \varphi)_E = \cQ^c_E(\varphi),$$
where for the last equality we used Green's formula (Lemma~\ref{lemma:greens formula forms}). This shows that $Q_E$ is an extension of $\cQ^c$. Together with $\Gamma_c(X;E)$ being dense in $D(Q_E$) we arrive at  $Q_E = Q_E^0$.

To finish the proof we show $\cM \alpha G_\alpha \varphi(x)  \to \cM \varphi(x)$, as $\alpha \to \infty$, with almost the same arguments as in the proof of Lemma~\ref{lemma:q and extension of qc}.     Let $\psi \in \ell^2(X,\mu)$ nonnegative and $\beta > - \lambda_0(H^0)$ such that $R_\beta \psi \geq |\varphi|$. It follows from the domination of $G_\alpha$ by $R_\alpha$ and the resolvent identity for $R_\alpha$, that for $\alpha > \max \{2\beta, -\lambda_0(M)\}$ we have
$$\alpha/2 |G_\alpha \varphi| \leq \alpha/2 R_\alpha |\varphi| \leq R_\beta \psi,$$
cf. the proof of Lemma~\ref{lemma:q and extension of qc}. Since $R_\beta \psi \in \cF$ and $(G_\alpha)$ is strongly continuous, an application of Lebesgue's dominated convergence theorem yields
$$\sum_{y \in X} \Phi_{x,y}\alpha G_\alpha \varphi(y) \to \sum_{y \in X} \Phi_{x,y} \varphi(y), \text{ as } \alpha \to \infty.  $$
This implies the desired convergence $\cM \alpha G_\alpha \varphi(x)  \to \cM \varphi(x)$ in $E_x$, as $\alpha \to \infty$, and finishes the proof.
\end{proof}

\begin{remark}
 In the previous proof we showed that the form $Q_E$ constructed there equals $Q^0_E$. As a short cut we employed the theory developed in \cite{LSW2} to show that $\Gamma_c(X; E)$ is dense in $D(Q_E)$. However, for proving  Theorem~\ref{theorem:existence of realizations for magnetic operators} this is not necessary. It is also possible to argue the same way as at the end of the proof of Theorem~\ref{theorem:existence of realizations for scalar operators}: First show that the operator associated with $Q_E$ is a restriction of $\cM$ and then use domination to obtain that $Q_E$ is an  extension of $\cQ^c_E$ (the latter statement is a version of Lemma~\ref{lemma:q and extension of qc} for magnetic operators, which  is one step in the presented proof of Theorem~\ref{theorem:existence of realizations for magnetic operators}). After that prove a version of Lemma~\ref{lemma:h0 a realization} for magnetic forms with the help of domination instead of using that resolvents are positivity preserving.    
 
 The known proofs for Theorem~\ref{theorem:existence of realizations for magnetic operators} (the discussed one  and a probabilistic one for  scalar magnetic Schrödinger operators in \cite{GKS}, cf. Subsection~\ref{subsection:summary and examples}) have in common that they use domination of forms.  Such a perturbative approach has the drawback that it always has two steps: One needs to first prove existence of realizations for scalar Schrödinger operators before one can treat the magnetic case.
\end{remark}

\subsection{Admissible endomorphisms and graphs with a finiteness condition} \label{subsection:admissible endomorphisms and fc}

The last existence result of realizations deals with the situation when $\cQ^c_E$ is induced by  a symmetric operator, i.e., when $\cM$ maps $\Gamma_c(X; E)$ to $\ell^2(X,\mu; E)$. We first put this condition into perspective. 
\begin{lemma}\label{lemma:characterizing (FC)}
 The following assertions are equivalent.
 \begin{enumerate}[(i)]
  \item $\cM \Gamma_c(X; E) \subseteq \ell^2(X,\mu; E)$.
  \item For all $x \in X$ the function $X \to \R, y \mapsto b(x,y)/\mu(y)$ belongs to $\ell^2(X,\mu)$.
 \end{enumerate}
 In this case, $\ell^2(X,\mu; E) \subseteq \cF_E$. In particular, both assertions are satisfied if the graph $(X,b)$ is locally finite.
\end{lemma}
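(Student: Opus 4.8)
The plan is to prove the equivalence (i) $\Leftrightarrow$ (ii) by a direct pointwise computation on the action of $\cM$ on a single section $\varphi = \xi \delta_x$ with $\xi \in E_x$, and then to derive the additional claims as corollaries. First I would fix $x \in X$ and a unit vector $\xi \in E_x$, and compute $\cM(\xi \delta_x)(y)$ for arbitrary $y \in X$ directly from the defining formula. For $y = x$ one gets a contribution involving $\deg(x)/\mu(x)$ and $W_x \xi$; for $y \neq x$ one gets $\cM(\xi \delta_x)(y) = -\frac{1}{\mu(y)} b(y,x) \Phi_{y,x} \xi$, and since $\Phi_{y,x}$ is unitary, $|\cM(\xi \delta_x)(y)|_y = b(x,y)/\mu(y)$ using (b1). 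Then $\|\cM(\xi\delta_x)\|_{2;E}^2$ is, up to the single finite term at $y = x$ (which is finite by (b2) and boundedness of $W_x$), equal to $\sum_{y \neq x} (b(x,y)/\mu(y))^2 \mu(y) = \sum_{y \in X} b(x,y)^2/\mu(y)$, which is precisely the condition that $y \mapsto b(x,y)/\mu(y)$ lies in $\ell^2(X,\mu)$. Since $\Gamma_c(X;E)$ is spanned by such sections $\xi \delta_x$, finiteness for all these generators is equivalent to $\cM \Gamma_c(X;E) \subseteq \ell^2(X,\mu;E)$, giving (i) $\Leftrightarrow$ (ii).

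Next I would prove that (ii) implies $\ell^2(X,\mu;E) \subseteq \cF_E$. Recall $\cF_E = \{f \mid \sum_{y} b(x,y)|f(y)| < \infty \text{ for all } x\}$. Fix $x \in X$ and $f \in \ell^2(X,\mu;E)$. Write $b(x,y)|f(y)| = \big(b(x,y)/\mu(y)\big) \cdot \big(|f(y)| \mu(y)\big)$ and apply Cauchy–Schwarz in $\ell^2(X,\mu)$: the first factor, as a function of $y$, is in $\ell^2(X,\mu)$ by (ii) — indeed $\sum_y (b(x,y)/\mu(y))^2 \mu(y) = \sum_y b(x,y)^2/\mu(y) < \infty$ — and the second factor $y \mapsto |f(y)|$ is in $\ell^2(X,\mu)$ since $f \in \ell^2(X,\mu;E)$. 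Hence $\sum_y b(x,y)|f(y)| \leq \big(\sum_y b(x,y)^2/\mu(y)\big)^{1/2} \|f\|_{2;E} < \infty$, so $f \in \cF_E$. (This also makes $\cM f$ well-defined for every $\ell^2$-section, although here we only need the inclusion of domains.)

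Finally, for the locally finite case: if $(X,b)$ is locally finite then for each $x$ the set $\{y : b(x,y) > 0\}$ is finite, so $y \mapsto b(x,y)/\mu(y)$ is finitely supported and trivially in $\ell^2(X,\mu)$; thus (ii) holds, and with it (i) and the inclusion $\ell^2(X,\mu;E) \subseteq \cF_E$. I do not expect any serious obstacle here — the only points requiring a little care are (a) correctly bookkeeping the isolated $y = x$ term so that it does not interfere with the equivalence (it is always finite, so it is harmless), and (b) making sure the reduction from $\Gamma_c(X;E)$ to the generating sections $\xi\delta_x$ is justified, which it is by linearity of $\cM$ and the fact that a finite sum of $\ell^2$-sections is $\ell^2$. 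The unitarity of $\Phi_{x,y}$ (so that magnetic phases drop out of all norms) and the symmetry (b1) of $b$ are the two structural inputs that make the scalar condition (ii) exactly capture the vector-valued statement (i).
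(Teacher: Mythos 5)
Your proposal is correct and follows essentially the same route as the paper: the paper also tests $\cM$ on the sections $\delta_{x,\xi}$ (you simply spell out the pointwise computation that the paper leaves as "follows from the definitions"), and it derives the inclusion $\ell^2(X,\mu;E)\subseteq\cF_E$ from exactly the same Cauchy--Schwarz estimate $\sum_{y}b(x,y)|f(y)|\le\|b(x,\cdot)/\mu\|_2\,\|f\|_{2;E}$, with the locally finite case being immediate. Your bookkeeping of the harmless $y=x$ term and the reduction to generators by linearity are both fine.
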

\begin{proof}
For $x  \in X$ and $\xi \in E_x$ with $|\xi| = 1$ we let $\delta_{x,\,\xi}$ the compactly supported section with $\delta_{x,\,\xi}(x) = \xi$ and $\delta_{x,\,\xi}(y) = 0$ if $y \neq x$. It follows  from the definitions that $\cM \delta_{x,\,\xi} \in  \ell^2(X,\mu)$ if and only if $X \to \R, y \mapsto b(x,y)/\mu(y)$ belongs to $\ell^2(X,\mu)$.  This shows the equivalence of (i) and (ii).

Moreover, if (ii) holds, the Cauchy-Schwarz inequality implies 
$$\sum_{y \in X} b(x,y) |f(y)| \leq \|b(x,\cdot)/\mu\|_2 \left(\sum_{y\in X} |f(y)|^2 \mu(y)\right)^{1/2}.$$
This proves the inclusion $\ell^2(X,\mu; E) \subseteq \cF_E$.
\end{proof}

The previous lemma shows that the inclusion $\cM \Gamma_c(X; E) \subseteq \ell^2(X,\mu; E)$ only depends on $(X,b)$ and the weight $\mu$ and not on the connection nor on the endomorphism. If it is satisfied, the restriction of $\cM$ to $\Gamma_c(X; E)$ is a densely-defined operator on $\ell^2(X,\mu; E)$.  
\begin{definition}[Finiteness condition and the minimal restriction of $\cM$] \label{definition:finiteness condition}
The triplet $(X,b,\mu)$ satisfies the {\em finiteness condition} (FC) if for all $x \in X$ the function $X \to \R, y \mapsto b(x,y)/\mu(y)$ belongs to $\ell^2(X,\mu)$. In this case, the operator $M^\mathrm{min}: D(M^\mathrm{min}) \to \ell^2(X, \mu; E)$ with $D(M^\mathrm{min}) = \Gamma_c(X; E)$ and $M^\mathrm{min} f = \cM f$ for $f\in D(M^\mathrm{min})$  is called the {\em minimal restriction of $\cM$}. 
\end{definition}
The following proposition is the main result of this subsection.  
\begin{proposition}[Realization under finiteness condition]\label{proposition:realization finiteness condition}
Suppose that  (FC) holds. 
\begin{enumerate}[(a)]
 \item $(M^\mathrm{min})^* = M^\mathrm{max}$. In particular, $(M^\mathrm{min})^*$ is the restriction of $\cM$ to
 $$D((M^\mathrm{min})^*) = \{f \in \ell^2(X,\mu; E) \mid \cM f \in \ell^2(X,\mu; E)\}.$$
 \item  If $W \in \cA_{\mu,\Phi; E}$, the form $\cQ^c_E$ is lower semi-bound and closable, and the operator $M^0$ is a realization of $\cM$.
 \item If $M$ is a realization of $\cM$, then $M$ is an extension of $M^\mathrm{min}$ and the associated quadratic form  is an extension of $\cQ^c_E$. 
\end{enumerate}
\end{proposition}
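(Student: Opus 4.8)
Throughout I would lean on the hypothesis (FC) via Lemma~\ref{lemma:characterizing (FC)}: it gives the inclusion $\ell^2(X,\mu; E) \subseteq \cF_E$, so that $\cM f$ makes sense for every $f \in \ell^2(X,\mu; E)$, and it makes $\cM \Gamma_c(X;E) \subseteq \ell^2(X,\mu; E)$, so that $M^{\mathrm{min}}$ is a densely defined operator on $\ell^2(X,\mu; E)$. I would first record that $M^{\mathrm{min}}$ is symmetric: for $\varphi,\psi \in \Gamma_c(X;E)$ all sums in Green's formula (Lemma~\ref{lemma:Green's formula general}) are finite and yield $\as{\varphi,\cM\psi}_{2;E} = (\varphi,\cM\psi)_E = (\cM\varphi,\psi)_E = \as{\cM\varphi,\psi}_{2;E}$, i.e.\ $\as{\varphi,M^{\mathrm{min}}\psi}_{2;E}=\as{M^{\mathrm{min}}\varphi,\psi}_{2;E}$. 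In particular $(M^{\mathrm{min}})^*$ is well defined and $M^{\mathrm{min}}$ is closable.

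For part (a) the plan is the routine adjoint computation. If $f \in D((M^{\mathrm{min}})^*)$ with $(M^{\mathrm{min}})^*f = g$, then for all $\varphi\in\Gamma_c(X;E)$ Green's formula gives $(\varphi,\cM f)_E = (\cM\varphi,f)_E = \as{M^{\mathrm{min}}\varphi,f}_{2;E} = \as{\varphi,g}_{2;E} = (\varphi,g)_E$; testing with $\varphi=\delta_{x,\,\xi}$ (the section supported at $x$ with unit value $\xi\in E_x$) for all $x,\xi$ forces $\cM f = g\in\ell^2(X,\mu;E)$, so $f\in D(M^{\mathrm{max}})$ and $M^{\mathrm{max}}f=g$. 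Conversely, for $f\in D(M^{\mathrm{max}})$ Green's formula gives $\as{M^{\mathrm{min}}\varphi,f}_{2;E}=(\varphi,\cM f)_E=\as{\varphi,\cM f}_{2;E}$ for all $\varphi$, so $f\in D((M^{\mathrm{min}})^*)$ with $(M^{\mathrm{min}})^*f=\cM f=M^{\mathrm{max}}f$. The stated domain description then only uses $\cF_E\cap\ell^2(X,\mu;E)=\ell^2(X,\mu;E)$ under (FC).

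For part (b), $W\in\cA_{\mu,\Phi;E}$ says precisely that $\cQ^c_E$ is lower semi-bounded, and by Lemma~\ref{lemma:greens formula forms} together with (FC) it is the form $\varphi\mapsto\as{\varphi,M^{\mathrm{min}}\varphi}_{2;E}$ of the symmetric, lower semi-bounded operator $M^{\mathrm{min}}$; hence $\cQ^c_E$ is closable and $M^0$ is the Friedrichs extension of $M^{\mathrm{min}}$, so $M^{\mathrm{min}}\subseteq M^0\subseteq (M^{\mathrm{min}})^*$. Checking (R1)--(R3) is then quick: (R1) is immediate from lower semi-boundedness; (R2) holds since $\Gamma_c(X;E)\subseteq D(\cQ^c_E)\subseteq D(Q^0_E)$; and for (R3) one has $D(M^0)\subseteq\ell^2(X,\mu;E)\subseteq\cF_E$ by (FC), while $M^0=(M^0)^*\subseteq (M^{\mathrm{min}})^*=M^{\mathrm{max}}$ by part (a), so $M^0 f=\cM f$ on $D(M^0)$. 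For part (c), let $M$ be a realization with form $Q_M$. By (R3), $D(M)\subseteq\cF_E\cap\ell^2(X,\mu;E)$ and $Mf=\cM f\in\ell^2(X,\mu;E)$, hence $M\subseteq M^{\mathrm{max}}=(M^{\mathrm{min}})^*$ by (a); taking adjoints and using self-adjointness of $M$ gives $M^{\mathrm{min}}\subseteq(M^{\mathrm{min}})^{**}\subseteq M^*=M$ (the closure $(M^{\mathrm{min}})^{**}$ exists by symmetry). Thus $\Gamma_c(X;E)\subseteq D(M)$ with $M\varphi=\cM\varphi$, so $M$ extends $M^{\mathrm{min}}$; and for $\varphi,\psi\in\Gamma_c(X;E)\subseteq D(M)\subseteq D(Q_M)$ the representation identity $Q_M(\varphi,\psi)=\as{\varphi,M\psi}_{2;E}$ with Lemma~\ref{lemma:greens formula forms} gives $Q_M(\varphi,\psi)=\as{\varphi,\cM\psi}_{2;E}=(\varphi,\cM\psi)_E=\cQ^c_E(\varphi,\psi)$, so $Q_M$ extends $\cQ^c_E$.

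I expect the only point requiring genuine care, rather than bookkeeping, to be part (a): one must justify that all the sums appearing are absolutely convergent and that the bilinear pairing $(\cdot,\cdot)_E$ may legitimately be replaced by the $\ell^2(X,\mu;E)$-inner product, which is exactly where (FC) --- through Lemma~\ref{lemma:characterizing (FC)} --- is used. Once (a) is in place, parts (b) and (c) are formal consequences of it together with the textbook correspondence between lower semi-bounded symmetric operators, their Friedrichs extensions, and the associated closed quadratic forms.
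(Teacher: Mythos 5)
Your proof is correct and follows essentially the same route as the paper: (FC) together with Green's formula identifies $(M^{\mathrm{min}})^*$ with $M^{\mathrm{max}}$, Friedrichs' extension theorem gives (b), and adjoint arguments give (c). The only (harmless) deviation is in (c), where you first establish $M^{\mathrm{min}}\subseteq M$ abstractly via $M\subseteq M^{\mathrm{max}}=(M^{\mathrm{min}})^*$ and then read off the form extension from the representation $Q_M(\varphi,\psi)=\as{\varphi,M\psi}_{2;E}$ for $\psi\in\Gamma_c(X;E)\subseteq D(M)$, whereas the paper proves the form extension separately by approximating test sections by elements of $D(M)$ in the form norm and proves the operator extension by verifying $\Gamma_c(X;E)\subseteq D(M^*)$ directly; both arguments are valid.
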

\begin{proof}
(a):  Let $f \in D((M^\mathrm{min})^*)$. By Lemma~\ref{lemma:characterizing (FC)} we have $f \in \cF_E$. For $\varphi \in \Gamma_c(X; E)$ we infer from Green's formula (Lemma~\ref{lemma:Green's formula general}) and the definition of $M^\mathrm{min}$ that
\begin{align*}
 \langle \varphi, (M^\mathrm{min})^* f \rangle_{2; E} = \langle M^\mathrm{min} \varphi,  f \rangle_{2; E} = \sum_{x \in X} \as{\cM \varphi(x),f(x)} \mu(x) = (\varphi, \cM f)_E.
\end{align*}
This shows $\cM f = (M^\mathrm{min})^* f \in \ell^2(X,\mu; E)$. Therefore, $(M^\mathrm{min})^*$ is a restriction of $M^\mathrm{max}$.

Let now $f \in D(M^\mathrm{max})$. Since by definition $D(M^\mathrm{max}) \subseteq \cF_E$,  Green's formula (Lemma~\ref{lemma:Green's formula general}) implies that for $\varphi \in D(M^\mathrm{min}) = \Gamma_c(X; E)$ we have
$$\langle M^\mathrm{min} \varphi,  f \rangle_{2; E} = \sum_{x \in X} \as{\cM \varphi(x),f(x)} \mu(x) = (\varphi, \cM f)_E = \langle \varphi,  M^\mathrm{max} f \rangle_{2; E}. $$
This shows $f \in D((M^\mathrm{min})^*)$ and that $M^\mathrm{max}$ is a restriction of $(M^\mathrm{min})^*$.

The ``In particular''-statement follows from the definition of $M^\mathrm{max}$ and Lemma~\ref{lemma:characterizing (FC)}.

(b): For $\varphi,\psi \in \Gamma_c(X; E)$ Green's formula (Lemma~\ref{lemma:Green's formula general}) and the definition  of $M^\mathrm{min}$ yield
$$\langle M^\mathrm{min} \varphi,\psi\rangle_{2; E} = \sum_{x \in X} \as{\cM \varphi(x),\psi(x)} \mu(x) =  \cQ^c_E(\varphi,\psi) = (\varphi, \cM \psi)_E = \langle  \varphi, M^\mathrm{min}\psi\rangle_{2; E}.$$
Therefore, $\cQ^c_E$ is the quadratic form of the symmetric operator $M^\mathrm{min}$. Since $W \in \cA_{\mu, \Phi; E}$, it is also lower semi-bounded. It follows from Friedrichs' extension theorem that $\cQ^c_E$ is closable and that the self-adjoint operator $M^0$ that is associated with the closure $Q^0_E$ is an extension of $M^\mathrm{min}$. With this at hand, (a) implies that $M^0$ is a restriction of $M^\mathrm{max}$ and therefore a restriction of $\cM$. Since  $\Gamma_c(X; E) \subseteq D(Q^0_E)$, the  operator $M^0$ is a realization of $\cM$.

(c): We first prove that the associated quadratic from, which we denote by  $Q$, is an extension of $\cQ^c_E$. Let $\varphi \in \Gamma_c(X; E)$. By the definition of realizations we have $\varphi \in D(Q)$. The domain of $M$ is dense in  $D(Q)$ with respect to the form norm. Hence, there exists a sequence $f_n \in D(M)$ such that $f_n \to \varphi$ with respect to the form norm.  Since $\cM \varphi \in \ell^2(X,\mu; E)$ and $f_n \in D(M) \subseteq \cF_E$, we obtain with the help of Green's formula (Lemma~\ref{lemma:Green's formula general}) that
$$Q(\varphi) = \lim_{n \to \infty} Q(\varphi, f_n) = \lim_{n \to \infty} \langle \varphi, Mf_n \rangle_{2; E} =  \lim_{n \to \infty} \langle \cM \varphi, f_n \rangle_{2; E} = \langle \cM \varphi, \varphi\rangle_{2; E} = \cQ_E^c(\varphi).$$
It remains to prove the statement about $M$.  Let $\varphi \in \Gamma_c(X; E)$. Since $\ell^2(X,\mu;E) \subseteq \cF_E$, for $f \in D(M)$  Green's formula (Lemma~\ref{lemma:Green's formula general}) yields
$$ \langle   \cM \varphi, f\rangle_{2; E} = (\varphi,\cM f)_E = \langle  \varphi, Mf\rangle_{2; E}.$$
holds. Since $\cM \varphi \in \ell^2(X,\mu ;E)$, this implies $\varphi \in D(M^*)$ and $M^* \varphi = \cM \varphi$. Now the claim follows because $M$ is self-adjoint. 
\end{proof}

\begin{remark}
\begin{enumerate}
 \item  This proposition allows more general endomorphisms than the corresponding results  for graphs without (FC) in Subsection~\ref{subsection:admissible endomorphisms and domination} and its proof is much simpler. The reason for this is that under (FC) Green's formula is valid for for sections with compact support, i.e., one has 
 $$\as{\varphi, \cM \psi}_{2; E} = \cQ^c_E(\varphi,\psi),\quad \varphi,\psi \in \Gamma_c(X; E).$$
 \item The proposition also shows that under (FC) the operator $M^\mathrm{max}$ is closed. It would be interesting to known whether or not this is true for graphs which do not satisfy (FC).
\end{enumerate}
\end{remark}

 The following lemma shows why we formulated condition (R2) in the definition of realizations for the domain of the associated quadratic form and not for the domain of the operator. Otherwise, we could have only dealt with graphs $(X,b)$ and weights $\mu$ that satisfy (FC).

\begin{corollary}\label{corollary:operator domain (FC)}
Let $W \in \cA_{\mu, \Phi; E}$. The following assertions are equivalent.
\begin{enumerate}[(i)]
 \item (FC) holds.
 \item For any realization $M$ of $\cM$ we have $\Gamma_c(X; E) \subseteq D(M)$.
 \item There exists a realization $M$ of $\cM$ with $\Gamma_c(X; E) \subseteq D(M)$.
\end{enumerate}
\end{corollary}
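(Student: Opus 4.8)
The equivalence (i) $\Rightarrow$ (ii) $\Rightarrow$ (iii) $\Rightarrow$ (i) is the natural cycle, and most of the work has already been done. For (i) $\Rightarrow$ (ii), suppose (FC) holds and let $M$ be a realization of $\cM$. By Proposition~\ref{proposition:realization finiteness condition}(c), $M$ extends $M^{\mathrm{min}}$, and since $D(M^{\mathrm{min}}) = \Gamma_c(X; E)$, this gives $\Gamma_c(X; E) \subseteq D(M)$ immediately. For (ii) $\Rightarrow$ (iii) we need the existence of at least one realization: this is where the hypothesis $W \in \cA_{\mu, \Phi; E}$ enters. One cannot invoke Theorem~\ref{theorem:existence of realizations for magnetic operators} directly since that requires $\Wm \in \cA_\mu$, which is a priori stronger than $W \in \cA_{\mu,\Phi;E}$; instead one uses Proposition~\ref{proposition:realization finiteness condition}(b), which produces the realization $M^0$ under (FC) and $W \in \cA_{\mu,\Phi;E}$. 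But wait — (ii) $\Rightarrow$ (iii) cannot be proved this way without first knowing (FC). So the cycle should be reorganized.

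\textbf{Revised plan.} Prove (i) $\Rightarrow$ (ii), then (ii) $\Rightarrow$ (iii), then (iii) $\Rightarrow$ (i), but handle the existence issue carefully. For (ii) $\Rightarrow$ (iii): the statement (ii) is vacuously about all realizations, so to deduce (iii) we genuinely need \emph{some} realization to exist. Here I would argue contrapositively or observe that if no realization existed, then (iii) fails, so we may as well assume one exists and then (ii) hands us $\Gamma_c(X;E) \subseteq D(M)$ for it. Actually the cleanest route: (i) $\Rightarrow$ (iii) combines Proposition~\ref{proposition:realization finiteness condition}(b) (which, using $W \in \cA_{\mu,\Phi;E}$ and (FC), gives that $M^0$ is a realization) with Proposition~\ref{proposition:realization finiteness condition}(c) (which gives $\Gamma_c(X;E) = D(M^{\mathrm{min}}) \subseteq D(M^0)$); then (iii) $\Rightarrow$ (ii) is trivial since (ii) just says \emph{every} realization has this property — no wait, (iii) is weaker than (ii). So the real content is (iii) $\Rightarrow$ (i).

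\textbf{The crux: (iii) $\Rightarrow$ (i).} Suppose $M$ is a realization with $\Gamma_c(X; E) \subseteq D(M)$. Then by (R3), $\cM$ maps $D(M)$ into $\ell^2(X,\mu; E)$, so in particular $\cM \delta_{x,\xi} \in \ell^2(X,\mu; E)$ for every $x \in X$ and $\xi \in E_x$ with $|\xi| = 1$, where $\delta_{x,\xi}$ is the section supported at $x$ with value $\xi$. By the computation in the proof of Lemma~\ref{lemma:characterizing (FC)}, $\cM \delta_{x,\xi} \in \ell^2(X,\mu; E)$ is equivalent to the function $y \mapsto b(x,y)/\mu(y)$ lying in $\ell^2(X,\mu)$. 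Since $x$ was arbitrary, this is exactly (FC). I expect no serious obstacle here — the main subtlety is simply being careful that the existence of \emph{a} realization is available in the chain of implications, which is guaranteed by $W \in \cA_{\mu, \Phi; E}$ together with (FC) via Proposition~\ref{proposition:realization finiteness condition}(b), used only in the direction (i) $\Rightarrow$ (iii). So the final write-up runs: (i) $\Rightarrow$ (iii) from Proposition~\ref{proposition:realization finiteness condition}(b),(c); (iii) $\Rightarrow$ (ii)? No — one shows (i) $\Rightarrow$ (ii) directly from Proposition~\ref{proposition:realization finiteness condition}(c), then (ii) $\Rightarrow$ (iii) needs existence, which under (i) is fine but we're trying to prove (i)... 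The honest resolution: prove (i) $\Leftrightarrow$ (iii) as the substantive equivalence and note (i) $\Rightarrow$ (ii) $\Rightarrow$ (iii) trivially once (i) holds, so all three are equivalent. The hard part is genuinely just (iii) $\Rightarrow$ (i), which is the short argument above.
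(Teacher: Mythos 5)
Your treatment of the individual implications coincides with the paper's: (i) $\Rightarrow$ (ii) via Proposition~\ref{proposition:realization finiteness condition}~(c) (every realization extends $M^{\mathrm{min}}$), (i) $\Rightarrow$ (iii) via parts (b) and (c) of the same proposition, and (iii) $\Rightarrow$ (i) by observing that $\cM \Gamma_c(X;E) = M\Gamma_c(X;E) \subseteq \ell^2(X,\mu;E)$, which is (FC); your detour through the sections $\delta_{x,\xi}$ and Lemma~\ref{lemma:characterizing (FC)} is the same computation written out.

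The gap lies in your final bookkeeping. The statement is a three-way equivalence, so (ii) must be shown to imply (i) or (iii); your final plan establishes (i) $\Leftrightarrow$ (iii) and (i) $\Rightarrow$ (ii) and then declares all three equivalent, which does not follow. If no realization of $\cM$ existed while (FC) failed, then (ii) would hold vacuously and (i), (iii) would fail, and nothing in your write-up excludes this scenario. The paper closes the cycle exactly at this point: its step (ii) $\Rightarrow$ (iii) consists of producing a realization via Proposition~\ref{proposition:realization finiteness condition}~(b). You are right to be uneasy about that citation, since part (b) presupposes (FC), and existence of a realization under $W \in \cA_{\mu,\Phi;E}$ alone (without (FC), $\Wm \in \cA_\mu$, or $W \in \cS_{\mu,\Phi;E}$) is not proved anywhere in the paper -- indeed it appears among the open problems -- so your criticism of the cycle structure is legitimate. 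But the remedy is not to drop the implication that the statement requires: you must either supply the missing existence of a realization under the corollary's hypotheses (which is precisely what (ii) $\Rightarrow$ (iii) needs in the potentially vacuous case) or, as the paper does, invoke Proposition~\ref{proposition:realization finiteness condition}~(b) and make explicit that this is where existence enters. As written, your proposal proves strictly less than the stated corollary.
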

\begin{proof}
 (i) $\Rightarrow$ (ii): Proposition~\ref{proposition:realization finiteness condition}~(c) shows that any realization of $\cM$ is an extension of $M^\mathrm{min}$ so that $\Gamma_c(X; E) = D(M^\mathrm{min}) \subseteq D(M)$.
 
 (ii) $\Rightarrow$ (iii): Proposition~\ref{proposition:realization finiteness condition}~(b) yields the existence of a realization.
 
 (iii) $\Rightarrow$ (i): Let $M$ a  realization of $\cM$ with $\Gamma_c(X; E) \subseteq D(M)$. Then $\cM \Gamma_c(X; E) = M \Gamma_c(X; E) \subseteq \ell^2(X, \mu; E)$, i.e., (FC) holds.
\end{proof}

\subsection{Summary and examples} \label{subsection:summary and examples}

In this subsection we summarize the results of this section and put them into perspective of the existing literature. Moreover, we discuss some examples that show the optimality of our results.

The whole section was devoted to proving the existence of realizations of $\cM$ (and $\cH$) under the following conditions.  
\begin{itemize}
 \item[(a)] $W \in \cA_{\mu, \Phi; E}$ and  (FC)   - admissible endomorphisms \& graphs with a finiteness condition.
 \item[(b)] $W \in \cS_{\mu, \Phi; E}$  - endomorphisms with small negative part. 
 \item[(c)] $W_{\mathrm{min}} \in  \cA_{\mu}$ - admissible endomorphisms dominated by an admissible potential.
\end{itemize}

\begin{remark}[Existing literature] 
\begin{enumerate}[(a)]
      \item  The condition (FC),  Lemma~\ref{lemma:characterizing (FC)}, Proposition~\ref{proposition:realization finiteness condition} and their proofs are basically taken from \cite{KL}, which contains versions of these results  for Schrödinger operators with nonnegative potentials. With the same arguments a version of Proposition~\ref{proposition:realization finiteness condition} is proven in \cite{GKS}  for scalar magnetic Schrödinger operators with  admissible potentials.  
\item The stability of closability under form small perturbations is a standard result in perturbation theory of quadratic forms. Thus, the closability of  $\cQ^c_E$ in  Proposition~\ref{proposition:small negative part} is well-known. The arguments for proving that in this case $M^0$ is a realization of $\cM$ are taken from \cite{GKS}, which treats scalar magnetic Schrödinger operators. That they can be extended to general magnetic Schrödinger operators has also been observed in \cite{GMT}.

\item For scalar magnetic Schrödinger operators the closability of $\cQ^c_{\theta,V}$ on $\ell^2(X,\mu)$ when $V \in \cA_{\mu}$ is one of the main results of \cite{GKS}. There, the proof is based on domination and explicit computations involving a Feynman-Kac-Ito formula for the corresponding semigroups. Our approach to proving  the closability statements in Theorem~\ref{theorem:existence of realizations for scalar operators} and in Theorem~\ref{theorem:existence of realizations for magnetic operators}  also uses domination but, in contrast,  is entirely analytic. The statement that $\Wm \in \cA_{\mu}$ implies that $M^0$ is a realization of $\cM$ is new even for scalar (magnetic) Schrödinger operators. In \cite{GKS,KPP} the authors could only prove this result under additional conditions.  
\end{enumerate}
\end{remark}

\begin{remark}[Optimality of the results]
  \begin{enumerate}[(a)] 
  \item   For graphs with (FC) the condition $W \in \cA_{\mu, \Phi; E}$ is optimal. In this case, the quadratic form of any realization of $\cM$ is an extension of $\cQ^c_E$, see Proposition~\ref{proposition:realization finiteness condition}. Thus, if (FC) holds and $\cM$ has a realization, then $\cQ^c_E$ is necessarily lower semi-bounded, i.e., $W \in \cA_{\mu, \Phi; E}$. 
  
  \item[(c)]    For general graphs it is unclear whether or not $W_{\mathrm{min}} \in  \cA_{\mu}$ is optimal for the existence of realizations of $\cM$. In particular, the existence of realizations of $\cM$ remains unresolved when the graph does not satisfy (FC) and $W \in \cA_{\mu, \Phi; E}$ but $W_{\mathrm{min}} \not \in \cA_{\mu}$.   Even in the case of scalar Schrödinger operators it is unclear whether $V \in \cA_{\mu}$ is necessary for the existence of realizations of $\cH$. We only proved that $V \in \cA_{\mu}$ is equivalent to the existence of a realization whose associated quadratic form satisfies the first Beurling-Deny criterion, cf.  Corollary~\ref{corollary:realization with first beurling deny}.  The problem is that in general we were not able prove that the quadratic form of an arbitrary realization of $\cH$ is an extension of $\cQ^c$; we needed to assume the first Beurling-Deny criterion, cf. Lemma~\ref{lemma:q and extension of qc}.
  \end{enumerate}
\end{remark}

The following example shows that Proposition~\ref{proposition:semiboundedness magnetic forms and domination} is only valid with some uniform control over all magnetic fields. It is taken from \cite{Gol}.

\begin{example}\label{example:adjacency matrix} \label{lemma:adjacency matrix}
 Let $K_n = (X_n,b_n)$ be the complete graph on $n$-vertices $X_n$, i.e., $|X_n| = n$ and $b_n(x,y) = 1$ for all $x,y \in X_n$, and let $\mu_n$ the counting measure on $X_n$. The corresponding adjacency operator
 $$A_n f(x) := \sum_{y \in X_n} b_n(x,y) f(x) = \sum_{y \in X_n} f(y)$$
 has the eigenvalues $n$ and $-1$.
 
 Let $(X,b)$ be the direct sum of $(X_n,b_n), n \in \N$, i.e.,  $X = \bigsqcup_{n \geq 1} X_n$ and $b(x,y) = 1$, if $x,y \in X_n$ for some $n \in \N$, and $b(x,y) = 0$, else. Moreover, let $\mu$ be the counting measure on $X$ and let $A:C(X) \to C(X)$ the formal adjacency operator
 $$A f(x) = \sum_{y \in X} f(y).$$
 It is equals the scalar formal magnetic Schrödinger operator $\cM_{\mu,-\pi,-\deg}$. Since all the $A_n$ are lower semi-bounded by $-1$ on $\ell^2(X_n,\mu_n)$,  for $\varphi \in C_c(X)$ we obtain
 $$\cQ^c_{-\pi,-\deg}(\varphi) = \sum_{n = 1}^\infty \as{A_n (\varphi 1_{X_n}),\varphi 1_{X_n}}_2 \geq   \sum_{n = 1}^\infty - \| 1_{X_n} \varphi\|_2^2 = -\|\varphi\|^2.$$
 This shows that  $\cQ^c_{-\pi,-\deg}$ is semi-bounded from below so that $-\deg \in \cA_{\mu, (-\mathrm{Id}); (\IC)}$.  However, $\cQ^c_{-\pi,-\deg}$ is not bounded from above (test e.g. with the sequence of normalized eigenfunctions $f_n = n^{-1/2} 1_{X_n}$). Hence, $\cQ^c_{-\deg} = \cQ^c_{0,-\deg} = - \cQ^c_{-\pi,-\deg}$ is not bounded from below on $\ell^2(X,\mu)$, so that $-\deg \not \in \cA_{\mu} = \cA_{\mu, (\mathrm{Id}); (\IC)}$.
 
 The graph constructed in this example is not connected. However, one can modify the graph to make it connected  as follows. If for each $n \in \N$ one adds a single edge of weight $1$ from some vertex in $X_n$ to some vertex in $X_{n+1}$, then the adjacency operator $A'$ of the resulting connected graph is a bounded perturbation of $A$. Therefore, the discussed boundedness properties of $A$ are passed on to $A'$.
\end{example}

When the endomorphism $W$ has a small negative part (i.e. $W \in \cS_{\mu,\Phi; E}$) we do not only obtain that $M^0$ is a realization of $\cM$ but also gain some information about the domain of the associated quadratic form; it is contained in $\mathcal{D}_{\Phi, W_+; E}$ and $Q^0_E$ is a restriction of $\cQ_E$. In the  following example we construct a  scalar potential for which this fails. Typically this is the case for optimal Hardy weights. As a consequence we also obtain that the classes $\cA_{\mu,\Phi; E}$ and $\cS_{\mu, \Phi; E}$ are different. 

\begin{example}\label{example:hardy}
  A function $w:X \to [0,\infty)$ is called a {\em Hardy weight} for the graph $(X,b)$ if 
$$\cQ^c_0(\varphi) \geq \sum_{x \in X} |\varphi(x)|^2 w(x), \quad \varphi \in C_c(X).$$
 In \cite{KPP} a Hardy weight $w$ is called optimal, if there exists a sequence $(e_n)$ in $C_c(X)$ with the following properties.
 \begin{itemize}
  \item $\lim\limits_{n \to \infty} \left( \cQ^c_0(e_n) - \sum\limits_{x \in X} |e_n (x)|^2 w(x)\right) = 0. $
  \item The sequence $(e_n)$ converges pointwise to a nonnegative function $G$.
  \item $G \not \in \ell^2(X,w)$, i.e., 
  $$\sum_{x \in X} |G(x)|^2 w(x) = \infty.$$
 \end{itemize}
 If $(X,b)$ is connected, the function $G$ is unique up to multiplication by a constant. It is called the {\em Agmon ground state} for $(X,b)$ and the weight $w$. The existence of optimal Hardy weights is established in \cite{KPP}. On the graph $\Z^d$  with weight $b:\Z^d \times \Z^d \to \{0,1\}$ given by  $b(x,y) = 1$ if $|x-y| = 1$ and $b(x,y) = 0$ else,  they construct an optimal Hardy weight provided that $d \geq 3$.
 
 Let $(X,b)$ be connected and suppose that $w$ is an optimal Hardy weight. Let $\mu:X \to (0,\infty)$ a weight such that the Agmon ground state $G$ satisfies $G \in \ell^2(X,\mu)$ and let $V:= - w \mu^{-1}$. Since $w$ is a Hardy weight, the form 
 $$Q^c_V(\varphi) = \frac{1}{2} \sum_{x,y \in X} b(x,y) |\varphi(x) - \varphi(y)|^2 -  \sum\limits_{x \in X} |\varphi (x)|^2w(x), \quad \varphi \in C_c(X),$$
 is nonnegative.  According to Theorem~\ref{theorem:existence of realizations for scalar operators} it is closable on $\ell^2(X,\mu)$. We prove that its closure $Q^0 = Q^0_{\mu, V}$ is not a restriction of $\cQ_V$. Since $G \not \in \ell^2(X,w)$, it suffices to prove $G \in D(Q^0)$ and $Q^0(G) = 0$.
 
 Let $(e_n)$ a sequence in $C_c(X)$ as in the definition of optimal Hardy weights and consider $f_n : = (e_n \vee0) \wedge G $, which also has compact support. The choice of $\mu$ and Lebesgue's dominated convergence theorem imply that $(f_n)$ converges in $\ell^2(X,\mu)$ towards $G$.  Moreover, the $\ell^2$-lower semicontinuity of $Q^0$ and  it satisfying the first Beurling-Deny criterion imply
 $$Q^0(f_n)^{1/2} \leq \liminf_{m \to \infty} Q^0((e_n \vee 0) \wedge e_m)^{1/2} \leq \liminf_{m \to \infty} \left(Q^0(e_n)^{1/2} + Q^0(e_m)^{1/2} \right) =  Q^0(e_n)^{1/2},$$
 see Lemma~\ref{lemma:maxima and minima energy inequality}. Hence, $(f_n)$ is also a sequence as in the definition of optimal Hardy weights, which additionally converges in $\ell^2(X,\mu)$ towards $G$. These properties and the inequality $Q^0(f_n - f_m)^{1/2} \leq Q^0(f_n)^{1/2} +   Q^0(f_m)^{1/2}$  show that $(f_n)$ is Cauchy with respect to the form norm. Since $Q^0$ is closed, it follows that $G \in D(Q^0)$ and 
 $$Q^0(G) = \lim_{n \to \infty} Q^0(f_n) = 0.$$
 This finishes the proof.

%
\end{example}

\section{Bounded realizations of $\cM$ and $\cH$} \label{section:bounded realizations}
In this  section we discuss when $\cM$ and $\cH$ have bounded realizations.  For this  the  function $B = B_{\mu,W}:X \to [0,\infty)$ that is given by  
$$B(x)  =  \sup\{|\mu(x)^{-1} \deg(x) + \langle W_x \xi,\xi\rangle| \mid  \xi \in E_x \text{ with } |\xi| = 1\}, \quad x \in X, $$
plays an important role. Our main theorem regarding bounded realizations reads as follows.
\begin{theorem}
   The following assertions are equivalent.
 \begin{enumerate}[(i)]
  \item The function $B$ is bounded and  $W \in \cA_{\mu, \Phi; E} \cap \cA_{\mu, - \Phi; E}$.
  \item The form $\cQ_E^c = \cQ^c_{\Phi, W; E}$ is bounded on $\ell^2(X,\mu; E)$.
  \item $\cM$ has a bounded realization.
  \item  $D(M^{\mathrm{max}}) = \ell^2(X,\mu; E)$.
  \item The operator $M^{\mathrm{max}}$ is  a bounded   realization of $\cM$. 
 \end{enumerate}
 If the above are satisfied, then (FC) holds and $M^\mathrm{min}$ is essentially self-adjoint.

\end{theorem}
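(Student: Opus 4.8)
The plan is to run the chain of implications (i)$\Leftrightarrow$(ii)$\Rightarrow$(iv)$\Rightarrow$(v)$\Rightarrow$(iii)$\Rightarrow$(ii), and to read off (FC) and the essential self-adjointness of $M^{\mathrm{min}}$ from the step (ii)$\Rightarrow$(iv). Two elementary computations do most of the work, and I would record them first. Testing $\cQ^c_E$ against the section $\delta_{x,\xi}$ supported at a single vertex $x$ with unit value $\xi\in E_x$ gives $\cQ^c_E(\delta_{x,\xi})=\deg(x)+\langle W_x\xi,\xi\rangle\mu(x)$ while $\|\delta_{x,\xi}\|_{2;E}^2=\mu(x)$, so a bound $|\cQ^c_E(\varphi)|\le C\|\varphi\|_{2;E}^2$ on $\Gamma_c(X;E)$ forces $|\mu(x)^{-1}\deg(x)+\langle W_x\xi,\xi\rangle|\le C$ for all $x$ and all unit $\xi$, i.e. $B\le C$. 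Second, applying the parallelogram identity to $\varphi(x)$ and $\Phi_{x,y}\varphi(y)$ yields, for every $\varphi\in\Gamma_c(X;E)$,
\[
 \cQ^c_{\Phi,W;E}(\varphi)+\cQ^c_{-\Phi,W;E}(\varphi)=2\sum_{x\in X}\bigl(\mu(x)^{-1}\deg(x)\,|\varphi(x)|^2+\langle W_x\varphi(x),\varphi(x)\rangle\bigr)\mu(x),
\]
whose right-hand side is bounded in modulus by $2\sum_{x\in X}B(x)|\varphi(x)|^2\mu(x)\le 2\|B\|_\infty\|\varphi\|_{2;E}^2$ whenever $B$ is bounded. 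This identity is exactly why assertion (i) must refer to both connections $\Phi$ and $-\Phi$.

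For (i)$\Leftrightarrow$(ii) I would combine these facts. If (i) holds, then $\cQ^c_{\Phi,W;E}$ and $\cQ^c_{-\Phi,W;E}$ are both bounded below by admissibility and $B$ is bounded, so the displayed identity turns the lower bound for $\cQ^c_{-\Phi,W;E}$ into an upper bound for $\cQ^c_E=\cQ^c_{\Phi,W;E}$, giving (ii). Conversely, if $\cQ^c_E$ is bounded, then the $\delta_{x,\xi}$-test gives $B$ bounded, boundedness of $\cQ^c_E$ gives $W\in\cA_{\mu,\Phi;E}$, and the identity then shows $\cQ^c_{-\Phi,W;E}$ is bounded, in particular bounded below, so $W\in\cA_{\mu,-\Phi;E}$; thus (i).

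For (ii)$\Rightarrow$(iv) I would argue as follows. A bounded Hermitian form on the dense subspace $\Gamma_c(X;E)$ extends by continuity to a bounded Hermitian form on $\ell^2(X,\mu;E)$, so its closure $Q^0_E$ has domain $\ell^2(X,\mu;E)$ and the associated operator $M^0$ is bounded and everywhere defined. For $\psi\in\Gamma_c(X;E)$, Green's formula (Lemma~\ref{lemma:greens formula forms}) gives $\langle\delta_{x,\xi},M^0\psi\rangle_{2;E}=\cQ^c_E(\delta_{x,\xi},\psi)=(\delta_{x,\xi},\cM\psi)_E$ for all $x$ and unit $\xi$, whence $M^0\psi=\cM\psi$ pointwise; since $M^0\psi\in\ell^2(X,\mu;E)$ this proves $\cM\Gamma_c(X;E)\subseteq\ell^2(X,\mu;E)$, i.e. (FC) holds by Lemma~\ref{lemma:characterizing (FC)}. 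Hence $M^0$ extends $M^{\mathrm{min}}$, and taking adjoints together with Proposition~\ref{proposition:realization finiteness condition}(a) gives $M^0=(M^0)^*\subseteq(M^{\mathrm{min}})^*=M^{\mathrm{max}}$; as $M^0$ is everywhere defined this forces $M^0=M^{\mathrm{max}}$ and $D(M^{\mathrm{max}})=\ell^2(X,\mu;E)$, which is (iv). Moreover $\overline{M^{\mathrm{min}}}=(M^{\mathrm{min}})^{**}=(M^{\mathrm{max}})^*=(M^0)^*=M^0=M^{\mathrm{max}}$, which is self-adjoint, so $M^{\mathrm{min}}$ is essentially self-adjoint and its closure $M^{\mathrm{max}}$ is bounded. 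This simultaneously establishes the final addendum.

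The cycle is then closed by routine steps. For (iv)$\Rightarrow$(v): under (iv) every $\delta_{x,\xi}$ lies in $D(M^{\mathrm{max}})$, so (FC) holds, hence $\ell^2(X,\mu;E)\subseteq\cF_E$ and $M^{\mathrm{max}}=(M^{\mathrm{min}})^*$ is an everywhere defined adjoint of a densely defined symmetric operator, therefore bounded and self-adjoint; it satisfies (R1)--(R3) by construction together with $\Gamma_c(X;E)\subseteq D(M^{\mathrm{max}})$, so it is a bounded realization. The implication (v)$\Rightarrow$(iii) is trivial. For (iii)$\Rightarrow$(ii): a bounded realization $M$ has $D(M)=\ell^2(X,\mu;E)\supseteq\Gamma_c(X;E)$, so $M\varphi=\cM\varphi\in\ell^2(X,\mu;E)$ for $\varphi\in\Gamma_c(X;E)$, which gives (FC), and Green's formula (Lemma~\ref{lemma:greens formula forms}) then yields $\cQ^c_E(\varphi)=(\varphi,\cM\varphi)_E=\langle\varphi,M\varphi\rangle_{2;E}$, bounded by $\|M\|\,\|\varphi\|_{2;E}^2$. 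I expect the main obstacle to be the orchestration of (FC): since boundedness of $\cQ^c_E$ is a priori purely form-theoretic, one has to manufacture (FC) from it, through the pointwise identity $M^0\psi=\cM\psi$, before Proposition~\ref{proposition:realization finiteness condition}(a) can be used to identify $M^0$ with $M^{\mathrm{max}}$, and this sequencing must be arranged carefully to avoid circular reasoning. The parallelogram identity linking the $\Phi$- and $(-\Phi)$-forms requires a small observation but is otherwise routine, and the operator-theoretic facts invoked (an everywhere defined closed operator is bounded; a symmetric operator with everywhere defined adjoint is essentially self-adjoint) are standard.
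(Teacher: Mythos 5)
Your proposal is correct and follows essentially the same route as the paper: the same cycle of implications, the same $\delta_{x,\xi}$ test, the same parallelogram identity relating $\cQ^c_{\Phi,W;E}$ and $\cQ^c_{-\Phi,W;E}$ (the paper's identity $(\heartsuit)$), and the same use of Green's formula together with Proposition~\ref{proposition:realization finiteness condition} to extract (FC) and identify $M^0$ with $M^{\mathrm{max}}$. The only cosmetic difference is in (iv)$\Rightarrow$(v), where you invoke the Hellinger--Toeplitz-type fact directly while the paper runs the closed graph theorem plus a continuity extension of the symmetry identity; these are equivalent.
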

\begin{proof}
We let $\mathrm{Deg} = \mu^{-1} \deg$. With the same symbol we denote the bundle endomorphism that acts upon $\Gamma(X; E)$ by pointwise multiplication with $\mathrm{Deg}$.    For $\varphi \in \Gamma_c (X; E)$ we note the identity
\begin{align*}
 \cQ^c_{\Phi,W; E}(\varphi)  = 2q_{\mathrm{Deg} + W}(\varphi) -  \cQ^c_{-\Phi,W; E}(\varphi).\tag{$\heartsuit$} \label{equation:magnetic identity}
\end{align*}

(i) $\Rightarrow$ (ii): The identity \eqref{equation:magnetic identity} and the lower semi-boundedness of $\cQ^c_{\Phi,W; E}$ and $\cQ^c_{-\Phi,W; E}$ imply the existence of $C \geq 0$ such that
$$- C\|\varphi\|_{2;\, E}^2 \leq \cQ^c_{\Phi,W; E}(\varphi) \leq  2q_{\mathrm{Deg} + W}(\varphi)  + C \|\varphi\|_{2;\, E}^2. $$
Since $q_{\mathrm{Deg} + W}(\varphi) \leq q_B(|\varphi|) \leq \sup B \|\varphi\|_{2;\, E}^2$, we arrive at (ii).

(ii) $\Rightarrow$ (i): Clearly, the boundedness of forms implies  lower semi-boundedness. Hence, it suffices to prove that the function $B$ and the form $\cQ^c_{-\Phi, W; E}$ are bounded. For $x \in X$ and $\xi \in E_x$ we denote by $\delta_{x,\xi}$ the finitely supported section with $\delta_{x,\xi}(x) = \xi$ and $\delta_{x,\xi}(y) = 0$ for $y \neq x$.  If $|\xi| = 1$, then
$$\cQ^c_{\Phi, W; E}(\delta_{x,\xi}) = \deg(x) + \as{W_x\xi,\xi} \mu(x).$$
From the boundedness of $\cQ^c_{\Phi, W; E}$ we infer the existence of $C \geq 0$ such that
$$|\cQ^c_{\Phi, W; E}(\delta_{x,\xi})| \leq C\|\delta_{x,\xi}\|_{2; E}^2 = C \mu(x).$$
Combining both inequalities yields that $B$ is bounded. With this at hand, the  boundedness of $\cQ^c_{-\Phi, W; E}$ follows from the boundedness of $\cQ^c_{\Phi, W; E}$ and the identity \eqref{equation:magnetic identity}.

(ii) $\Rightarrow$ (iv): Since $\cQ^c_{\Phi, W; E}$ is bounded, its closure $Q^0$ is a continuous quadratic form on $\ell^2(X,\mu; E)$ and the associated self-adjoint operator $M^0$ is bounded. For $\varphi,\psi \in \Gamma_c(X; E)$ Green's formula (Lemma~\ref{lemma:Green's formula general}) implies
$$\langle \varphi,M^0 \psi \rangle_{2; E} = Q^0(\varphi,\psi) = \cQ^c_{\Phi, W; E}(\varphi,\psi) = (\varphi, \cM \psi)_E.$$
This shows  (FC), i.e., $\cM \Gamma_c(X; E) \subseteq \ell^2(X,\mu; E)$,  and that $M^0$ is an extension of $M^\mathrm{min}$. Therefore, $M^0 = (M^0)^*$ is a restriction of $(M^\mathrm{min})^* = M^\mathrm{max}$; for the last equality we used Proposition~\ref{proposition:realization finiteness condition}. We arrive at $\ell^2(X,\mu; E) = D(M^0) \subseteq D(M^\mathrm{max})$.

(iv) $\Rightarrow$ (v): Assertion (iv) implies $\cM \Gamma_c(X; E) \subseteq \ell^2(X,\mu; E)$ and so (FC) holds. It follows from Proposition~\ref{proposition:realization finiteness condition} that $M^\mathrm{max}$ is closed. Since by assumption $D(M^\mathrm{max}) = \ell^2(X,\mu; E)$, the closed graph theorem implies that $M^\mathrm{max}$ is continuous. It remains to prove that $M^\mathrm{max}$ is self-adjoint. Green's formula (Lemma~\ref{lemma:Green's formula general}) and (FC) yield
$$\langle M^\mathrm{max} f,g \rangle_{2; E} = \langle f,  M^\mathrm{max}  g \rangle_{2; E} $$
for $f,g \in \Gamma_c(X; E)$.  By continuity this identity extends to $f,g \in \ell^2(X,\mu; E)$.

(v) $\Rightarrow$ (iii): This is trivial.


(iii) $\Rightarrow$ (ii): Let $M$ be a bounded realization of $\cM$. For $\varphi \in \Gamma_c(X; E)$ Green's formula (Lemma~\ref{lemma:Green's formula general}) implies
$$|\cQ^c_{\Phi, W; E}(\varphi)| = |(\varphi, \cM \varphi)_E|  = |\langle \varphi, M \varphi \rangle_{2; E}| \leq \|M\| \|\varphi\|_{2; E}^2.$$
This proves (ii).

Suppose now that one of the assertions holds. That they imply (FC) was proven along the way. Moreover, (v) shows that $M^\mathrm{max}$ is self-adjoint. According to Proposition~\ref{proposition:realization finiteness condition} it satisfies $M^\mathrm{max} = (M^\mathrm{min})^*$, so that $M^\mathrm{min}$ is essentially self-adjoint.
\end{proof}
\begin{remark}
   As remarked in Subsection~\ref{subsection:admissible endomorphisms and domination}, in general it is hard to determine whether a given endomorphism  $W$ belongs to $\cA_{\mu, \Phi; E} \cap \cA_{\mu, - \Phi; E}$ or not. Proposition~\ref{proposition:semiboundedness magnetic forms and domination} gives the sufficient condition $\Wm \in \cA_{\mu}$, which might be easier to check. Note that this is always satisfied if $W \geq 0$.
\end{remark}
For scalar  Schrödinger operators $\cH_{\mu,V}$ the function $B$ can be easily computed.  It is given by $B  = |\mu^{-1} \deg  + V|.$ Moreover, the lower bound on the spectrum of the endomorphism $\Wm$ is given by $V$ itself, cf. the discussion after the definition of $\Wm$ in Subsection~\ref{subsection:Nonnegative endomorphisms and small perturbations}.  Therefore, the theorem and the previous remark yield the following.
\begin{corollary}
 The following assertions are equivalent.
 \begin{enumerate}[(i)]
  \item The function $\mu^{-1} \deg  + V$ is bounded and $V \in \cA_{\mu}$.
  \item $\cQ^c = \cQ_V^c$ is bounded on $\ell^2(X,\mu)$
  \item $\cH$ has a bounded realization.
  \item $H^\mathrm{max}$ is a bounded realization of $\cH$.
  \item  $D(H^\mathrm{max})= \ell^2(X,\mu)$.
 \end{enumerate}
\end{corollary}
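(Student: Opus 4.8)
The plan is to obtain the corollary as the scalar specialization of the preceding theorem. Apply that theorem with the Hermitian bundle $E = (\IC)_{x \in X}$, the trivial connection $\Phi = \mathrm{Id}$ (equivalently $\theta = 0$), and $W$ the self-adjoint bundle endomorphism acting as multiplication by the real-valued potential $V$. For these data $\cM = \cH$, $M^{\mathrm{max}} = H^{\mathrm{max}}$, $M^{\mathrm{min}}$ is the restriction of $\cH$ to $C_c(X) = \Gamma_c(X; (\IC))$, and $\cQ^c_{\Phi, W; E} = \cQ^c_V = \cQ^c$. Since each fibre $E_x = \IC$ is one-dimensional, the supremum over unit vectors in the definition of $B$ collapses to an absolute value, so $B = |\mu^{-1}\deg + V|$, and likewise $\Wm = V$; both facts are recorded in the discussion preceding the corollary. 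Hence ``$B$ is bounded'' is literally ``$\mu^{-1}\deg + V$ is bounded'', and assertions (ii), (iii), (iv), (v) of the theorem become, verbatim up to reordering (iv) and (v), assertions (ii)--(v) of the corollary.

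The only point that requires an argument is that assertion (i) of the theorem reduces to assertion (i) of the corollary in this setting. In the scalar case the theorem's hypothesis (i) reads: $\mu^{-1}\deg + V$ is bounded and $W \in \cA_{\mu, \mathrm{Id}; (\IC)} \cap \cA_{\mu, -\mathrm{Id}; (\IC)}$. By definition $\cA_{\mu, \mathrm{Id}; (\IC)} = \cA_{\mu}$, so this already contains ``$V \in \cA_{\mu}$'', which yields the implication from the theorem's (i) to the corollary's (i). For the reverse implication, assume the corollary's (i), so that $\Wm = V \in \cA_{\mu}$. Then Proposition~\ref{proposition:semiboundedness magnetic forms and domination} guarantees that $\cQ^c_{\Phi, W; E}$ is lower semi-bounded on $\ell^2(X,\mu; E)$ for \emph{every} unitary connection $\Phi$ on $E$, in particular for $\Phi = -\mathrm{Id}$; hence $W \in \cA_{\mu, -\mathrm{Id}; (\IC)}$ as well, and the corollary's (i) implies the theorem's (i). Combining the two directions with the translations of (ii)--(v) above shows that the five assertions of the corollary are pairwise equivalent.

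I do not expect a genuine obstacle here: the corollary is a dictionary translation of the theorem. The one subtlety worth flagging in the write-up is that, for scalar potentials, ``$V \in \cA_{\mu}$'' already forces membership in $\cA_{\mu, -\mathrm{Id}; (\IC)}$ — this is precisely the scalar instance of Proposition~\ref{proposition:semiboundedness magnetic forms and domination} and is what lets us replace the intersection $\cA_{\mu, \mathrm{Id}; (\IC)} \cap \cA_{\mu, -\mathrm{Id}; (\IC)}$ of the general theorem by the single condition $V \in \cA_{\mu}$. For a genuinely bundle-valued endomorphism $W$ the conditions $W \in \cA_{\mu, \Phi; E}$ and $\Wm \in \cA_{\mu}$ need not agree, which is exactly why the general theorem must be stated with the intersection rather than with a single hypothesis on $\Wm$.
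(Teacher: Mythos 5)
Your proposal is correct and follows essentially the same route as the paper: the paper also obtains the corollary by specializing the boundedness theorem to $E=(\IC)_{x\in X}$, $\Phi=\mathrm{Id}$, noting $B=|\mu^{-1}\deg+V|$ and $\Wm=V$, and invoking Proposition~\ref{proposition:semiboundedness magnetic forms and domination} (as recorded in the remark preceding the corollary) to see that $V\in\cA_\mu$ already yields lower semi-boundedness for every unitary connection, in particular for $-\mathrm{Id}$, so the intersection condition of the theorem collapses to $V\in\cA_\mu$. Your explicit verification of this last reduction is exactly the point the paper delegates to that remark.
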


\begin{remark} 
 \begin{enumerate}
  \item  For Schrödinger operators with nonnegative potentials this characterization of boundedness is contained in \cite{KL2}.
  \item Example~\ref{example:adjacency matrix} shows that the assumption  $W \in \cA_{\mu, \Phi; E} \cap \cA_{\mu, -\Phi; E}$ in (i) cannot be weakened.  In the example   we constructed a graph  with counting measure $\mu$ (i.e. $\mu(x) = 1, x \in X$) such that the scalar magnetic Schrödinger operator with magnetic field $\theta = -\pi$ and potential $-\deg$ (the adjacency operator)  is bounded from above but not from below on $\ell^2(X,\mu)$. As discussed there, this means that $- \deg \in  \cA_{\mu, (-\mathrm{Id});(\IC)}$ but $-\deg \not \in \cA_{\mu} = \cA_{\mu, (\mathrm{Id}); (\IC)}$. Moreover, it satisfies $B(x) = |\mu(x)^{-1}\deg(x) - \deg(x)| = 0$.
 \end{enumerate}

\end{remark}

\section{Uniqueness of realizations} \label{section:uniqueness}

In this section we discuss two criteria that guarantee the uniqueness of  realizations of $\cM$ and $\cH$. More precisely, we prove  the absence of nonnegative subsolutions for scalar operators and then extend this to magnetic operators with the help of Kato's inequality. That this in turn yields uniqueness of realizations is guaranteed by the following lemma.

\begin{lemma}[Abstract criterion for uniqueness] \label{lemma:uniqueness abstract tool}
 Let $V \in C(X)$ real-valued. Assume that there exists $C \in \R$ such that all nonnegative $f \in \ell^2(X,\mu) \cap \cF$ with $\cH f \leq C f$ satisfy $f = 0$.
 \begin{enumerate}[(a)]
  \item If $W \geq V$, then $\cM$ has at most one realization. If, moreover, $\Wm \in \cA_{\mu}$, then $\cM$ has exactly one realization.
  \item If $W \in \cA_{\mu, \Phi; E}$ with $W \geq V$ and (FC) holds, then $M^\mathrm{min}$ is essentially self-adjoint.
 \end{enumerate}

\end{lemma}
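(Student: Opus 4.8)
The plan is to reduce both statements to a single observation: under the stated hypothesis, the maximal restriction $M^{\mathrm{max}}$ (resp. $H^{\mathrm{max}}$) has a trivial ``deficiency'' at the spectral parameter $-C$, and any realization sits between the form-closure $M^0$ and $M^{\mathrm{max}}$. The crucial link between the scalar hypothesis and the magnetic setting will be the Corollary to Kato's inequality (solutions of $\cM f = \lambda f$ yield nonnegative subsolutions $\cH|f|\le\lambda|f|$), applied at $\lambda = -C$.

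For part (b), assume (FC), $W\in\cA_{\mu,\Phi;E}$ and $W\ge V$. By Proposition~\ref{proposition:realization finiteness condition}(b), $\cQ^c_E$ is lower semi-bounded and closable, $M^0$ is a realization, and $(M^{\mathrm{min}})^* = M^{\mathrm{max}}$. Essential self-adjointness of $M^{\mathrm{min}}$ is equivalent to $M^0 = M^{\mathrm{max}}$, which in turn follows once we show $M^{\mathrm{max}}$ is symmetric, or equivalently that $\ker(M^{\mathrm{max}} + C) = \{0\}$ for $C$ large enough (we may enlarge $C$ so that $-C < \lambda_0(M^0)$, using that shifting $V$ and $W$ by a constant changes nothing). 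So let $f\in D(M^{\mathrm{max}})\subseteq\cF_E$ with $\cM f = -Cf$. Then $f\in\ell^2(X,\mu;E)$, and the Corollary to Kato's inequality gives $\cH|f|\le -C|f|$ with $|f|\in\cF\cap\ell^2(X,\mu)$ nonnegative. The hypothesis forces $|f| = 0$, hence $f = 0$. Thus $\ker(M^{\mathrm{max}} + C) = \{0\}$; since $M^{\mathrm{min}}$ is semi-bounded (as $W\in\cA_{\mu,\Phi;E}$) and symmetric, and its adjoint $M^{\mathrm{max}}$ has trivial kernel at a point below the bottom of the Friedrichs realization $M^0$, the deficiency spaces vanish and $M^{\mathrm{min}}$ is essentially self-adjoint.

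For part (a), let $M$ be any realization of $\cM$. By (R3), $D(M)\subseteq\cF_E$ and $Mg = \cM g$, so $M$ is a restriction of $M^{\mathrm{max}}$. Pick $C' > \max\{C, -\lambda_0(M)\}$; then $M + C'$ is injective with bounded inverse. Suppose $M_1, M_2$ are two realizations. For $f\in D(M_i)$ with $(M_i + C')f = h$, the difference trick is to take $h\in\ell^2(X,\mu;E)$, let $f_i = (M_i + C')^{-1}h$, and show $f_1 = f_2$: both solve $\cM f = h - C'f$ as elements of $\cF_E$, so $g := f_1 - f_2\in D(M^{\mathrm{max}})$ satisfies $\cM g = -C'g$; by Kato's corollary $\cH|g|\le -C'|g| \le -C|g|$ with $|g|\in\ell^2(X,\mu)\cap\cF$ nonnegative, so $|g| = 0$ and $f_1 = f_2$. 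Hence $(M_1 + C')^{-1} = (M_2 + C')^{-1}$ on a common domain, giving $M_1 = M_2$; this proves at most one realization. If moreover $\Wm\in\cA_{\mu}$, then $\Wm \le W$ ensures $W\in\cA_{\mu,\Phi;E}$ by Proposition~\ref{proposition:semiboundedness magnetic forms and domination}, and Theorem~\ref{theorem:existence of realizations for magnetic operators} produces a realization $M^0$ of $\cM$; combined with uniqueness this gives exactly one.

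The main obstacle is the bookkeeping with the spectral shift: one must check that the hypothesis ``$\cH f\le Cf$ forces $f=0$'' for a \emph{fixed} $C$ is enough, i.e.\ that $C$ can be compared with the lower bounds $\lambda_0(M^0)$, $\lambda_0(M_i)$ of the realizations involved. The point is that Kato's corollary yields $\cH|g| \le -C'|g|$ for whatever large $C'$ we need (subsolution inequalities only get easier to satisfy as the right-hand side decreases, since $|g|\ge 0$), so we may always take $C'\ge C$; the hypothesis with constant $C$ then still applies because $\cH|g| \le -C'|g| \le C|g|$ once $-C' \le C$, i.e.\ $C' \ge -C$, which holds as soon as $C' \ge |C|$. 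So no genuine difficulty arises here, only care. A secondary subtlety is justifying, in part~(a) without (FC), that any realization really is a restriction of $M^{\mathrm{max}}$ and that $M_i + C'$ is boundedly invertible — both are immediate from (R1), (R3) and the definition of $M^{\mathrm{max}}$.
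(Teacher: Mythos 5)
Your proposal is correct, and its engine is the same as the paper's: apply the corollary to Kato's inequality to an $\ell^2\cap\cF_E$ solution of $(\cM-\lambda)f=0$ at a parameter $\lambda\le C$ lying below the relevant lower bounds, and let the hypothesis kill $|f|$. Part (a) is essentially identical to the paper's proof (difference of resolvents of two realizations). In part (b) you take a mildly different final step: the paper shows $D(M^{\mathrm{max}})\subseteq D(M^0)$ directly by comparing $f\in D(M^{\mathrm{max}})$ with $g=(M^0-\lambda)^{-1}(\cM-\lambda)f$ and killing $f-g$ with the same Kato argument, whereas you show $\ker\bigl((M^{\mathrm{min}})^*+C'\bigr)=\{0\}$ at a real point strictly below the numerical bound of $M^{\mathrm{min}}$ and invoke the standard criterion (vanishing deficiency indices for semibounded symmetric operators). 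Both routes are valid; the paper's stays inside results already proved in the text (Proposition~\ref{proposition:realization finiteness condition}), while yours imports a classical functional-analytic fact but does not need $M^0$ beyond semiboundedness of $\cQ^c_E$. One bookkeeping remark: in the body of (a) the choice $C'>\max\{C,-\lambda_0(M)\}$ does not by itself guarantee $-C'\le C$ when $C<0$; the condition $C'\ge |C|$ from your closing paragraph is the one actually needed, and it matches the paper's phrasing of taking $\lambda\le C$ below both lower bounds.
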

\begin{proof}
(a): Suppose that $\cM$ has two realizations $M_1$ and $M_2$. Since both are by definition lower semi-bounded, their resolvents  $(M_1 - \lambda)^{-1}$ and $(M_2 - \lambda)^{-1}$ exist for $\lambda$ small enough. Let such a $\lambda$ with $\lambda \leq C$ be given and let $g \in \ell^2(X,\mu; E)$. Since both are realizations, the function 
$$f:= (M_1 - \lambda)^{-1}g - (M_2 - \lambda)^{-1}g$$
belongs to $\ell^2(X,\mu; E) \cap \cF_E$ and satisfies $(\cM - \lambda)f = 0$.  Kato's inequality (Lemma~\ref{lemma:Katos inequality}) and $W \geq V$ imply  $|f| \in  \ell^2(X,\mu) \cap \cF$ and $\cH |f| \leq \lambda |f|$. Since $\lambda \leq C$ and $|f| \geq 0$, we obtain $ \cH |f| \leq C |f|$. By our assumption this implies $|f| = 0$, i.e., $f = 0$. Hence, the resolvents agree and we conclude $M_1 = M_2$. If $\Wm \in \cA_{\mu}$, the existence of realizations is guaranteed by Theorem~\ref{theorem:existence of realizations for magnetic operators}.

(b) We need to prove that $(M^\mathrm{min})^*$ is self-adjoint. According to Proposition~\ref{proposition:realization finiteness condition} we have $(M^\mathrm{min})^* = M^\mathrm{max}$, where $M^\mathrm{max}$ is the restriction of $\cM$ to $D(M^\mathrm{max}) = \{f \in \ell^2(X,\mu; E) \mid \cM f \in \ell^2(X,\mu; E)\}$. Since $M^0$ is a self-adjoint and $M^\mathrm{max}$ is an extension of $M^0$, see Proposition~\ref{proposition:realization finiteness condition}, it suffices to prove $D(M^\mathrm{max}) \subseteq D(M^0)$ to settle the claim.  Let $f \in D(M^\mathrm{max})$ and for $\lambda$ small enough consider $g:= (M^0-\lambda)^{-1} (\cM-\lambda)f$, which exists since $\cM f \in \ell^2(X,\mu; E)$. Since $M^0$ is a realization of $\cM$, we conclude that
$$(\cM - \lambda)(f-g) = 0.$$
With the same arguments as in (a) we obtain $f = g \in D(M^0)$ and the claim is proven.
\end{proof}

\begin{remark} 
 For graphs satisfying (FC) essential self-adjointness is a stronger property than uniqueness of realizations. This is because realizations are always semi-bounded. Indeed, there are graphs and magnetic Schrödinger operators where $M^\mathrm{min}$ is not semi-bounded (neither from above nor from below) but essentially self-adjoint. For example, similar to Example~\ref{example:adjacency matrix} one can consider the adjacency operator on the disjoint union $\bigsqcup_{n = 2}^\infty K_{n,n}$ of  complete bipartite graphs  on $n$ vertices  $K_{n,n}$. We leave the details to the reader.
 
 It is unclear whether there are graphs with (FC) and $W \in \cA_{\mu, \Phi; E}$ such that the associated magnetic Schrödinger operator has a unique realization but $M^\mathrm{min}$ is not essentially-self-adjoint.  
\end{remark}

\subsection{A measure space criterion}\label{subsection:a measure space criterion}

In this subsection we present a uniqueness criterion that is based on combinatorics and the discreteness of the measure space. It seems to have no counterpart  for operators on smooth spaces.

\begin{theorem}[Measure space criterion for uniqueness] \label{theorem:measure space uniqueness}
Suppose $(X,b)$ has no isolated vertices and  $\Wm \in \cA_{\mu}$. If there exists $\alpha \in \R$ such that for each infinite path $(x_n)$ we have
 \begin{align}\label{equation:divergent sum}
  \sum_{n = 1}^\infty \mu(x_n) \prod_{j = 0}^{n-1} \left(1 + \frac{\mu(x_j)(\Wm(x_j) - \alpha)}{\deg(x_j)}\right)^2 = \infty, \tag{$\clubsuit$}
 \end{align}
 then $\cM$ has exactly one realization. If, additionally, (FC) holds, then $M^{\mathrm{min}}$ is essentially self-adjoint.
\end{theorem}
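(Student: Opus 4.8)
The plan is to derive the statement from the abstract criterion Lemma~\ref{lemma:uniqueness abstract tool}, applied with $V := \Wm$ (the inequality $W \geq \Wm$ holds by the definition of $\Wm$). So I must produce a $C \in \R$ such that every nonnegative $f \in \ell^2(X,\mu) \cap \cF$ with $\cH_{\mu,\Wm} f \leq C f$ vanishes. For such an $f$, multiplying the inequality by $\mu(x)$ and rearranging yields, at every (non-isolated) $x$,
$$
\frac{1}{\deg(x)} \sum_{y \in X} b(x,y) f(y) \;\geq\; c_C(x)\, f(x), \qquad c_C(x) := 1 + \frac{\mu(x)(\Wm(x) - C)}{\deg(x)} = \frac{\Deg(x) + \Wm(x) - C}{\Deg(x)},
$$
and the left-hand side is a probability-weighted average of the numbers $f(y)$, $y \sim x$. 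The subtle point is choosing $C$ so small that simultaneously (i) $c_C(x) > 0$ for all $x$ and (ii) $c_C(x)^2 \geq c_\alpha(x)^2$ for all $x$, where $c_\alpha$ is the coefficient appearing in \eqref{equation:divergent sum}. This is where $\Wm \in \cA_{\mu}$ is used quantitatively: testing the (finite) lower bound $\lambda_0 := \lambda_0(\cQ^c_{\Wm})$ of $\cQ^c_{\Wm}$ against $\delta_x$ gives $\Deg(x) + \Wm(x) \geq \lambda_0$ for all $x$. Hence any $C \leq \min\{\alpha,\, 2\lambda_0 - \alpha\} - 1$ works: (i) because then $C < \lambda_0 \leq \Deg(x)+\Wm(x)$, and (ii) because $(\Deg(x)+\Wm(x)-C)^2 - (\Deg(x)+\Wm(x)-\alpha)^2 = (\alpha-C)\big(2(\Deg(x)+\Wm(x)) - C - \alpha\big) \geq 0$, the first factor being positive since $C < \alpha$ and the second since $\Deg(x)+\Wm(x) \geq \lambda_0 \geq (C+\alpha)/2$. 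In particular \eqref{equation:divergent sum} still holds along every infinite path after replacing $\alpha$ by this $C$, because $\prod_j c_C(x_j)^2 \geq \prod_j c_\alpha(x_j)^2$.

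Fix such a $C$ and suppose, for contradiction, that $f \in \ell^2(X,\mu)\cap\cF$ is nonnegative, $\cH_{\mu,\Wm}f \leq Cf$, and $f(x_0) > 0$ for some $x_0$. I build an infinite path $(x_n)_{n\geq0}$ greedily: $(X,b)$ has no isolated vertices, so $x_n$ has a neighbor, and a probability-weighted average of the values $f(y)$, $y\sim x_n$, cannot be strictly larger than all of them, so some neighbor $x_{n+1}\sim x_n$ satisfies $f(x_{n+1}) \geq \deg(x_n)^{-1}\sum_y b(x_n,y)f(y) \geq c_C(x_n)f(x_n)$. As each $c_C(x_n) > 0$, induction gives $f(x_n) \geq f(x_0)\prod_{j=0}^{n-1}c_C(x_j) > 0$, hence
$$
\sum_{n\geq1} |f(x_n)|^2\mu(x_n) \;\geq\; |f(x_0)|^2\sum_{n\geq1}\mu(x_n)\prod_{j=0}^{n-1}c_C(x_j)^2 \;=\; \infty
$$
by \eqref{equation:divergent sum} applied to this path. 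This contradicts $f\in\ell^2(X,\mu)$, so $f = 0$. Lemma~\ref{lemma:uniqueness abstract tool}(a) then gives that $\cM$ has exactly one realization (existence because $\Wm\in\cA_{\mu}$). If in addition (FC) holds, then $W \in \cA_{\mu,\Phi;E}$ by Proposition~\ref{proposition:semiboundedness magnetic forms and domination}, and Lemma~\ref{lemma:uniqueness abstract tool}(b) yields that $M^{\mathrm{min}}$ is essentially self-adjoint.

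The genuinely routine inputs — the $\delta_x$-evaluation, the existence theorem, and Kato's inequality inside Lemma~\ref{lemma:uniqueness abstract tool} — are all available. The step I expect to be the real obstacle, and the one to treat with care, is the sign bookkeeping of the first paragraph: the naive choice $C = \alpha$ can fail, since if some vertex has $\Deg(x)+\Wm(x) \leq \alpha$ then $\delta_x$ is itself a nonzero nonnegative $\ell^2$-subsolution at level $\alpha$, so one must lower the spectral parameter and then verify that \eqref{equation:divergent sum} is preserved — precisely the place where the standing assumption $\Wm\in\cA_{\mu}$ enters in an essential, quantitative way. A minor further point: the greedily built path may revisit vertices, which is harmless because a path here is not required to consist of distinct vertices.
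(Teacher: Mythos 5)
Your proposal is correct and follows essentially the same route as the paper's proof: reduce to Lemma~\ref{lemma:uniqueness abstract tool} with $V=\Wm$, use $\cQ^c_{\Wm}(\delta_x)$ to get the lower bound $\mathrm{Deg}+\Wm\geq\lambda_0$, lower the spectral parameter so that the coefficients stay nonnegative while \eqref{equation:divergent sum} is preserved, and then build the greedy path whose growth contradicts $f\in\ell^2(X,\mu)$. Your version merely makes the choice of the shifted constant and the (FC)-step via Proposition~\ref{proposition:semiboundedness magnetic forms and domination} more explicit than the paper does.
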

\begin{proof}
 To simplify notation we set $\mathrm{Deg} := \mu^{-1} \deg$. We start with proving the following observation. Under the given assumptions there exists an $\alpha \in \R$ such that  $\mathrm{Deg} + \Wm \geq \alpha.$ and \eqref{equation:divergent sum} holds for all infinite paths. 
 
Since $\cQ^c_{\Wm}$ is lower semi-bounded on $\ell^2(X,\mu)$, the function $\mathrm{Deg} + \Wm$ is bounded from below. This can be easily inferred from the identity 
$$\cQ^c_{\Wm}(\delta_x) = \deg(x) + \Wm(x) \mu(x), \quad x \in X.$$
Thus, the bound $\mathrm{Deg} + \Wm \geq \alpha$ is satisfied for small enough $\alpha \in \R$ . We show that also \eqref{equation:divergent sum} holds for $\alpha$ small enough. Let $\alpha_0 \in \R$ for which \eqref{equation:divergent sum} holds for all infinite paths.  It suffices to show that for all small enough $\alpha$  and all $x \in X$ we have 
$$(\Deg(x) + \Wm(x) - \alpha)^2 \geq (\Deg(x) + \Wm(x) - \alpha_0)^2.$$
This however is a consequence of $\Deg + \Wm$ being bounded from below.

 We now use Lemma~\ref{lemma:uniqueness abstract tool} to deduce uniqueness. Let $\alpha \in \R$ such that \eqref{equation:divergent sum} holds for all infinite paths and $\mathrm{Deg} + \Wm \geq \alpha$, and let $f \in \ell^2(X,\mu) \cap \cF$ nonnegative with $\cH_{\mu, \Wm} f \leq \alpha f$. The definition of $\cH_{\mu,\Wm}$ then shows that for each $x \in X$ we have
 $$(\mathrm{Deg}(x) + \Wm(x) - \alpha) f(x) \leq \frac{1}{\mu(x)} \sum_{y \in X} b(x,y) f(y).$$
 Assume that there exists some $x_0 \in X$ with $f(x_0) > 0$. By the previous inequality there exists some $x_1 \in X$ with $x_1 \sim x_0$ and
 $$\frac{\mathrm{Deg}(x_0) + \Wm(x_0) - \alpha}{\mathrm{Deg}(x_0)} f(x_0) \leq  f(x_1).$$
Iterating this argument and using $\Deg + \Wm - \alpha \geq 0$ yields an infinite path $(x_n)$ such that for each $n \in \N$ we have 
$$f(x_n) \geq f(x_0)\prod_{j = 0}^{n-1} \left(1 + \frac{\Wm(x_j) - \alpha}{\mathrm{Deg}(x_j)}\right). $$
Since  \eqref{equation:divergent sum} holds, this inequality and $f(x_0) > 0$  contradict $f \in \ell^2(X,\mu)$.
%
%
%
%
\end{proof}

The assumption on the divergence of the sum in the previous theorem is a bit technical. For nonnegative endomorphisms it reduces to infinite paths having infinite measure, which is satisfied if $\inf_{x \in X} \mu(x) > 0$.

\begin{corollary}\label{corollary:measure space criterion}
 If $W \geq 0$ and every infinite path $(x_n)$ satisfies
 $$\sum_{n = 1}^\infty \mu(x_n) = \infty,$$
 then $\cM$ has exactly one realization. If, additionally, (FC) holds, then $M^{\mathrm{min}}$ is essentially self-adjoint.
\end{corollary}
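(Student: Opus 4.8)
The plan is to obtain this from the abstract uniqueness criterion Lemma~\ref{lemma:uniqueness abstract tool}, applied with the potential $V=0$. The bookkeeping is easy: $W\geq 0=V$ is exactly the hypothesis $W\geq V$ of the lemma; since $\Wm\geq 0$ the scalar form $\cQ^c_{\Wm}$ is nonnegative, hence lower semi-bounded, so $\Wm\in\cA_\mu$; and since $W\geq 0$ the form $\cQ^c_{\Phi,W;E}$ is nonnegative, so $W\in\cA_{\mu,\Phi;E}$. Thus, once the standing hypothesis of Lemma~\ref{lemma:uniqueness abstract tool} has been verified, part~(a) gives that $\cM$ has exactly one realization and part~(b) gives that $M^{\mathrm{min}}$ is essentially self-adjoint when (FC) holds. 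So the whole task reduces to producing a constant $C\in\R$ such that every nonnegative $f\in\ell^2(X,\mu)\cap\cF$ with $\cH_{\mu,0}f\leq Cf$ vanishes.

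For this I would take $C$ strictly negative and repeat the ``walk'' argument from the proof of Theorem~\ref{theorem:measure space uniqueness}, with the parameter $\alpha$ there now equal to $C<0$. Writing $\mathrm{Deg}=\mu^{-1}\deg$, the inequality $\cH_{\mu,0}f\leq Cf$ reads $(\mathrm{Deg}(x)-C)f(x)\leq\mu(x)^{-1}\sum_{y}b(x,y)f(y)$ for all $x\in X$. If $f(x_0)>0$, then $x_0$ cannot be isolated (otherwise the right-hand side is $0$ while the left-hand side equals $-C\,f(x_0)>0$), and since the right-hand side is $\mathrm{Deg}(x_0)$ times a weighted average of $f$ over the neighbours of $x_0$, there is some $x_1\sim x_0$ with $f(x_1)\geq\bigl(1-C/\mathrm{Deg}(x_0)\bigr)f(x_0)$; the factor exceeds $1$, so $f(x_1)>f(x_0)>0$. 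Iterating (every vertex reached carries a positive, hence a non-isolated, value of $f$) yields an infinite path $(x_n)$ with $f(x_n)\geq f(x_0)\prod_{j=0}^{n-1}\bigl(1-C/\mathrm{Deg}(x_j)\bigr)\geq f(x_0)$ along which $f$ is strictly increasing, so the $x_n$ are pairwise distinct. Then $\|f\|_2^2\geq\sum_n f(x_n)^2\mu(x_n)\geq f(x_0)^2\sum_n\mu(x_n)=\infty$ by the divergence hypothesis, contradicting $f\in\ell^2(X,\mu)$; hence $f=0$.

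There is no genuine obstacle here; the content is already in Theorem~\ref{theorem:measure space uniqueness} and Lemma~\ref{lemma:uniqueness abstract tool}. The only point that deserves a word is that, unlike Theorem~\ref{theorem:measure space uniqueness}, the corollary does not assume $(X,b)$ has no isolated vertices, and taking $C<0$ (rather than $C=0$) is precisely what makes the argument above immune to this, since it both prevents the constructed walk from stalling at an isolated vertex and forces that walk to be repetition-free. An alternative, if one prefers to cite Theorem~\ref{theorem:measure space uniqueness} verbatim, is to peel off the restriction of $\cM$ to the $\ell^2$-sections supported on the isolated vertices --- there $\cM$ acts as the bounded self-adjoint endomorphism $W\geq 0$, hence is essentially self-adjoint on the compactly supported sections and has a unique realization --- and to apply Theorem~\ref{theorem:measure space uniqueness} to the graph obtained by deleting the isolated vertices, which has no isolated vertices, the same infinite paths, and for which \eqref{equation:divergent sum} holds with $\alpha=0$ because every factor is then $\geq 1$ while $\sum_n\mu(x_n)=\infty$.
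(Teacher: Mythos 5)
Your proposal is correct, and at its core it is the same argument the paper intends: the corollary is meant to follow from Theorem~\ref{theorem:measure space uniqueness} by observing that for $W\geq 0$ one may take $\alpha\leq 0$ in \eqref{equation:divergent sum}, so that every factor is at least $1$ and the divergence of $\sum_n\mu(x_n)$ already gives \eqref{equation:divergent sum}; your main route simply re-runs the theorem's walk argument directly through Lemma~\ref{lemma:uniqueness abstract tool} with $V=0$ and $C<0$ instead of citing the theorem. The one genuine difference is your treatment of isolated vertices: Theorem~\ref{theorem:measure space uniqueness} assumes there are none, while the corollary as stated does not, and a verbatim citation would quietly inherit that hypothesis. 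Your choice $C<0$ (which kills $f$ at isolated vertices and makes the constructed path strictly increasing, hence repetition-free, so the $\ell^2$-contradiction is clean) closes this small gap, and your alternative of splitting off the isolated vertices and applying the theorem to the remaining graph is an equally valid repair. So the two approaches coincide in substance; what yours buys is a self-contained verification that also covers graphs with isolated vertices, at the modest cost of repeating the iteration argument rather than quoting the theorem.
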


\begin{remark}\begin{enumerate}
  \item The most important application of Corollary~\ref{corollary:measure space criterion} is when $\inf_{x \in X} \mu(x) > 0$. In this case, it shows   uniqueness of  realizations for all magnetic Schrödinger operators with nonnegative bundle endomorphism.  
  
  If one path has infinite measure, the whole space has infinite measure. Therefore, Corollary~\ref{corollary:measure space criterion} cannot be used to infer uniqueness of realizations or essential self-adjointness when $\mu(X) < \infty$. 
               \item 
The given abstract criterion for essential self-adjointness in Lemma~\ref{lemma:uniqueness abstract tool} and the basic idea for the proof of Theorem~\ref{theorem:measure space uniqueness} go back to \cite{Woj},  which treats graphs with weights $b \in \{0,1\}$ and the counting measure. In the presented form Corollary~\ref{corollary:measure space criterion} is taken from \cite{KL}, which treats Schrödinger operators with nonnegative potentials. Variants of Theorem~\ref{theorem:measure space uniqueness} are contained in  \cite{Gol2,GKS} for scalar magnetic Schrödinger operators. 

Compared to \cite{GKS}, our Theorem~\ref{theorem:measure space uniqueness} is a bit stronger because we also obtain the existence of realizations by means of Theorem~\ref{theorem:existence of realizations for magnetic operators}.

In \cite{Gol2} only locally finite graphs are considered, but essential self-adjointness of $M^\mathrm{min}$ is proven without assuming it to be lower semi-bounded. In this case (and more generally if (FC) holds), for establishing essential self-adjointness of $M^\mathrm{min}$ without assumptions on $W$ it suffices to prove that for some $\alpha \in \R, \gamma > 0$ all solutions to 
$$((M^\mathrm{min})^* + \alpha \pm \gamma i)f = (M^\mathrm{max} + \alpha \pm \gamma i)f = 0$$
satisfy $f = 0$, see e.g. \cite[Theorem~X.1]{RSII}. If  \eqref{equation:divergent sum} holds for all infinite paths, the vanishing of such solutions can be proven along the same lines as in \cite{Gol2}, where scalar magnetic Schrödinger operators are treated. Since our paper focuses on lower semi-bounded realizations, we refrain from giving details.

              \end{enumerate}

\end{remark}

\subsection{A metric space criterion}\label{subsection:a metric space criterion}

In this subsection we prove a criterion on uniqueness of realizations that is based on intrinsic metrics. The philosophy, which is inspired by corresponding results on manifolds, is the following. A magnetic Schrödinger operator can have unique realizations for two reasons: 1. The space has no boundary so that it is impossible to have different realizations from imposing different boundary conditions. 2. The space has a boundary but a strong growth of the potential (or the endomorphism) forces functions in the domain of the operator to vanish at the boundary. Also in this case boundary conditions can  not lead to different realizations. It turns out that a possible boundary to make this work is the Cauchy boundary with respect to an intrinsic metric. Theorem~\ref{theorem:metric space uniqueness} is a unified approach to both perspectives and Corollary~\ref{corollary:completeness} is a precise form of the first. The vanishing of the boundary (completeness) is replaced by balls with respect to an intrinsic metric being finite.  For path metrics on locally finite graphs this equivalent to completeness by a discrete version of the Hopf-Rinow theorem, see Proposition~\ref{proposition:properties of path metrics}.

A {\em pseudo metric} on $X$ is a symmetric function $\rho:X \times X \to [0,\infty)$ that vanishes on the diagonal and satisfies the triangle inequality. We let $\overline{X}^\rho$ be the completion of $X$ with respect to $\rho$ and $\partial_\rho X := \overline{X}^\rho \setminus X$ the corresponding {\em Cauchy boundary}.  By $D_\rho:X \to [0,\infty]$ we denote the distance to the boundary, i.e., 
$$D_\rho(x) := \rho(x, \partial_\rho X) := \inf\{ \rho(x,z) \mid z \in \partial_\rho X \}.$$
Here we use the convention $D_\rho =\infty$ if $\partial_\rho X = \emptyset$. Note that $D_\rho(x) > 0$ for all $x \in X$ if and only if $\partial_\rho X$ is closed in $\overline{X}^\rho$.

For a graph $(X,b)$ and a weight $\mu$ a pseudo metric $\rho$ on $X$ is called {\em intrinsic (with respect to $b$ and  $\mu$)} if 
$$\sum_{y \in X} b(x,y)\rho(x,y)^2 \leq \mu(x), \text{ for all } x \in X.$$
\begin{remark}
  For regular Dirichlet forms intrinsic metrics were introduced and systematically studied in \cite{FLW}. For graphs and other non-local operators related concepts, so-called adapted metrics, were independently introduced in \cite{Fol1,Hua,MU}. In recent years they have been used to solve several open problems in global analysis on graphs. We refer to the survey \cite{Kel} for a detailed discussion.
\end{remark}

   The possibility that $D_\rho$  is infinite is implicit in the statement of following theorem,  where by convention dividing by infinity yields zero.

\begin{theorem}\label{theorem:metric space uniqueness}
 Let $\rho$ be an intrinsic pseudo metric with the following properties.
 \begin{itemize}
  \item  $\partial_\rho X$ is closed in $\overline{X}^\rho$.
  \item For all $\varepsilon > 0$  all $\rho$-bounded subsets of  $\{x \in X \mid D_\rho(x) \geq \varepsilon\}$ are finite.
 \end{itemize}
 If $\Wm   \geq  \frac{1}{2 D_\rho^2} + V$ with $V \in \cA_{\mu}$, then $\cM$ has exactly one realization. If, additionally, (FC) holds, then $M^{\mathrm{min}}$ is essentially self-adjoint on $\ell^2(X,\mu; E)$.
\end{theorem}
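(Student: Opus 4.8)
\medskip\noindent\emph{Strategy and reduction.} The plan is to reduce the statement to the scalar subsolution criterion of Lemma~\ref{lemma:uniqueness abstract tool} and then to exclude nonnegative $\ell^2$‑subsolutions by a Caccioppoli (ground state transform) estimate with cutoff functions adapted to the intrinsic metric $\rho$. Concretely, put $\widetilde V := \frac{1}{2 D_\rho^2} + V$. Since $\partial_\rho X$ is closed, $D_\rho(x) > 0$ for every $x$, so $\widetilde V$ is a genuine real‑valued potential with $\widetilde V \geq V$; hence $\cQ^c_{\widetilde V} \geq \cQ^c_V$ on $C_c(X)$ and $\widetilde V \in \cA_\mu$, and the same comparison applied to $\Wm \geq \widetilde V \geq V$ gives $\Wm \in \cA_\mu$, whence $W \in \cA_{\mu,\Phi;E}$ for every connection by Proposition~\ref{proposition:semiboundedness magnetic forms and domination}. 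Since also $W \geq \Wm \geq \widetilde V$, Lemma~\ref{lemma:uniqueness abstract tool}, applied with the potential $\widetilde V$, reduces the theorem to: there is a constant $C \in \R$ such that every nonnegative $f \in \ell^2(X,\mu) \cap \cF$ with $\cH_{\mu,\widetilde V} f \leq C f$ vanishes; part~(b) of that lemma then yields essential self‑adjointness of $M^{\mathrm{min}}$ under (FC).

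\medskip\noindent\emph{The master inequality.} Fix $C_0 \geq 0$ with $\cQ^c_V \geq -C_0\|\cdot\|_2^2$ on $C_c(X)$ and set $C := -C_0 - 1$. Let $f$ be as above; since $f \in \cF$, the weighted graph $(X,b^{(f)})$ is defined, and the ground state transform (Lemma~\ref{lemma:ground state transform}, applied to $\cH_{\mu,\widetilde V}$) gives, for all $\varphi \in C_c(X)$,
$$\cQ^c_V(f\varphi) + \frac12\sum_{x\in X}\frac{f(x)^2\varphi(x)^2}{D_\rho(x)^2}\mu(x) = \cQ^c_{\widetilde V}(f\varphi) \leq \cQ^{c,f}(\varphi) + C\|f\varphi\|_2^2 .$$
Bounding $\cQ^c_V(f\varphi) \geq -C_0\|f\varphi\|_2^2$ and using $2f(x)f(y)\leq f(x)^2 + f(y)^2$ together with the symmetry of $b$ to estimate $\cQ^{c,f}(\varphi) \leq \frac12\sum_x f(x)^2\sum_y b(x,y)|\varphi(x)-\varphi(y)|^2$, one arrives at
$$\frac12\sum_{x\in X}\frac{f(x)^2\varphi(x)^2}{D_\rho(x)^2}\mu(x) + \sum_{x\in X}f(x)^2\varphi(x)^2\mu(x) \leq \frac12\sum_{x\in X}f(x)^2\sum_{y\in X}b(x,y)|\varphi(x)-\varphi(y)|^2 ,$$
valid for all $\varphi \in C_c(X)$. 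The aim is to feed in cutoffs for which the right‑hand side becomes negligible while $\sum_x f(x)^2\varphi(x)^2\mu(x) \to \|f\|_2^2$, which forces $f = 0$.

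\medskip\noindent\emph{Cutoffs adapted to $\rho$.} Fix $o \in X$. One tests the master inequality with $\varphi = (\ell_{\varepsilon,\delta}\circ D_\rho)\cdot\chi_{o,R}$, where $\chi_{o,R}$ is the $1$‑Lipschitz spatial cutoff that equals $1$ on the $\rho$‑ball $B_R(o)$, vanishes outside $B_{2R}(o)$, and is affine in $\rho(o,\cdot)$ in between, and $\ell_{\varepsilon,\delta}$ ($0<\varepsilon<\delta$) is the logarithmic profile with $\ell_{\varepsilon,\delta}\equiv 0$ on $[0,\varepsilon]$, $\ell_{\varepsilon,\delta}\equiv 1$ on $[\delta,\infty)$, and $\ell_{\varepsilon,\delta}(t)=\log(t/\varepsilon)/\log(\delta/\varepsilon)$ on $[\varepsilon,\delta]$. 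Both hypotheses on $\rho$ enter: $\partial_\rho X$ closed makes $D_\rho$ positive, and the finiteness condition makes the support of $\varphi$ — contained in $\{D_\rho\geq\varepsilon\}\cap B_{2R}(o)$ — finite, so $\varphi\in C_c(X)$. One estimates $\sum_y b(x,y)|\varphi(x)-\varphi(y)|^2$ using the intrinsic condition $\sum_y b(x,y)\rho(x,y)^2\leq\mu(x)$, the $1$‑Lipschitz property of $D_\rho$, and the elementary bound $\log u\leq u-1$ (which controls the increments of $\ell_{\varepsilon,\delta}\circ D_\rho$ across an edge by $\rho(x,y)/D_\rho$ times $1/\log(\delta/\varepsilon)$). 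Letting $R\to\infty$ annihilates the spatial transition term (as $\|f\|_2^2<\infty$), leaving on the right essentially $\log(\delta/\varepsilon)^{-2}$ times a $D_\rho^{-2}$‑weighted energy of $f$ over $\{D_\rho<\delta\}$. A preliminary application of the master inequality (with $\delta$ fixed and a single profile $\ell_{\varepsilon_0,\delta}$) shows this weighted energy is finite; then letting $\varepsilon\to 0$ kills the right‑hand side, so $f\equiv 0$ on $\{D_\rho\geq\delta\}$ for every $\delta>0$, hence $f\equiv 0$.

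\medskip\noindent\emph{Main obstacle.} The decisive difficulty is the cutoff energy estimate of the last step in the genuinely non‑local setting: a vertex $x$ may have neighbours $y$ for which $\rho(x,y)$ is large compared with $D_\rho(x)$, so the naive local Lipschitz bound for $\ell_{\varepsilon,\delta}\circ D_\rho$ is unavailable and the edge sum must be split according to the size of $\rho(x,y)$ relative to $D_\rho(x)$, the long‑range contributions being absorbed using the intrinsic condition together with the $D_\rho^{-2}$‑term on the left of the master inequality. This is precisely where the argument goes beyond the path‑metric treatment of \cite{MT}, and it is here that the constant $\tfrac12$ in the hypothesis $\Wm \geq \frac{1}{2 D_\rho^2} + V$ is needed.
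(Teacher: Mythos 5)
Your reduction to the scalar subsolution criterion of Lemma~\ref{lemma:uniqueness abstract tool} and your master inequality obtained from the ground state transform (Lemma~\ref{lemma:ground state transform}) are exactly the paper's first two steps (the paper works with $V' = \frac12\max\{1,D_\rho^{-2}\}+V$ and $C=\lambda-1$, which is the same bookkeeping as your $\widetilde V$ and $C=-C_0-1$). The gap is in the cutoff step, which is the heart of the proof and which you yourself flag as the ``decisive difficulty'' without carrying it out. With the logarithmic profile $\ell_{\varepsilon,\delta}\circ D_\rho$, the increment bound you invoke only controls edges whose endpoints both lie in the transition zone and satisfy $\rho(x,y)\ll D_\rho(x)$. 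For a vertex $x$ with $D_\rho(x)$ of order $\varepsilon$ and a neighbour $y$ with $D_\rho(y)\geq\delta$ one only has $|\varphi(x)-\varphi(y)|\le 1$, and after using the intrinsic condition such terms contribute on the order of $\varepsilon^{-2}\sum_{\{D_\rho\lesssim\varepsilon\}}f^2\mu$ to the right-hand side; there is no a priori decay of $f$ near $\partial_\rho X$ available to absorb this, because that decay is precisely what is being proven. Likewise, your ``preliminary application'' of the master inequality with a fixed profile only gives finiteness of the $D_\rho^{-2}$-weighted energy on $\{D_\rho\geq\varepsilon_0\}$, not uniformly down to the boundary, so the limit $\varepsilon\to 0$ is not justified. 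As written, the proposal announces the absorption of the long-range and near-boundary contributions but does not prove it, and it is not clear that the constant $\tfrac12$ suffices for the scheme you describe.

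The paper closes the argument by a different, much simpler choice of cutoff that makes the non-local difficulty disappear entirely: $\varphi(x)=F(D_\rho(x))\,G(\rho(o,x))$ with the affine profiles $F(t)=(t-\varepsilon)_+\wedge 1$ and $G(t)=(2-t/R)_+\wedge 1$. Since both factors are $[0,1]$-valued and $\rho$-Lipschitz (with constants $1$ and $1/R$), the product is globally $\rho$-Lipschitz with constant $1+1/R$, so the intrinsic condition yields
\begin{align*}
\sum_{y\in X}b(x,y)\,|\varphi(x)-\varphi(y)|^2\;\leq\;\Bigl(1+\tfrac1R\Bigr)^2\mu(x)
\end{align*}
at \emph{every} vertex, with no splitting into short- and long-range edges and no boundary terms. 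Combining this with $\max\{1,D_\rho(x)^{-2}\}\varphi(x)^2\geq(1-\varepsilon/D_\rho(x))^2$ on $X_{\varepsilon,R}=\{D_\rho\geq\varepsilon,\ \rho(o,\cdot)\leq R\}$ (finite by hypothesis), the master inequality becomes
\begin{align*}
\frac12\sum_{x\in X_{\varepsilon,R}}\bigl(1-\varepsilon/D_\rho(x)\bigr)^2 f(x)^2\mu(x)+\|\varphi f\|_2^2\;\leq\;\frac12\Bigl(1+\tfrac1R\Bigr)^2\|f\|_2^2,
\end{align*}
and letting $\varepsilon\to0$, $R\to\infty$ gives $\|(D_\rho\wedge 1)f\|_2=0$, hence $f=0$ since $D_\rho>0$. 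To make your route work you would have to supply the missing absorption estimate; alternatively, replace the logarithmic profile by the affine one and your argument collapses to the paper's.
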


\begin{proof}
We use Lemma~\ref{lemma:qc satisfies first beurling-deny criterion} to prove the statement. Since constant functions belong to $\cA_{\mu}$, the assumption implies that
$$\Wm \geq \frac{1}{2} \max \left\{1,   \frac{1}{D_\rho^{2}}\right\} + V $$
for some $V \in \cA_{\mu}$ (which is not the same as in the statement of the theorem). We consider the discrete Schrödinger operator $\cH':= \cH_{\mu, V'}$ with potential $V' := \frac{1}{2} \max\{1,   D_\rho^{-2}\} + V$. Using $V \in \cA_{\mu}$ we choose $\lambda \in \R$ such that $\lambda \|\varphi\|^2 \leq \cQ_V^c(\varphi)$ for all $\varphi \in C_c(X)$ and let $C: = \lambda -1$. Furthermore, we let $f \in \cF \cap \ell^2(X,\mu)$ nonnegative with $\cH' f \leq C f$. By Lemma~\ref{lemma:qc satisfies first beurling-deny criterion} it suffices to show $f = 0$.

For $\varphi \in C_c(X)$ the ground state transform (Lemma~\ref{lemma:ground state transform}) yields
\begin{align*}\cQ^c_{V'}(f \varphi) - C \| \varphi f\|^2 &\leq \frac{1}{2} \sum_{x,y \in X} b(x,y) f(x)f(y) |\varphi(x) - \varphi(y)|^2 \\
&\leq \frac{1}{2} \sum_{x \in X}f(x)^2 \sum_{y \in X} b(x,y)  |\varphi(x) - \varphi(y)|^2. 
\end{align*}
Moreover, from the choice of $C$ and the definition of $V'$ we obtain
\begin{align*} 
 \frac{1}{2} \sum_{x \in X} |\varphi(x) f(x)|^2 \max \left\{1, D_\rho(x)^{-2}\right\}\mu(x) + \|\varphi f\|^2\leq  \cQ^c_{V'}(f \varphi) - C \| \varphi f\|^2.
\end{align*}
We choose $\varphi$ to obtain the desired statement from these estimates. We fix a point $o \in X$. Let $0 < \varepsilon, R$ and set 
$$X_{\varepsilon, R} := \{x \in X \mid D_\rho(x) \geq \varepsilon \text{ and } \rho(o,x) \leq R\}.$$
By  assumption these sets are finite. We consider the piecewise affine functions $F: \R \cup\{\infty\} \to \R$, $F(t) = (t - \varepsilon)_+ \wedge 1$  and $G:\R_+ \to \R$, $G(t) = (2 - t/R)_+ \wedge 1$ and we define $\varphi:X \to \R$ by $\varphi(x) := F(D_\rho(x)) G(\rho(o,x)).$ It is straightforward that $\varphi$ is $\rho$-Lipschitz with Lipschitz constant $1 + 1/R$. Moreover, it is supported in $X_{\varepsilon, 2R}$, which is finite. Combining these observations with the above inequalities and using that $\rho$ is intrinsic we arrive at 
$$\frac{1}{2} \sum_{x \in X} |\varphi(x) f(x)|^2 \max \left\{1, D_\rho(x)^{-2}\right\}\mu(x) + \|\varphi f\|^2 \leq \frac{1}{2} \left(1 + \frac{1}{R} \right)^2 \|f\|^2. $$
For $x \in X_{\varepsilon, R}$ we have $\varphi(x) = (D_\rho(x) - \varepsilon) \wedge 1$ and hence
$$\max \left\{1, D_\rho(x)^{-2}\right\}|\varphi(x)|^2 \geq (1 - \varepsilon/D_\rho(x))^2.$$
This amounts to
$$\frac{1}{2} \sum_{x \in X_{\varepsilon, R}} (1 - \varepsilon/D_\rho(x))^2 |f(x)|^2 \mu(x) + \|\varphi f\|_2^2 \leq \frac{1}{2} \left(1 + \frac{1}{R} \right)^2 \|f\|^2_2.  $$
Now we let $\varepsilon \to 0+$ and $R \to \infty$. Since $\partial_\rho X$ is closed, we have $D_\rho(x) > 0$ for all $x \in X$, such that under these limits $X_{\varepsilon, R} \nearrow X$ and  $\varphi \to  D_\rho \wedge 1$ pointwise.  From this we obtain $\|(D_\rho \wedge 1) f\|_2 = 0$ and   we arrive at $f = 0$, which was to be proven. 
\end{proof}

\begin{corollary}\label{corollary:completeness}
  Let $\rho$ be an intrinsic pseudo metric. Assume that all $\rho$-balls are finite. If $\Wm \in  \cA_{\mu}$, then $\cM$ has exactly one realization. If, moreover, (FC) holds, then $M^\mathrm{min}$ is essentially self-adjoint.
\end{corollary}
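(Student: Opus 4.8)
The plan is to derive Corollary~\ref{corollary:completeness} directly from Theorem~\ref{theorem:metric space uniqueness}. The key point is that if every $\rho$-ball is finite, then the Cauchy boundary $\partial_\rho X$ is empty, so that the two geometric hypotheses of the theorem hold for trivial reasons and the required lower bound on $\Wm$ collapses to $\Wm \geq V$ for a suitable $V \in \cA_\mu$.

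First I would check that $\partial_\rho X = \emptyset$. Let $(x_n)$ be a $\rho$-Cauchy sequence in $X$. By the triangle inequality it is $\rho$-bounded, hence contained in a single $\rho$-ball $B$, which is finite by hypothesis. The set $\{\rho(x,y) \mid x,y \in B\}$ of realized distances is therefore finite; if all of them vanish the sequence plainly converges in $X$, and otherwise, writing $\delta > 0$ for the smallest strictly positive one and applying the Cauchy condition with $\varepsilon = \delta/2$, we obtain $N$ with $\rho(x_n,x_m) = 0$ for all $n,m \geq N$. In either case $\rho(x_n, x_N) \to 0$, so $(x_n)$ converges in $\overline{X}^\rho$ to $x_N \in X$. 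As every Cauchy sequence is of this form, $\overline{X}^\rho = X$ and $\partial_\rho X = \emptyset$.

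With this established the corollary follows at once. The empty boundary is trivially closed in $\overline{X}^\rho$; by the conventions fixed before Theorem~\ref{theorem:metric space uniqueness} one has $D_\rho \equiv \infty$ and $\frac{1}{2 D_\rho^2} \equiv 0$; and for each $\varepsilon > 0$ the set $\{x \in X \mid D_\rho(x) \geq \varepsilon\}$ equals $X$, whose $\rho$-bounded subsets sit inside $\rho$-balls and are thus finite. Both bulleted assumptions of Theorem~\ref{theorem:metric space uniqueness} therefore hold, and taking $V := \Wm \in \cA_\mu$ gives $\Wm \geq \frac{1}{2 D_\rho^2} + V$. The theorem then provides uniqueness of realizations of $\cM$, and essential self-adjointness of $M^\mathrm{min}$ when in addition (FC) holds. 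The only step that is not pure bookkeeping is the implication that all $\rho$-balls being finite forces $\partial_\rho X = \emptyset$, and even this is elementary once one observes that a bounded subset of $X$ realizes only finitely many distance values; so I anticipate no genuine difficulty.
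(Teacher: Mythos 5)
Your proposal is correct and follows exactly the paper's route: finiteness of $\rho$-balls gives completeness, hence $\partial_\rho X = \emptyset$ and $D_\rho \equiv \infty$, after which the two bulleted hypotheses of Theorem~\ref{theorem:metric space uniqueness} hold trivially and one applies it with $V = \Wm$. The only difference is that you spell out the elementary completeness argument (a Cauchy sequence sits in a finite ball, so it is eventually at distance zero from a fixed vertex), which the paper asserts without proof; this is a fine addition but not a different approach.
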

\begin{proof}
 Finiteness of $\rho$-balls implies completeness of $(X,\rho)$ so that $\partial_\rho X = \emptyset$ and $D_\rho \equiv \infty$. With this at hand, the statement follows from the previous theorem. 
\end{proof}

\begin{remark}
\begin{enumerate}

 \item For path metrics (see below) on locally finite graphs finiteness of balls and completeness  coincide, see Proposition~\ref{proposition:properties of path metrics}.  Hence, Corollary~\ref{corollary:completeness} is a discrete version of a classical theorem of Strichartz on manifolds \cite{Str}, which says that on a complete Riemannian manifold Laplacians on functions, forms and tensors are essentially self-adjoint. For Schrödinger operators on open subsets of Euclidean space variants of Theorem~\ref{theorem:metric space uniqueness} are well known (with $D_\rho$   replaced by the distance to the topological boundary of the domain), see e.g. \cite{NN}.
 
 \item  For discrete Schrödinger operators with vanishing potential Corollary~\ref{corollary:completeness} was first proven in \cite{HKMW} and then extended to scalar magnetic Schrödinger operators in \cite{GKS}. Related but somewhat weaker results with additional assumptions on $\mu^{-1}\deg$ or particular metrics are contained in \cite{Tor,Mil}. For magnetic Schrödinger operators on bundles over locally finite graphs Theorem~\ref{theorem:metric space uniqueness} is proven in \cite{MT}, which was inspired by results on Schrödinger operators in \cite{TTV}. In \cite{MT} only path metrics (see below) are considered and the potentials need to be uniformly bounded from below.  Moreover, a somewhat stronger assumption than finiteness of bounded subsets of $\{x \in X \mid D_\rho(x) \geq \varepsilon\}$, called ``regularity of the graph'', is needed. The presented proof of Theorem~\ref{theorem:metric space uniqueness} is a simplified version of the one given in \cite{MT}. 

 \item If $\inf_{x \in X} \mu(x) > 0$ and $W \geq 0$, the criteria  in Subsection~\ref{subsection:a measure space criterion} always yield uniqueness of realizations while Theorem~\ref{theorem:measure space uniqueness} may not be applicable. For finite measures the situation is opposite. In this case, Theorem~\ref{theorem:measure space uniqueness} and its corollary may give uniqueness results while the criteria from Subsection~\ref{subsection:a measure space criterion} fail, see e.g. Example~\ref{example:essential self adjointness on Z} below. As a rule of thumb one can say that Theorem~\ref{theorem:metric space uniqueness} and its corollary are most interesting for the finite measure case. 
\end{enumerate}
\end{remark}

In the remainder of this section we put the assumptions on closedness of $\partial_\rho X$, finiteness of bounded subsets of $\{x \in X \mid D_\rho(x) \geq \varepsilon\}$ and finiteness of bounded subsets of $X$ into perspective. There are basically two kinds of (pseudo) metrics on $X$ for which this is possible; path metrics on locally finite graphs and  metrics  induced from embeddings of $X$ into Euclidean spaces (or more generally complete Riemannian manifolds). 

Let $\sigma: X \times X \to [0,\infty)$ be a symmetric function with $\sigma(x,y) > 0$ if and only if $x \sim y$. The length of a finite path $\gamma = (x_0,x_1,\ldots,x_n)$ with respect to $\sigma$ is defined by
$$L_\sigma(\gamma) := \sum_{i=1}^n \sigma(x_{i-1},x_i).$$
If the graph $(X,b)$ is connected, the associated {\em path pseudo metric $\rho_\sigma:X \times X \to [0,\infty)$} is defined by
$$\rho_\sigma(x,y) := \inf\{ L_\sigma(\gamma) \mid \gamma = (x_0,\ldots,x_n) \text{ is a path with } x_0 = x, x_n = y\}.$$
Pseudo metrics that arise in this way are called {\em path pseudo metrics}. One way to guarantee that   $\rho_\sigma$ is intrinsic (with respect to $b$ and $\mu$) is to demand that 
$$\sum_{y \in X} b(x,y) \sigma(x,y)^2\leq \mu(x), \quad \text{ for all } x \in X.$$
If  $\sigma$ satisfies this property the path pseudo metric $\rho_\sigma$ is called {\em strongly intrinsic}.  One edge weight for which this assumption holds is  $\sigma_H:X \times X \to [0,\infty)$ with $\sigma_H(x,y) := 0$ if $x \not \sim y$ and 
$$\sigma_H(x,y) :=  \min \left\{\frac{\mu(x)}{\deg(x)},\frac{\mu(y)}{\deg(y)} \right\}^{1/2}, \text{ if } x\sim y.$$
The corresponding path pseudo metric  $\rho_{\sigma_H}$ was introduced in \cite{Hua}.

The following lemma  characterizes some of the required properties to apply Theorem~\ref{theorem:metric space uniqueness} and Corollary~\ref{corollary:completeness} for path metrics on locally finite graphs.

\begin{proposition}\label{proposition:properties of path metrics}
Let $(X,b)$ a locally finite connected graph and let $\sigma:X \times X \to [0,\infty)$ symmetric with $\sigma(x,y) > 0$ if and only if $x \sim y$.
\begin{enumerate}[(a)]
 \item $\rho_\sigma$ is a metric on $X$ that induces the discrete topology  and $\partial_{\rho_\sigma} X$ is closed in $\overline{X}^{\rho_\sigma}$.
 \item The following assertions are equivalent.
 \begin{enumerate}[(i)]
  \item All $\rho_\sigma$-balls are finite.
  \item $(X,\rho_\sigma)$ is complete. 
 \end{enumerate}
\end{enumerate}
\end{proposition}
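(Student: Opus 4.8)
The plan is to treat the two parts separately; part (a) is short, and part (b) is a discrete Hopf--Rinow theorem whose nontrivial direction is (ii)$\Rightarrow$(i). Throughout I would introduce, for $x\in X$, the quantity $\varepsilon_x:=\min\{\sigma(x,z)\mid z\sim x\}$. This is a strictly positive real number: local finiteness makes the neighbour set of $x$ finite, it is nonempty because the graph is connected (the case $|X|=1$ being trivial), and $\sigma$ is strictly positive on edges.

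For (a): symmetry, vanishing on the diagonal and the triangle inequality for $\rho_\sigma$ follow at once from reversal and concatenation of paths. Positive definiteness and discreteness both come from one observation: any path from $x$ to a vertex $y\ne x$ begins with an edge at $x$, so its $\sigma$-length is at least $\varepsilon_x$; hence $\rho_\sigma(x,y)\ge\varepsilon_x>0$, and the open ball $B(x,\varepsilon_x)$ equals $\{x\}$, so the metric topology is discrete. For closedness of $\partial_{\rho_\sigma}X$ I would invoke the criterion recalled before Theorem~\ref{theorem:metric space uniqueness}, namely that it is equivalent to $D_{\rho_\sigma}(x)>0$ for all $x$: if a point $\xi\in\partial_{\rho_\sigma}X$ satisfied $\rho_\sigma(x,\xi)<\varepsilon_x$, then any sequence $y_n\to\xi$ in $X$ would eventually have $\rho_\sigma(x,y_n)<\varepsilon_x$, forcing $y_n=x$ and hence $\xi=x\in X$, a contradiction; so $D_{\rho_\sigma}(x)\ge\varepsilon_x>0$.

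For (b), the implication (i)$\Rightarrow$(ii) is immediate: a Cauchy sequence is bounded, hence eventually contained in a finite ball, and in a finite metric space distinct points have distances bounded away from $0$, so the sequence is eventually constant and converges in $X$. For (ii)$\Rightarrow$(i) I would argue by contraposition. Assuming some closed ball $\overline{B}(o,R)$ is infinite, set $R^*:=\sup\{r\ge 0\mid \overline{B}(o,r)\text{ is finite}\}$; monotonicity of balls together with $\overline{B}(o,r)=\{o\}$ for $r<\varepsilon_o$ gives $\varepsilon_o\le R^*\le R<\infty$, and $\overline{B}(o,r)$ is finite for every $r<R^*$. The first substep is to show $\overline{B}(o,R^*)$ is infinite: otherwise local finiteness makes $N(\overline{B}(o,R^*))$ finite, so $m:=\min\{\rho_\sigma(o,a)\mid a\in N(\overline{B}(o,R^*))\setminus\overline{B}(o,R^*)\}>R^*$ (this outer-boundary set being nonempty since $X$ is infinite here), and a first-exit argument along near-shortest paths shows $\{x\mid\rho_\sigma(o,x)<m\}\subseteq\overline{B}(o,R^*)$, so $\overline{B}(o,r)$ is finite for some $r>R^*$, contradicting maximality of $R^*$. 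The second substep is a König's-lemma construction: enumerate $\overline{B}(o,R^*)=\{v_1,v_2,\dots\}$, fix for each $i$ a path $\gamma_i$ from $o$ to $v_i$ of $\sigma$-length at most $R^*+2^{-i}$, and inductively pass to nested infinite index sets $I_1\supseteq I_2\supseteq\cdots$ and vertices $o=w_0\sim w_1\sim w_2\sim\cdots$ such that $\gamma_i$ starts with $w_0,\dots,w_j$ for all $i\in I_j$; the pigeonhole step survives at each level because, the $v_i$ being distinct, all but one of the (infinitely many) remaining paths have at least $j+1$ edges. Letting $i\to\infty$ within $I_j$ yields $\sum_{l<j}\sigma(w_l,w_{l+1})\le R^*$ for every $j$, so the infinite path $(w_j)$ has finite total $\sigma$-length and is therefore Cauchy; since $b(x,x)=0$ forces $w_j\ne w_{j+1}$, it is not eventually constant, hence by discreteness of the topology it does not converge in $X$, and $(X,\rho_\sigma)$ is not complete.

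I expect the main obstacle to be the second substep of (ii)$\Rightarrow$(i): one must be careful that the near-geodesics $\gamma_i$ need not be simple, that distinctness of the $v_i$ is precisely what keeps the alternative ``infinitely many surviving paths still have another edge'' alive at every level, and that the length bound $R^*+2^{-i}$ collapses to the clean bound $R^*$ only in the limit. The first substep, controlling the passage across the finite outer boundary of $\overline{B}(o,R^*)$, is the other place where local finiteness is genuinely used and must be phrased with care (in particular checking the outer boundary is nonempty).
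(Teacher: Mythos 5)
Your proof is correct. For part (a) your argument is essentially the paper's: both use the strictly positive quantity $\min\{\sigma(x,z)\mid z\sim x\}$ (positive by local finiteness) to get positive definiteness and to show small balls around $x$ are singletons, hence the discrete topology; the paper concludes closedness of $\partial_{\rho_\sigma}X$ by noting $\{x\}$ is open even in the completion, while you equivalently verify $D_{\rho_\sigma}(x)>0$ via the criterion stated before Theorem~\ref{theorem:metric space uniqueness} --- a cosmetic difference only. For part (b) the routes genuinely differ: the paper gives no argument at all and simply cites \cite[Theorem~A.1]{HKMW}, whereas you supply a complete self-contained proof of the discrete Hopf--Rinow direction, with the two key steps handled correctly: the first-exit argument across the finite outer neighbourhood $N(\overline{B}(o,R^*))\setminus\overline{B}(o,R^*)$ (nonempty by connectedness together with infiniteness of $X$ --- you should mention connectedness explicitly here, though it is a standing hypothesis) showing $\overline{B}(o,R^*)$ must be infinite, and the K\"onig-type extraction along near-geodesics of length at most $R^*+2^{-i}$, where distinctness of the endpoints $v_i$ rules out more than one path terminating at any given level and local finiteness supplies the pigeonhole; the resulting infinite path has total length at most $R^*$, is Cauchy, and cannot converge in the discrete topology since consecutive vertices differ by (b0). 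What your approach buys is independence from \cite{HKMW}; what the paper's buys is brevity, delegating a known result to its source.
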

\begin{proof}

(a): For $x \in X$ we  let $\sigma_x := \inf\{\sigma(x,y) \mid y \sim x\}.$ Since $(X,b)$ is locally finite, it satisfies $\sigma_x > 0$. If $y \neq x$ we have $\rho_\sigma(x,y) \geq \sigma_x> 0$. Hence, $\rho_\sigma(x,y) = 0$ implies $x = y$ so that $\rho_\sigma$ is a metric.  Moreover, it follows from the definitions that any $\rho_\sigma$-ball of radius less than $\sigma_x$ around $x$ only contains $x$.  This shows that $\{x\}$ is open in $(X,\rho_\sigma)$ and in $(\overline{X}^{\rho_\sigma},\rho_\sigma)$. Thus, $\rho_\sigma$ induces the discrete topology on $X$ and $\partial_{\rho_\sigma} X$ is closed in $\overline{X}^{\rho_\sigma}$.

(b): This is contained in \cite[Theorem~A.1]{HKMW}.
%
\end{proof}

The assumption that bounded subsets of $\{x \in X \mid D_\rho(x) \geq \varepsilon\}$ are finite may or may not be satisfied for path  metrics $\rho$ on locally finite graphs. In \cite{TTVa,MT} it is claimed that all locally finite weighted trees (and more generally graphs of finite first Betti number) have this property. However, the following example shows that this is not true. It was communicated to us by Matthias Keller.
\begin{example}
 On $X: = \{(n,i) \mid n \in \N, i = 1,2\}$  let the graph $b:X \times X \to [0,\infty)$ be given by 
 $$b((n,i),(m,j)) = \begin{cases}
                    1 &\text{if } n = m \text{ and } i \neq j\\
                    \min\{n,m\}^{-2} &\text{if } |n-m| = 1 \text{ and } i = j = 0\\
                    0 &\text{else}
                    \end{cases}.
$$
W                             e consider the path metric $\rho_b$ that is induced by $b$. Then $(X,b)$ consists of the infinite path of finite length $((n,0))_{n \in \N}$ with an edge of length $1$ attached to each of the vertices in the path. In particular, $(X,b)$ is a tree. 

The diameter of $(X,\rho_b)$ is bounded by $1 + \frac{\pi^2}{6}$, hence any of its subsets is bounded. The Cauchy boundary is one point $\partial$ and the distance to it satisfies
$$D_{\rho_b}((n,i)) = \rho_b((n,i),\partial) = i +  \sum_{k = n}^\infty \frac{1}{k^2}.$$
This shows that for $0 < \varepsilon < 1$ we have $\{(n,1) \mid n \in \N\} \subseteq \{(n,i) \in X \mid D_{\rho_b}((n,i)) \geq \varepsilon\}$. Since $\{(n,1) \mid n \in \N\}$ is bounded, bounded subsets of  $\{(n,i) \in X \mid D_{\rho_b}((n,i)) \geq \varepsilon\}$ are not necessarily finite.
\end{example}

 We finish this section by discussing metrics that arise from embeddings into Euclidean spaces.  Let $\iota:X \to \R^n$ be an injective function.  We define the metric $d_\iota:X \times X \to [0,\infty)$ by $d_\iota(x,y) := |\iota(x) -\iota(y)|$. Then $\iota$ is an isometry from $(X,d_\iota)$ to $(\R^n,|\cdot |)$ that maps $X$ to $\iota(X)$.  It is readily verified that it extends uniquely to a surjective isometry $\hat \iota : (\overline{X}^{d_\iota},d_\iota) \to (\overline{\iota(X)},|\cdot|)$,  where $\overline{\iota(X)}$ is the closure of $\iota(X)$ in $\R^n$. Under this map the Cauchy boundary $\partial_{d_\iota} X$ is one-to-one with $\overline{\iota(X)} \setminus  \iota(X)$. In particular,
 $$D_{d_\iota}(x) = \inf \{|\iota(x) - a| \mid a \in \overline{\iota(X)} \setminus \iota(X)\}.$$
 We say that $\lim_{|x| \to \infty} \iota(x) = \infty$ if for every $R > 0$ there exists a finite $K \subseteq X$ such that $|\iota(x)| \geq R$ for all $x \in X \setminus K$.  Recall that $D \subseteq \R^n$ is called {\em discrete} if every $x \in D$ has an open neighborhood $U$ such that $U \cap D = \{x\}$. The following lemma summarizes properties of the metric space $(X,d_\iota)$, which are relevant for an application of Theorem~\ref{theorem:metric space uniqueness}.

\begin{proposition}\label{proposition:properties of metrics from embeddings}
 Let $\iota:X \to \R^d$ injective.

 \begin{enumerate}[(a)]
  \item The following assertions are equivalent.
  \begin{enumerate}[(i)]
   \item $\partial_{d_\iota} X$ is closed in $\overline{X}^{d_\iota}$.
   \item $\iota(X)$ is open in $\overline{\iota(X)}$.
  \end{enumerate}

  \item  The following assertions are equivalent. 
 \begin{enumerate}[(i)]
   \item $\iota(X)$ is discrete.
   \item $\partial_{d_\iota} X$ is closed and for every $\varepsilon > 0$ all $d_\iota$-bounded subsets of
    $$\{x \in X \mid D_{d_\iota}(x) \geq \varepsilon\} = \{x \in X \mid  |\iota(x) - a| \geq \varepsilon \text{ for all } a \in \overline{\iota(X)} \setminus  \iota(X)\}$$
 are finite.
 \end{enumerate}
 \item The following assertions are equivalent.
 \begin{enumerate}[(i)]
        \item All $d_\iota$-balls are finite.
        \item $\lim\limits_{|x| \to \infty} \iota(x) = \infty$.
        \item $\iota(X)$ is discrete and $(X,d_\iota)$ is complete.
       \end{enumerate} 
 \end{enumerate}
\end{proposition}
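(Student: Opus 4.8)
\textbf{Proof plan for Proposition~\ref{proposition:properties of metrics from embeddings}.}

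The plan is to prove the three parts separately, exploiting throughout that $\hat\iota:(\overline{X}^{d_\iota},d_\iota)\to(\overline{\iota(X)},|\cdot|)$ is a surjective isometry carrying $X$ onto $\iota(X)$ and $\partial_{d_\iota}X$ onto $\overline{\iota(X)}\setminus\iota(X)$. This reduces every assertion to an elementary topological statement about the subset $\iota(X)\subseteq\R^d$, and I will from now on identify $X$ with $\iota(X)$ and argue inside $\R^d$. For part~(a), recall the remark after the definition of the Cauchy boundary: $D_\rho(x)>0$ for all $x$ if and only if $\partial_\rho X$ is closed in $\overline{X}^\rho$. Thus (i) says every point of $\iota(X)$ has positive distance to $\overline{\iota(X)}\setminus\iota(X)$, which is precisely the statement that $\iota(X)$ is open in its closure $\overline{\iota(X)}$, giving (ii); for the converse one runs the same equivalence backwards. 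For part~(b), $(i)\Rightarrow(ii)$: if $\iota(X)$ is discrete then in particular it is open in $\overline{\iota(X)}$, so $\partial_{d_\iota}X$ is closed by~(a); and if some $d_\iota$-bounded set $A\subseteq\{x\mid D_{d_\iota}(x)\geq\varepsilon\}$ were infinite, then $\iota(A)$ would be an infinite bounded subset of $\R^d$ staying distance $\geq\varepsilon$ from $\overline{\iota(X)}\setminus\iota(X)$; by Bolzano--Weierstrass it has an accumulation point $a$, which lies in $\overline{\iota(X)}$, has distance $\geq\varepsilon$ from the boundary hence lies in $\iota(X)$, and is an accumulation point of $\iota(X)$ — contradicting discreteness. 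For $(ii)\Rightarrow(i)$: given $x\in\iota(X)$ set $\varepsilon:=D_{d_\iota}(x)>0$ (positive since $\partial_{d_\iota}X$ is closed), so $x$ lies in $\{y\mid D_{d_\iota}(y)\geq\varepsilon\}$; if $x$ were an accumulation point of $\iota(X)$, the ball of radius $\varepsilon/2$ about $x$ would contain infinitely many points of $\iota(X)$, all with distance $\geq\varepsilon/2$ from the boundary, producing an infinite $d_\iota$-bounded subset of $\{y\mid D_{d_\iota}(y)\geq\varepsilon/2\}$, a contradiction; hence $x$ is isolated in $\iota(X)$.

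For part~(c) I would prove $(i)\Leftrightarrow(ii)$ and $(i)\Leftrightarrow(iii)$. A $d_\iota$-ball $B_R(x)=\{y\in X\mid |\iota(y)-\iota(x)|\leq R\}$ pulls back under $\hat\iota$ to the intersection of the Euclidean closed ball $\overline{B}_R(\iota(x))$ with $\iota(X)$. If $\lim_{|x|\to\infty}\iota(x)=\infty$, then for each $R$ there is a finite $K$ with $|\iota(y)|\geq |\iota(x)|+R+1$ off $K$, so $B_R(x)\subseteq K$ is finite; conversely, if some $d_\iota$-ball $B_R(o)$ is infinite, then $\iota(X)$ has infinitely many points in a fixed Euclidean ball, violating $\lim_{|x|\to\infty}\iota(x)=\infty$ (a bounded-in-$X$ infinite set can then be arranged to stay bounded in $\R^d$). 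This gives $(i)\Leftrightarrow(ii)$. For $(i)\Rightarrow(iii)$: finiteness of balls forces $\iota(X)$ to be locally finite in $\R^d$, hence discrete and also closed in $\R^d$, so $\overline{\iota(X)}=\iota(X)$, $\partial_{d_\iota}X=\emptyset$, $D_{d_\iota}\equiv\infty$, and $(X,d_\iota)$ is complete since any Cauchy sequence is eventually constant (a bounded sequence lies in a finite ball). For $(iii)\Rightarrow(i)$: if $\iota(X)$ is discrete and $(X,d_\iota)$ complete, I claim $\iota(X)$ is closed in $\R^d$ — a limit point $a$ of $\iota(X)$ in $\R^d$ gives a Cauchy sequence in $X$, which by completeness converges in $\overline{X}^{d_\iota}$, i.e.\ $a\in\iota(X)$ by surjectivity of $\hat\iota$, whence $a$ is an accumulation point of the discrete set $\iota(X)$, forcing the sequence to be eventually constant equal to $a$; so $\iota(X)$ is closed and discrete in $\R^d$, hence locally finite, hence all $d_\iota$-balls (bounded subsets of a locally finite set) are finite.

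The one genuine subtlety — and the step I expect to take the most care — is the direction $(iii)\Rightarrow(i)$ in part~(c), specifically pinning down that ``discrete $+$ complete $\Rightarrow$ closed in $\R^d$'': discreteness alone does not give closedness (think of $\{1/n\}$), and one must use completeness of the \emph{metric} $(X,d_\iota)$ together with the fact that $\hat\iota$ identifies $\overline{X}^{d_\iota}$ with $\overline{\iota(X)}$ to see that a would-be non-isolated boundary point is impossible. Once closedness is secured, local finiteness of $\iota(X)$ in $\R^d$ — and hence finiteness of all $d_\iota$-balls — follows from the Bolzano--Weierstrass argument already used in part~(b). All other implications are routine point-set topology, so I would keep their write-up brief and devote the space to making this equivalence airtight.
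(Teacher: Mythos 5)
Your proposal is correct and follows essentially the same route as the paper: everything is transported to $\iota(X)\subseteq\R^d$ via the isometry $\hat\iota$ and settled by Bolzano--Weierstrass/compactness arguments combined with discreteness and triangle-inequality estimates for the distance to $\overline{\iota(X)}\setminus\iota(X)$; your local ``each point is isolated'' version of (b)(ii)$\Rightarrow$(i) and your ``discrete $+$ complete $\Rightarrow$ closed in $\R^d$'' step in (c)(iii)$\Rightarrow$(i) are only cosmetic reshufflings of the paper's arguments (the paper instead picks a non-isolated point globally, resp.\ shows that images of $d_\iota$-balls are compact). One tiny point to smooth out in the write-up: in (b)(ii)$\Rightarrow$(i), if $\partial_{d_\iota}X=\emptyset$ then $\varepsilon:=D_{d_\iota}(x)=\infty$, so in that degenerate case simply take any finite radius, since the hypothesis then says all $d_\iota$-bounded subsets of $X$ are finite.
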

\begin{proof}
(a): (i) $\Leftrightarrow$ (ii):  As mentioned above, $\partial_{d_\iota} X$ and $\overline{\iota(X)} \setminus \iota(X)$ are one-to-one under the surjective isometry $\hat \iota:\overline{X}^{d_\iota} \to \overline{\iota(X)}$. Therefore, $\partial_{d_\iota} X$ is closed if and only if  $\overline{\iota(X)} \setminus \iota(X)$ is closed in $\overline{\iota(X)}$. This in turn is equivalent to  $\iota(X)$ being open in $\overline{\iota(X)}$.

(b): (i) $\Rightarrow$ (ii): The discreteness of $\iota(X)$ implies that singleton sets $\{\iota(x)\}$ are open in $\iota(X)$ in the relative topology. Hence, $\iota(X)$ is open in $\overline{\iota(X)}$ and (a) implies that $\partial_{d_\iota} X$ is closed. 

Let now $B$ be a $d_\iota$-bounded subset of $\{x \in X \mid D_{d_\iota}(x) \geq \varepsilon\}$.  By assumption $\iota(B)$ is a bounded discrete set in $\R^d$ that has positive distance from  $\overline{\iota(X)} \setminus  \iota(X)$. If $\iota(B)$ were infinite, it would contain a sequence $(a_n)$ of pairwise different points. Since $\iota(B)$ is bounded,  without loss of generality we can assume that $(a_n)$ converges to some point $a \in \overline{\iota(B)}$.    However, since $\iota(X)$ is discrete, we also have $a \in \R^d \setminus \iota(X)$.  This contradicts the fact that $\iota(B)$ has positive distance to $\overline{\iota(X)} \setminus  \iota(X)$.

%
%
%
%

(ii) $\Rightarrow$ (i): Suppose that $\iota(X)$ is not discrete. Then there exists  $o \in X$ such that every Euclidean ball around $f(o)$ contains infinitely many elements of $\iota(X)$.  According to (a) our assumption implies that $\iota(X)$ is open in $\overline{\iota(X)}$. Hence,  there exists an $\varepsilon > 0$ such that  $|\iota(o) - a| \geq 2\varepsilon$ for every $a \in \overline{\iota(X)} \setminus \iota(X)$. Now consider the $d_\iota$-bounded set
$$B:= \{x \in X \mid d_\iota(o,x) \leq \varepsilon\} = \{x \in X \mid |\iota(o) - \iota(x)| \leq \varepsilon \}.$$
By the choice of $o$ the set $B$ is infinite. Moreover, $x \in B$ satisfy
$$|\iota(x) - a| \geq |\iota(o) - a| - |\iota(x) - \iota(o)| \geq \varepsilon$$
for every $a \in \overline{\iota(X)} \setminus \iota(X)$ so that $B \subseteq \{x \in X \mid D_{d_\iota}(x) \geq \varepsilon\}$.  This contradicts (ii).

(c): (i) $\Leftrightarrow$ (ii): This is straightforward from the definitions.

(i) $\Rightarrow$ (iii): Completeness follows from the fact that Cauchy sequences are bounded and finite sets are compact. The discreteness of $\iota(X)$ follows from (b).

(iii) $\Rightarrow$ (ii): Let $o \in X$ and for $r > 0$ let $B_r(o) := \{x \in X \mid d_\iota(o,x) \leq r\}$. Then  $\iota(B_r(o))$ is obviously bounded in $\R^d$. Since $(X,d_\iota)$ is complete, it is also complete and hence even compact in $\R^d$. Thus, if $\iota(B_r(o))$ were not finite, there would be an infinite sequence $(a_n)$ of pairwise different elements of  $\iota(B_r(o))$ that converges to some $a \in \overline{\iota(B_r(o))} = \iota(B_r(o))$. This however contradicts the discreteness of $\iota(X)$. 
\end{proof}

\begin{remark}
 \begin{enumerate}
  \item The injectivity of $\iota$ is not so important. It would have been possible to deal with functions for which the preimages of singleton sets are finite. In this case, the resulting distance functions are only pseudo metrics and not metrics.
  
 \item In view of Proposition~\ref{proposition:properties of path metrics}~(b) one could ask whether finiteness of $d_\iota$ balls alone is equivalent to completeness of $(X,d_\iota)$. This is not the case. Consider e.g. $X = \N_0$ and $\iota:\N \to \R$ given by $\iota(0) = 2$ and $\iota(n) = 2 - 1/n$, $n \in \N$. Then $(\N,d_\iota)$ is complete but $\N$ is bounded with respect to $d_\iota$. In particular, not all $d_\iota$-balls are finite.
 
  \item Instead of functions with values in Euclidean spaces we could have considered functions with values in complete metric spaces whose bounded sets are precompact. For example, this is the case for complete Riemannian manifolds.
 \end{enumerate}
\end{remark}

For a given graph $(X,b)$ without isolated vertices and an injective  function $\iota:X \to \R^n$ there is a smallest weight $\mu_\iota:X \to (0,\infty)$ such that $d_\iota$ is intrinsic with respect to $b$ and $\mu_\iota$. It is given by
$$\mu_\iota(x) = \sum_{y \in X} b(x,y)|\iota(x) - \iota(y)|^2$$
and has the property that for all weights $\mu:X \to (0,\infty)$ that satisfy $\mu \geq \mu_\iota$ the metric $d_\iota$ is intrinsic with respect to $b$ and $\mu$. Even though this observation is elementary, it allows us to construct many interesting examples of graphs with finite measures for which Theorem~\ref{theorem:metric space uniqueness} can be applied. We finish this subsection with two such examples.

The following example is taken from \cite{Puc}, which is based on \cite{BS}.

\begin{example}[Nerves of circle packings]
A {\em circle packing} in $\R^2$ (or $\mathbb C$) is a collection of circles $\cC =  (C_j)_{j \in J}$, with $C_j = \{x \in \R^2 \mid |x - x_j| = r_j\}$ for some $x_j \in \R^2$ and $r_j > 0$, such that for $i \neq j$ the interiors of the circles $C_i$ and $C_j$ do not intersect.  We say that $\cC$ is {\em bounded} if $\cup_{j \in J} C_j$ is bounded in  $\R^2$.  The {\em nerve} (or contact graph) of a circle packing $\cC = (C_j)_{j \in J}$ is the graph $(X_\cC,b_\cC)$, where $X_\cC = \{x_j \mid j \in J\}$ and 
$$b_\cC(x_i,x_j)  =\begin{cases}
                1 &\text{if } S_j \cap S_i \neq \emptyset\\
                0 &\text{else}
               \end{cases}.
$$
The graph $(X_\cC,b_\cC)$ satisfies the condition (b2) if and only if it is locally finite, i.e., $\deg(x_i) = \# \{j \in J \mid C_i \cap C_j \neq \emptyset\} < \infty$ for all $x_i \in X_\cC$. We say that $\cC$ is {\em connected} if $(X_\cC,b_\cC)$ is a connected graph.

Let $\iota:X_\cC \to \R^2$ the identity, i.e., $\iota(x_j) = x_j$ for $j\in J$. Then $d_\iota(x_i,x_j) = |x_i - x_j| = r_i + r_j$ so that $\iota(X_\cC) = X_\cC$ is discrete. If $\cC$ is bounded and connected and the function $\deg$ is bounded, then the measure $\mu_\iota$ is finite. Indeed, if $K$ is an upper bound for $\deg$ we obtain
\begin{align*}
 \mu_\iota(X) &=  \sum_{x \in X_\cC} \sum_{y \in X_\cC} b_\cC(x,y)|\iota(x) - \iota(y)|^2\\
 &= \sum_{j \in J} \sum_{i \in J : S_j \cap S_i \neq \emptyset} (r_i + r_j)^2 \\
 &\leq 4K \sum_{j \in J} r^2_j.
\end{align*}
Since $\pi r_j^2$ equals the area of the circle $C_j$ and different circles have disjoint interiors, the above computation yields $ \mu_\iota(X) \leq \frac{4K}{\pi} \mathrm{Area}(\cC)$. Here, $\mathrm{Area}(\cC)$ is the area covered by the interiors of the circles in $\cC$. This shows that for the nerve of a bounded connected circle packing with bounded degree the Euclidean metric is an intrinsic metric with respect to a finite measure. Moreover, it satisfies the assumptions of Theorem~\ref{theorem:metric space uniqueness}.
\end{example}

 The following example shows how perturbation by a large potential forces a non essentially self-adjoint operator to become essentially self-adjoint. It also shows that for finite measures Theorem~\ref{theorem:metric space uniqueness} is stronger than Theorem~\ref{theorem:measure space uniqueness}. The first part of the discussion without the potential is taken from \cite{HKMW}.
 
\begin{example} \label{example:essential self adjointness on Z}
 Consider the graph $(\Z,b)$ with $b(k,l) = 1$ if $|k-l| = 1$. For a  weight $\mu:\Z \to (0,\infty)$ the associated formal Schrödinger operator $\cH_{\mu, 0}$ acts on $C(\Z)$ by
 $$\cH_{\mu, 0} f(k) = \frac{1}{\mu(k)} \left( 2f(k) - f(k+1) - f(k-1) \right), \quad k \in \Z.$$
 Consider the weight $\nu:\Z \to (0,\infty)$, $\nu(k) = 2k^{-4}$ for $k \neq 0$ and $\nu(0) = 2$. Then $H^\mathrm{min}_{\nu,0}$  is not essentially self-adjoint. This can be seen as follows. The function $h:\Z \to \R$ with $h(k) = k$ belongs to $\ell^2(X,\nu)$ and satisfies $\cH_{\nu,0}h = 0$. Hence,  $ h \in D(H^\mathrm{max}_{\nu,0})$. However, $h$ has infinite energy because
 $$\sum_{k,l \in \Z} b(k,l) (h(k) - h(l))^2 = \sum_{k \in \Z} 2 = \infty.$$
 Since all functions in  $D(H^0_{\nu,0})$ have finite energy, see Proposition~\ref{proposition:nonnegative endomorphisms}, this implies $h \not \in D(H^0_{\nu,0})$. If $H^\mathrm{min}_{\nu,0}$ were essentially self-adjoint, all self-adjoint extensions of $H^\mathrm{min}_{\nu,0}$ would coincide with $(H^\mathrm{min}_{\nu,0})^* = H^\mathrm{max}_{\nu,0}$. The previous discussion shows that this is not the case.
 
 Let now $\iota: \Z \to \R$ given by $\iota(k) = 2 - 1/k$ if $k \neq 0$ and $\iota(0) = 0$. Then $\iota(\Z)$ is discrete in $\R$. The Cauchy boundary of $\Z$ with respect to $d_\iota$ consists of exactly one point $\partial$ and the isometric extension of $\iota$ to the completions is given by $\hat \iota: \Z \cup \{ \partial\} \to \overline{\iota(\Z)} = \iota(\Z) \cup\{2\}$ with $\hat \iota (k)  = \iota(k)$ for $k \in \Z$ and $\hat \iota (\partial) = 2$. We obtain that 
 $$D_{d_\iota}(k) = |\iota(k) - 2| = \begin{cases} 1/|k| &\text{for } k \neq 0\\ 2 &\text{for } k = 0 \end{cases}. $$
 Moreover, the metric $d_\iota$ is intrinsic with respect to $\nu$, since 
 $$\mu_\iota(k) = \sum_{l \in \Z} b(k,l) d_\iota(k,l)^2 = \sum_{l \in \Z} b(k,l) |\iota(k) - \iota(l)|^2 = \begin{cases}
                                                                                 \frac{2}{k^2(k^2 -1)}  &\text{if } |k| \geq 2\\
                                                                                 \frac{5}{4} &\text{if } |k| = 1\\
                                                                                 2 &\text{if } k = 0
                                                                                \end{cases}.
 $$
 Then Theorem~\ref{theorem:metric space uniqueness} and Proposition~\ref{proposition:properties of metrics from embeddings} imply that for any $V:\Z \to \R$ with $V(k) \geq k^2/2 $ the operator $H^\mathrm{min}_{\nu,V}$ is essentially self-adjoint $\ell^2(\Z,\nu).$ Moreover, for the case $V:\Z \to \R$,  $V(k) = k^2/2 $ the assumptions of Theorem~\ref{theorem:measure space uniqueness} are not satisfied, since for all $\alpha \in \R$  the infinite path $(x_n)_{n \geq 0} = (n)_{n \geq 0}$ satisfies
 $$\sum_{n  = 1}^\infty \nu(n) \prod_{k = 0}^{n-1} \left(1 + \frac{\nu(k)(k^2/2 - \alpha)}{2}\right)^2 = 2 \sum_{n  = 1}^\infty \frac{1}{n^4} \prod_{k = 0}^{n-1} \left(1 + \frac{(k^2/2 - \alpha)}{2k^4}\right)^2< \infty. $$
\end{example}

\section{Markovian realizations of $\cH$ and Markov uniqueness} \label{section:Markovian realizations}
In this section we study realizations of $\cH$ whose associated quadratic forms are Dirichlet forms. The semigroups generated by these realizations are Markovian and therefore correspond to Markov processes on $X$ through the Feynman-Kac formula.  Such realizations on possibly non locally finite graphs have received quite some attention in recent years, see e.g.  \cite{KL2,KL,HKLW,Schmi} and references therein. Here we discuss their basic structure and give criteria for their uniqueness. It turns out that if the potential is nonnegative, there always is a minimal and a maximal Markovian realization in the sense of quadratic forms, see Theorem~\ref{theorem:structure of Markovian realizations}. Since Markovian realizations are special realizations, theorems that ensure their uniqueness tend to have weaker assumptions than the ones for essential self-adjointness or uniqueness of realizations, which we discussed in Section~\ref{section:uniqueness}. This is the content of Subsection~\ref{subsection:markov uniqueness}.

\subsection{The structure of Markovian realizations}\label{subsection:structure of Markovian realizations}

 In this subsection we construct a minimal and a maximal Markovian realization of $\cH$ and show that all other Markovian realization lie between them in the sense of quadratic forms. 

For the definition and basic properties of Dirichlet forms we refer to Appendix~\ref{appendix:Beruling Deny}. Since Dirichlet forms are real quadratic forms, for the purpose of this section it suffices to consider real-valued functions.  We use the following convention. 

\begin{convention}
 In this whole section all functions are real-valued. In particular, we abuse notation and denote by $C(X)$ the real-valued functions on $X$, by $C_c(X)$ the real-valued functions on $X$ with finite support and by $\ell^2(X,\mu)$  the Hilbert space of real-valued square summable functions.
\end{convention}

\begin{definition}[Markovian realization]
 A realization  of $\cH$ is called {\em Markovian} if the associated quadratic form is a Dirichlet form.
\end{definition}

The following lemma shows that for studying Markovian realizations it suffices to consider the case  $V \geq 0$.

\begin{lemma}
If $\cH$ has a Markovian realization, then $V \geq 0$.
\end{lemma}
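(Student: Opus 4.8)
The plan is to argue on the resolvent side, using the sub\-Markovianity of the resolvents of a Markovian realization rather than the Dirichlet property of the form directly. Suppose $H$ is a Markovian realization of $\cH$, so that its quadratic form is a Dirichlet form; then $H\ge 0$, the resolvents $G_\alpha:=\alpha(H+\alpha)^{-1}$ exist for all $\alpha>0$, they satisfy $G_\alpha h\to h$ in $\ell^2(X,\mu)$ as $\alpha\to\infty$, and each $G_\alpha$ is sub\-Markovian, i.e.\ $0\le h\le 1$ implies $0\le G_\alpha h\le 1$. Fix a vertex $x_0\in X$; the goal is to show $V(x_0)\ge 0$.

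First I would pick an exhaustion $B_1\subseteq B_2\subseteq\dots$ of $X$ by finite sets with $x_0\in B_1$ and set $f_n:=1_{B_n}\in C_c(X)$, so that $0\le f_n\le 1$, $f_n(x_0)=1$ and $f_n\to 1$ pointwise on $X$. For each $n$ and each $\alpha>0$ put $g_{n,\alpha}:=G_\alpha f_n\in D(H)$. Sub\-Markovianity gives $0\le g_{n,\alpha}\le 1$, and since $H$ is a realization of $\cH$ we have $g_{n,\alpha}\in\cF$ and $\cH g_{n,\alpha}=Hg_{n,\alpha}=\alpha(f_n-g_{n,\alpha})$. Evaluating this pointwise at $x_0$, writing out the definition of $\cH$ and using $\Deg(x_0)=\mu(x_0)^{-1}\sum_{y}b(x_0,y)$, a rearrangement yields
\begin{align*}
 V(x_0)\,g_{n,\alpha}(x_0)=\alpha\bigl(1-g_{n,\alpha}(x_0)\bigr)-\Deg(x_0)\,g_{n,\alpha}(x_0)+\frac{1}{\mu(x_0)}\sum_{y\in X}b(x_0,y)\,g_{n,\alpha}(y),
\end{align*}
in which the first summand on the right is $\ge 0$ because $g_{n,\alpha}(x_0)\le 1$.

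Next I would let $\alpha\to\infty$ with $n$ fixed. Strong continuity gives $g_{n,\alpha}\to f_n$ in $\ell^2(X,\mu)$, hence pointwise; in particular $g_{n,\alpha}(x_0)\to f_n(x_0)=1$, so the left\-hand side tends to $V(x_0)$. Since $|g_{n,\alpha}|\le 1$ and $\sum_y b(x_0,y)<\infty$ by (b2), dominated convergence gives $\sum_y b(x_0,y)g_{n,\alpha}(y)\to\sum_y b(x_0,y)f_n(y)$, so the last two summands converge as well. Consequently $\alpha\bigl(1-g_{n,\alpha}(x_0)\bigr)$ converges, and since its values are nonnegative so is its limit; reading this off the displayed identity gives
\[
 V(x_0)\ \ge\ -\Deg(x_0)+\frac{1}{\mu(x_0)}\sum_{y\in X}b(x_0,y)f_n(y)\ =\ -\frac{1}{\mu(x_0)}\sum_{y\in X}b(x_0,y)\bigl(1-f_n(y)\bigr).
\]
Finally, letting $n\to\infty$: $1-f_n(y)\to 0$ for every $y$, $0\le 1-f_n(y)\le 1$, and (b2) again permits dominated convergence, so the right\-hand side tends to $0$ and $V(x_0)\ge 0$. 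As $x_0$ was arbitrary, $V\ge 0$.

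I expect the main point to be appreciating \emph{why} the obvious form\-side attempt does not work: testing $\cQ^c$ against $\delta_{x_0}$ (or $1_K$) and using that unit contractions decrease a Dirichlet form only yields $\cQ^c(\delta_{x_0})=\deg(x_0)+V(x_0)\mu(x_0)\ge 0$, i.e.\ the too weak bound $V(x_0)\ge-\Deg(x_0)$, because every finitely supported test function picks up the full vertex degree through the gradient term. The resolvent argument circumvents this precisely because the degree contribution $\Deg(x_0)g_{n,\alpha}(x_0)$ is asymptotically cancelled by $\mu(x_0)^{-1}\sum_y b(x_0,y)g_{n,\alpha}(y)$ once sub\-Markovianity together with $f_n\to 1$ forces $g_{n,\alpha}$ close to the constant $1$ near $x_0$. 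Beyond that, the only things to check are the two dominated\-convergence passages (both valid by (b2)) and the elementary facts that $g_{n,\alpha}\in D(H)\subseteq\cF$ and that $\cH g_{n,\alpha}=\alpha(f_n-g_{n,\alpha})$ holds pointwise, which are immediate from the definition of a realization.
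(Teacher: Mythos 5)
Your argument is correct: every step checks out against the paper's framework — $H\geq 0$ because Dirichlet forms are nonnegative, $0\leq \alpha(H+\alpha)^{-1}f_n\leq 1$ by Proposition~\ref{proposition:secons beurling deny}, $D(H)\subseteq\cF$ and $Hg=\cH g$ by (R3), the pointwise splitting of $\cH g(x_0)$ is legitimate since $g\in\cF$ and $\deg(x_0)<\infty$, and both dominated-convergence passages are covered by (b2).

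You do, however, take a genuinely different route from the paper. The paper stays on the form side: it first invokes Lemma~\ref{lemma:q and extension of qc} to conclude that the Dirichlet form $Q$ of a Markovian realization extends $\cQ^c$, so the unit-contraction property $\cQ^c(\varphi\wedge 1)\leq\cQ^c(\varphi)$ holds on $C_c(X)$; it then applies Lemma~\ref{lemma:postivity of form} to the pair $f=\delta_x$, $g=1_K$ (with $x\in K$ finite) to get $0\leq \cQ^c(1_K,\delta_x)=\sum_{y\in X\setminus K}b(x,y)+V(x)\mu(x)$, and lets $K\nearrow X$. You instead argue at the resolvent level, using only (R3), nonnegativity of $H$ and Markovianity of $\alpha(H+\alpha)^{-1}$, and you produce the cancellation of the degree term by forcing $G_\alpha 1_{B_n}$ pointwise close to $1$ near $x_0$ rather than through the off-diagonal pairing $\cQ^c(1_K,\delta_x)$. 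What each buys: your proof is self-contained and bypasses Lemma~\ref{lemma:q and extension of qc} (whose proof is itself a resolvent/domination argument) and in fact never needs (R2) or the extension property $Q\supseteq\cQ^c$; the paper's proof is shorter given the lemmas it has already established and reuses machinery that is needed elsewhere anyway. One small correction to your closing remark: it is only the diagonal test $\cQ^c(\delta_{x_0})\geq 0$ that is too weak (giving merely $V(x_0)\geq-\Deg(x_0)$); the paper's proof shows that the form-side approach does succeed once one tests off-diagonally via $\cQ^c(1_K,\delta_{x_0})\geq 0$, which is precisely the form-level analogue of the cancellation your resolvent argument exploits.
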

\begin{proof}
 Let $H$ be a Markovian realization of $\cH$ with associated quadratic form $Q$. It follows from Lemma~\ref{lemma:q and extension of qc} that $Q$ is an extension of $\cQ^c$. This implies  $\cQ^c(\varphi \wedge 1) \leq \cQ^c(\varphi)$ for all $\varphi  \in C_c(X)$. Let $K \subseteq X$ finite and let $x \in K$.  Lemma~\ref{lemma:postivity of form} shows $ \cQ^c(1_K,\delta_x) \geq 0$ and, therefore, 
 $$0 \leq \cQ^c(1_K,\delta_x) = \sum_{y \in X \setminus K} b(x,y)  + V(x).$$
 Letting $K \nearrow X$ yields $V \geq 0$. 
\end{proof}

Recall the definition of $\cQ$ and $\cD$ from Subsection~\ref{subsection:Schrödinger forms}. The following well-known proposition gives the two most prominent Markovian realizations of $\cH$ when $V \geq 0$, see e.g. \cite{KL,Schmi}.

\begin{proposition}[Existence of Markovian realizations]
 Let $V \geq 0$. Then $Q^0$ and the restriction of $\cQ$ to $\mathcal{D} \cap \ell^2(X,\mu)$ are Dirichlet forms and the associated operators are Markovian realizations of $\cH$.
\end{proposition}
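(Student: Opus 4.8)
The plan is to check, for each of the two forms, the three defining properties of a Dirichlet form --- dense domain, closedness, and the Markovian property that the unit contraction operates --- and then to identify the associated self-adjoint operators as realizations of $\cH$ in the sense of Definition~\ref{definition:discrete magnetic Schrödinger operators}. Since a realization is Markovian precisely when its form is a Dirichlet form, these two tasks together give the claim.

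I would treat $Q^0$ first. Here the hypothesis $V \geq 0$ means that in the scalar case the bundle endomorphism is nonnegative, so Proposition~\ref{proposition:nonnegative endomorphisms} already provides that $\cQ^c$ is nonnegative and closable, that its closure $Q^0$ is a restriction of $\cQ$, and that $H^0$ is a realization of $\cH$; in particular $Q^0$ is closed, nonnegative, and has dense domain. It remains to see that $Q^0$ is Markovian. The point is that $t \mapsto t^\# := (t \vee 0)\wedge 1$ is a normal contraction, so for $\varphi \in C_c(X)$ one has $\varphi^\# \in C_c(X)$ with $|\varphi^\#(x) - \varphi^\#(y)| \leq |\varphi(x)-\varphi(y)|$ and $|\varphi^\#(x)| \leq |\varphi(x)|$, whence $\cQ^c(\varphi^\#) \leq \cQ^c(\varphi)$ using $V \geq 0$. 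To pass this to the closure, given $f \in D(Q^0)$ I would pick $\varphi_n \in C_c(X)$ converging to $f$ in the form norm; then $\varphi_n^\# \to f^\#$ in $\ell^2(X,\mu)$ while $\sup_n \cQ^c(\varphi_n^\#) \leq \sup_n \cQ^c(\varphi_n) < \infty$, and the $\ell^2$-lower semicontinuity of the closed form $Q^0$ (Appendix~\ref{appendix:quadratic forms}) yields $f^\# \in D(Q^0)$ together with $Q^0(f^\#) \leq \liminf_n \cQ^c(\varphi_n^\#) \leq \liminf_n \cQ^c(\varphi_n) = Q^0(f)$. Thus $Q^0$ is a Dirichlet form, and by Proposition~\ref{proposition:nonnegative endomorphisms} its operator $H^0$ is a realization of $\cH$, hence a Markovian realization.

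For the restriction $Q^{(N)}$ of $\cQ$ to $\cD \cap \ell^2(X,\mu)$, density is clear since $C_c(X) \subseteq \cD \cap \ell^2(X,\mu)$ (by (b2)) is dense in $\ell^2(X,\mu)$, nonnegativity is immediate from $V \geq 0$, and the Markovian property needs no limiting argument: the same normal-contraction estimate shows directly that $f \in \cD \cap \ell^2(X,\mu)$ implies $f^\# \in \cD \cap \ell^2(X,\mu)$ with $\cQ(f^\#) \leq \cQ(f)$. Closedness I would obtain by a Fatou argument: a form-norm Cauchy sequence $(f_n)$ converges in $\ell^2(X,\mu)$, hence (along a subsequence) pointwise, to some $f$, and Fatou's lemma gives $\cQ(f - f_m) \leq \liminf_n \cQ(f_n - f_m)$, which is arbitrarily small for $m$ large; this shows $f - f_m \in \cD$, so $f \in \cD \cap \ell^2(X,\mu)$, and $f_n \to f$ in form norm. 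Finally, to see that the operator $H^{(N)}$ associated with $Q^{(N)}$ is a realization: (R1) holds by nonnegativity, (R2) since $C_c(X) \subseteq D(Q^{(N)})$, and for (R3) one has $D(H^{(N)}) \subseteq D(Q^{(N)}) = \cD \cap \ell^2(X,\mu) \subseteq \cF$ by Lemma~\ref{lemma:greens formula forms}, while Green's formula (Lemma~\ref{lemma:greens formula forms}) gives $\langle \varphi, H^{(N)}f\rangle_2 = Q^{(N)}(\varphi,f) = \cQ(\varphi,f) = (\varphi, \cH f)$ for $\varphi \in C_c(X)$ and $f \in D(H^{(N)})$, so $H^{(N)}f = \cH f$ by testing against $\varphi = \delta_x$. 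Hence $H^{(N)}$ is a Markovian realization as well.

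The only step with any subtlety is the stability of the Markovian inequality under passing to the closure in the case of $Q^0$ --- this is precisely where the lower semicontinuity of closed forms enters, and it is a completely standard ingredient; all the remaining verifications are direct.
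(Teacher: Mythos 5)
Your argument is correct and essentially the paper's own: Proposition~\ref{proposition:nonnegative endomorphisms} together with $\ell^2$-lower semicontinuity of the closure handles $Q^0$, a Fatou argument gives closedness of the restriction of $\cQ$ to $\cD\cap\ell^2(X,\mu)$, and Green's formula identifies the associated operators as realizations of $\cH$. The only point to tidy up is that the paper's definition of a Dirichlet form requires \emph{every} normal contraction to operate, not just the unit contraction $(t\vee 0)\wedge 1$; since your two pointwise estimates hold verbatim for an arbitrary normal contraction $C$, running the identical computation with $C$ in place of your $\#$ (as the paper does) settles this --- alternatively, combine your unit-contraction bound with the first Beurling--Deny criterion (Lemma~\ref{lemma:qc satisfies first beurling-deny criterion}, or the same approximation argument applied to $|\cdot|$) and invoke Proposition~\ref{proposition:secons beurling deny}.
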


\begin{proof}
 Let $Q^{(N)}$ denote the restriction of $\cQ$ to $\mathcal{D} \cap \ell^2(X,\mu)$. We first prove that $Q^{(N)}$ is a Dirichlet form. Its lower semicontinuity with respect to $\ell^2(X,\mu)$-convergence follows from Fatou's lemma. Hence, it is closed by Lemma~\ref{lemma:characterization closedness}. The definition of $\cQ$ and $V \geq 0$ imply that for $f \in \mathcal{D}$ and any normal contraction $C:\R \to \R$ we have $C \circ f \in \cD$ and $\cQ(C \circ f) \leq \cQ(f)$. Since $f \in \ell^2(X,\mu)$ also implies $C \circ f \in \ell^2(X,\mu)$, we obtain that $Q^{(N)}$ is a Dirichlet form. Let $H^{(N)}$ be the associated self-adjoint operator. For $f \in D(H^{(N)})$ and $\varphi \in C_c(X) \subseteq D(Q^{(N)})$ Green's formula (Lemma~\ref{lemma:greens formula forms}) shows
 $$\langle \varphi, H^{(N)}f \rangle_2 = Q^{(N)}(\varphi, f) = \cQ(\varphi,f) = (\varphi, \cH f).$$
 This implies that $H^{(N)}$ is a realization of $\cH$.
 
 We have already seen in Proposition~\ref{proposition:nonnegative endomorphisms} that $H^0$ is a realization of $\cH$. It remains to show that $Q^0$ is a Dirichlet form. To this end, let $f \in D(Q^0)$ and choose a sequence $(\varphi_n)$ in $C_c(X)$ that converges to $f$ with respect to the form norm.  For a normal contraction $C:\R \to \R$  the sequence $(C \circ \varphi_n)$ belongs to $C_c(X)$ and converges in $\ell^2(X,\mu)$ to $f$. Therefore, the lower semicontinuity of $Q^0$ and its definition yield
 $$Q^0(C \circ f) \leq  \liminf_{n \to \infty} Q^0( C \circ \varphi_n) =  \liminf_{n \to \infty} \cQ^c( C \circ \varphi_n) \leq  \liminf_{n \to \infty} \cQ^c(  \varphi_n) = Q^0(f).$$
 This shows that $Q^0$ satisfies the first Beurling-Deny criterion and finishes the proof.
 \end{proof}

\begin{definition}[Neumann realization]
Let $V \geq 0$. The Dirichlet form on $\ell^2(X,\mu)$ that is the restriction of $\cQ$ to the domain $\mathcal{D}\cap \ell^2(X,\mu)$ is denoted by $Q^{(N)}  = Q^{(N)}_{\mu, V}$. The associated self-adjoint operator is called $H^{(N)} = H^{(N)}_{\mu, V}$.
\end{definition}

\begin{remark}
 We already noted that $H^0$ can be thought of being the realization of $\cH$ with ``Dirichlet boundary conditions at infinity''.  As the notation suggests, $H^{(N)}$ should be thought of as the realization with ``Neumann boundary conditions at infinity''. On appropriate compactifications of $X$ this intuition can be made precise, see \cite{KLSS, HS}.
\end{remark}

Markovian realizations are ordered in terms of their quadratic forms, cf. Appendix~\ref{appendix:basics}.  The following theorem shows that in this sense $H^0$ is the minimal Markovian realization and $H^{(N)}$ is the maximal Markovian realization.

\begin{theorem}[Structure of Markovian realizations]\label{theorem:structure of Markovian realizations}
 Let $H$ be a Markovian realization of $\cH$ with associated Dirichlet form $Q$. Then $V \geq 0$ and $Q^0 \leq Q \leq Q^{(N)}$.  In particular, $Q$ is an extension of $Q^0$.
\end{theorem}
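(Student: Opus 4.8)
This statement bundles two claims: the sign condition $V\ge 0$ and the sandwich $Q^0\le Q\le\QN$. The first is exactly the preceding lemma — a Markovian realization forces $V\ge 0$ — so it only remains to prove the sandwich, where for closed forms ``$Q_1\le Q_2$'' is meant in the extension sense: $D(Q_1)\subseteq D(Q_2)$ and $Q_1=Q_2$ on $D(Q_1)$.

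For $Q^0\le Q$: being a Dirichlet form, $Q$ satisfies the first Beurling--Deny criterion, and $H$ is a lower semi-bounded self-adjoint restriction of $\cH$ because it is a realization. Hence Lemma~\ref{lemma:q and extension of qc} applies and shows that $Q$ is an extension of $\cQ^c$; in particular $\cQ^c$ is lower semi-bounded and closable, so $Q^0$ and $H^0$ are defined, and since $Q^0$ is the smallest closed extension of $\cQ^c$ while $Q$ is one such, $Q^0\le Q$. This is also the final ``in particular''.

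For $Q\le\QN$ one must show $D(Q)\subseteq\cD\cap\ell^2(X,\mu)=D(\QN)$ together with $Q=\cQ$ on $D(Q)$; as $D(Q)\subseteq\ell^2(X,\mu)$ is automatic, the real content is $D(Q)\subseteq\cD$ (finite energy) and the identity of the forms. The starting point is a local Green's formula: for $f\in D(Q)\cap\ell^\infty(X)$ and $\varphi\in C_c(X)$,
\begin{align*}
 Q(\varphi,f)=(\varphi,\cH f)=\tfrac12\sum_{x,y\in X}b(x,y)\big(\varphi(x)-\varphi(y)\big)\big(f(x)-f(y)\big)+\sum_{x\in X}V(x)\varphi(x)f(x)\mu(x),
\end{align*}
the right-hand side converging because $\varphi$ has finite support and $f\in\ell^\infty(X)\subseteq\cF$. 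This is obtained by approximating $f$ by $f_\alpha:=\alpha(H+\alpha)^{-1}f\in D(H)$: since $Q$ is a Dirichlet form, $\|f_\alpha\|_\infty\le\|f\|_\infty$ and $f_\alpha\to f$ in the form norm (functional calculus), hence pointwise; as $H$ is a restriction of $\cH$ we have $Q(\varphi,f_\alpha)=\langle\varphi,\cH f_\alpha\rangle$, and passing to the limit via dominated convergence (each $b(x,\cdot)$ has finite mass, the $f_\alpha$ are uniformly bounded, $\varphi$ has finite support) together with Green's formula (Lemma~\ref{lemma:Green's formula general}) gives the claim. Testing this against $\varphi=f\1_K$ for finite $K\subseteq X$ and expanding yields
\begin{align*}
 Q(f\1_K,f)=\tfrac12\sum_{x,y\in K}b(x,y)\big(f(x)-f(y)\big)^2+\sum_{x\in K,\,y\notin K}b(x,y)f(x)\big(f(x)-f(y)\big)+\sum_{x\in K}V(x)f(x)^2\mu(x).
\end{align*}
As $K$ exhausts $X$ the first and last sums increase, by monotone convergence and $V\ge 0$, to $\tfrac12\sum_{x,y}b(x,y)(f(x)-f(y))^2$ and $\sum_xV(x)f(x)^2\mu(x)$, whose sum is $\cQ(f)$; controlling the mixed ``boundary'' term together with the left-hand side then forces $\cQ(f)<\infty$, i.e.\ $f\in\cD$, and identifies $Q(f)=\cQ(f)$. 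One reduces first to $0\le f\le 1$, then transfers from bounded $f$ to all of $D(Q)$ by a truncation argument (bounded functions are dense in $D(Q)$ in the form norm, and $\cD$ is complete under $\cQ^{1/2}$ along pointwise-convergent Cauchy sequences, again using $V\ge 0$). The technical heart — and the main obstacle — is precisely this control: when the vertex degree is unbounded, a plain indicator cut-off $\1_K$ need not remain bounded in the form norm, so one argues instead with suitable capacitary cut-off functions in place of $\1_K$, following the methods of \cite{Schmi3}. Once $D(Q)\subseteq\cD\cap\ell^2(X,\mu)$ and $Q=\cQ$ there have been established, $Q\le\QN$ holds by the definition of $\QN$, completing the proof.
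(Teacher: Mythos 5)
Your first half is fine: $V\ge 0$ is the preceding lemma, and $Q^0\le Q$ follows from Lemma~\ref{lemma:q and extension of qc} exactly as you say (this is also how the paper gets it). The problem is the second half. To begin with, you have misread the order convention: in this paper $Q\le Q^{(N)}$ means $D(Q)\subseteq D(Q^{(N)})$ and $Q(f)\ge Q^{(N)}(f)$ on $D(Q)$ --- it does \emph{not} mean that $Q^{(N)}$ extends $Q$. The identity $Q=\cQ$ on $D(Q)$ that you set out to prove is false in general: a Markovian realization may carry a nontrivial nonnegative boundary form (Robin-type conditions at infinity), so that $Q(f)=\cQ(f)+q^{\partial}(f)>\cQ(f)$ for functions with nonzero boundary trace, while the associated operator still acts pointwise as $\cH$. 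This is exactly the content of the remark following the theorem, where the Dirichlet forms between $Q^0$ and $Q^{(N)}$ are parametrized by Dirichlet forms on the Royden boundary. So the target you aim at is unattainable; only the inequality $Q(f)\ge\cQ(f)$ can be proved, and that is all the theorem asserts.

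Second, even for that inequality your computation has a genuine gap. Testing the local Green identity against $\varphi=f1_K$ produces the mixed term $\sum_{x\in K,\,y\notin K}b(x,y)f(x)(f(x)-f(y))$, which has no sign, and the left-hand side $Q(f1_K,f)$ bears no a priori relation to $Q(f)$: the functions $f1_K$ converge to $f$ only in $\ell^2(X,\mu)$, not in form norm, and $\|f1_K\|_{Q}$ need not stay bounded. Deferring this to ``capacitary cut-offs following \cite{Schmi3}'' is precisely the point where a proof is missing. The paper's device is built to avoid both difficulties at once: it works with the concatenated form $Q_\varphi(f)=Q(\varphi f)-Q(\varphi f^2,\varphi)$, whose particular combination cancels the boundary and killing contributions (for $\varphi=1_K$ it equals exactly $\tfrac12\sum_{x,y\in K}b(x,y)(f(x)-f(y))^2$), and the whole point of Lemma~\ref{lemma:fundamental lemma structure of markovian restrictions} is the a priori bound $Q_\varphi(f)\le Q(f)$, proved via approximating forms and the positivity-preserving/Markovian property of the resolvent. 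That bound, together with the separate treatment of the killing part $q_V\le Q^k$ using the monotonicity of $Q^k$ and the limit $\varphi\nearrow 1$, is the missing idea; without it the estimate $Q\le Q^{(N)}$ does not follow from what you have written.
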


\begin{corollary}[Markov uniqueness]\label{corollary:markov uniqueness}
 The operator $\cH$ has exactly one Markovian realization if and only if $V \geq 0$ and $Q^0 = Q^{(N)}$. 
\end{corollary}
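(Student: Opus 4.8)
The plan is to read the statement off from Theorem~\ref{theorem:structure of Markovian realizations} together with the preceding proposition on the existence of Markovian realizations, with essentially no extra work; the only genuine point is to be careful about the logical content of the phrase ``exactly one''.

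For the ``if'' direction, I would assume $V \ge 0$ and $Q^0 = Q^{(N)}$. First invoke the existence proposition to record that $H^0$ is a Markovian realization of $\cH$, so at least one Markovian realization exists. Then, given an arbitrary Markovian realization $H$ with associated Dirichlet form $Q$, Theorem~\ref{theorem:structure of Markovian realizations} sandwiches $Q^0 \le Q \le Q^{(N)}$; using the hypothesis $Q^{(N)} = Q^0$ this forces $Q = Q^0$, and since a closed form and its self-adjoint operator determine each other, $H = H^0$. Hence the Markovian realization is unique, so $\cH$ has exactly one.

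For the ``only if'' direction, I would assume $\cH$ has exactly one Markovian realization. In particular a Markovian realization exists, so Theorem~\ref{theorem:structure of Markovian realizations} (or the earlier lemma to that effect) yields $V \ge 0$. Now that $V \ge 0$, the existence proposition tells us that both $H^0$ and $H^{(N)}$ are Markovian realizations of $\cH$; by uniqueness they coincide, and passing to the associated quadratic forms gives $Q^0 = Q^{(N)}$.

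The only place requiring a little care is the bookkeeping around ``exactly one'': it is the existence half of this statement, not the uniqueness half, that supplies $V \ge 0$ in the forward direction, since for $V \not\ge 0$ there are no Markovian realizations at all rather than exactly one. Beyond this logical remark there is no analytic obstacle, as all the substance is already contained in Theorem~\ref{theorem:structure of Markovian realizations} and the existence proposition.
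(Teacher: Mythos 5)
Your argument is correct and is exactly the intended derivation: the paper states the corollary without a separate proof precisely because it follows, as you show, by combining Theorem~\ref{theorem:structure of Markovian realizations} (which yields $V \geq 0$ and the sandwich $Q^0 \leq Q \leq Q^{(N)}$ for any Markovian realization) with the existence proposition giving that $H^0$ and $H^{(N)}$ are Markovian realizations once $V \geq 0$. Your remark that the forward implication's $V \geq 0$ comes from the existence half of ``exactly one'' is also the right bookkeeping, so there is nothing to add.
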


\begin{remark}
 \begin{enumerate}
 \item We would like to stress that in our convention $Q^0 \leq Q \leq Q^{(N)}$ means $D(Q^0) \subseteq D(Q) \subseteq D(Q^{(N)})$ and $Q^0(f) \geq Q(f) \geq Q^{(N)}(f)$ for all $f \in \ell^2(X,\mu)$ (with the forms being equal $\infty$ outside their domains). It emphasizes the size of the form domain; forms with large domains and small values are large in the sense of this ordering. This convention seems to be standard in Dirichlet form theory, see e.g. \cite{FOT,CF}.

  \item For locally finite graphs this theorem and its corollary are contained in \cite{HKLW}. It is new for general graphs. The proof given below is based on results of \cite{Schmi3}.
  \item Since the space is discrete, any Dirichlet form $Q$ that extends $Q^0$ is a Silverstein extension, i.e.,   for $f \in D(Q^0) \cap \ell^\infty(X)$ and $g \in D(Q)\cap \ell^\infty(X)$ we have $fg \in D(Q^0) \cap \ell^\infty(X)$.   This is clear whenever $f \in C_c(X)$, the case of general $f$ follows by an approximation argument. 
  
  It also follows from abstract results in Dirichlet form theory that when $V = 0$ the form $Q^{(N)}$ is the maximal (Silverstein) extension of $Q^0$, see e.g. \cite[Theorem~6.6.9]{CF}. In the case $V \neq 0$ the abstract results contained in the literature are wrong; they claim that any (Silverstein) extension $Q$ of  $Q^0$ satisfies $Q \leq Q^{(N)}$. However, in general there are extensions of $Q^0$ for which $Q^0 \leq Q^{(N)}$ does no hold; for an example see \cite[Section~5]{KLSS}. Note that the class of  Dirichlet forms treated by Theorem~\ref{theorem:structure of Markovian realizations} is a bit smaller. We only consider extensions $Q$ of $Q^0$ whose associated operator is a Markovian realization of $\cH$.

  \item Dirichlet forms $Q$ on $\ell^2(X,\mu)$ with $Q^0 \leq Q \leq Q^{(N)}$ are parametrized by certain Dirichlet forms on the Royden boundary of the graph. This is discussed in \cite{KLSS}. 
 \end{enumerate}

\end{remark}

Let $H$ be a Markovian realization of $\cH$ and let $Q$ be the associated Dirichlet form. For $f \in D(Q) \cap \ell^\infty(X)$ and $\varphi \in D(Q)$ with $0 \leq \varphi \leq 1$  we define the concatenated form
$$Q_\varphi (f) := Q(\varphi f) - Q(\varphi f^2, \varphi).$$
%
Since $D(Q) \cap \ell^\infty(X)$ is an algebra, see e.g. \cite[Theorem~1.4.2]{FOT}, this is well-defined. For $f \in D(Q)\cap \ell^\infty(X)$  we define the {\em main part of $Q$} by 
$$Q^m(f) := \sup \{Q_\varphi(f) \mid \varphi  \in D(Q) \text{ with } 0 \leq \varphi \leq 1\},$$
and the {\em killing part of $Q$} by
$$Q^k(f) := Q(f) - Q^m(f).$$
The following lemma shows that both are well-defined. It is a special case of the theory developed in \cite[Chapter~3]{Schmi3}.
\begin{lemma}\label{lemma:fundamental lemma structure of markovian restrictions}
Let $f,g\in D(Q) \cap \ell^\infty(X)$ and let  $\varphi,\psi \in D(Q) $ with $0 \leq \varphi,\psi \leq 1$. Then the following holds.
\begin{enumerate}[(a)]
 \item $0 \leq Q_\varphi(f) \leq Q^m(f) \leq Q(f)$ and $0 \leq Q^k(f) \leq Q(f)$.
 \item $\varphi \leq \psi$ implies $Q_\varphi(f) \leq Q_{\psi}(f)$.
 \item $|f| \leq |g|$ implies  $Q^k(f) \leq Q^k(g)$.
\end{enumerate}
\end{lemma}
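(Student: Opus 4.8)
\textbf{Proof strategy for Lemma~\ref{lemma:fundamental lemma structure of markovian restrictions}.}

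The plan is to work directly from the definitions of $Q_\varphi$, $Q^m$ and $Q^k$, using that $D(Q)\cap \ell^\infty(X)$ is an algebra and that $Q$ is a Dirichlet form (so in particular it is Markovian and the Beurling-Deny decomposition applies). The three assertions are of increasing depth: (b) is the monotonicity that makes the supremum defining $Q^m$ well-behaved, (a) packages the basic bounds, and (c) is the substantive statement about the killing part.

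First I would establish (b). For $\varphi \le \psi$ with both in $D(Q)$ and taking values in $[0,1]$, one writes $Q_\psi(f) - Q_\varphi(f)$ explicitly and rearranges using bilinearity. Writing $\psi = \varphi + (\psi - \varphi)$ and expanding $Q(\psi f)$ and $Q(\psi f^2,\psi)$, the difference should collect into a sum of terms each of which is nonnegative because $Q$ is a Dirichlet form (hence satisfies the Markov property / first Beurling-Deny criterion, giving $Q(u,v)\ge 0$ when $v\ge 0$ and $u$ is suitably related, cf. the positivity statements used elsewhere in the paper). Concretely, the natural identity to aim for is something like $Q_\psi(f) - Q_\varphi(f) = Q\big((\psi-\varphi)(f-\langle f\rangle)\,\cdot\,\big) \ge 0$ after reorganizing; the key input is that $f$ is bounded so all products stay in the algebra $D(Q)\cap\ell^\infty(X)$. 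Taking $\varphi = 0$ in (b) gives $Q_\varphi(f) \ge Q_0(f) = 0$, which is the lower bound $Q_\varphi(f)\ge 0$ in (a).

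Given (b), assertion (a) is mostly bookkeeping: $Q^m(f) = \sup_\varphi Q_\varphi(f)$ is a supremum of nonnegative numbers, it is $\le Q(f)$ because each $Q_\varphi(f) \le Q(f)$ (which itself follows by taking $\varphi \to \1$ along an approximating sequence, or directly from a Markov-property estimate on $Q(\varphi f)$ together with $Q(\varphi f^2,\varphi)\ge 0$), and then $Q^k(f) = Q(f) - Q^m(f)$ lies in $[0,Q(f)]$. The only slightly delicate point is justifying $Q_\varphi(f)\le Q(f)$ for \emph{all} admissible $\varphi$ without passing to a limit; here I would use that $0\le\varphi\le 1$ makes $\varphi f$ a normal contraction applied coordinatewise to $f$ in an appropriate sense, so $Q(\varphi f)\le Q(f)$-type bounds are available from the Dirichlet property, combined again with $Q(\varphi f^2,\varphi)\ge 0$.

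The main obstacle is (c), the monotonicity of the killing part in $|f|$. This does not follow from manipulating a single $Q_\varphi$; it requires the Beurling-Deny representation of the Dirichlet form $Q$, which splits $Q$ into a strongly local part, a jump part and a killing part $\int f^2\,dk$ for a positive killing measure $k$. One must show that the abstract $Q^k$ defined here as $Q - Q^m$ coincides with $\int f^2\,dk$, at least on $D(Q)\cap\ell^\infty(X)$; once that identification is in hand, $|f|\le|g|$ gives $\int f^2\,dk \le \int g^2\,dk$ immediately. Proving the identification amounts to showing that the main part $Q^m$ equals the sum of the local and jump parts, i.e. that the concatenation procedure $\sup_\varphi(Q(\varphi f) - Q(\varphi f^2,\varphi))$ exactly strips off the killing term. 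For this I would invoke the machinery of \cite[Chapter~3]{Schmi3} as the excerpt suggests, checking that its hypotheses are met in the present discrete setting (where everything is very concrete: the jump part is $\frac12\sum_{x,y} b(x,y)(f(x)-f(y))^2/\mu$-like and a residual killing-type term can only come from the ``boundary'' contribution, not from the interior sum). If one wanted a self-contained argument instead, one could compute $Q_\varphi(f)$ for $\varphi\in C_c(X)$ directly: there the form has no killing part relative to $C_c(X)$, so $Q^m$ restricted appropriately is forced, and then pass to general $f\in D(Q)\cap\ell^\infty(X)$ by approximation in the form norm, using lower semicontinuity of $Q$ and dominated convergence for the $f^2\,dk$ term — but verifying the approximation is precisely where the reference does the work.
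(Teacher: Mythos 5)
There are genuine gaps in both halves of your argument. For (a)/(b), the two auxiliary facts you lean on are simply false for a general Dirichlet form: multiplication by $\varphi$ with $0\le\varphi\le 1$ is not a normal contraction applied to $f$, so there is no bound of the type $Q(\varphi f)\le Q(f)$ (take $f\equiv 1$ with $1\in D(Q)$, $Q(1)=0$ and $\varphi$ nonconstant), and $Q(\varphi f^2,\varphi)\ge 0$ also fails: on a two-point graph with $b(x,y)=1$, $\mu\equiv 1$, $V=0$, $\varphi=(1,\tfrac12)$ and $f=(0,1)$ one gets $Q(\varphi f^2,\varphi)=(0-\tfrac12)(1-\tfrac12)=-\tfrac14<0$. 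Likewise the ``natural identity'' you hope for in (b) does not materialize: expanding $Q_\psi(f)-Q_\varphi(f)$ by bilinearity with $\eta=\psi-\varphi$ leaves terms such as $2Q(\varphi f,\eta f)-Q(\varphi f^2,\eta)-Q(\eta f^2,\varphi)$ which have no definite sign at the level of the abstract form. The device the paper uses, and which your proposal lacks, is to pass to the approximating forms $Q^\alpha(u,v)=\alpha\langle (I-\alpha G_\alpha)u,v\rangle_2$: after reducing to finitely supported $f$ by continuity in $f$, both $Q^\alpha_\varphi$ and $Q^\alpha$ become explicit graph-type forms whose coefficients are inner products involving the resolvent, and then $0\le b^\alpha_\varphi\le b^\alpha_\psi\le b^\alpha$ follows from positivity preservation while $c^\alpha_\varphi\le c^\alpha$ follows from Markovianity (via $\alpha G_\alpha(1_S+\varphi 1_{X\setminus S})\le 1$). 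This is where $0\le Q_\varphi(f)\le Q_\psi(f)\le Q(f)$ actually comes from; it is not an algebraic consequence of the Dirichlet property.

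For (c), invoking the Beurling--Deny representation and identifying $Q^k$ with $\int f^2\,dk$ for a killing measure on $X$ is not available here. The form $Q$ is an arbitrary Dirichlet form between $Q^0$ and $Q^{(N)}$ and in general is not regular on $X$ ($C_c(X)$ need not be form-dense in $D(Q)$ -- that is exactly the nonuniqueness situation), so the classical representation does not apply directly, and the killing detected by $Q^k$ may contain boundary contributions that are not an integral over $X$; identifying the abstract $Q^k$ with a representation-theoretic killing term is essentially the content one has to prove, and you explicitly defer that verification to the reference. The paper's proof of (c) is more elementary and self-contained: first, under the extra assumption that some $\psi\in D(Q)$ satisfies $1_{\{|g|>0\}}\le\psi\le 1$, choose $\varphi\ge\psi$ nearly realizing $Q^m(f)$ and $Q^m(g)$ simultaneously and compute $Q^k(g)-Q^k(f)\ge Q(g^2-f^2,\varphi)-2\varepsilon\ge -2\varepsilon$, the last step by Lemma~\ref{lemma:postivity of form} since $g^2-f^2\ge 0$ and $\varphi=1$ on its support; then remove the extra assumption by approximating with $f_\alpha=f-(f\wedge\alpha)\vee(-\alpha)$, $g_\alpha$ analogously, and $\psi_\alpha=(\alpha^{-1}|g|)\wedge 1$, using that $Q^k\le Q$ is continuous with respect to form convergence. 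You would need to supply an argument of this kind (or a full verification of the hypotheses and conclusions of the cited machinery) for (c) to stand.
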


\begin{proof}

 (a) \& (b): We show the inequalities $0 \leq Q_\varphi(f) \leq Q_\psi(f) \leq Q(f)$, the rest then follows from the definitions. We denote by $G_\alpha := (H + \alpha)^{-1}$ the resolvents and we use the approximating forms, see Appendix~\ref{appendix:basics}, to compute
 \begin{align*}
  Q_\varphi(f) &= \lim_{\alpha \to \infty} \alpha ( \as{(I -\alpha G_\alpha) (\varphi f),\varphi f}_2 - \as{(I - \alpha G_\alpha)(\varphi f^2), \varphi}_2 )\\
  &= \lim_{\alpha \to \infty} \alpha (  \as{ \alpha G_\alpha (\varphi f^2), \varphi}_2 - \as{\alpha G_\alpha (\varphi f) ,\varphi f}_2  )
 \end{align*}
 and
 $$ Q(f) = \lim_{\alpha \to \infty} \alpha \as{(I -\alpha G_\alpha) f ,f}_2.$$  
 Since the involved quantities are continuous in $f$, it suffices to prove 
 $$Q^\alpha_\varphi(f) := \as{ \alpha G_\alpha (\varphi f^2), \varphi}_2 - \as{\alpha G_\alpha (\varphi f) ,\varphi f}_2 \leq  \as{(I -\alpha G_\alpha) f ,f}_2 =: Q^\alpha(f)$$
 and the monotonicity in $\varphi$ for $f \in C_c(X)$.  Let $S$ be the finite support of $f$. An elementary computation shows
 $$Q_\varphi^\alpha(f)  = \sum_{x,y \in S} b^\alpha_\varphi(x,y) (f(x) - f(y))^2 + \sum_{x \in S} c_\varphi^\alpha(x) f(x)^2,  $$
 where for $x,y \in S$ the coefficients satisfy
 $$b^\alpha_\varphi(x,y) \quad  =  \quad \begin{cases} - Q_\varphi^\alpha(\delta_x,\delta_y) &\text{if } x \neq y\\
                                          0 &\text{if }x = y
                                         \end{cases}
                                     \quad    = \quad
 \begin{cases}
                                          \as{\alpha G_\alpha (\varphi\delta_x),\varphi \delta_y}_2 &\text{if } x \neq y\\
                                          0 &\text{if }x = y
                                         \end{cases},
$$
and
$$c_\varphi^\alpha(x) = Q^\alpha_\varphi(1_S, \delta_x) =  \as{\alpha G_\alpha ( 1_{X\setminus S} \varphi), \varphi \delta_x}_2. 
$$
 Similarly, we obtain 
  $$Q^\alpha(f)  =  \sum_{x,y \in S} b^\alpha(x,y) (f(x) - f(y))^2 + \sum_{x \in S} c^\alpha(x) f(x)^2, $$
 where for $x,y \in S$ the coefficients satisfy
 $$b^\alpha(x,y) \quad  =  \quad \begin{cases} - Q^\alpha(\delta_x,\delta_y) &\text{if } x \neq y\\
                                          0 &\text{if }x = y
                                         \end{cases}
                                     \quad    = \quad
 \begin{cases}
                                          \as{\alpha G_\alpha\delta_x,\delta y }_2 &\text{if }  x \neq y\\
                                          0 &\text{else}
                                         \end{cases},
$$
and
$$c^\alpha(x) = Q^\alpha(1_S, \delta_x) =  \as{1_S - \alpha G_\alpha 1_S, \delta_x}_2.  
 $$
 Since resolvents of Dirichlet forms are positivity preserving, these computations immediately yield $0 \leq b_\varphi^\alpha \leq b_\psi^\alpha \leq b^\alpha$ and $0 \leq c_\varphi^\alpha \leq c_\psi^\alpha$. For $x \in S$ we obtain
 \begin{align*}
  c^\alpha(x) - c_\varphi^\alpha(x) &= \as{1_S - \alpha G_\alpha 1_S,\delta_x}_2 - \as{\alpha G_\alpha ( 1_{X \setminus S} \varphi),\varphi \delta_x}_2\\
  &\geq \as{1_S - \alpha G_\alpha 1_S,\delta_x}_2 - \as{\alpha G_\alpha ( 1_{X \setminus S} \varphi), \delta_x}_2\\
  &=  \as{1_S - \alpha G_\alpha (1_S + \varphi 1_{X \setminus S}),\delta_x}_2.
 \end{align*}
 Since $\alpha G_\alpha$ is Markovian and $0 \leq \varphi \leq 1$, we have $\alpha G_\alpha (1_S + \varphi 1_{X \setminus S}) \leq 1$. Therefore, the above computation shows $c^\alpha_\varphi(x) \leq c^\alpha(x)$ for $x \in S$ and the claims are proven.
 
 (c): We prove this statement in two steps. First, we additionally assume that there exists $\psi \in D(Q)$ such that $1_{\{|g| > 0\}} \leq \psi \leq 1$. Let $\varepsilon > 0$. We use the definition of $Q^k$ and that $Q_\varphi$ is monotone in $\varphi$ to choose $\varphi \in D(Q)$ with $\psi \leq \varphi \leq 1$ such that 
 $$|Q^m(f) - Q_\varphi(f)| \leq \varepsilon \text{ and } |Q^m(g) - Q_\varphi(g)| \leq \varepsilon. $$
 Note that $\varphi$ equals one on the supports of $f$ and $g$. We obtain
 \begin{align*}
  Q^k(g) - Q^k(f) &\geq  Q(g) - Q_\varphi(g) - Q(f) + Q_\varphi(f) - 2\varepsilon \\
  &= Q(\varphi g) - Q_\varphi(g) - Q(\varphi f) + Q_\varphi(f) - 2 \varepsilon\\
  &= Q(\varphi g^2,\varphi) - Q(\varphi f^2,\varphi)\\
  &= Q(g^2 - f^2,\varphi) - 2 \varepsilon.\\
 \end{align*}
 Since $g^2 - f^2$ is nonnegative and  $\varphi$ equals one on the support of $g^2 - f^2$, Lemma~\ref{lemma:postivity of form} yields $Q(g^2 - f^2,\varphi) \geq 0$ and the claim is proven for these special $f$ and $g$.
 
 Let now $f,g \in D(Q) \cap \ell^\infty(X)$ with $|f| \leq |g|$ arbitrary. For $\alpha > 0$ consider the functions $f_{\alpha} := f - (f \wedge \alpha) \vee (-\alpha)$ and $g_{\alpha} := g - (g \wedge \alpha) \vee (-\alpha)$. They satisfy $|f_\alpha| \leq |g_\alpha|$ and \cite[Theorem~1.4.2]{FOT} shows $f_\alpha,g_\alpha \in D(Q)$ and $g_{\alpha} \to g$ and $f_\alpha \to f$ with respect to the form norm, as $\alpha \to 0+$.   Since $Q$ is a Dirichlet form, the function 
 $\psi_\alpha :=  (\alpha^{-1} |g|) \wedge 1$   
 belongs to $D(Q)$ and since  $\{|g_\alpha| > 0\} = \{|g| > \alpha\}$, it equals one on the support of $g_\alpha$. We can therefore apply the already proven inequality to the functions $f_\alpha$ and $g_\alpha$. Moreover, the quadratic form $Q^k$ is smaller than $Q$ and therefore continuous with respect to $Q$-convergence. With all of these properties we conclude
 $$Q^k(f) = \lim_{\alpha \to 0+} Q^k(f_\alpha) \leq \lim_{\alpha \to 0+} Q^k(g_\alpha) = Q^k(g). $$
 This finishes the proof.
\end{proof}

%
%
%

For the  following proof recall that  $\cQ_0$ is the Schrödinger form with respect to the potential $V = 0$.

\begin{proof}[Proof of Theorem~\ref{theorem:structure of Markovian realizations}]

 By definition we have $Q = Q^m + Q^k$ on $D(Q) \cap \ell^\infty(X)$ and $Q^{(N)} = \cQ_0 + q_V$ on $D(Q^{(N)})$. Since bounded functions are dense in the domains of Dirichlet forms, see e.g. \cite[Theorem~1.4.2]{FOT}, it suffices to prove $\cQ_0(f) \leq Q^m(f)$ and $q_V(f) \leq Q^k(f)$ for $f \in D(Q) \cap \ell^\infty(X)$.

  Let $K \subseteq X$  finite and let  $f \in D(Q) \cap \ell^\infty(X)$. By Lemma~\ref{lemma:q and extension of qc} the form $Q$ is an extension of $\cQ^c$ and  $1_K$, $1_K f$ and $1_K f^2$ have finite support. Therefore, Lemma~\ref{lemma:fundamental lemma structure of markovian restrictions} yields
 $$Q^m(f) \geq Q_{1_K}(f) =   \cQ^c(1_K f) - \cQ^c(1_K f^2, 1_K) = \frac{1}{2} \sum_{x,y \in K} b(x,y) (f(x) - f(y))^2.$$
  Letting $K \nearrow X$ yields $Q^m(f) \geq \cQ_0(f)$ for all $f \in D(Q) \cap \ell^\infty(X)$.

 It remains to prove the inequality $Q^k(f) \geq q_V(f)$ for $f \in D(Q) \cap \ell^\infty(X)$. To this end, let $K \subseteq X$ finite and let $\varepsilon > 0$. According to Lemma~\ref{lemma:fundamental lemma structure of markovian restrictions}, we can choose $\psi \in D(Q)$ with $1_K \leq \psi \leq 1$ such that for each $\varphi \in D(Q)$ with $\psi \leq \varphi \leq 1$ we have $Q^k(1_K f) \geq Q(1_K f) - Q_{\varphi} (1_K f) - \varepsilon$. The monotonicity of $Q^k$ (Lemma~\ref{lemma:fundamental lemma structure of markovian restrictions}~(c)) then implies
 $$Q^k(f) \geq Q^k(1_K f) \geq Q(1_K f) - Q_{\varphi} (1_K f) - \varepsilon = Q(1_K f^2,\varphi) - \varepsilon. $$
 Since $0 \leq \varphi \leq 1$ is bounded and the resolvent $G_{\alpha} : = (H + \alpha)^{-1}$ is Markovian, we have $0 \leq \alpha G_\alpha \varphi \leq 1$. It follows from Lebesgue's dominated convergence theorem that $\cH \alpha G_\alpha \varphi \to \cH \varphi$ pointwise, as $\alpha \to \infty.$ Moreover,  $\cH$ is a realization of $H$ and $1_Kf^2$ has finite support so that we obtain
 $$Q(1_K f^2,\varphi) = \lim_{\alpha \to \infty} Q(1_K f^2,\alpha G_\alpha \varphi) = \lim_{\alpha \to \infty}  (1_K f^2, \cH \alpha G_\alpha \varphi) = (1_K f^2, \cH \varphi). $$
 Combining these computations we arrive at 
 $$Q^k(f) \geq (1_K f^2, \cH \varphi) - \varepsilon,$$
 whenever $\varphi \in D(Q)$ with $\psi \leq \varphi \leq 1$. Letting $\varphi \nearrow 1$ pointwise  and using Lebesgue's dominated convergence theorem yields $\cH \varphi \to V$ pointwise. This shows
 $$Q^k(f) \geq  (1_K f^2,V) - \varepsilon = \sum_{x \in K} f(x)^2 V(x) \mu(x) - \varepsilon.$$
 Since $K$ and $\varepsilon$ were arbitrary, this finishes the proof. 
\end{proof}

%
%
%
%
%

\subsection{Markov uniqueness} \label{subsection:markov uniqueness}

In this subsection we discuss criteria for the uniqueness of Markovian realizations of $\cH$, which by Theorem~\ref{theorem:structure of Markovian realizations} is equivalent to $Q^0 = Q^{(N)}$. Of course, the results of Section~\ref{section:uniqueness} can always be applied, but in general their assumptions are stronger than what is actually needed for Markov uniqueness.  There are various abstract characterization for $Q^0 = Q^{(N)}$ in the context of graphs, see e.g. \cite{HKLW,Schmi}. In this text the focus is  a bit different. We present characterizations of Markov uniqueness that are related to the geometry of the graph and therefore have the same spirit as the results in Section~\ref{section:uniqueness}.

The first criterion deals with capacities of boundaries. As a preparation we discuss some of their elementary properties. Let $V \geq 0$. A function $h:X \to [0,\infty)$ is called {\em $1$-excessive} (with respect to the form $Q^{(N)}$) if for every $\beta > 0$ we have $\beta(H^{(N)} + \beta + 1)^{-1} h \leq h$. Here the resolvent is extended to arbitrary nonnegative functions by monotonicity, i.e., 
$$(H^{(N)} + \beta  + 1)^{-1} h := \sup \{(H^{(N)} + \beta + 1)^{-1} f \mid f \in \ell^2(X,\mu) \text{ with } 0 \leq f\leq h\}.$$
Since the resolvent of $Q^{(N)}$ is Markovian, the constant function $1$ is always $1$-excessive. If $\mu(X) <\infty$  and $(V\cdot \mu)(X) < \infty$, it belongs to $D(Q^{(N)})$. We say that a function $f:X \to \R$ is {\em strictly positive} if $f(x) > 0$ for all $x \in X$. We need the following characterizations of excessive functions.

\begin{lemma}[Characterization of excessive functions]\label{lemma:characterization of excessive functions}
 Let $h:X \to [0,\infty)$. The following assertions are equivalent.
 \begin{enumerate}[(i)]
  \item $h$ is $1$-excessive.
  \item For every $f \in D(Q^{(N)})$ we have $f \wedge h \in D(Q^{(N)})$ and
  $$Q^{(N)}(f \wedge h) + \|f \wedge h\|^2_2 \leq Q^{(N)}(f) + \|f\|^2_2.$$
 \end{enumerate}
 If, additionally, $h \in D(Q^{(N)})$, then these   are equivalent to the following.
 \begin{enumerate}[(i)]  \setcounter{enumi}{2}
  \item $Q^{(N)}(h,f) + \langle h, f \rangle_2 \geq 0$ for all nonnegative $f \in D(Q^{(N)})$.
 \end{enumerate}
In particular, there always exists a strictly positive $1$-excessive function in $D(Q^{(N)})$. 
\end{lemma}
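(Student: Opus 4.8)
The plan is to pass to the form shifted by the identity. Put $\widetilde{Q} := Q^{(N)} + \langle \cdot,\cdot\rangle_2$, let $A := H^{(N)} + 1$ be the associated self-adjoint operator (so $A \geq 1$ because $V \geq 0$), and for $\beta > 0$ set $R_\beta := (A+\beta)^{-1}$, so that $\widetilde{Q}(R_\beta g, f) + \beta \langle R_\beta g, f\rangle_2 = \langle g, f\rangle_2$ for $g \in \ell^2(X,\mu)$ and $f \in D(Q^{(N)})$. In this language $h$ is $1$-excessive precisely when $\beta R_\beta h \leq h$ for all $\beta > 0$, where $R_\beta$ is extended to nonnegative functions by monotonicity. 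Two facts will be used repeatedly. Since $Q^{(N)}$ is a Dirichlet form, each $R_\beta$ is positivity preserving, hence monotone; in particular, for any nonnegative $\psi_0 \in \ell^2(X,\mu)$ the potential $h_0 := (H^{(N)}+1)^{-1}\psi_0$ is $1$-excessive, since the resolvent identity yields $\beta R_\beta h_0 = h_0 - R_\beta \psi_0 \leq h_0$, and the pointwise minimum of two $1$-excessive functions is again $1$-excessive. Moreover, if $u, v \in D(Q^{(N)})$ are nonnegative with disjoint supports, then $\langle u, v\rangle_2 = 0$ and, reading off the defining sums of $Q^{(N)}$, $Q^{(N)}(u,v) \leq 0$, hence $\widetilde{Q}(u,v) \leq 0$; together with $\widetilde{Q}(u,u) \geq \|u\|_2^2$ this supplies the decisive sign information. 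I also note the pointwise bound $|f \wedge h| \leq |f|$, which makes $f \wedge h \in \ell^2(X,\mu)$ automatic for $f \in D(Q^{(N)})$.

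First I would establish the equivalences under the extra hypothesis $h \in D(Q^{(N)})$, and also (ii) $\Rightarrow$ (i) in general. For (iii) $\Rightarrow$ (i): fix $g \in \ell^2(X,\mu)$ with $0 \leq g \leq h$ and show that $u := (\beta R_\beta g - h)_+ \in D(Q^{(N)})$ vanishes. Evaluating $\widetilde{Q}(\beta R_\beta g - h, u) + \beta\langle \beta R_\beta g - h, u\rangle_2$ via the resolvent equation, together with $g \leq h$ and $\widetilde{Q}(h,u) \geq 0$, shows it is $\leq 0$; evaluating it via the disjoint-support estimate for $u$ and $(h - \beta R_\beta g)_+$ shows it is $\geq (1+\beta)\|u\|_2^2$; hence $u = 0$, and taking the supremum over $g$ gives $\beta R_\beta h \leq h$. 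For (i) $\Rightarrow$ (iii): use $\beta R_\beta h \to h$ in the form norm of $\widetilde{Q}$ together with $\widetilde{Q}(\beta R_\beta h, f) = \beta\langle h - \beta R_\beta h, f\rangle_2 \geq 0$ for nonnegative $f \in D(Q^{(N)})$, and let $\beta \to \infty$. For (ii) $\Rightarrow$ (i): with $g,h$ as above and $f := \beta R_\beta g \geq 0$, the hypothesis $\widetilde{Q}(f \wedge h) \leq \widetilde{Q}(f)$ gives $2\widetilde{Q}(f, (f-h)_+) \geq \|(f-h)_+\|_2^2$, while the resolvent equation gives $\widetilde{Q}(f, (f-h)_+) = \beta\langle g - f, (f-h)_+\rangle_2 \leq 0$ since $g \leq h < f$ on $\{(f-h)_+ > 0\}$; hence $(f-h)_+ = 0$. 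Finally, for (i) $\Rightarrow$ (ii) with $h \in D(Q^{(N)})$: decompose $f = f \wedge h + (f-h)_+$ and $f \wedge h = h - (h-f)_+$, expand $\widetilde{Q}(f)$, and combine $\widetilde{Q}(h, (f-h)_+) \geq 0$ (from (iii)) with the disjoint-support estimate for $(h-f)_+$ and $(f-h)_+$ to obtain $\widetilde{Q}(f) \geq \widetilde{Q}(f \wedge h)$.

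It then remains to remove the hypothesis $h \in D(Q^{(N)})$ from (i) $\Rightarrow$ (ii). Here I would fix a \emph{strictly} positive $\psi_0 \in \ell^2(X,\mu)$, let $h_0 := (H^{(N)}+1)^{-1}\psi_0 > 0$, and approximate a general $1$-excessive $h$ by $h^{(m)} := h \wedge (m h_0)$, which is $1$-excessive and bounded above by $m h_0 \in \ell^2(X,\mu)$. The crucial — and, I expect, hardest — point is that $h^{(m)}$ has finite energy, i.e.\ $h^{(m)} \in D(Q^{(N)})$. Writing $v := h^{(m)}$ and $v_n := n R_n v \in D(H^{(N)})$, one has $\widetilde{Q}(v_n, v_n) = n\langle v - v_n, v_n\rangle_2 \leq n\langle v - v_n, v\rangle_2$ (using $v_n \leq v$); since $v \leq m h_0$ and $(H^{(N)}+1) h_0 = \psi_0$, and since $n(v - v_n) = (H^{(N)}+1) v_n$, this is $\leq m\langle (H^{(N)}+1) v_n, h_0\rangle_2 = m\langle v_n, \psi_0\rangle_2 \leq m^2\langle h_0, \psi_0\rangle_2$, a bound uniform in $n$. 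Since $v_n \to v$ in $\ell^2(X,\mu)$ and $\widetilde{Q}$ is lower semicontinuous with respect to $\ell^2$-convergence, $v = h^{(m)} \in D(Q^{(N)})$. Now apply the already-proven in-domain case of (i) $\Rightarrow$ (ii) to each $h^{(m)}$ and let $m \to \infty$: then $h^{(m)} \uparrow h$ pointwise (this uses $h_0 > 0$), so $f \wedge h^{(m)} \to f \wedge h$ in $\ell^2(X,\mu)$ with $\sup_m \widetilde{Q}(f \wedge h^{(m)}) \leq \widetilde{Q}(f)$, and lower semicontinuity of $\widetilde{Q}$ gives $f \wedge h \in D(Q^{(N)})$ and $\widetilde{Q}(f \wedge h) \leq \widetilde{Q}(f)$.

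For the ``in particular'' assertion one takes $h_0$ itself: enumerating $X = \{x_1, x_2, \dots\}$ and setting $\psi_0(x_n) := 2^{-n}\mu(x_n)^{-1/2}$ gives a strictly positive $\psi_0 \in \ell^2(X,\mu)$, whence $h_0 := (H^{(N)}+1)^{-1}\psi_0$ lies in $D(H^{(N)}) \subseteq D(Q^{(N)})$ and is $1$-excessive by the resolvent identity above. It is strictly positive because $h_0(x)\mu(x) = \langle \psi_0, (H^{(N)}+1)^{-1}\delta_x\rangle_2$, where $(H^{(N)}+1)^{-1}\delta_x \geq 0$ and $u := (H^{(N)}+1)^{-1}\delta_x$ is nonzero with $u(x)\mu(x) = \widetilde{Q}(u,u) \geq \|u\|_2^2 > 0$. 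Apart from the finite-energy step singled out above, the argument is a matter of carefully assembling these positivity estimates.
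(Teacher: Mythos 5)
Your proof is correct, but it takes a genuinely different route from the paper. The paper disposes of the equivalence of (i)--(iii) in one line by citing the general Dirichlet-form literature (\cite{MR} and \cite{Kaj}) and only argues the ``in particular'' part by hand (via the resolvent identity and strong continuity at a hypothetical zero of $h$); you instead reprove the equivalences from scratch. Your key ingredients -- the shifted form $\widetilde Q$, the identity $\widetilde Q(\beta R_\beta g,\varphi)=\beta\langle g-\beta R_\beta g,\varphi\rangle_2$, the sign $\widetilde Q(u,v)\le 0$ for nonnegative $u,v$ with disjoint supports (read off the explicit jump-plus-killing structure of $Q^{(N)}$), and the approximation $h^{(m)}=h\wedge(mh_0)$ with the a priori bound $\widetilde Q(nR_n h^{(m)})\le m^2\langle h_0,\psi_0\rangle_2$ to get $h^{(m)}\in D(Q^{(N)})$ for excessive $h$ that need not lie in $\ell^2(X,\mu)$ -- all check out, and the final limit $m\to\infty$ works because $|f\wedge h^{(m)}|\le|f|$ and $\widetilde Q$ is lower semicontinuous; your strict-positivity argument ($u(x)\mu(x)=\widetilde Q(u,u)\ge\|u\|_2^2$ for $u=(H^{(N)}+1)^{-1}\delta_x$) also differs from the paper's but is equally valid. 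What your approach buys is a self-contained, elementary proof tailored to the discrete setting (the disjoint-support estimate is where you use the concrete graph form rather than abstract potential theory); what the paper's citation buys is brevity and a statement that holds verbatim for general (quasi-)regular Dirichlet forms. Two small points you leave implicit and should state: that $f\wedge h,(f-h)_+\in D(Q^{(N)})$ when $h\in D(Q^{(N)})$ follows from the first Beurling--Deny criterion (cf.\ Lemma~\ref{lemma:maxima and minima energy inequality}), and that for $h\in\ell^2(X,\mu)$ the extended resolvent of the definition coincides with the usual one by monotonicity of $R_\beta$.
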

\begin{proof}
The equivalence of the assertions (i) - (iii) follows from \cite[Proposition~III.1.2]{MR} and \cite[Proposition~4]{Kaj}.

For the ``In particular''-part we let $g \in \ell^2(X,\mu)$ strictly positive and consider $h := (H^{(N)} + 1)^{-1} g$. By (iii) it is $1$-excessive. We show that it is strictly positive. Suppose that there exists an $x \in X$ with $h(x) = 0$. For $\beta > 0$ let $G_\beta := (H^{(N)} + 1)^{-1}$.  The resolvent identity and that $G_\beta$ is positivity preserving yield for $\beta > 1$ 
$$0 = h(x) = G_1 g(x) = (\beta -1) G_1 G_\beta g(x) + G_\beta g(x) \geq G_\beta g(x) \geq 0.  $$
Since $(G_\beta)$ is strongly continuous, we obtain
$$0 = \lim_{\beta \to \infty} \beta G_\beta g(x) = g(x) > 0,$$
a contradiction.
\end{proof}

Let now $h$ be a  $1$-excessive  function.  We define the {\em capacity} of a set $U \subseteq X$ (with respect to $h$ and $Q^{(N)})$ by
$$\mathrm{cap}_h(U) := \inf \{\|f\|_{Q^{(N)}}^2  \mid f \in D(Q^{(N)}) \text{ with } f \geq 1_U h\},$$
where $\|f\|_{Q^{(N)}}^2 = Q^{(N)}(f) + \|f\|_2^2$ is the square of the form norm. We use the convention $\mathrm{cap}_h(U) = \infty$ if the infimum is taken over an empty set.  If the convex set $\{f \in D(Q^{(N)}) \mid f \geq 1_U h\}$ is nonempty, by the Hilbert space projection theorem there exists a unique minimizer $h_U \geq 1_U h$ such that $\mathrm{cap}_h(U)  =Q^{(N)}(h_U) + \|h_U\|^2_2$. It is called the {\em equilibrium potential of $U$} (with respect to $h$). Since by Lemma~\ref{lemma:characterization of excessive functions} we have $\||h_U| \wedge h\|_{Q^{(N)}}^2 \leq \|h_U\|_{Q^{(N)}}^2$, it  satisfies $0 \leq h_U \leq h$. In particular, $h_U = h$ on $U$.

The {\em capacity of the boundary of X}  is defined by
$$\mathrm{cap}_h(\partial X) := \inf \{\mathrm{cap}_h(X \setminus K) \mid  K\subseteq X \text{ finite}  \}.$$
Let $\rho$ be a pseudo  metric. The {\em capacity of the Cauchy boundary}  is defined by
$$\mathrm{cap}_h(\partial_\rho X) := \inf \{\mathrm{cap}_h(X \cap O) \mid O \subseteq \overline{X}^\rho \text{ open neighborhood of } \partial_\rho X \}.$$
\begin{remark}
 Let  $\hat{X}$ be a compactification of $X$. For an open neighborhood $U$ of $\hat{X} \setminus X$ in $\hat X$ the set $\hat X \setminus U = X \setminus U$ is compact in $X$. Since $X$ carries the discrete topology, this means that $X \setminus U$ is finite. This implies that  $X \cap U = X \setminus K$ for some finite $K \subseteq X$.  On the other hand, if $K\subseteq X$ is finite, the set $\hat X \setminus K$ is an open neighborhood of $\hat X \setminus X$. These considerations show 
 $$\mathrm{cap}_h(\partial X) = \inf \{\mathrm{cap}_h(X \cap O) \mid O \subseteq \hat X \text{ open neighborhood of } \hat X \setminus X \}.$$
 Therefore, $\mathrm{cap}_h(\partial X)$ and $\mathrm{cap}_h(\partial_\rho X)$ have basically the same definition with the only difference that open neighborhoods of ``the boundary'' are defined by two different topologies in topological spaces containing $X$.
\end{remark}

The results of Subsection~\ref{subsection:a metric space criterion} could be read that absence of a boundary implies uniqueness of realizations. The following theorem is similar in spirit. It says that smallness of the boundary yields Markov uniqueness.

\begin{theorem}\label{theorem:capacity criterion boundary}
If $V \geq 0$, the following assertions are equivalent. 
\begin{enumerate}[(i)]
 \item $\cH$ has a unique Markovian realization.
 \item For any  $1$-excessive function $h \in D(Q^{(N)})$ we have $\mathrm{cap}_h(\partial X) = 0$. 
 \item There exists one strictly positive $1$-excessive function $h \in D(Q^{(N)})$ with $\mathrm{cap}_h(\partial X) = 0$. 
\end{enumerate}
If, additionally, $(X,b)$ is connected and locally finite, then these are equivalent to the following.
\begin{enumerate}[(i)]  \setcounter{enumi}{3}
 \item  For one/any strongly intrinsic path metric $\rho$  and for  any $1$-excessive function $h \in D(Q^{(N)})$ we have $\mathrm{cap}_h(\partial_{\rho} X) = 0$.
 \item  For one/any strongly intrinsic path metric $\rho$  and for  one strictly positive $1$-excessive function $h \in D(Q^{(N)})$ we have $\mathrm{cap}_h(\partial_{\rho} X) = 0$.
\end{enumerate}
\end{theorem}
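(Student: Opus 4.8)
The plan is to prove the chain of equivalences by establishing that Markov uniqueness, i.e.\ $Q^0 = Q^{(N)}$ (Corollary~\ref{corollary:markov uniqueness}), is detected by vanishing capacity of the boundary. The key structural fact, which I would isolate first, is that for a strictly positive $1$-excessive $h \in D(Q^{(N)})$ one has $h_{X\setminus K} \to 0$ in the form norm of $Q^{(N)}$ as $K \nearrow X$ if and only if $\mathrm{cap}_h(\partial X) = 0$, and that this in turn is equivalent to $C_c(X)$ being dense in $D(Q^{(N)})$ with respect to the form norm, which is exactly $Q^0 = Q^{(N)}$. The forward direction (iii) $\Rightarrow$ (i): given a finite $K$, the equilibrium potential $h_{X\setminus K}$ agrees with $h$ off $K$, so $h - h_{X\setminus K}$ is supported on $K$, hence lies in $C_c(X) \subseteq D(Q^0)$; letting $\mathrm{cap}_h(X\setminus K)\to 0$ shows $h \in \overline{C_c(X)}^{\,\|\cdot\|_{Q^{(N)}}} = D(Q^0)$. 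Then for any $f \in D(Q^{(N)}) \cap \ell^\infty(X)$ with $0 \le f$ one writes $f \wedge (nh) \in D(Q^0)$ (using that $D(Q^0)$ is a Silverstein extension domain, together with $f\wedge(nh) - f\wedge(\text{compactly supported truncation})$ arguments, cf.\ the third remark after Theorem~\ref{theorem:structure of Markovian realizations}), and approximates a general $f$ by such truncations. So $D(Q^{(N)}) \subseteq D(Q^0)$, whence $Q^0 = Q^{(N)}$ by Theorem~\ref{theorem:structure of Markovian realizations}.

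For (i) $\Rightarrow$ (ii): if $Q^0 = Q^{(N)}$, then every $1$-excessive $h \in D(Q^{(N)}) = D(Q^0)$ can be approximated in form norm by $\varphi_n \in C_c(X)$; replacing $\varphi_n$ by $(\varphi_n \vee 0) \wedge h$ (which is still in $C_c(X)$, is dominated by $h$, and by Lemma~\ref{lemma:characterization of excessive functions} together with the normal-contraction property has smaller form norm) we get a compactly supported approximating sequence $\psi_n \le h$ with $\psi_n \to h$. Then $h - \psi_n \ge 0$ is a candidate in the definition of $\mathrm{cap}_h(X \setminus \mathrm{supp}\,\psi_n)$, so $\mathrm{cap}_h(\partial X) \le \|h - \psi_n\|_{Q^{(N)}}^2 \to 0$. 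The implication (ii) $\Rightarrow$ (iii) is immediate once one knows a strictly positive $1$-excessive function in $D(Q^{(N)})$ exists, which is the ``In particular'' clause of Lemma~\ref{lemma:characterization of excessive functions}.

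For the locally finite connected case, the Remark before Theorem~\ref{theorem:capacity criterion boundary} already notes that $\mathrm{cap}_h(\partial X)$ equals the capacity computed over neighborhoods of $\hat X \setminus X$ in \emph{any} compactification, and for a strongly intrinsic path metric $\rho$ on a locally finite connected graph the metric $\rho$ induces the discrete topology and $\overline{X}^\rho$ is a compactification of $X$ (Proposition~\ref{proposition:properties of path metrics}(a) gives that $\partial_\rho X$ is closed and $X$ discrete; compactness follows from the finiteness of $\rho$-balls up to the boundary via a Hopf--Rinow type argument, or one notes that the completion of a space with the Heine--Borel property up to the boundary is compact). Hence $\mathrm{cap}_h(\partial_\rho X) = \mathrm{cap}_h(\partial X)$, and (iv), (v) reduce to (ii), (iii). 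I expect the main obstacle to be the Silverstein/truncation bookkeeping in (iii) $\Rightarrow$ (i): showing carefully that $f \wedge (nh) \in D(Q^0)$ for bounded nonnegative $f \in D(Q^{(N)})$ when $h$ is strictly positive $1$-excessive in $D(Q^0)$, and that these truncations converge to $f$ in form norm, requires combining the Silverstein extension property with the excessivity of $h$ and a diagonal approximation; everything else is either capacity manipulation or a direct appeal to the cited lemmas.
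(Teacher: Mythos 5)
Your treatment of the equivalence of (i)--(iii) is essentially the paper's route (capacity zero $\Leftrightarrow$ $h\in D(Q^0)$ via equilibrium potentials, then truncation against $nh$), but two steps are only gestured at and the tools you name are not the ones that make them work. The Silverstein remark after Theorem~\ref{theorem:structure of Markovian realizations} gives a product (algebraic ideal) property and does not by itself yield $f\wedge(nh)\in D(Q^0)$; the paper instead approximates $h$ in form norm by $\varphi_n\in C_c(X)$, bounds $\|f\wedge\varphi_n\|_{Q^0}$ by Lemma~\ref{lemma:maxima and minima energy inequality} and concludes by $\ell^2$-lower semicontinuity (Lemma~\ref{lemma:strictly positive excessive function in q0 implies q0 = qn}). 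Moreover you do not need the truncations $f\wedge(nh)$ to converge to $f$ in form norm: the excessivity inequality of Lemma~\ref{lemma:characterization of excessive functions}(ii), $Q^{(N)}(f\wedge(nh))+\|f\wedge(nh)\|_2^2\le Q^{(N)}(f)+\|f\|_2^2$, together with lower semicontinuity of the closed form $Q^0$ already gives $f\in D(Q^0)$. Similarly, in (i)$\Rightarrow$(ii) you assert $\psi_n=(\varphi_n\vee 0)\wedge h\to h$ in form norm without proof; this is true but needs the standard "uniform bound $+$ convergence of norms $+$ weak convergence" argument, or can be bypassed entirely by rewriting $\mathrm{cap}_h(X\setminus K)=\inf\{\|h-f\|_{Q^{(N)}}^2 \mid f\,1_{X\setminus K}=0\}$ (the paper's Lemma~\ref{lemma:alternative formula capacity}), after which the $\varphi_n$ themselves serve as competitors. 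These are repairable gaps.

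The genuine gap is in the locally finite part. Your reduction of (iv)/(v) to (ii)/(iii) rests on the claim that $\overline{X}^{\rho}$ is a compactification of $X$, so that the Remark before the theorem applies; this is false in general. If $(X,\rho)$ is complete and $X$ is infinite, then $\overline{X}^{\rho}=X$ is an infinite discrete space, hence not compact, and even a bounded completion need not be compact: in the paper's example after Proposition~\ref{proposition:properties of path metrics} the pendant vertices $(n,1)$ form an infinite set with pairwise distances $\ge 2$ inside a bounded completion. Consequently open neighborhoods of $\partial_\rho X$ need not have finite complement, and only the easy inequality $\mathrm{cap}_h(\partial_\rho X)\le \mathrm{cap}_h(\partial X)$ comes for free (complements of finite sets are open neighborhoods of $\partial_\rho X$). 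The converse inequality is precisely where the paper invests work: Lemma~\ref{lemma:equality of capacities} approximates $h-h_{X\cap O}$, which is supported in the $\rho$-complete set $X\setminus O$, by finitely supported functions, and this uses Lemma~\ref{lemma:q0u = qnu} ($Q^0_U=Q^{(N)}_U$ for $\rho$-complete $U$), which in turn relies on the metric uniqueness criterion Corollary~\ref{corollary:completeness} applied to the connected components of $U$ with their induced path metrics, via the Hopf--Rinow type statement of Proposition~\ref{proposition:properties of path metrics}. This mechanism --- completeness of the piece away from the boundary forcing Neumann equal Dirichlet on that piece --- is the substantive content of the second half of the theorem (the paper even remarks it likely fails for general intrinsic metrics), and it is entirely absent from your plan; without it the equivalence with (iv)/(v) does not follow.
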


We prove the theorem through several lemmas, which may be of interest on their own right. For some subset $U \subseteq X$ we denote by $\pi_U:\ell^2(X,\mu) \to \ell^2(U,\mu|_U)$ the restriction $f \mapsto f|_U$ and by $\iota_U = (\pi_U)^*:\ell^2(U,\mu|_U) \to \ell^2(X,\mu)$ the embedding that is given by $\iota_U f(x) = f(x)$ if $x \in U$ and $\iota_U f(x) = 0$ if $x \in X \setminus U$. Let $Q^{(N)}_U$ be the Dirichlet form on $\ell^2(U,\mu|_U)$ with domain $D(Q^{(N)}_U) = \{f \in \ell^2(U,\mu|_U) \mid \iota_U f \in D(Q^{(N)})\}$ on which it acts by $Q^{(N)}_U(f) :=   Q^{(N)}(\iota_U f)$. Moreover, let $Q^0_U$ be the restriction of $Q^{(N)}_U$ to the closure of $C_c(U)$ with respect to the form norm $\|\cdot\|_{Q^{(N)}_U}$. For $f \in D(Q^{(N)}_U)$ we have
$$Q^{(N)}_U(f) = \frac{1}{2} \sum_{x,y \in U} b(x,y) (f(x) - f(y))^2  + \sum_{x \in U} f(x)^2 (V(x) + d_U(x)) \mu(x),$$
with $d_U(x) = \mu(x)^{-1}\sum_{y \in X \setminus U} b(x,y)$. Hence, $Q^{(N)}_U$ and $Q^0_U$ are the maximal and the minimal Dirichlet form on $\ell^2(U,\mu|_U)$ associated with the graph $(U, b|_{U \times U})$ with potential $V|_U + d_U$. The associated operators are Markovian  realizations of the Schrödinger operator $\cH_{\mu|_U, V|_U + d_U}$ on the graph $b|_{U \times U}$.

\begin{lemma}\label{lemma:q0u = qnu}
 Let $(X,b)$ be a connected locally finite graph and let $\rho$ be a strongly intrinsic path metric. If $U \subseteq X$ is complete with respect to $\rho$, then $Q^0_U = Q^{(N)}_U$.
\end{lemma}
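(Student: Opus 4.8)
The plan is to deduce the lemma from Corollary~\ref{corollary:completeness}, applied to the induced subgraph $(U,b|_{U\times U})$ equipped with the nonnegative potential $V|_U+d_U$.

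First I would reduce to the case that $(U,b|_{U\times U})$ is connected. If $U_i$ is a connected component of this subgraph, then there are no $b$-edges between $U_i$ and $U\setminus U_i$, so the restriction of $d_U$ to $U_i$ equals $d_{U_i}$, and both $Q^0_U$ and $Q^{(N)}_U$ decompose as orthogonal direct sums over the $U_i$ (the edge term splits because $b$ vanishes between components, and $C_c(U)=\bigoplus_i C_c(U_i)$). Hence it suffices to prove $Q^0_{U_i}=Q^{(N)}_{U_i}$ for each $i$; and each $U_i$ is again complete for $\rho$, since a $\rho$-Cauchy sequence in $U_i$ converges in $(U,\rho)$ and, the topology induced by $\rho$ being discrete (Proposition~\ref{proposition:properties of path metrics}), its limit must lie in $U_i$. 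So from now on assume $(U,b|_{U\times U})$ connected.

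Next, writing $\rho=\rho_\sigma$ for the strongly intrinsic edge weight $\sigma$ defining $\rho$, I would consider $\rho_U:=\rho_{\sigma|_{U\times U}}$, the path metric of the connected locally finite graph $(U,b|_{U\times U})$. Because $\sum_{y\in U}b(x,y)\sigma(x,y)^2\le\sum_{y\in X}b(x,y)\sigma(x,y)^2\le\mu(x)$, the metric $\rho_U$ is strongly intrinsic with respect to $b|_{U\times U}$ and $\mu|_U$, and by Proposition~\ref{proposition:properties of path metrics} it is indeed a metric. The key observation is that $(U,\rho_U)$ is \emph{complete}: every path inside $U$ is a path in $X$, hence $\rho_U\ge\rho|_{U\times U}$; thus any $\rho_U$-Cauchy sequence in $U$ is $\rho$-Cauchy, so by hypothesis it converges in $(U,\rho)$, so --- $\rho$ being discrete --- it is eventually constant, and therefore $\rho_U$-convergent as well. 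By Proposition~\ref{proposition:properties of path metrics}~(b), all $\rho_U$-balls are then finite.

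Finally, since $V\ge0$ and $d_U\ge0$ we have $V|_U+d_U\ge0$, so $V|_U+d_U\in\cA_{\mu|_U}$, and Corollary~\ref{corollary:completeness} (in the scalar case) applies to $\cH_{\mu|_U,\,V|_U+d_U}$ on $(U,b|_{U\times U})$: this operator has exactly one realization. By the discussion preceding the lemma, $Q^0_U$ and $Q^{(N)}_U$ are precisely the minimal form $Q^0_{\mu|_U,V|_U+d_U}$ and the Neumann form $Q^{(N)}_{\mu|_U,V|_U+d_U}$ of this operator, and both $H^0_{\mu|_U,V|_U+d_U}$ and $H^{(N)}_{\mu|_U,V|_U+d_U}$ are realizations of it (the second because its potential is nonnegative); uniqueness forces them to coincide, whence $Q^0_U=Q^{(N)}_U$. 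I expect the only genuine obstacle to be the completeness transfer in the previous paragraph: one has to pass from $\rho$-completeness of $U$ to completeness of the \emph{subgraph} path metric $\rho_U$, which may be strictly larger than $\rho|_{U\times U}$, and this step really uses that $\rho$ induces the discrete topology --- otherwise a $\rho_U$-Cauchy sequence could escape to a $\rho$-limit lying outside $U$.
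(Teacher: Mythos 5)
Your proof is correct and follows essentially the same route as the paper's: reduce to connected components, observe that the induced path metric $\rho_U$ of the subgraph dominates $\rho$ and hence inherits completeness via discreteness of the topology (so balls are finite by Proposition~\ref{proposition:properties of path metrics}), and then apply Corollary~\ref{corollary:completeness} to $\cH_{\mu|_U,\,V|_U+d_U}$ to force the minimal and Neumann forms to coincide. The only difference is that you spell out a few steps the paper leaves implicit, such as the direct-sum decomposition of $Q^0_U$ and the completeness of each component.
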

\begin{proof}
We first show that it suffices to consider the connected components of $U$.  Let  $(U_i)_{i \in I}$ the connected components of $U$. A simple computation using the aforementioned formula for $Q^{(N)}_U$ shows
$$Q^{(N)}_U(f) = \sum_{i \in I} Q^{(N)}_{U_i}(f|_{U_i}).$$
Hence, if for each $U_i$ we have $Q^{(N)}_{U_i} = Q^0_{U_i}$, then $Q^{(N)}_U = Q^0_U$.

Let now $\sigma$ be an edge weight such that  $\rho = \rho_\sigma$ and let  $W$ a connected component of $U$. By $\rho_{\sigma}^W$ we denote the path metric that $\sigma|_{W \times W}$ induces on $W$.   Since there are less paths in $W$ than in $X$, we have $\rho_\sigma^W \geq \rho_\sigma$ on $W \times W$. Moreover, since $\rho_\sigma$ is strongly intrinsic, the metric $\rho^W_\sigma$ is intrinsic with respect to $b|_{W \times W}$ and $\mu|_{W}$. 

{\em Claim:} $(W,\rho^W_\sigma)$ is complete. 

{\em Proof of the claim.} Let $(x_n)$ be a Cauchy sequence in $(W,\rho^W_\sigma)$. Since $ \rho_\sigma^W \geq \rho_\sigma$ and $W$ is complete in with respect to $\rho_\sigma$, it converges with respect to $ \rho_\sigma$ to some $x \in W$. According to Proposition~\ref{proposition:properties of path metrics} the singleton set $\{x\}$ is open in $(X,\rho_\sigma)$. Hence, $(x_n)$ is eventually constant so that it also converges in $(W,\rho^W_\sigma)$ to $x$. This proves the claim.

As discussed in Proposition~\ref{proposition:properties of path metrics} completeness of $(W,\rho^W_\sigma)$ implies that bounded sets in $(W,\rho^W_\sigma)$ are finite. Hence, by Corollary~\ref{corollary:completeness} the Schrödinger operator $\cH_{\mu|_W, V|_W + d_W}$ (with respect to the graph $b|_{W \times W}$) has a unique realization on $\ell^2(W,\mu|_W)$. The discussion preceding this theorem shows that this implies $Q^0_W = Q^{(N)}_W$.
\end{proof}

\begin{lemma}\label{lemma:alternative formula capacity}
 Let $h \in D(Q^{(N)})$ be $1$-excessive. For any $U \subseteq X$ we have
 $$\mathrm{cap}_h(U) = \inf \{\|h-f\|_{Q^{(N)}}^2 \mid f \in D(Q^{(N)}) \text{ with } f 1_U = 0 \}.$$
\end{lemma}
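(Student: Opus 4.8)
The plan is to prove the two characterizations of $\mathrm{cap}_h(U)$ coincide by exhibiting a bijective correspondence between the competitors in the two infima that preserves the relevant norms. Given a $1$-excessive $h \in D(Q^{(N)})$ and a set $U \subseteq X$, write $\cA = \{f \in D(Q^{(N)}) \mid f \geq 1_U h\}$ for the competitors in the definition of $\mathrm{cap}_h(U)$, and $\cB = \{g \in D(Q^{(N)}) \mid g 1_U = 0\}$ for the competitors in the asserted alternative formula. The natural candidate map is $\cB \to \cA$, $g \mapsto h - g$: indeed, if $g 1_U = 0$ then $(h-g)1_U = h 1_U \geq 1_U h$ holds on $U$, but off $U$ we only get $(h-g) = h - g$, which need not dominate $0$. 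So the naive map does not quite land in $\cA$.

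To fix this, I would first reduce to an optimal competitor on the $\cA$-side that is comparable to $h$. By the Hilbert space projection theorem (applied exactly as in the paragraph defining the equilibrium potential $h_U$), if $\cA$ is nonempty there is a unique minimizer $h_U \in \cA$ with $\mathrm{cap}_h(U) = \|h_U\|_{Q^{(N)}}^2$, and using that $h$ is $1$-excessive together with Lemma~\ref{lemma:characterization of excessive functions}(ii) one gets $0 \leq h_U \leq h$; in particular $h_U = h$ on $U$. Now set $g_U := h - h_U$. Then $g_U \in D(Q^{(N)})$, $g_U 1_U = 0$, and $0 \leq g_U \leq h$, so $g_U \in \cB$ and $\|h - g_U\|_{Q^{(N)}}^2 = \|h_U\|_{Q^{(N)}}^2 = \mathrm{cap}_h(U)$. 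This shows the right-hand infimum is $\leq \mathrm{cap}_h(U)$. Conversely, given any $g \in \cB$, I would replace $g$ by $g' := (g_+ \wedge h) = (g \vee 0) \wedge h$, which still satisfies $g' 1_U = 0$, lies in $D(Q^{(N)})$ by the lattice properties of Dirichlet forms (and the $1$-excessiveness of $h$ for the $\wedge h$ part, via Lemma~\ref{lemma:characterization of excessive functions}), satisfies $0 \leq g' \leq h$, and has $\|h - g'\|_{Q^{(N)}} \leq \|h-g\|_{Q^{(N)}}$ — the latter because truncating $g$ to $[0,h]$ moves it closer to $h$ in the form norm, again an application of the normal-contraction / excessive-function inequalities. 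Then $h - g' \in \cA$ since $0 \leq h - g' \leq h$ and $(h-g') 1_U = h 1_U \geq 1_U h$, giving $\mathrm{cap}_h(U) \leq \|h-g'\|_{Q^{(N)}}^2 \leq \|h-g\|_{Q^{(N)}}^2$. Taking the infimum over $g \in \cB$ yields the reverse inequality, and together with the nonempty case above this proves the identity. The case where $\cA$ is empty must be handled separately: then $\mathrm{cap}_h(U) = \infty$ by convention, and one checks the alternative infimum is also $\infty$, e.g. because a finite competitor $g \in \cB$ would, after the truncation above, produce an element of $\cA$.

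The main obstacle is the inequality $\|h - g'\|_{Q^{(N)}} \leq \|h - g\|_{Q^{(N)}}$ for the truncation $g' = (g \vee 0) \wedge h$, equivalently showing that projecting $g$ onto the order interval $[0,h]$ pointwise does not increase the form-norm distance to $h$. For the $g \vee 0$ step this is the statement that $\|h - g\|_{Q^{(N)}}^2 \geq \|h - g \vee 0\|_{Q^{(N)}}^2$, which follows from $C(t) = t \vee 0$ being a normal contraction applied to $h - (h-g) \mapsto \dots$; more directly, write $h - g\vee 0 = (h-g) \vee 0$ when $h \geq 0$ is fixed and use that $Q^{(N)}$ is a Dirichlet form so $\|(h-g)\vee 0\|_{Q^{(N)}} \leq \|h-g\|_{Q^{(N)}}$ once one also notes $\|(h-g)\vee 0\|_2 \leq \|h-g\|_2$. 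For the $\wedge h$ step one uses Lemma~\ref{lemma:characterization of excessive functions}(ii) with $f$ replaced by the relevant function: $\|f \wedge h\|_{Q^{(N)}}^2 \leq \|f\|_{Q^{(N)}}^2$, applied to $f = h - (g\vee 0)$ so that $f \wedge h = h - \big((g \vee 0)\vee 0\big)\wedge\dots$; the bookkeeping here of which function to feed into the lemma is the only genuinely fiddly point, and I would carry it out by writing $h - g' = h - \big((g\vee 0)\wedge h\big) = \big(h - (g\vee 0)\big)\vee 0$ and checking this equals $f \vee 0$ for $f$ as above, so that both reductions are instances of normal contractions in a Dirichlet form together with the one use of $1$-excessiveness. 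Everything else — nonemptiness of $\cB$ (it always contains $0$), membership of the truncations in $D(Q^{(N)})$, the conventions for $\infty$ — is routine.
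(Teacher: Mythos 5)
Your argument is correct, and in one place it is actually more careful than the paper's own proof. For the inequality $\mathrm{cap}_h(U)\geq\inf\{\dots\}$ you do exactly what the paper does: take the equilibrium potential $h_U$, note $h_U=h$ on $U$, and use $h-h_U$ as a competitor with $\|h-(h-h_U)\|_{Q^{(N)}}^2=\mathrm{cap}_h(U)$. For the converse inequality the paper simply picks a near-minimizer $g$ with $g1_U=0$ and asserts $h-g\geq 1_U h$; as you correctly observed, this requires $g\leq h$ off $U$ and is not automatic, so strictly speaking the paper's proof omits a reduction. Your truncation $g'=(g\vee 0)\wedge h$ supplies exactly this reduction, with $\|h-g'\|_{Q^{(N)}}\leq\|h-g\|_{Q^{(N)}}$ obtained from the normal-contraction property of the Dirichlet form $Q^{(N)}$ together with Lemma~\ref{lemma:characterization of excessive functions}(ii); both ingredients are available and the estimate is valid. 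Two remarks. First, the intermediate identity you wrote, $h-(g\vee 0)=(h-g)\vee 0$, is false (the correct identity is $h-(g\vee 0)=(h-g)\wedge h$); your later bookkeeping $h-g'=\bigl(h-(g\vee 0)\bigr)\vee 0=\bigl((h-g)\wedge h\bigr)\vee 0$ is the right one, and the excessive-function lemma should be applied to $h-g$ (yielding the bound for $(h-g)\wedge h$), not to $h-(g\vee 0)$. Second, the argument can be shortened: it suffices to replace the capacity competitor $h-g$ by $(h-g)\vee 0$, which lies in $D(Q^{(N)})$, dominates $1_U h$ (it equals $h$ on $U$ and is nonnegative), and has smaller form norm by the Dirichlet property alone, so the $\wedge h$ step and the use of $1$-excessiveness in this direction are not needed. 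Also note that since $h\in D(Q^{(N)})$ and $h\geq 0$, the set of capacity competitors always contains $h$, so the separate ``empty case'' you mention never occurs under the lemma's hypotheses.
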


\begin{proof}
 Let $h_U$ be the equilibrium potential of $U$. Since $h_U = h$ on $U$, we have  $h - h_U = 0$ on $U$ so that
 $$\mathrm{cap}_h(U) = \|h - (h - h_U)\|_{Q^{(N)}}^2 \geq \inf \{\|h-f\|_{Q^{(N)}}^2 \mid f \in D(Q^{(N)}) \text{ with } f 1_U = 0 \}.$$
 Let now $\varepsilon > 0$ and choose $g \in D(Q^{(N)})$ with $g 1_U = 0$ such that 
 $$\|h-g\|_{Q^{(N)}}^2 \leq \inf \{\|h-f\|_{Q^{(N)}}^2 \mid f \in D(Q^{(N)}) \text{ with } f 1_U = 0 \} + \varepsilon.$$
 Since $h - g \geq h1_U$, this inequality and the definition of $\mathrm{cap}_h(U)$ imply
 $$\mathrm{cap}_h(U) \leq \inf \{\|h-f\|_{Q^{(N)}}^2 \mid f \in D(Q^{(N)}) \text{ with } f 1_U = 0 \} + \varepsilon.$$
 This finishes the proof.
\end{proof}

\begin{lemma}\label{lemma:equality of capacities}
 Let $(X,b)$ be a connected locally finite graph and let $\rho$ be a strongly intrinsic path metric. Let $h \in D(Q^{(N)})$ be a $1$-excessive function. Then 
 $$\mathrm{cap}_h(\partial X) = \mathrm{cap}_h(\partial_{\rho} X).$$
\end{lemma}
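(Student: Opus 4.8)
The plan is to prove the two inequalities $\mathrm{cap}_h(\partial_\rho X)\le \mathrm{cap}_h(\partial X)$ and $\mathrm{cap}_h(\partial X)\le \mathrm{cap}_h(\partial_\rho X)$ separately, by comparing the two families of ``neighbourhoods of the boundary'' over which the respective infima are taken. Fix an edge weight $\sigma$ with $\rho=\rho_\sigma$.

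The first inequality is the easy one. By Proposition~\ref{proposition:properties of path metrics}~(a) the metric $\rho$ induces the discrete topology on $X$, and in fact every singleton $\{x\}$ with $x\in X$ is open in $\overline{X}^\rho$; hence any finite $K\subseteq X$ is open in $\overline{X}^\rho$, so $O:=\overline{X}^\rho\setminus K$ is an open neighbourhood of $\partial_\rho X$ with $X\cap O=X\setminus K$. Thus every complement $X\setminus K$ of a finite set occurs among the sets $X\cap O$ used to define $\mathrm{cap}_h(\partial_\rho X)$, and taking infima in the two definitions gives $\mathrm{cap}_h(\partial_\rho X)\le \mathrm{cap}_h(\partial X)$.

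For the reverse inequality I would start from an arbitrary open neighbourhood $O\subseteq\overline{X}^\rho$ of $\partial_\rho X$ and set $U:=X\cap O$. The key geometric observation is that $X\setminus U=\overline{X}^\rho\setminus O$ is closed in the complete metric space $\overline{X}^\rho$, hence complete for $\rho$; by Lemma~\ref{lemma:q0u = qnu} this yields $Q^0_{X\setminus U}=Q^{(N)}_{X\setminus U}$, i.e.\ $C_c(X\setminus U)$ is dense in $D(Q^{(N)}_{X\setminus U})$ for the form norm. Now take the equilibrium potential $h_U$ of $U$ (which exists since $h\in D(Q^{(N)})$), so that $0\le h_U\le h$, $h_U=h$ on $U$, and $\mathrm{cap}_h(U)=\|h_U\|_{Q^{(N)}}^2$. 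The function $h-h_U\in D(Q^{(N)})$ vanishes on $U$, hence equals $\iota_{X\setminus U}g$ for $g:=(h-h_U)|_{X\setminus U}\in D(Q^{(N)}_{X\setminus U})$; pick $\varphi_n\in C_c(X\setminus U)$ with $\varphi_n\to g$ in $\|\cdot\|_{Q^{(N)}_{X\setminus U}}$ and put $K_n:=\supp\varphi_n$, a finite subset of $X$. Since $\iota_{X\setminus U}$ is a form-norm isometry onto the functions in $D(Q^{(N)})$ vanishing on $U$, one gets $h-\iota_{X\setminus U}\varphi_n\to h_U$ in $\|\cdot\|_{Q^{(N)}}$. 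As $\iota_{X\setminus U}\varphi_n$ is supported in $K_n$ (so it vanishes on $X\setminus K_n$), Lemma~\ref{lemma:alternative formula capacity} applied to $X\setminus K_n$ gives $\mathrm{cap}_h(X\setminus K_n)\le\|h-\iota_{X\setminus U}\varphi_n\|_{Q^{(N)}}^2\to\|h_U\|_{Q^{(N)}}^2=\mathrm{cap}_h(U)$, whence $\mathrm{cap}_h(\partial X)\le\mathrm{cap}_h(U)$. Taking the infimum over all such $O$ finishes the argument.

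The main obstacle is this second inequality: converting a neighbourhood of $\partial_\rho X$ in the completion $\overline{X}^\rho$ into the finite-subset language of $\mathrm{cap}_h(\partial X)$. This is precisely where the completeness of $X\setminus U$ and Lemma~\ref{lemma:q0u = qnu} (and through it Corollary~\ref{corollary:completeness}) are used, together with the alternative description of the capacity in Lemma~\ref{lemma:alternative formula capacity}; the remaining ingredients — existence and the bounds $0\le h_U\le h$ for the equilibrium potential, and the isometry property of $\iota_{X\setminus U}$ — are routine.
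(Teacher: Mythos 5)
Your proof is correct and follows essentially the same route as the paper's: complements of finite sets serve as open neighbourhoods of $\partial_\rho X$ for the easy inequality, and for the converse you use the equilibrium potential of $X\cap O$, the completeness of $X\setminus O$ together with Lemma~\ref{lemma:q0u = qnu} to approximate $h-h_{X\cap O}$ in form norm by functions of finite support in $X\setminus O$, and then Lemma~\ref{lemma:alternative formula capacity} to bound $\mathrm{cap}_h(X\setminus K_n)$, which is exactly the paper's argument up to bookkeeping. The only slip is in the easy direction: $\overline{X}^\rho\setminus K$ is an open neighbourhood of $\partial_\rho X$ because the finite set $K$ is closed in the metric space $\overline{X}^\rho$ (points are closed), not because $K$ is open --- openness of $K$ would only make its complement closed.
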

\begin{proof}
Since $\rho$ is a metric, points are closed. Hence, for any finite $K \subseteq X$ the set $\overline{X}^{\rho}\setminus K$ is an open neighborhood of $\partial_{\rho} X$. This implies $\mathrm{cap}_h(\partial X) \geq  \mathrm{cap}_h(\partial_{\rho} X)$.

Let $O$ an open neighborhood of $\partial_{\rho} X$ and let $h_{X\cap O}$ the corresponding equilibrium potential.  Since $h_{X \cap O} = h$ on $X \cap O$, the function $h - h_{X\cap O}$ is supported in $X \setminus O$. Moreover,  $h - h_{X\cap O} \in D(Q^{(N)})$ and  so the restriction of $h - h_{X\cap O}$ to $X \setminus O$ belongs to $D(Q^{(N)}_{X \setminus O})$.  The complement $X \setminus O$ is closed in the completion and therefore complete itself. Lemma~\ref{lemma:q0u = qnu} shows $Q^0_{X \setminus O} = Q^{(N)}_{X \setminus O}$.  Since $h - h_{X \cap O}$ is supported in $X \setminus O$, this yields that $h - h_{X \cap O}$ can be approximated with respect to $\|\cdot\|_{Q^{(N)}}$ by functions of finite support in $X \setminus O$.

Let now $\varepsilon > 0$ and let $\psi$ be a function with finite support $K \subseteq X\setminus O$ that satisfies
$ \|h-h_{X \cap O} - \psi\|_{Q^{(N)}}   < \varepsilon.$
Using Lemma~\ref{lemma:alternative formula capacity} and that $\psi 1_{X \setminus K} = 0$ we obtain 
\begin{align*}
 \mathrm{cap}_h(X \cap O)^{1/2} = \|h - (h-h_{X \cap O})\|_{Q^{(N)}} \geq  \|h - \psi\|_{Q^{(N)}} - \varepsilon
 \geq   \mathrm{cap}_h(X \setminus K)^{1/2} - \varepsilon.
 \end{align*}
This yields  $\mathrm{cap}_h(X \cap O) \geq  \mathrm{cap}_h(\partial X)$ and the claim is proven.
\end{proof}

\begin{lemma}\label{lemma:strictly positive excessive function in q0 implies q0 = qn}
 Let $h$ be a strictly positive $1$-excessive function. If $h \in D(Q^0)$, then $Q^0 = Q^{(N)}$.  
\end{lemma}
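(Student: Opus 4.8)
The plan is to exploit the characterization that Markov uniqueness is equivalent to $Q^0 = Q^{(N)}$ (Corollary~\ref{corollary:markov uniqueness}), together with the sandwich $Q^0 \leq Q^{(N)}$ from Theorem~\ref{theorem:structure of Markovian realizations}. Since $Q^0 \leq Q^{(N)}$ always means $D(Q^0) \subseteq D(Q^{(N)})$ and $Q^0 \geq Q^{(N)}$ pointwise on $D(Q^0)$, it suffices to prove the reverse inclusion $D(Q^{(N)}) \subseteq D(Q^0)$ (the pointwise equality then being automatic on the common domain). So the real content is: given a strictly positive $1$-excessive $h \in D(Q^0)$, show every $g \in D(Q^{(N)})$ can be approximated in the form norm $\|\cdot\|_{Q^{(N)}}$ by elements of $C_c(X)$.

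First I would reduce to bounded $g$: bounded functions are dense in the domain of any Dirichlet form (\cite[Theorem~1.4.2]{FOT}), so it is enough to treat $g \in D(Q^{(N)}) \cap \ell^\infty(X)$. Next I would use $h \in D(Q^0)$ to pull in a sequence $\varphi_n \in C_c(X)$ with $\varphi_n \to h$ with respect to the form norm of $Q^0$, hence of $Q^{(N)}$. The key step is then a cutoff-and-truncation argument: for a bounded $g$, say $0 \leq g \leq 1$ after an affine reduction (splitting into positive and negative parts and using that Dirichlet-form domains are stable under normal contractions), consider $g_n := g \wedge (c\,\varphi_n)_+$ for a large constant $c$, or more robustly $g_n := g - (g - c\varphi_n)_+$. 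Since $h$ is $1$-excessive, Lemma~\ref{lemma:characterization of excessive functions}(ii) gives the contraction estimate $\|f \wedge h\|_{Q^{(N)}} \leq \|f\|_{Q^{(N)}}$, and analogous estimates for minima with $c\varphi_n$ (after rescaling $h$, or using that $c\varphi_n$ converges to $ch$ which is $1$-excessive up to the constant absorbed into the $+1$ in the form norm — here one must be slightly careful, perhaps working with the excessive function directly rather than its multiples, or invoking that $\alpha G_\alpha$-type contractions for $1$-excessive functions still control the form norm). The functions $g_n$ have finite support (they are dominated by $c\varphi_n \in C_c(X)$ and are $\leq g$), converge to $g$ in $\ell^2(X,\mu)$ by dominated convergence, and the above contraction estimates bound $\|g_n\|_{Q^{(N)}}$ uniformly; passing to a subsequence that converges weakly in the form Hilbert space and then taking convex combinations (Banach–Saks / Mazur) produces a norm-convergent sequence in $C_c(X)$ with limit $g$. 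Hence $g \in D(Q^0)$.

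After establishing $D(Q^{(N)}) \subseteq D(Q^0)$ one concludes $Q^0 = Q^{(N)}$, which by Corollary~\ref{corollary:markov uniqueness} is exactly Markov uniqueness. I expect the main obstacle to be the bookkeeping around the $1$-excessive function and its multiples: the clean contraction inequality in Lemma~\ref{lemma:characterization of excessive functions} is stated for $h$ itself with the ``$+\|f\|_2^2$'' built in, and truncating against $c\varphi_n$ rather than against $h$ requires either rescaling the form (replacing the $+1$ perturbation by $+c^{-2}$, which changes the form norm but not the domain, so it is harmless) or a direct approximation argument showing $g \wedge h_n \to g$ where $h_n := h \wedge (c\varphi_n)$ is itself $1$-excessive as a minimum of a $1$-excessive function and a function dominated by a rescaled excessive function. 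The second delicate point is extracting a \emph{strongly} convergent sequence in $C_c(X)$ from the bounded sequence $(g_n)$; this is where one invokes that the form is a closed (hence Hilbert) form and uses the Banach–Saks property or Mazur's lemma to convexify the weakly convergent subsequence, noting that convex combinations of finitely supported functions are again finitely supported.
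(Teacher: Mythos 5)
There is a genuine gap, and it sits exactly where you flag your own unease. Your truncations $g_n = g \wedge (c\varphi_n)$ (equivalently $g - (g-c\varphi_n)_+$) do \emph{not} converge to $g$ in $\ell^2(X,\mu)$: they converge to $g \wedge (ch)$. Strict positivity of $h$ does not give $\inf h > 0$, so no fixed constant $c$ makes $ch \geq g$, even for bounded $g \in \ell^2(X,\mu)$; the same objection kills the variant with $h_n = h \wedge (c\varphi_n)$, since $g \wedge h_n \to g \wedge h \wedge (ch) \neq g$ in general. So your argument, repaired at the points you hedge on, proves at best that $g \wedge (ch) \in D(Q^0)$ for each fixed $c$ — which is the first half of the paper's proof (there one takes $f \wedge \varphi_n \in C_c(X)$, uses Lemma~\ref{lemma:maxima and minima energy inequality} for the uniform bound and lower semicontinuity of the closed form $Q^0$ to get $f \wedge h \in D(Q^0)$; your Banach--Saks/Mazur detour is an admissible but unnecessary substitute for that lower semicontinuity step). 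Incidentally, your worry about rescaling the form is a non-issue: $nh$ is again $1$-excessive simply because the (positivity preserving) resolvent is linear, so $\beta(H^{(N)}+\beta+1)^{-1}(nh) = n\beta(H^{(N)}+\beta+1)^{-1}h \leq nh$.

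What is missing is the second limiting step that bridges from $g \wedge (nh)$ to $g$. Since $nh$ is strictly positive, $1$-excessive and lies in $D(Q^0)$, the first step applies to every $n$ and gives $g \wedge (nh) \in D(Q^0)$. Strict positivity of $h$ is then used precisely here: $g \wedge (nh) \to g$ pointwise, hence in $\ell^2(X,\mu)$ by dominated convergence. The uniform form bound does not come from Lemma~\ref{lemma:maxima and minima energy inequality} (which would blow up with $\|nh\|_{Q^{(N)}}$) but from the excessiveness of $nh$ via Lemma~\ref{lemma:characterization of excessive functions}~(ii), which yields $Q^{(N)}(g \wedge (nh)) + \|g\wedge(nh)\|_2^2 \leq Q^{(N)}(g) + \|g\|_2^2$ uniformly in $n$. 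Combined with the fact that $Q^0$ and $Q^{(N)}$ agree on $D(Q^0)$ and with lower semicontinuity of $Q^0$, this gives $g \in D(Q^0)$ and hence $D(Q^{(N)}) \subseteq D(Q^0)$. Without this step your proof does not reach $g$, only its truncations by multiples of $h$ — and, tellingly, your write-up never uses strict positivity of $h$, which is indispensable.
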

\begin{proof}
 Since $Q^{(N)}$ is a Dirichlet form, it suffices to prove that each nonnegative $f \in D(Q^{(N)})$ can be approximated by finitely supported functions with respect to the form norm.  Let $(\varphi_n)$ a sequence in $C_c(X)$ that converges to $h$ with respect to the form norm. Since $f$ is nonnegative, the functions $f_n := f \wedge \varphi_n$ have finite support and they converge in $\ell^2(X,\mu)$ to $f \wedge h$. The lower semicontinuity of $Q^0$ and that $Q^0$ and $Q^{(N)}$ agree on $C_c(X)$ yield
 $$\left(Q^0(f \wedge h) + \|f \wedge h\|_2^2\right)^{1/2} \leq \liminf_{n \to \infty} \|f_n\|_{Q^0}  = \liminf_{n \to \infty} \|f_n\|_{Q^{(N)}} \leq \|f\|_{Q^{(N)}} +  \|h\|_{Q^{(N)}} < \infty.  $$
 For the last inequality we use Lemma~\ref{lemma:maxima and minima energy inequality}. This shows $f \wedge h \in D(Q^0)$. For any $n \in \N$ the function $nh$ is also $1$-excessive and belongs to $D(Q^0)$. Hence, $f \wedge (n h) \in D(Q^0)$ for any $n \in \N$. With this at hand, the lower semi-continuity of $Q^0$ and that $Q^{(N)}$ and $Q^0$ agree on $D(Q^0)$ imply
 \begin{align*}
  Q^0(f) + \|f\|_2^2  &\leq \liminf_{n \to \infty} \left(Q^0(f \wedge (nh)) + \| f \wedge (nh) \|^2_2\right) \\
  &= \liminf_{n \to \infty} \left(Q^{(N)}(f \wedge (nh)) + \| f \wedge (nh) \|^2_2\right) \\
  &\leq Q^{(N)}(f) + \|f\|^2_2.
 \end{align*}
 For the last inequality we used that $nh$ is $1$-excessive and Lemma~\ref{lemma:characterization of excessive functions}. This shows $f \in D(Q^0)$ and finishes the proof.
\end{proof}

\begin{lemma}\label{lemma:characterizing h in q0}
 Let $h \in D(Q^{(N)})$ be a  $1$-excessive function. Then $h \in D(Q^0)$ if and only if  $ \mathrm{cap}_h(\partial X) = 0$.
\end{lemma}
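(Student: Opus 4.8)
The plan is to prove the two implications separately, using the reformulation of capacity from Lemma~\ref{lemma:alternative formula capacity} together with the completeness/restriction machinery for the forms $Q^{(N)}_U$.

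First, suppose $h \in D(Q^0)$. Then there is a sequence $(\varphi_n)$ in $C_c(X)$ with $\varphi_n \to h$ with respect to the form norm $\|\cdot\|_{Q^{(N)}}$. For each $n$, the set $K_n := \supp \varphi_n$ is finite, and the function $h - \varphi_n$ vanishes on $K_n$, hence vanishes on $X \setminus (X \setminus K_n) = K_n$; in other words $(h-\varphi_n) 1_{X\setminus K_n} = h - \varphi_n$ but more to the point $(h - \varphi_n)1_{K_n} = 0$. Applying Lemma~\ref{lemma:alternative formula capacity} with $U = X \setminus K_n$ gives
$$\mathrm{cap}_h(X \setminus K_n) \leq \|h - \varphi_n\|_{Q^{(N)}}^2 \longrightarrow 0,$$
since $h - \varphi_n$ is an admissible competitor (it vanishes on $K_n$, i.e. on the complement of $X \setminus K_n$). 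By the definition of $\mathrm{cap}_h(\partial X)$ as the infimum over finite $K$, this yields $\mathrm{cap}_h(\partial X) = 0$.

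Conversely, suppose $\mathrm{cap}_h(\partial X) = 0$. Then for each $\varepsilon > 0$ there is a finite set $K \subseteq X$ with $\mathrm{cap}_h(X \setminus K) < \varepsilon$, and by Lemma~\ref{lemma:alternative formula capacity} there exists $g \in D(Q^{(N)})$ with $g 1_{X \setminus K} = 0$ (i.e. $\supp g \subseteq K$) and $\|h - g\|_{Q^{(N)}}^2 < \varepsilon$. Since $\supp g$ is finite, $g \in C_c(X) \subseteq D(Q^0)$. Thus $h$ is a $\|\cdot\|_{Q^{(N)}}$-limit of functions in $D(Q^0)$. Because $Q^0$ is the closure of $\cQ^c$ and $Q^{(N)}$ restricts to $Q^0$ on $D(Q^0)$ (Theorem~\ref{theorem:structure of Markovian realizations}), the form norms $\|\cdot\|_{Q^0}$ and $\|\cdot\|_{Q^{(N)}}$ coincide on $D(Q^0)$, so $D(Q^0)$ is closed in $(D(Q^{(N)}), \|\cdot\|_{Q^{(N)}})$; hence $h \in D(Q^0)$.

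The main subtlety — and the reason Lemma~\ref{lemma:alternative formula capacity} is invoked rather than arguing directly — is the passage between ``$h$ can be approximated by finitely supported functions'' and ``$h - g$ with $g$ finitely supported has small form norm''. The equilibrium potential $h_U$ need not have finite support, so one cannot use it directly as the approximant; the alternative formula for $\mathrm{cap}_h(U)$ is precisely what converts the variational problem into one about functions vanishing on $U$, which (when $U = X\setminus K$) are exactly the finitely supported ones. Everything else is a routine diagonal/limit argument, using that $D(Q^0)$ is closed with respect to the $Q^{(N)}$-form norm on $D(Q^0)$, which is immediate from Theorem~\ref{theorem:structure of Markovian realizations}.
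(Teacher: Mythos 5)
Your proof is correct and follows essentially the same route as the paper, which compresses both directions into the single chain of identities $\mathrm{cap}_h(\partial X) = \inf\{\|h-f\|_{Q^{(N)}}^2 \mid f \in C_c(X)\}$ obtained from Lemma~\ref{lemma:alternative formula capacity} applied with $U = X\setminus K$ for finite $K$, combined with the fact that $D(Q^0)$ is the form-norm closure of $C_c(X)$. Two small slips to correct: in the first direction the admissible competitor in Lemma~\ref{lemma:alternative formula capacity} is $\varphi_n$ (which vanishes on $X\setminus K_n$), not $h-\varphi_n$ (which in general does not vanish on $K_n$), although the displayed inequality $\mathrm{cap}_h(X\setminus K_n)\leq \|h-\varphi_n\|_{Q^{(N)}}^2$ is the right one; and the equality of $Q^{(N)}$ and $Q^0$ on $D(Q^0)$ (hence of the form norms there) follows from Proposition~\ref{proposition:nonnegative endomorphisms}, since Theorem~\ref{theorem:structure of Markovian realizations} only gives the inequality $Q^0\leq Q^{(N)}$.
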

\begin{proof}
   Let $h \in D(Q^{(N)})$ be $1$-excessive. Using Lemma~\ref{lemma:alternative formula capacity} we obtain 
\begin{align*}
 \mathrm{cap}_h(\partial X) &= \inf_{K \subseteq X\, \mathrm{ finite}}  \mathrm{cap}_h(X \setminus K)\\
 &= \inf_{K \subseteq X\, \mathrm{ finite}} \inf \{\|h - f\|^2_{Q^{(N)}} \mid f 1_{X \setminus K} = 0\}\\
 &= \inf \{\|h - f\|^2_{Q^{(N)}} \mid f  \in C_c(X)\}.
\end{align*}
This shows the claim.
\end{proof}

\begin{proof}[Proof of Theorem~\ref{theorem:capacity criterion boundary}]
According to Corollary~\ref{corollary:markov uniqueness} for proving the equivalence of (i) - (iii) it suffices to show that the assertions  (ii) and (iii) are equivalent to $Q^0 = Q^{(N)}$.

(i) $\Rightarrow$ (ii): Since $Q^0 = Q^{(N)}$, we have $h \in D(Q^0)$ and Lemma~\ref{lemma:characterizing h in q0} shows $\mathrm{cap}_h(\partial X) = 0$.

(ii) $\Rightarrow$ (iii): This follows from the existence of a strictly positive $1$-excessive function, see Lemma~\ref{lemma:characterization of excessive functions}.

(iii) $\Rightarrow$ (i):  Let $h \in D(Q^{(N)})$ be $1$-excessive and strictly positive. According to Lemma~\ref{lemma:characterizing h in q0} assertion (iii) implies $h \in D(Q^0)$. With this at hand $Q^0 = Q^{(N)}$ follows from Lemma~\ref{lemma:strictly positive excessive function in q0 implies q0 = qn}.

Assume now additionally that $(X,b)$ is connected and locally finite. The equivalence of the assertions (ii) and (iv) and of  the assertions (iii) and (v) follows from Lemma~\ref{lemma:equality of capacities} and the existence of strongly intrinsic path metrics, which was discussed in Subsection~\ref{subsection:a metric space criterion}.
\end{proof}

\begin{remark}
\begin{enumerate}
 \item  Under the condition $1 \in D(Q^{(N)})$, for locally finite graphs the equivalence of $Q^0 = Q^{(N)}$ and $\mathrm{cap}_1(\partial_\rho X) = 0$ for some strongly intrinsic path metric $\rho$ is contained in \cite{HKMW}.  The idea for the proof of Lemma~\ref{lemma:q0u = qnu} is also taken from \cite{HKMW}. The proofs of Lemma~\ref{lemma:alternative formula capacity}, Lemma~\ref{lemma:characterizing h in q0} and Lemma~\ref{lemma:strictly positive excessive function in q0 implies q0 = qn} are taken from \cite{Schmi3}, which contains abstract versions of these results and of the equivalence of (i) - (iii) in Theorem~\ref{theorem:capacity criterion boundary}. Lemma~\ref{lemma:equality of capacities} seems to be a new observation.
 \item If $1 \in D(Q^{(N)})$, the condition $\mathrm{cap}_1(\partial_\rho X)= 0$ can be inferred from estimates on the Minkowski dimension of  $\partial_\rho X$ with respect to the given measure $\mu$, see \cite[Theorem~4]{HKMW}.
 \item For proving $\mathrm{cap}_h(\partial X) = \mathrm{cap}_h(\partial_{\rho} X)$ it was essential that $(X,b)$ is locally finite and $\rho$ is a strongly intrinsic path metric. If $\rho$ is only an intrinsic metric that induces the discrete topology on $X$, we still have $\mathrm{cap}_h(\partial X) \geq \mathrm{cap}_h(\partial_{\rho} X)$ but we doubt that the converse inequality holds. Nevertheless, it could still happen that $\mathrm{cap}_h(\partial_{\rho} X) = 0$ implies  $\mathrm{cap}_h(\partial X)$. If this were the case, in the theorem we could drop the assumption that $\rho$ is a strongly intrinsic path metric.
\end{enumerate}

\end{remark}

In the case of finite measure it is possible to relate Markov uniqueness of $\cH$ to uniqueness of realizations of $\cH$ and the criteria that we gave in Subsection~\ref{subsection:a metric space criterion}. This is discussed next.   We denote by $\cD^0 = \cD_V^0$ the functions of finite energy $f \in \cD = \cD_V$ for which there exists a sequence $(\varphi_n)$ in $C_c(X)$ such that $\varphi_n \to f$ pointwise and $\cQ(f - \varphi_n) \to  0$, as $n \to \infty.$ In the following theorem we assume $V= 0$ for convenience. It would also be true with $(V \cdot \mu)(X) < \infty$.

\begin{theorem}\label{theorem:markov uniqueness 2}
  Suppose $V=0$.  The following assertions are equivalent. 
  \begin{enumerate}[(i)]
   \item There exists a finite measure $\mu$ such that $\cH_{\mu, 0}$ has a unique Markovian realization.
   \item For all finite measures $\mu$ the operator $\cH_{\mu, 0}$ has a unique Markovian realization.
   \item There exists a finite measure $\mu$ such that $\cH_{\mu, 0}$ has a unique realization.
   \item There exists a finite measure $\mu$ and an intrinsic metric $\rho$ with respect to $b$ and $\mu$ that has finite distance balls.
   \item $1 \in \mathcal{D}^0$.
   \item There exists a function   $f \in \mathcal{D}$ with $\lim\limits_{x\to \infty} f(x) = \infty.$
  \end{enumerate}
   If, additionally, (FC) holds, then these are equivalent to the following.
   \begin{enumerate}[(i)]\setcounter{enumi}{6}
    \item There exists a finite measure $\mu$ such that $H^\mathrm{min}_{\mu,0}$ is essentially self-adjoint.
  \end{enumerate}
\end{theorem}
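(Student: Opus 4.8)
The plan rests on the observation that the energy form $\cQ=\cQ_0$ and the spaces $\cD$, $\cD^0$ do not depend on the weight, so conditions (v) and (vi) are properties of the bare graph $(X,b)$; by Corollary~\ref{corollary:markov uniqueness} (whose hypothesis $V\geq 0$ is vacuous here) Markov uniqueness of $\cH_{\mu,0}$ for a finite $\mu$ means exactly $Q^0_{\mu,0}=Q^{(N)}_{\mu,0}$. I would run the cycle $(i)\Rightarrow(v)\Rightarrow(ii)\Rightarrow(i)$, prove $(v)\Leftrightarrow(vi)$ by an elementary truncation argument, and close the loop through $(v)\Rightarrow(iv)\Rightarrow(iii)\Rightarrow(i)$; the steps $(ii)\Rightarrow(i)$ and $(iii)\Rightarrow(i)$ are trivial, since a (unique) realization is in particular a (unique) Markovian realization.

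For $(i)\Rightarrow(v)$: if $Q^0_{\mu,0}=Q^{(N)}_{\mu,0}$ for a finite $\mu$, then $1\in\cD\cap\ell^2(X,\mu)=D(Q^{(N)}_{\mu,0})=D(Q^0_{\mu,0})$, and a form-norm approximating sequence from $C_c(X)$, passed to a pointwise convergent subsequence, witnesses $1\in\cD^0$. For $(vi)\Rightarrow(v)$: replacing $f$ by $|f|+1$ one may assume $f\in\cD$, $f\geq 1$, $f(x)\to\infty$; then $\varphi_n:=(2-n^{-1}f)_+\wedge 1$ lies in $C_c(X)$ (its support is contained in the finite set $\{f<2n\}$), tends pointwise to $1$, and $\cQ(1-\varphi_n)=\cQ(\varphi_n)\leq n^{-2}\cQ(f)\to 0$ because the defining map is $n^{-1}$-Lipschitz. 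For $(v)\Rightarrow(vi)$: fix an exhaustion $(K_m)$ of $X$ by finite sets; from approximants of $1$, truncated into $[0,1]$, extract a subsequence $(\varphi_m)$ with $\cQ(1-\varphi_m)\leq 4^{-m}$ and $1-\varphi_m\leq 2^{-m}$ on $K_m$; then $f:=\sum_m(1-\varphi_m)$ is finite at every vertex and lies in $\cD$ (Fatou on the partial sums, whose $\cQ^{1/2}$ are bounded by $\sum_m 2^{-m}$), while $1-\varphi_m\equiv 1$ off the finite set $\supp\varphi_m$ forces $\{f<N\}\subseteq\bigcup_{m\leq N}\supp\varphi_m$, so $f(x)\to\infty$.

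For $(v)\Rightarrow(ii)$: given any finite weight $\mu$, truncating the approximants of $1$ into $[0,1]$ and invoking dominated convergence (here $\mu(X)<\infty$) upgrades pointwise convergence to $\ell^2(X,\mu)$-convergence, so $1\in D(Q^0_{\mu,0})$; since $1$ is strictly positive and $1$-excessive for $Q^{(N)}_{\mu,0}$, Lemma~\ref{lemma:strictly positive excessive function in q0 implies q0 = qn} yields $Q^0_{\mu,0}=Q^{(N)}_{\mu,0}$. For $(v)\Rightarrow(iv)\Rightarrow(iii)$: take $f\in\cD$ with $f\geq 1$ and $f(x)\to\infty$ as above, put $\rho(x,y):=|f(x)-f(y)|$ and $\mu(x):=\sum_y b(x,y)|f(x)-f(y)|^2+\varepsilon_x$ with $\varepsilon_x>0$ and $\sum_x\varepsilon_x<\infty$; then $\rho$ is an intrinsic pseudo metric for $(b,\mu)$ by construction, $\mu(X)=2\cQ(f)+\sum_x\varepsilon_x<\infty$, and every $\rho$-ball $\{y:|f(o)-f(y)|\leq R\}$ lies inside the finite set $\{f\leq f(o)+R\}$; this is $(iv)$ (a small injective perturbation of $f$, or simply reading $(iv)$ with pseudo metrics, removes the metric/pseudo metric distinction). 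Corollary~\ref{corollary:completeness}, applied with $\Wm=0\in\cA_\mu$, then shows that $\cH_{\mu,0}$ has a unique realization, i.e.\ $(iii)$.

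Finally, under (FC): $(iv)\Rightarrow(vii)$ is the essential self-adjointness clause of Corollary~\ref{corollary:completeness}, while $(vii)\Rightarrow(iii)$ follows because Proposition~\ref{proposition:realization finiteness condition} gives, under (FC), that every realization of $\cH_{\mu,0}$ is a self-adjoint extension of $M^\mathrm{min}$ and that a realization exists, so once $M^\mathrm{min}$ is essentially self-adjoint the realization is unique and equals $M^\mathrm{max}=(M^\mathrm{min})^*$ (here (FC) is understood for the finite measures involved, which is automatic for locally finite $(X,b)$). The main obstacle is the geometric step $(v)/(vi)\Rightarrow(iv)$: recognising that one single function of finite energy tending to infinity simultaneously manufactures a finite measure and a compatible intrinsic metric with finite balls; the diagonal extraction in $(v)\Rightarrow(vi)$ also needs a little care. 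All remaining steps are bookkeeping with Corollaries~\ref{corollary:markov uniqueness} and~\ref{corollary:completeness}, Lemma~\ref{lemma:strictly positive excessive function in q0 implies q0 = qn}, and Proposition~\ref{proposition:realization finiteness condition}.
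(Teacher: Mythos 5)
Your proposal is correct and follows essentially the same route as the paper: Markov uniqueness for a finite measure gives $1\in\cD^0$; for any finite measure, truncated approximants of $1$ plus lower semicontinuity give $1\in D(Q^0)$ and Lemma~\ref{lemma:strictly positive excessive function in q0 implies q0 = qn} yields $Q^0=Q^{(N)}$; summing $1-\varphi_n$ produces $f\in\cD$ with $f(x)\to\infty$; such an $f$ manufactures the finite measure $\mu_f$ and the intrinsic (pseudo) metric $d_f$ with finite balls (with the same gloss on making $f$ injective); and Corollary~\ref{corollary:completeness} handles (iv)$\Rightarrow$(iii)/(vii). The only deviations are cosmetic: you add a direct (vi)$\Rightarrow$(v) argument via $\varphi_n=(2-n^{-1}f)_+\wedge 1$ (the paper closes that implication through (iv)$\Rightarrow$(iii)$\Rightarrow$(i)$\Rightarrow$(v)), and you build $f$ pointwise with Fatou instead of summing in the form-norm Hilbert space $D(Q^{(N)}_{\mu,0})$.
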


To shed some light on the measures that are mentioned in (iii) and (vii) we single out the following lemma before proving the theorem.

\begin{lemma}\label{lemma:intrinsic metrics with finite balls}
 Let $f \in \cD$ injective with $\lim\limits_{x\to \infty} f(x) = \infty$. Then the metric $d_f(x,y): X \times X \to [0,\infty)$, $d_f(x,y) = |f(x) - f(y)|$ has finite distance balls. The measure $\mu_f$ that is given by  
 $$\mu_f(x) = \sum_{y \in X}b(x,y) |f(x) - f(y)|^2$$
 satisfies $\mu_f(X) \leq 2 \cQ(f) < \infty$ and $d_f$ is intrinsic with respect to any measure $\mu \geq \mu_f$.
\end{lemma}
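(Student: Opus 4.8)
The plan is to verify the three assertions in turn, each of which reduces to elementary bookkeeping. First I would check that $d_f$ is genuinely a metric: symmetry and the triangle inequality are inherited from the absolute value on $\R$, and $d_f(x,y)=0$ forces $f(x)=f(y)$, hence $x=y$ by the injectivity of $f$. Then, for the finiteness of distance balls, I would observe that the ball of radius $r$ around a point $x_0\in X$ equals $\{y\in X\mid f(x_0)-r\le f(y)\le f(x_0)+r\}$, which is contained in the sublevel set $\{y\in X\mid f(y)\le f(x_0)+r\}$; the hypothesis $\lim_{x\to\infty}f(x)=\infty$ says precisely that such sublevel sets are finite (for every $R$ there is a finite $K\subseteq X$ with $f(x)\ge R$ off $K$), so the ball is finite.

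Next, for the estimate on $\mu_f$, I would interchange the order of summation — permissible since every summand is nonnegative — to obtain $\mu_f(X)=\sum_{x\in X}\sum_{y\in X}b(x,y)|f(x)-f(y)|^2=\sum_{x,y\in X}b(x,y)|f(x)-f(y)|^2=2\,\cQ(f)$, recalling that the potential vanishes in the present setting; finiteness of the right-hand side is then exactly the assumption $f\in\cD$. I would also add a remark that $\mu_f$ is strictly positive, and hence a bona fide weight, as soon as $(X,b)$ has no isolated vertices: on each edge $f(x)\neq f(y)$ by injectivity, so the defining sum for $\mu_f(x)$ is positive. If isolated vertices are present this is harmless for the conclusion, since one is always free to enlarge $\mu_f$.

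Finally, for intrinsicality I would note that for any measure $\mu\ge\mu_f$ and any $x\in X$,
\[
\sum_{y\in X}b(x,y)\,d_f(x,y)^2=\sum_{y\in X}b(x,y)|f(x)-f(y)|^2=\mu_f(x)\le\mu(x),
\]
which is literally the defining inequality of an intrinsic pseudo metric with respect to $b$ and $\mu$. There is essentially no obstacle in this lemma; the only points demanding a modicum of care are the correct reading of the growth hypothesis $\lim_{x\to\infty}f(x)=\infty$ in the first step and the (inconsequential) matter of isolated vertices in the second.
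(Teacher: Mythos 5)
Your proof is correct and takes essentially the same route as the paper, whose own proof merely cites Proposition~\ref{proposition:properties of metrics from embeddings} for the finiteness of $d_f$-balls and declares the remaining claims straightforward from the definitions; you simply write out that easy implication directly (ball contained in a finite sublevel set) and do the same Tonelli/definition bookkeeping for $\mu_f(X)\leq 2\cQ(f)$ and intrinsicality. Your side remark about strict positivity of $\mu_f$ at isolated vertices is a reasonable observation and does not change the argument.
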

 \begin{proof}
  According to Proposition~\ref{proposition:properties of metrics from embeddings} the metric $d_f$ has finite distance balls.   The other statements follow straightforward from the definitions.
 \end{proof}

\begin{proof}
We first prove the equivalence of (i), (ii) and (v).

 (ii) $\Rightarrow$ (i): This is trivial.
 
 (i) $\Rightarrow$ (v): Markov uniqueness, $V = 0$  and $\mu(X) < \infty$ imply $1 \in D(Q^{(N)}) = D(Q^0)$.  Moreover, it follows from the definitions that $D(Q^0) \subseteq \mathcal{D}^0 \cap \ell^2(X,\mu)$. This shows $1 \in \mathcal{D}^0$.
 
 (v) $\Rightarrow$ (ii): Let $\mu$ a finite measure on $X$. Since the constant function $1$ is $1$-excessive and belongs to $D(Q^{(N)})$, by Lemma~\ref{lemma:strictly positive excessive function in q0 implies q0 = qn} it suffices to prove $1 \in D(Q^0)$. Let $(\varphi_n)$ a sequence in $C_c(X)$ that converges pointwise to $1$ and satisfies $\cQ(\varphi_n - 1) = \cQ(\varphi_n) = \cQ^c(\varphi_n) \to 0$, as $n \to \infty$. Consider the sequence $\tilde \varphi_n := (\varphi_n \wedge 1) \vee 0$ in $C_c(X)$. It satisfies $\cQ^c(\tilde \varphi_n) \leq \cQ^c(\varphi_n)$ and, since $\mu$ is finite, it converges in $\ell^2(X,\mu)$ to $1$. The $\ell^2$-lower semicontinuity of $Q^0$ and that it is the closure of $\cQ^c$ yields
 $$Q^0(1) \leq \liminf_{n \to \infty} Q^0(\tilde \varphi_n) = \liminf_{n \to \infty} \cQ^c (\tilde \varphi_n) \leq \liminf_{n \to \infty} \cQ^c (\varphi_n) = 0 < \infty.$$
 This shows $1 \in D(Q^0)$.

 (iv) $\Rightarrow$ (iii)/(vii) : This follows from Corollary~\ref{corollary:completeness}. 
 
 (iii)/(vii) $\Rightarrow$ (i): This is trivial.
 
 (v) $\Rightarrow$ (vi): Let $\mu$ a finite measure. Let $(\varphi_n)$ be a sequence in $C_c(X)$ that converges pointwise to $1$ and satisfies $\cQ(1 - \varphi_n) = \cQ(\varphi_n)  \to 0$, as $n\to \infty$. As seen in the proof of (v) $\Rightarrow$ (ii), we can assume $0 \leq \varphi_n \leq 1$ so  that $\varphi_n \to 1$ in $\ell^2(X,\mu)$. We can further assume (after choosing a subsequence) that 
 $$\sum_{n = 1}^\infty \|1 - \varphi_n\|_{Q^{(N)}_{\mu,0}} < \infty.$$
 Hence, $f:= \sum_{n = 1}^\infty (1 - \varphi_n)$ exists in the Hilbert space $(D(Q^{(N)}_{\mu,0}),\|\cdot\|_{Q^{(N)}_{\mu,0}})$. For $K \in \N$ it satisfies $|f(x)| \geq K$ whenever $x \in X \setminus \cup_{n = 1}^K \mathrm{supp} (\varphi_n)$. This shows $\lim\limits_{x\to \infty} f(x) = \infty.$
 
 (vi) $\Rightarrow$ (iv): Let $f \in \cD$ with $\lim\limits_{x\to \infty} f(x) = \infty.$ By modifying the values of $f$ a little bit it can be chosen to be injective. With this at hand, the statement follows from Lemma~\ref{lemma:intrinsic metrics with finite balls}.
\end{proof}

\begin{remark}
\begin{enumerate}
 \item Let $V = 0$. Weighted  graphs $(X,b)$ that satisfy $1 \in \mathcal{D}^0$   are called recurrent. There is a vast amount of literature on abstract and geometric conditions ensuring recurrence. We refer to the textbooks \cite{Soa,Woe} as well as \cite{Schmi} for further details and references.
 
That recurrence ($1 \in \mathcal{D}^0$)  is equivalent to $Q^0_{\mu,0} = Q^{(N)}_{\mu,0}$ for finite measures $\mu$ is well known, see e.g. \cite{Schmi} for the discrete setting or \cite{Kuw,HKLMS} in the context of general Dirichlet forms.  Indeed, recurrence always implies Markov uniqueness for all measures.
 
 The equivalence of (iv), (v) and (vi) is taken from \cite{Puc}.
 \item Corollary~\ref{corollary:completeness} shows that the existence of intrinsic metrics with finite distance balls yields uniqueness of realizations. It is quite remarkable that the previous theorem gives some kind of converse. Uniqueness of Markovian realizations yields the existence of intrinsic metrics with finite distance balls and hence also uniqueness of realizations. However, we warn the reader that this is only true for particular measures.   In the proof we construct finite measures and metrics with finite distance balls that depend on functions as in (vi), cf. Lemma~\ref{lemma:intrinsic metrics with finite balls}. See also  Example~\ref{example:last example} below. 
\end{enumerate}
\end{remark}

We finish this section with an example of a graph where the previous theorem can be applied. Moreover, illustrate the construction of the finite measures that appear in assertion (iii)/(vii) of the theorem. 

\begin{example}\label{example:last example}
 Consider the graph $(\Z,b)$ of  Example~\ref{example:essential self adjointness on Z} and the measure $\nu_\alpha:\Z \to (0,\infty)$  with $\nu_\alpha(n) =   n^{-\alpha}$ if $n \neq 0$ and $\nu(0) = 1$. With the same arguments as in Example~\ref{example:essential self adjointness on Z} it is possible to prove that if $\alpha > 3$, the operator  $H^\mathrm{min}_{\nu_\alpha,0}$ is not essentially self-adjoint on $\ell^2(X,\nu_\alpha)$. Hence, according to Corollary~\ref{corollary:completeness}, for $\alpha > 3$ there cannot exist an intrinsic metric with finite distance balls with respect to $b$ and the measure $\nu_\alpha$.
 
 It is well known  that the graph $(\Z,b)$ is recurrent, i.e., $1 \in \cD^0$.  Indeed, for $1/2 < \alpha \leq 1$ the function $f_\alpha:\Z \to \R$ with $f_\alpha(0) = 1$ and 
 $$f_\alpha(n) = 1 + \mathrm{sgn} (n) \sum_{k = 1}^{|n|} \frac{1}{k^\alpha}, \quad n\neq 0,$$
 satisfies $f_\alpha \in \cD$ and $\lim\limits_{x\to \infty} f_\alpha(x) = \infty$. Therefore, the previous theorem shows $1 \in \cD^0$ and that for any finite measure we have Markov uniqueness. In particular, for all $\alpha > 1$  the operator $\cH_{\nu_\alpha,0}$ has a unique Markovian realization. For $\alpha \leq 1$ essential self-adjointness and Markov uniqueness can be inferred from Corollary~\ref{corollary:measure space criterion}.

  We discuss which finite measures and intrinsic metrics with finite balls are induced by $f_\alpha$, cf. Lemma~\ref{lemma:intrinsic metrics with finite balls}.  Let $g_\alpha = f_{\alpha/2}/\sqrt{2}$ and consider the measure
 $$\mu_{g_\alpha}(n)  =  \frac{1}{2} \sum_{k \in \Z} b(n,k)(f_{\alpha/2}(n) - f_{\alpha/2}(k))^2 = \begin{cases}
                                                                                            \frac{1}{2(|n| + 1)^{\alpha}} +  \frac{1}{2|n| ^{\alpha}} &\text{if } |n| \geq 1\\
                                                                                            1 &\text{if } n = 0
                                                                                           \end{cases}.
$$
 It satisfies $\mu_{g_\alpha} \leq \nu_{\alpha}$. Hence, the metric $d_{g_\alpha}$ that is given by
 $$d_{g_\alpha}(n,m) = |g_\alpha(n) - g_\alpha(m)| = \frac{1}{\sqrt{2}} \left|\mathrm{sgn} (n) \sum_{k = 1}^{|n|} \frac{1}{k^{\alpha/2}} - \mathrm{sgn} (m) \sum_{k = 1}^{|m|} \frac{1}{k^{\alpha/2}} \right| $$
 is intrinsic with respect to $\nu_\alpha$. For $\alpha \leq 2$ we have $\lim\limits_{x \to \infty}g_\alpha = \infty$  so that balls with respect to $d_{g_\alpha}$ are finite, see Proposition~\ref{proposition:properties of metrics from embeddings}. This shows that for $\alpha \leq 2$ the operator $\cH_{\nu_\alpha,0}$ is essentially self-adjoint on $\ell^2(X,\nu_\alpha)$, while for $\alpha > 1$ the measure $\nu_\alpha$ is finite.
 
 In the previous discussion we saw that  the operator $H^\mathrm{min}_{\nu_\alpha,0}$ is essentially self-adjoint for $\alpha \leq 2$ and not essentially self-adjoint for $\alpha > 3$.  It would be interesting to know what happens for $2 < \alpha \leq 3$. To the best of our knowledge, this is open. Note that $\cH_{\nu_\alpha,0}$ has a unique Markovian realization for all $\alpha \in \R$.
\end{example}

\section{Open problems} \label{section:open problems}

In this section we collect some of the open problems that arose in the text. For some problems we also comment on expected answers, turning them into conjectures.

 \begin{problem}
  Is $M^\mathrm{max}$ a closed operator on $\ell^2(X,\mu; E)$?
 \end{problem}
 
 If  (FC) is satisfied the maximal restriction   $M^\mathrm{max}$ is the adjoint of $M^\mathrm{min}$ and hence it is closed. For general graphs and finite measures however we expect the answer to be negative for two reasons. If we take a graph that is not locally finite and equip $C(X)$ with the locally convex topology of pointwise convergence, the operator $\cH_{\mu,0} :\cF \to C(X)$ is not closed as an unbounded operator on $C(X)$. For finite measures the spaces $\ell^2(X,\mu)$ and $C(X)$ and their topologies are not that different.

 \begin{problem}
  Does $W \in \cA_{\mu,\Phi; E}$ for all unitary connections $\Phi$ on $E$ imply $\Wm \in \cA_\mu$?
 \end{problem}

 At a first glance it seems unlikely that this is true, since by Proposition~\ref{proposition:semiboundedness magnetic forms and domination} $\Wm \in \cA_{\mu}$ implies a  lower bound for $ \cQ^c_{\Phi, W;\, E}$ that is uniform in the connection $\Phi$. On the other hand, it is not totally unlikely that some uniform boundedness principle yields a positive answer to the question.

 \begin{problem}
  Let $M$ be a realization of $\cM$. Is the associated quadratic form an extension of $\cQ^c_E$? In particular, does the existence of a realization of $\cH_{\mu,V}$ imply $V \in \cA_\mu$?
 \end{problem}

 We resolved this problem in the scalar case when the associated form satisfies the first Beurling-Deny criterion, see Lemma~\ref{lemma:q and extension of qc}. Moreover, by Proposition~\ref{proposition:realization finiteness condition} it is satisfied whenever (FC) holds. An answer to the ``in particular''-part of this problem would show or disprove the optimality of Theorem~\ref{theorem:existence of realizations for scalar operators}.
 
 \begin{problem}
  Let $(X,b)$ be a graph and let $\mu$ be a weight such that (FC) is not satisfied. Is there $W \in \cA_{\mu,\Phi; E}$ with $\Wm \not \in \cA_\mu$ such that $\cM_{\mu, \Phi,W; E}$ has / does not have a realization?
 \end{problem}

 This is a case that cannot be treated with domination arguments, cf. also Subsection~\ref{subsection:summary and examples}.

 \begin{problem}
  Is there a graph $(X,b)$ and a weight $\mu$ with (FC)  and $W \in \cA_{\mu, \Phi; E}$ such that $M^\mathrm{min}$ is not essentially self-adjoint but $\cM_{\mu,\Phi,W; E}$ has a unique realization?
 \end{problem}

 Realizations in the sense of Definition~\ref{definition:discrete magnetic Schrödinger operators} are always semi-bounded. Hence, uniqueness of realizations asks for uniqueness in the class of semi-bounded operators while essential self-adjointness asks for uniqueness in the class of all self-adjoint operators. For a general symmetric operator on some Hilbert space  it can of course happen that it has a unique semi-bounded extension while it is not essentially self-adjoint. Therefore, the previous questions asks whether such abstract examples can be realized within the class of magnetic Schrödinger operators. 
 
 \begin{problem} 
  Let $\rho$ be an intrinsic pseudo metric such that $X$ is closed in $\overline{X}^\rho$ and such that for all $\varepsilon > 0$ bounded subsets of $\{x \in X \mid D_\rho(x) \geq \varepsilon\}$ are finite.  What is the optimal constant $C \geq 0$ such that for all $W$ with $\Wm \geq C D_\rho^{-2} + V$ for some $V \in \cA_\mu$ the operator $\cM$ has a unique realization? 
 \end{problem}

Theorem~\ref{theorem:metric space uniqueness} shows that  the optimal constant is smaller than or equal to $\frac{1}{2}$.  For essential self-adjointness of  classical Schrödinger operators $-\Delta + V$  on open subsets of Euclidean space (with $D_\rho$   replaced by the distance to the topological boundary of the domain) the optimal constant is $\frac{3}{4}$, see e.g. \cite{NN} for the result in any dimension and \cite[Theorem~X.10]{RSII} for the optimality of the constant in one dimension. Even though this may make our result seem stronger than in the Euclidean case, it is not. Our discrete operator $\cH_{\mu,0}$ plays the same role as $-\frac{1}{2}\Delta$ (the generator of Brownian motion) in the Euclidean setting. For $\frac{1}{2}\Delta$ the optimal constant scales down to $\frac{3}{8}$, which is smaller than $\frac{1}{2}$.

 \begin{problem}
  For which $2 < \alpha \leq 3$ is the operator $H^\mathrm{min}_{\nu_\alpha,0}$ of Example~\ref{example:last example} essentially self-adjoint? 
 \end{problem}

 It was the general theme of the uniqueness sections that small measures tend to make discrete magnetic Schrödinger operators less unique. Even for the simplest infinite graph $\Z$ the role of the measure for essential self-adjointness is not fully understood. The previous problem could be a starting point for further investigations.

 \appendix
 
 \section{Quadratic forms on Hilbert spaces} \label{appendix:quadratic forms}

 In this appendix we collect basic facts about quadratic forms on Hilbert spaces. In particular, we treat their monotone limits and the Beurling-Deny criteria. The material presented here is standard. For the claims where we do not give detailed references, we refer the reader to the textbooks \cite{FOT,RSI}. 
 
 \subsection{Basics}\label{appendix:basics}

 Let $(H,\as{\cdot,\cdot})$ be a complex Hilbert space with induced norm $\|\cdot\|$. A  functional $q:H \to (-\infty,\infty]$ is called {\em quadratic form} if it is homogeneous and satisfies the parallelogram identity, i.e., if for all $f,g,h \in H$ and $\lambda \in \IC$ it satisfies
 $$q(\lambda f) = |\lambda|^2 q(f)$$
 and
 $$q(f+g) + q(f-g) = 2q(f) + 2q(g).$$
 The {\em domain} of a quadratic form $q$ on $H$ is $D(q) = \{f \in H \mid q(f) < \infty\}.$ It is called densely defined if the closure of $D(q)$ in $H$ equals $H$.
 
 By a theorem of Jordan and von Neumann \cite{JvN} any quadratic form induces a sesquilinear form on its domain via polarization, i.e.,
 $$q:D(q) \times D(q) \to \IC,\, (f,g) \mapsto q(f,g) := \frac{1}{4}\sum_{k = 1}^4 i^kq(f + i^k g)$$
 is sesquilinear. Here we abuse notation and write $q$ for the quadratic form on $H$ and the induced sesquilinear form on $D(q)$.  In this sense, we have $q(f,f) = q(f)$ for $f \in D(q)$. 
 
 A quadratic form $q'$ is called an {\em extension} of a quadratic form $q$ if $D(q) \subseteq D(q')$ and $q(f) = q'(f)$ for $f \in D(q)$. Moreover, we say that two quadratic forms $q,q'$ satisfy $q \leq q'$ if $D(q) \subseteq D(q')$ and $q(f) \geq q'(f)$ for all $f \in  D(q)$. Note that the order relation $\leq$ on quadratic forms compares the size of their domains.
 
 A quadratic form $q$ on $H$ is called {\em lower semi-bounded} or {\em semi-bounded from below} if there exists a constant $C \in \R$ such that  
 \begin{align}
 C \|f\|^2 \leq q(f) \text{ for all } f \in H.\label{inequality:lower bound} \tag{$\spadesuit$}
 \end{align}
 By the definition of $D(q)$ this can be replaced by the validity of the above inequality for  all $f \in D(q)$. If the quadratic form $q$ is semi-bounded from below, the largest possible constant $C \in \R$ for which  Inequality~\eqref{inequality:lower bound} holds is denoted by $\lambda_0(q).$ The quadratic form $q - \lambda_0(q) \|\cdot\|^2$ is a nonnegative quadratic form. Therefore, the induced sesquilinear form satisfies the Cauchy-Schwarz inequality. 
 
 For a lower semi-bounded quadratic form $q$ and $\alpha> 0$ the inner product
 $$\as{\cdot,\cdot}_q : D(q) \times D(q) \to \IC,\, (f,g) \mapsto \as{f,g}_q :=  q(f,g) + (\alpha - \lambda_0(q)) \as{f,g}$$
 is called {\em form inner product}. The induced norm on $D(q)$ is called the {\em form norm} and denoted by $\|\cdot\|_q$. Even though this definition depends on $\alpha > 0$, different $\alpha$ yield equivalent norms and this suffices for our purposes. Note that if $f_n \to  f$ with respect to the form norm, then also $f_n \to f$ in $H$, $q(f_n - f) \to 0$ and $q(f_n) \to q(f)$.

 A  densely defined lower semi-bounded quadratic form $q$ on $H$ is called {\em closed} if $(D(q),\as{\cdot,\cdot}_q)$ is a Hilbert space; it is called {\em closable} if it possesses a closed extension. Closability and closedness can be characterized by lower semicontinuity as follows, see e.g. \cite[Theorem~S.18]{RSI}.
 
 \begin{lemma}\label{lemma:characterization closedness}
  Let $q$ be a  densely defined lower semi-bounded quadratic form on $H$. The following assertions are equivalent.
  \begin{enumerate}[(i)]
   \item $q$ is closed.
   \item $q$ is lower semicontinuous, i.e., $f_n \to f$ in $H$ implies
   $$q(f) \leq \liminf_{n \to \infty} q(f_n).$$
   %
  \end{enumerate}
 \end{lemma}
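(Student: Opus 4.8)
The plan is to derive both implications purely from the geometry of the form inner product space $(D(q),\langle\cdot,\cdot\rangle_q)$, fixing once and for all the parameter $\alpha>0$ in its definition. Since $q-\lambda_0(q)\|\cdot\|^2$ is a nonnegative quadratic form, one has $\langle f,f\rangle_q = q(f)+(\alpha-\lambda_0(q))\|f\|^2 \geq \alpha\|f\|^2$ for all $f\in D(q)$; hence the form norm controls the $H$-norm, so convergence and Cauchyness in the form norm are stronger than in $H$, and $(D(q),\langle\cdot,\cdot\rangle_q)$ is a genuine inner product space. The two directions then read ``$D(q)$ is complete'' $\Rightarrow$ ``$q$ is lower semicontinuous'', and conversely.

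For (i) $\Rightarrow$ (ii), first I would let $f_n\to f$ in $H$ and assume $\liminf_n q(f_n)<\infty$ (otherwise the inequality is trivial), then pass to a subsequence along which $q(f_n)\to L:=\liminf_n q(f_n)$. Along this subsequence $\|f_n\|_q^2 = q(f_n)+(\alpha-\lambda_0(q))\|f_n\|^2$ stays bounded, because $\|f_n\|\to\|f\|$; so, using that the Hilbert space $D(q)$ is reflexive, a further subsequence converges weakly in $D(q)$ to some $g\in D(q)$. The bounded inclusion $D(q)\hookrightarrow H$ is weak-to-weak continuous, so this subsequence also converges weakly to $g$ in $H$; but it converges strongly, hence weakly, to $f$ in $H$, so $g=f$ and in particular $f\in D(q)$. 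Finally, weak lower semicontinuity of $\|\cdot\|_q$ gives $\|f\|_q^2 \leq \liminf\|f_n\|_q^2 = L+(\alpha-\lambda_0(q))\|f\|^2$ along the subsequence, and cancelling the common term yields $q(f)\leq L=\liminf_n q(f_n)$.

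For (ii) $\Rightarrow$ (i), I would take a form-norm Cauchy sequence $(f_n)$, note it is Cauchy in $H$ and hence converges there to some $f\in H$, and then show $f\in D(q)$ with $f_n\to f$ in the form norm. Fixing $\varepsilon>0$ and $N$ with $\|f_n-f_m\|_q<\varepsilon$ for $n,m\geq N$, for fixed $m\geq N$ one has $f_n-f_m\to f-f_m$ in $H$, so lower semicontinuity gives $q(f-f_m)\leq\liminf_n q(f_n-f_m)\leq\varepsilon^2$; thus $f-f_m\in D(q)$, whence $f\in D(q)$, and $\|f-f_m\|_q^2 = q(f-f_m)+(\alpha-\lambda_0(q))\|f-f_m\|^2\leq\varepsilon^2+(\alpha-\lambda_0(q))\|f-f_m\|^2$, which has limsup at most $\varepsilon^2$ as $m\to\infty$ since $\|f-f_m\|\to 0$. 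Letting $\varepsilon\to 0$ shows $f_m\to f$ in the form norm, so $D(q)$ is complete.

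The only step that is not pure bookkeeping is the weak-compactness argument in (i) $\Rightarrow$ (ii): one must extract a subsequence converging weakly in the Hilbert space $D(q)$ and correctly identify its weak limit with the prescribed strong $H$-limit $f$, which hinges on the inclusion $D(q)\hookrightarrow H$ being continuous and hence intertwining the two weak topologies. All remaining manipulations are straightforward consequences of the inequality $\alpha\|f\|^2\leq\|f\|_q^2$ and the definitions of the form norm and of $\lambda_0(q)$.
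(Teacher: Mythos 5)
Your argument is correct, and it is necessarily a different route from the paper's, because the paper does not prove this lemma at all: it simply cites \cite[Theorem~S.18]{RSI}. What you supply is the standard self-contained Hilbert-space argument: for (i) $\Rightarrow$ (ii) you use boundedness of $\|\cdot\|_q$ along a subsequence realizing the liminf, weak compactness of bounded sets in the Hilbert space $(D(q),\langle\cdot,\cdot\rangle_q)$, weak-to-weak continuity of the bounded inclusion $D(q)\hookrightarrow H$ to identify the weak limit with $f$, and weak lower semicontinuity of the form norm; for (ii) $\Rightarrow$ (i) you test a form-norm Cauchy sequence against its $H$-limit. This buys a proof that needs nothing beyond the inequality $\alpha\|f\|^2\le\|f\|_q^2$ and elementary Hilbert-space facts, whereas the paper's approach buys brevity by outsourcing to Reed--Simon. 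All the delicate points (identification of the weak limit, handling $\liminf q(f_n)=\infty$, eventual membership $f_n\in D(q)$ along the chosen subsequence) are treated correctly.

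One small caveat in (ii) $\Rightarrow$ (i): your intermediate bound $\liminf_n q(f_n-f_m)\le\varepsilon^2$ uses $q(h)\le\|h\|_q^2$, which holds only when the coefficient $\alpha-\lambda_0(q)$ is nonnegative; if $\lambda_0(q)>\alpha$ it fails as stated. This is harmless: either fix $\alpha\ge\lambda_0(q)$ at the outset (the paper notes different $\alpha$ give equivalent norms), or keep the exact identity $q(h)=\|h\|_q^2+(\lambda_0(q)-\alpha)\|h\|^2$, in which case the extra term cancels against $(\alpha-\lambda_0(q))\|f-f_m\|^2$ in your final estimate and you even get $\|f-f_m\|_q^2\le\varepsilon^2$ directly. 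With that adjustment the proof is complete.
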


 \begin{lemma}
  Let $q$ be a  densely defined lower semi-bounded quadratic form on $H$. The following assertions are equivalent.
  \begin{enumerate}[(i)]
   \item $q$ is closable.
   \item $q$ is lower semicontinuous on its domain, i.e., for all $f \in D(q)$ the convergence $f_n \to f$ in $H$ implies
   $$q(f) \leq \liminf_{n \to \infty} q(f_n).$$
  \end{enumerate}
 \end{lemma}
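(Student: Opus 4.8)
The plan is to treat the two implications separately; (ii) $\Rightarrow$ (i) is the one that needs work. For (i) $\Rightarrow$ (ii) I would simply unwind the definitions with the help of the preceding Lemma~\ref{lemma:characterization closedness}. Let $q'$ be a closed extension of $q$; by that lemma $q'$ is lower semicontinuous on all of $H$. Given $f \in D(q)$ and a sequence $f_n \to f$ in $H$, we have $f \in D(q')$ with $q'(f) = q(f)$, and $q'(f_n) \le q(f_n)$ for every $n$ (with equality when $f_n \in D(q)$, and $q(f_n) = \infty$ otherwise), so $q(f) = q'(f) \le \liminf_n q'(f_n) \le \liminf_n q(f_n)$, which is (ii).

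For (ii) $\Rightarrow$ (i) the idea is to realize a closed extension of $q$ concretely as the completion of its form domain. Fix $\alpha > \max\{0,\lambda_0(q)\}$ and let $\|\cdot\|_q$ be the associated form norm, so that $\|f\|_q^2 = q(f) + (\alpha-\lambda_0(q))\|f\|^2 \ge \alpha\|f\|^2$ on $D(q)$ and the inclusion $(D(q),\|\cdot\|_q) \hookrightarrow H$ is bounded. Let $V$ be the abstract completion of $(D(q),\|\cdot\|_q)$ and $\iota\colon V \to H$ the continuous extension of this inclusion. I would then set $D(q') := \iota(V)$ and, for $\xi \in V$, define $q'(\iota\xi) := \|\xi\|_V^2 - (\alpha-\lambda_0(q))\|\iota\xi\|^2$, and verify in turn: that $q'$ is well defined (this requires $\iota$ to be injective, see below); that $q'$ is a quadratic form on $H$, being on $\iota(V)$ the difference of two quadratic forms, hence homogeneous and satisfying the parallelogram law; that $q'$ extends $q$ and is lower semi-bounded by $\lambda_0(q)$, the inequality $\|\xi\|_V^2 \ge \alpha\|\iota\xi\|^2$ passing from the dense subspace $D(q)$ to all of $V$ by continuity; and finally that, since consequently $\lambda_0(q') = \lambda_0(q)$, the form norm of $q'$ pulls back under $\iota$ to $\|\cdot\|_V$. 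As $V$ is complete, this last point exhibits $(D(q'),\|\cdot\|_{q'})$ as a Hilbert space, so $q'$ is a closed extension of $q$ and $q$ is closable.

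The heart of the argument — and the only place where hypothesis (ii) is used — is the injectivity of $\iota$, which I would deduce from the sublemma: if $(h_n) \subseteq D(q)$ is $\|\cdot\|_q$-Cauchy and $h_n \to 0$ in $H$, then $\|h_n\|_q \to 0$. Indeed, $\|h_n\|_q$ converges to some $\ell \ge 0$ and $\|h_n\| \to 0$, hence $q(h_n) \to \ell^2$; given $\varepsilon > 0$, choose $N$ with $\|h_n - h_m\|_q < \varepsilon$ for $n,m \ge N$, and for each fixed $m \ge N$ apply (ii) to the sequence $(h_n - h_m)_n \subseteq D(q)$, which converges in $H$ to $-h_m \in D(q)$: this yields $q(h_m) = q(-h_m) \le \liminf_n q(h_n - h_m) = \liminf_n\bigl(\|h_n - h_m\|_q^2 - (\alpha-\lambda_0(q))\|h_n - h_m\|^2\bigr) \le \varepsilon^2$, using $\|h_n - h_m\| \to \|h_m\|$ and $\alpha - \lambda_0(q) > 0$; letting $m \to \infty$ and then $\varepsilon \to 0$ forces $\ell = 0$. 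Granting the sublemma, if $\iota(\xi) = 0$ choose $(f_n) \subseteq D(q)$ with $f_n \to \xi$ in $V$; then $(f_n)$ is $\|\cdot\|_q$-Cauchy and $f_n = \iota(f_n) \to 0$ in $H$, so the sublemma gives $f_n \to 0$ in $V$, i.e. $\xi = 0$. Everything else is routine bookkeeping, so I expect the sublemma (equivalently, the injectivity of $\iota$) to be the only genuine obstacle.
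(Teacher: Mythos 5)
Your proof is correct: the easy direction follows from Lemma~\ref{lemma:characterization closedness} exactly as you say, and your completion argument with the sublemma (form-Cauchy sequences tending to $0$ in $H$ have form norm tending to $0$, giving injectivity of $\iota$) is sound. The paper does not prove this lemma itself but refers to the standard literature (e.g.\ Reed--Simon), and your argument is essentially that standard proof, so there is nothing further to compare.
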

Let $q$ be a densely defined lower semi-bounded closed quadratic form on $H$. Then there is a   lower semi-bounded self-adjoint operator $L$   that is associated with $q$. It has the domain
$$D(L) = \{f \in D(q) \mid \text{ex. } g\in H \text{ s.t. } q(f,h) = \as{g,h} \text{ for all } h \in D(q)\}, $$
on which it acts by
$$Lf =g.$$
The value $\lambda_0(q)$ is the infimum of the spectrum of $L$, which is why we also write $\lambda_0(L)$ for it. The domain of $q$ satisfies $D(q) = D((L - \lambda_0(q))^{1/2})$. For $\alpha > -\lambda_0(q)$ we denote by $G_\alpha := (L + \alpha)^{-1}$ the resolvent of $L$. Moreover, for $\alpha > -\lambda_0(q)$ we define the {\em approximating form} $q^{(\alpha)}:H \to \R$ by
 $$q^{(\alpha)}(f) := \alpha \as{f - \alpha G_\alpha f, f}.$$
It follows from the spectral theorem that they are monotone increasing in the parameter $\alpha$. The resolvent and the approximating forms have the following properties. For details we refer to \cite[Section~1.3]{FOT}, where  nonnegative forms are treated. The proofs given there work also for lower semi-bounded forms.
 
\begin{enumerate}[(a)]
 \item  For all $\alpha > -\lambda_0(q)$ the operator $G_\alpha$ is bounded and self-adjoint.
 \item The family $(G_\alpha)_{\alpha > -\lambda_0(q)}$ satisfies the {\em resolvent identity}, i.e., for $\alpha,\beta > -\lambda_0(q)$ we have
 $$G_\alpha - G_\beta = (\beta-\alpha) G_\beta G_\alpha.$$
 \item For all $\alpha > -\lambda_0(q)$ we have $\|G_\alpha\| \leq (\alpha + \lambda_0(q))^{-1}$ and the family $(G_\alpha)_{\alpha > -\lambda_0(q)}$ is {\em strongly continuous}, i.e., for all $f \in H$ we have $\alpha G_\alpha f \to f$, as $\alpha \to \infty$. 
 \item  For all $f \in H$ we have $q(f) = \lim_{\alpha \to \infty} q^{(\alpha)}(f)$, where  it is possible that the limit takes the value $\infty$. In particular,
 $$D(q) = \{f \in H \mid \lim_{\alpha \to \infty} q^{(\alpha)}(f) < \infty\}.$$
\end{enumerate}
Conversely, suppose that  $(\widetilde{G}_\alpha)_{\alpha > C}$ is a family of operators that satisfies properties (a) - (c) as above with $-\lambda_0(q)$ replaced by the constant $C \in \R$. Such a family is called a {\em strongly continuous self-adjoint resolvent family} or simply {\em strongly continuous resolvent}. In this case the functional $\tilde{q}:H \to (-\infty,\infty]$ that is given by
$$\tilde{q}(f) := \lim_{\alpha \to \infty} \alpha \langle f - \alpha \widetilde{G}_\alpha f,f\rangle $$
is a densely defined lower semi-bounded closed quadratic form on $H$ with $C > -\lambda_0(\tilde q)$. For $\alpha  > C$ the operator $\widetilde{G}_\alpha$ coincides with the $\alpha$-resolvent of $\tilde{q}$.

\subsection{Monotone convergence of quadratic forms}

\begin{lemma}[Monotone convergence of quadratic forms]\label{lemma:monotone convergence of forms}
 Let $(q_n)$ be a sequence of densely defined lower semi-bounded closed quadratic forms on $H$ with associated resolvents $(G_\alpha^n)$  and assume that there exists $C \in \R$ such that $C \leq \lambda_0(q_n)$ for all $n \in \N$. If  $(q_n)$ is  monotone decreasing, i.e., $q_n(f) \geq q_{n+1}(f)$ for all $f \in H$ and $n \in \N$, then there exists a  densely defined lower semi-bounded closed quadratic form $q$ with resolvent $(G_\alpha)$ and with the following properties.
 \begin{enumerate}[(a)]
  \item $\lambda_0(q) \geq C.$
  \item For all $\alpha > 0$ we have $G_\alpha^n \to G_\alpha$ strongly, as $n \to \infty$.
  \item The convergence $f_n \to f$ weakly in $H$ implies 
  $$q(f) \leq \liminf_{n\to \infty} q_n(f_n).$$ 
  \item For every $f \in D(q)$ there exists a sequence $(f_n)$ with $f_n \in D(q_n)$, $f_n \to  f$ in $H$  and 
  $$\limsup_{n\to \infty} q_n(f_n) \leq q(f).$$
 \end{enumerate}
\end{lemma}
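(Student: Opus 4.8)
The plan is to reduce the statement about monotone \emph{decreasing} forms to the classical monotone-convergence theorem for nonnegative \emph{increasing} forms (Kato's theorem, as presented e.g.\ in \cite[\S 1.3]{FOT} or \cite{RSI}), using the correspondence between forms and strongly continuous resolvents recalled at the end of Subsection~\ref{appendix:basics}. First I would normalize: replacing $q_n$ by $q_n - C$ we may assume $C = 0$, so that each $q_n$ is nonnegative, the sequence is still monotone decreasing, and $0 \le \lambda_0(q_n)$. For a fixed $\alpha > 0$, the resolvents $G^n_\alpha = (L_n + \alpha)^{-1}$ are bounded, self-adjoint, nonnegative, with $\|G^n_\alpha\| \le \alpha^{-1}$. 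The key monotonicity input is that $q_n \ge q_{n+1}$ (on all of $H$, with the convention $+\infty$ outside the domain) is equivalent, via the associated positive self-adjoint operators, to $L_n \ge L_{n+1}$ in the form sense, which in turn is equivalent to $G^n_\alpha \le G^{n+1}_\alpha$ as bounded operators for every $\alpha > 0$; this is standard operator monotonicity of $t \mapsto (t+\alpha)^{-1}$. Hence for each $f \in H$ the sequence $\langle G^n_\alpha f, f\rangle$ is nondecreasing and bounded by $\alpha^{-1}\|f\|^2$, so by polarization and the uniform bound $G^n_\alpha f$ converges weakly; a short argument with the resolvent identity and the uniform norm bound upgrades this to strong convergence to a limit operator $G_\alpha$. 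One then checks directly that $(G_\alpha)_{\alpha > 0}$ is again a strongly continuous self-adjoint resolvent family: self-adjointness and the norm bound $\|G_\alpha\| \le \alpha^{-1}$ pass to strong limits, the resolvent identity passes to strong limits, and strong continuity $\alpha G_\alpha f \to f$ as $\alpha \to \infty$ follows because $\alpha G^n_\alpha f \to \alpha G_\alpha f$ uniformly in $n$ (again by the uniform bound) together with $\alpha G^n_\alpha f \to f$ for each fixed $n$, via a standard $3\varepsilon$-argument, using that $G_\alpha \le G^n_\alpha$ gives $\|\alpha G_\alpha f - f\| \le \|\alpha G^n_\alpha f - f\| + \alpha\|G^n_\alpha f - G_\alpha f\|$ is not quite monotone, so more carefully one uses $\langle \alpha G_\alpha f, f\rangle = \lim_n \langle \alpha G^n_\alpha f, f\rangle$ and monotone/dominated convergence in the spectral representation. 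This gives a densely defined, lower semi-bounded, closed quadratic form $q$ defined by $q(f) = \lim_{\alpha\to\infty} \alpha\langle f - \alpha G_\alpha f, f\rangle$, proving (b), and since each $q_n \ge 0$ one gets $\lambda_0(q) \ge 0$, i.e.\ (a) after undoing the normalization.

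For (c), fix $f_n \to f$ weakly. For every fixed $\alpha > 0$ the approximating form $g \mapsto q^{(\alpha)}_n(g) = \alpha\langle g - \alpha G^n_\alpha g, g\rangle = \alpha\langle (I-\alpha G^n_\alpha) g, g\rangle$ is a bounded nonnegative quadratic form, hence weakly lower semicontinuous; moreover since $G_\alpha \le G^n_\alpha$ we have $q^{(\alpha)}(g) \le q^{(\alpha)}_n(g) \le q_n(g)$ for all $g$, where $q^{(\alpha)}$ is the approximating form of $q$. Therefore
\begin{align*}
 q^{(\alpha)}(f) \le \liminf_{n\to\infty} q^{(\alpha)}(f_n) \le \liminf_{n\to\infty} q^{(\alpha)}_n(f_n) \le \liminf_{n\to\infty} q_n(f_n).
\end{align*}
Letting $\alpha \to \infty$ and using $q(f) = \sup_\alpha q^{(\alpha)}(f) = \lim_{\alpha\to\infty} q^{(\alpha)}(f)$ yields $q(f) \le \liminf_n q_n(f_n)$, which is (c). Notice that applying (c) to the constant sequence $f_n = f$ already recovers $q(f) \le \liminf_n q_n(f) \le q_1(f)$, so $D(q_1) \supseteq \cdots$ need not hold literally but $D(q) \supseteq \bigcap_n D(q_n)$ and more, consistent with the decreasing case enlarging domains.

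For (d), this is the delicate direction and I expect it to be the main obstacle, exactly as in the increasing-form case where the construction of a recovery sequence is the heart of Kato's theorem. The approach: given $f \in D(q)$, set $f^{(\alpha)} := \alpha G_\alpha f$. Then $f^{(\alpha)} \to f$ in $H$ as $\alpha \to \infty$ by strong continuity, and a spectral-calculus computation (valid since $G_\alpha$ commutes with the self-adjoint operator $L$ associated to $q$) shows $q(f^{(\alpha)}) = \alpha^2 \langle L(L+\alpha)^{-2} f, f\rangle \le q(f)$ with $q(f^{(\alpha)}) \to q(f)$; in fact one has the cleaner estimate $q(\alpha G_\alpha f) \le q(f)$ and $\lim_\alpha q(\alpha G_\alpha f) = q(f)$. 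Now for each fixed $\alpha$, I need $f^{(\alpha)}_n := \alpha G^n_\alpha f \in D(q_n)$ (which holds since $G^n_\alpha$ maps $H$ into $D(L_n) \subseteq D(q_n)$), with $f^{(\alpha)}_n \to f^{(\alpha)}$ in $H$ as $n\to\infty$ by (b), and $\limsup_n q_n(f^{(\alpha)}_n) \le q^{(\alpha)}(f) \le q(f)$: indeed $q_n(\alpha G^n_\alpha f) = \alpha^2\langle L_n(L_n+\alpha)^{-2}f, f\rangle = \alpha\langle (I - \alpha G^n_\alpha)\alpha G^n_\alpha f, f\rangle \le \alpha\langle (I-\alpha G^n_\alpha) f, f\rangle = q^{(\alpha)}_n(f)$, and $q^{(\alpha)}_n(f) = \alpha\langle (I - \alpha G^n_\alpha)f,f\rangle \to \alpha\langle(I-\alpha G_\alpha)f,f\rangle = q^{(\alpha)}(f)$ by (b). Finally a diagonal argument: choose $\alpha_k \to \infty$ with $\|f^{(\alpha_k)} - f\| \le 1/k$ and $q^{(\alpha_k)}(f) \le q(f) + 1/k$, then for each $k$ choose $n_k$ (increasing) with $\|f^{(\alpha_k)}_{n_k} - f^{(\alpha_k)}\| \le 1/k$ and $q_{n_k}(f^{(\alpha_k)}_{n_k}) \le q^{(\alpha_k)}(f) + 1/k$, and define $f_n$ for $n_k \le n < n_{k+1}$ appropriately (e.g.\ $f_n = f^{(\alpha_k)}_{n_k}$ padded, or more carefully interpolate so that $f_n \in D(q_n)$ — here one uses that $D(q_n)$ is decreasing in the sense that one may always take $f_n = \alpha_{k}G^n_{\alpha_k} f$ for the appropriate block, which lies in $D(q_n)$). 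Then $f_n \to f$ in $H$ and $\limsup_n q_n(f_n) \le q(f)$, giving (d). The subtle point to get right is the bookkeeping in the diagonal extraction so that the chosen element genuinely lies in $D(q_n)$ for the correct index $n$ — this is why working with the explicit recovery vectors $\alpha G^n_\alpha f$, which lie in $D(L_n) \subseteq D(q_n)$ for \emph{every} $n$, is essential, and it is the place where a careless argument would fail.
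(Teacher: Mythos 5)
Your proof is correct, but it takes a genuinely different route from the paper's. The paper's proof is essentially a citation: assertion (b) is taken from \cite[Theorem~S.16]{RSI}, assertions (c) and (d) are identified as Mosco convergence of $(q_n)$ to $q$ and obtained from the known equivalence of Mosco convergence with strong resolvent convergence \cite[Theorem~8.3]{CKK}, and (a) is read off from (d) together with $\lambda_0(q_n)\geq C$. You instead argue self-containedly: after normalizing $C=0$ you deduce from $q_n \geq q_{n+1}$ the resolvent monotonicity $G^n_\alpha \leq G^{n+1}_\alpha$, get strong convergence of the monotone, uniformly bounded resolvents, check that the limit family is again a strongly continuous self-adjoint resolvent (hence defines the closed form $q$ via the correspondence recalled in Appendix~\ref{appendix:basics}), and then prove the Mosco-type properties by hand: (c) from weak lower semicontinuity of the bounded nonnegative approximating form $q^{(\alpha)}$ together with $q^{(\alpha)} \leq q^{(\alpha)}_n \leq q_n$, and (d) from the explicit recovery vectors $\alpha G^n_\alpha f \in D(L_n) \subseteq D(q_n)$ plus a diagonal extraction. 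What your route buys is independence from the Mosco machinery and from Reed--Simon; what the paper's buys is brevity. Two harmless slips you should fix: the operator inequality goes the other way, namely $G^n_\alpha \leq G_\alpha$ (decreasing forms have increasing resolvents), though the form inequality $q^{(\alpha)} \leq q^{(\alpha)}_n$ you actually use is the correct consequence; and the domains $D(q_n)$ increase rather than decrease, which in fact legitimizes your ``padded'' diagonal choice directly, since $f_n := \alpha_k G^{n_k}_{\alpha_k} f$ for $n_k \leq n < n_{k+1}$ lies in $D(q_{n_k}) \subseteq D(q_n)$ and satisfies $q_n(f_n) \leq q_{n_k}(f_n) \leq q^{(\alpha_k)}_{n_k}(f) \leq q(f) + 1/k$. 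For the strong continuity of the limit resolvent, where you hedge, the clean argument is: for each fixed $n$, $\langle \alpha G_\alpha f, f\rangle \geq \langle \alpha G^n_\alpha f, f\rangle \to \|f\|^2$ as $\alpha \to \infty$, while $\langle \alpha G_\alpha f, f\rangle \leq \|f\|^2$, so $\|\alpha G_\alpha f - f\|^2 \leq 2\|f\|^2 - 2\langle \alpha G_\alpha f, f \rangle \to 0$.
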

\begin{proof}
 Properties (c) and (d) say that the sequence $(q_n)$ Mosco converges towards $q$. This is well-known to be equivalent to (b), see e.g. \cite[Theorem~8.3]{CKK}, where a proof for nonnegative forms is given. The modifications for a sequence of forms that are lower semi-bounded with a uniform lower bound is straightforward.  

 (b) is contained in \cite[Theorem~S.16]{RSI} and (a) follows from (d) and  $\lambda_0(q_n) \geq C$.
\end{proof}

\begin{remark}
\begin{enumerate}[$\bullet$]
 \item  Properties (c) and (d) in the previous lemma mean that the sequence $(q_n)$ Mosco converges towards $q$. 
 \item Note that $q$ is not the pointwise limit of $(q_n)$. Even if $f \in D(q_n)$ for all $n \in \N$, the equation $q(f) = \lim_{n \to \infty} q_n(f)$ may fail. It is possible to prove that $q$ is the largest closed densely defined quadratic form that satisfies $q(f) \leq q_n(f)$ for all $f \in H$ and $n \in \N$, see e.g. \cite[Theorem~S.16]{RSI}.
\end{enumerate}
\end{remark}

\subsection{The Beurling-Deny criteria} \label{appendix:Beruling Deny}

In this subsection we consider quadratic forms on the Hilbert space $\ell^2(X,\mu)$, where $X$ is a countable set and $\mu$ is a weight on $X$. We discuss the compatibility of their resolvents with the order structure of this Hilbert space. 

A quadratic form $q$ on $\ell^2(X,\mu)$ is called {\em real} if for any real-valued $f,g \in D(q)$ it satisfies $q(f + ig) = q(f) + q(g)$. For studying a  real form $q$  it suffices to consider its restriction to $D(q)_r: =\{f:X \to \R \mid f \in D(q)\}$ on the real Hilbert space $\ell_r^2(X,\mu):=  \{f:X \to \R \mid f \in \ell^2(X,\mu)\}$.

We say that a lower semi-bounded closed quadratic form on $\ell^2(X,\mu)$ satisfies the {\em first Beurling-Deny criterion} if $f \in D(q)$ implies $|f| \in D(q)$ and $q(|f|) \leq q(f)$.

\begin{proposition}\label{propostion:beurling deny}
 Let $q$ be a  densely defined closed lower semi-bounded  quadratic form on $\ell^2(X,\mu).$ The following assertions are equivalent.
 \begin{enumerate}[(i)]
  \item $q$ satisfies the first Beurling-Deny criterion.
  \item The resolvent of $q$ is positivity preserving, i.e., for $\alpha > - \lambda_0(q)$ and $f \in \ell^2(X,\mu)$ the inequality $f \geq 0$ implies $G_\alpha f \geq 0$.
 \end{enumerate}
In particular, forms satisfying the first Beurling-Deny criterion are real.
\end{proposition}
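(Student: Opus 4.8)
The statement to prove is Proposition~\ref{propostion:beurling deny}: for a densely defined closed lower semi-bounded quadratic form $q$ on $\ell^2(X,\mu)$, the first Beurling-Deny criterion is equivalent to the resolvent being positivity preserving, and in that case $q$ is real.

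The plan is to use the characterization of $q$ via its approximating forms $q^{(\alpha)}(f) = \alpha\langle f - \alpha G_\alpha f, f\rangle$ and the projection onto the convex cone of nonnegative functions. For the implication (ii) $\Rightarrow$ (i), I would first observe that positivity preservation of $G_\alpha$ for one (hence all, by the resolvent identity and monotone limits) $\alpha > -\lambda_0(q)$ forces the form to be real: applying the criterion to indicator-type computations, or more directly noting that a positivity-preserving bounded self-adjoint operator maps real functions to real functions, so $\alpha G_\alpha$ does, and then $q^{(\alpha)}(f) = \alpha\|f\|^2 - \alpha^2\langle G_\alpha f, f\rangle$ splits over real and imaginary parts. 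Then, given a real $f \in D(q)$, write $f = f_+ - f_-$ and estimate $q^{(\alpha)}(|f|)$ against $q^{(\alpha)}(f)$: since $\langle f - \alpha G_\alpha f, f\rangle = \|f\|_2^2 - \alpha\langle G_\alpha f, f\rangle$ and $\|f\|_2 = \||f|\|_2$, it suffices to show $\langle G_\alpha |f|, |f|\rangle \geq \langle G_\alpha f, f\rangle$, which follows from positivity preservation: $\langle G_\alpha f, f\rangle = \langle G_\alpha(f_+ - f_-), f_+ - f_-\rangle \leq \langle G_\alpha(f_+ + f_-), f_+ + f_-\rangle = \langle G_\alpha |f|, |f|\rangle$ because the cross terms $\langle G_\alpha f_\pm, f_\mp\rangle$ are nonnegative while in $|f|$ they enter with a plus sign and the diagonal terms are unchanged. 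Passing to the limit $\alpha \to \infty$ using property (d) of approximating forms in Appendix~\ref{appendix:basics} gives $|f| \in D(q)$ and $q(|f|) \leq q(f)$.

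For the converse (i) $\Rightarrow$ (ii), I would use the variational characterization of the resolvent. Fix $\alpha > -\lambda_0(q)$ and $f \geq 0$; the element $u := G_\alpha f$ is the unique minimizer over $D(q)$ of the functional $J(v) = q(v) + \alpha\|v\|_2^2 - 2\,\mathrm{Re}\,\langle f, v\rangle$. First one checks $q$ is real (this is part of the claim); assuming the criterion, applying it to $f = f_r + i f_i$ together with the parallelogram identity forces $q(f_r + if_i) + q(f_r - if_i) = 2q(f_r) + 2q(f_i)$, and combined with $q(|f_r + if_i|) \le q(f)$ — actually the cleanest route is to first establish reality directly from the criterion by a standard argument (e.g. \cite[Theorem~1.4.1]{FOT}), then restrict to the real Hilbert space. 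On $\ell^2_r(X,\mu)$, with $u$ the minimizer, consider $|u| \in D(q)$ by the criterion; then $J(|u|) = q(|u|) + \alpha\||u|\|_2^2 - 2\,\mathrm{Re}\,\langle f, |u|\rangle \leq q(u) + \alpha\|u\|_2^2 - 2\langle f, u\rangle = J(u)$, where the inequality uses $q(|u|) \leq q(u)$, $\||u|\|_2 = \|u\|_2$, and $\langle f, |u|\rangle \geq \langle f, u\rangle$ since $f \geq 0$. By uniqueness of the minimizer, $u = |u| \geq 0$, which is positivity preservation.

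The main obstacle I anticipate is the careful bookkeeping around reality of the form and making the reduction to the real Hilbert space rigorous, since the first Beurling-Deny criterion is stated for complex $f$ and one must extract that $q(f_r + i f_i) = q(f_r) + q(f_i)$ before the minimization argument in the real space is even meaningful; this is the standard but slightly fiddly part. The rest is routine: the inequality $\langle G_\alpha |f|, |f|\rangle \geq \langle G_\alpha f, f\rangle$ and the minimizer comparison are both short once positivity (resp. the criterion) is in hand. I would also need to invoke the resolvent identity to pass between different values of $\alpha$, and the strong-continuity/monotone-limit facts from Appendix~\ref{appendix:basics} to move between the approximating forms and $q$ itself. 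Since the excerpt cuts off right at the proposition statement, these ingredients are all available, and I expect the full proof to occupy roughly one page.
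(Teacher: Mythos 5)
Your proposal supplies a genuine self-contained proof, whereas the paper disposes of this proposition by citing \cite[Theorem~XIII.50]{RSIV} for nonnegative forms, remarking that the argument carries over verbatim to lower semi-bounded forms, and deducing reality from (ii) via approximating forms. Your two halves --- the approximating-form comparison $q^{(\alpha)}(|f|)\le q^{(\alpha)}(f)$ for (ii)$\Rightarrow$(i), and the variational characterization of $G_\alpha f$ as the unique minimizer of $J(v)=q(v)+\alpha\|v\|_2^2-2\,\mathrm{Re}\langle f,v\rangle$ for (i)$\Rightarrow$(ii) --- are exactly the standard arguments behind the cited theorem, so what your write-up buys is the ``verbatim for lower semi-bounded forms'' claim made explicit (note $q^{(\alpha)}(f)=\alpha\|f\|^2-\alpha^2\langle G_\alpha f,f\rangle$, so the comparison is insensitive to the sign of $\alpha$), together with the reality statement obtained in the same way the paper sketches it.

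Two loose ends, both patchable. First, in (ii)$\Rightarrow$(i) your $f=f_+-f_-$ computation only treats real $f$, while the criterion as defined in the paper quantifies over all complex $f\in D(q)$; for complex $f$ you need $\mathrm{Re}\langle G_\alpha f,f\rangle\le\langle G_\alpha|f|,|f|\rangle$, which follows from positivity preservation via the nonnegativity of the matrix entries $\langle\delta_x,G_\alpha\delta_y\rangle$ combined with $\mathrm{Re}\bigl(\overline{f(x)}f(y)\bigr)\le|f(x)||f(y)|$ --- reality of the form alone does not give this, since $|f|$ is not controlled by $|\mathrm{Re}\,f|$ and $|\mathrm{Im}\,f|$ separately in the form sense. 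Second, in (i)$\Rightarrow$(ii) your plan to ``first establish reality directly from the criterion'' and then restrict to $\ell^2_r(X,\mu)$ is both under-justified (the FOT reference is a real-Hilbert-space treatment, so it does not supply the complex-to-real reduction you need) and unnecessary: run the minimizer comparison directly in the complex space. With $f\ge0$ and $u:=G_\alpha f$, the criterion applied to the complex element $u$ gives $q(|u|)\le q(u)$, while $\||u|\|_2=\|u\|_2$ and $\mathrm{Re}\langle f,u\rangle\le\langle f,|u|\rangle$ because $f\ge0$ and $\mathrm{Re}\,u\le|u|$ pointwise; hence $J(|u|)\le J(u)$ and uniqueness of the minimizer yields $u=|u|\ge0$. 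Reality then follows a posteriori from (ii) exactly as in your first half, which is also how the paper phrases the ``in particular'' statement. With these two adjustments the proof is complete.
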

\begin{proof}
  For nonnegative forms this is contained in \cite[Theorem~XIII.50]{RSIV}.  The proof given there can be carried out verbatim for lower semi-bounded forms. 
  
  Assertion (ii) implies that the resolvent maps $\ell^2_r(X,\mu)$ to $\ell^2_r(X,\mu)$. Using approximating forms this yields that forms satisfying the first Beurling-Deny criterion are real.
\end{proof}

\begin{lemma}\label{lemma:maxima and minima energy inequality}
 Let $q$ be a  densely defined closed lower semi-bounded  quadratic that satisfies the {\em first Beurling-Deny criterion}. For $f,g \in \ell^2_r(X,\mu)$ we have
 $$\|f \wedge g\|_q \leq \|f\|_q + \|g\|_q \text{ and } \|f \vee g\|_q \leq \|f\|_q + \|g\|_q.$$
\end{lemma}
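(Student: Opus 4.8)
The plan is to express $f\wedge g$ and $f\vee g$ using the absolute value and the pointwise identities
$$f\wedge g = \tfrac12\big(f+g-|f-g|\big),\qquad f\vee g = \tfrac12\big(f+g+|f-g|\big),$$
and then to run everything through the seminorm $\sqrt{q+(\alpha-\lambda_0(q))\|\cdot\|^2}$ attached to the form inner product. The point is that the first Beurling-Deny criterion gives $|h|\in D(q)$ and $q(|h|)\le q(h)$, and since $\||h|\|_2=\|h\|_2$ trivially, we get the seminorm inequality $\|\,|h|\,\|_q\le\|h\|_q$. So the whole argument reduces to the triangle inequality for $\|\cdot\|_q$ together with this single contraction estimate. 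I would present it for $f\wedge g$; the case $f\vee g$ is identical up to a sign and I would just remark that.

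First I would check that $f\wedge g\in D(q)$: since $f,g\in D(q)\subseteq\ell^2_r(X,\mu)$ (the form is real by Proposition~\ref{propostion:beurling deny}, so it suffices to work on the real Hilbert space), we have $f-g\in D(q)$, hence $|f-g|\in D(q)$ by the first Beurling-Deny criterion, and therefore $f\wedge g = \tfrac12(f+g-|f-g|)$ is a linear combination of elements of $D(q)$, so it lies in $D(q)$. Then I would estimate, using that $\|\cdot\|_q$ is a seminorm (it is the norm coming from the form inner product, which satisfies the triangle inequality because $q-\lambda_0(q)\|\cdot\|^2$ is nonnegative and hence obeys Cauchy-Schwarz):
$$\|f\wedge g\|_q = \tfrac12\big\|\,f+g-|f-g|\,\big\|_q \le \tfrac12\Big(\|f\|_q+\|g\|_q+\big\|\,|f-g|\,\big\|_q\Big).$$
Now apply $\big\|\,|f-g|\,\big\|_q\le\|f-g\|_q\le\|f\|_q+\|g\|_q$, where the first inequality is exactly the Beurling-Deny contraction estimate (combining $q(|f-g|)\le q(f-g)$ with $\|\,|f-g|\,\|_2=\|f-g\|_2$) and the second is the triangle inequality again. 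This yields $\|f\wedge g\|_q\le\tfrac12\big(2\|f\|_q+2\|g\|_q\big)=\|f\|_q+\|g\|_q$, as claimed. The bound for $f\vee g$ follows verbatim, replacing $-|f-g|$ by $+|f-g|$.

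There is no real obstacle here; the only thing to be slightly careful about is the reduction to real-valued functions, which is legitimate because a form satisfying the first Beurling-Deny criterion is real (last sentence of Proposition~\ref{propostion:beurling deny}), so that $f\wedge g$ and $f\vee g$ — which only make sense for real $f,g$ — stay inside $D(q)$ and the seminorm computation above is valid. I would also note in passing that the specific choice of $\alpha>0$ entering the definition of $\|\cdot\|_q$ is irrelevant, since the inequality is homogeneous in the form inner product and we never compare two different choices of $\alpha$.
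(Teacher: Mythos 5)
Your proof is correct and follows essentially the same route as the paper: the identities $f\wedge g=\tfrac12(f+g-|f-g|)$, $f\vee g=\tfrac12(f+g+|f-g|)$ combined with the triangle inequality for $\|\cdot\|_q$ and the contraction $\|\,|h|\,\|_q\le\|h\|_q$ coming from the first Beurling--Deny criterion. Nothing further is needed.
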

\begin{proof}
 We have $f \wedge g = \frac{1}{2} (f + g - |f-g|)$ and  $f \vee g = \frac{1}{2} (f + g + |f-g|)$. Hence, the inequalities follow from the fact that $\|\cdot\|_q$ is a norm with $\||h|\|_q \leq \|h\|_q$ for all $h \in D(q)$.
\end{proof}

A function $C:\IC \to \IC$ is called a {\em normal contraction} if $C(0) = 0$ and $|C(x) - C(y)| \leq |x-y|$ for each $x,y \in \IC$. A nonnegative quadratic form satisfies the {\em second Beurling-Deny criterion} if for any normal contraction $C$ and $f \in D(q)$ we have $ C \circ f \in D(q)$ and $q(C \circ f) \leq q(f)$. Forms satisfying the second Beurling-Deny criterion are also called {\em Dirichlet forms}. They can be characterized as follows.

\begin{proposition}\label{proposition:secons beurling deny}
 Let $q$ be a densely defined closed nonnegative  quadratic form on $\ell^2(X,\mu).$ The following assertions are equivalent.
 \begin{enumerate}[(i)]
  \item $q$ satisfies the second Beurling-Deny criterion.
  \item $q$ satisfies the first Beurling-Deny criterion and for all  nonnegative $f \in D(q)$ we have $(f \wedge 1) \in D(q)$ and  $q((f \wedge 1)) \leq q(f)$.
  \item The resolvent of $q$ is Markovian, i.e., for $\alpha > 0$ and $f \in \ell^2(X,\mu)$ the inequality $0 \leq f \leq 1$ implies $0 \leq \alpha G_\alpha f \leq 1$.
 \end{enumerate}
\end{proposition}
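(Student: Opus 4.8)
The plan is to prove the chain of implications (iii) $\Rightarrow$ (i) $\Rightarrow$ (ii) $\Rightarrow$ (iii). The implication (i) $\Rightarrow$ (ii) is essentially trivial: if $q$ satisfies the second Beurling--Deny criterion, then applying it with the normal contraction $C(t) = (\mathrm{Re}\, t)\vee 0$ (respectively $C(t) = t/|t|$ for $|t|\geq 1$ and $C(t)=t$ otherwise, restricted to the real axis) gives that $|f|\in D(q)$ with $q(|f|)\leq q(f)$, so the first criterion holds; and applying it with $C(t) = t\wedge 1$ on $[0,\infty)$ (which extends to a normal contraction on $\IC$, e.g. $C(t) = t$ if $|t|\le 1$ and $t/|t|$ if $|t|>1$, combined with taking real parts) gives the remaining assertion. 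One should be slightly careful to exhibit genuine normal contractions of $\IC$ that restrict to the desired maps on the reals, but this is routine once one notes that $q$ is real (Proposition~\ref{propostion:beurling deny}) so it suffices to work on $\ell^2_r(X,\mu)$.

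For (ii) $\Rightarrow$ (iii): by Proposition~\ref{propostion:beurling deny} the first Beurling--Deny criterion is equivalent to $(G_\alpha)$ being positivity preserving, so $0\le f$ implies $0\le \alpha G_\alpha f$; it remains to show $f\le 1$ implies $\alpha G_\alpha f\le 1$. Here I would use the approximating forms. Fix $\alpha>0$ and nonnegative $f$ with $f\le 1$; set $u := \alpha G_\alpha f$, which is nonnegative, and let $v := u\wedge 1 = u - (u-1)_+$. The hypothesis in (ii) gives $v\in D(q)$ and $q(v)\le q(u)$. The standard trick is to compare $\|u - \tfrac{1}{\alpha}\,\cdot\,\|$-type quantities: one shows that $u$ is the unique minimizer of the strictly convex functional $h\mapsto q(h) + \alpha\|h\|^2 - 2\alpha\langle f,h\rangle$ on $D(q)$ (this is just the variational characterization of the resolvent). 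Then one checks that replacing $u$ by $v=u\wedge 1$ does not increase this functional: the quadratic-form part does not increase by (ii), and the remaining part $\alpha\|v\|^2 - 2\alpha\langle f,v\rangle$ does not increase because, pointwise, for $0\le f\le 1$ the scalar function $t\mapsto t^2 - 2f(x)t$ is nondecreasing for $t\ge 1$, so truncating $u(x)$ down to $1$ where $u(x)>1$ can only decrease it. By uniqueness of the minimizer, $v=u$, i.e. $u\le 1$.

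For (iii) $\Rightarrow$ (i): this is the main technical step, and it mirrors the proof of Proposition~\ref{propostion:beurling deny}, so I would follow the argument of \cite[Theorem~XIII.50]{RSIV} adapted to the lower semi-bounded case (here $q$ is even assumed nonnegative). Given that the resolvent is Markovian, I want to show that for every normal contraction $C$ and $f\in D(q)$ one has $C\circ f\in D(q)$ and $q(C\circ f)\le q(f)$. The route is through the approximating forms $q^{(\alpha)}(f) = \alpha\langle f - \alpha G_\alpha f, f\rangle$, which increase to $q(f)$ as $\alpha\to\infty$; it suffices to prove $q^{(\alpha)}(C\circ f)\le q^{(\alpha)}(f)$ for each $\alpha$ and to pass to the limit, which simultaneously shows $C\circ f\in D(q)$ via the characterization $D(q) = \{h : \lim_\alpha q^{(\alpha)}(h) < \infty\}$. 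Writing $q^{(\alpha)}(f) = \tfrac{1}{2}\sum_{x,y} \alpha^2 G_\alpha(x,y)\mu(x)\mu(y)\,|f(x)-f(y)|^2 + \sum_x \alpha\big(1 - \alpha\sum_y G_\alpha(x,y)\mu(y)\big)\mu(x)|f(x)|^2$, where $G_\alpha(x,y)$ denotes the kernel of $\alpha G_\alpha$ with respect to $\mu$, the Markovian property says precisely that $G_\alpha(x,y)\ge 0$ and $\alpha\sum_y G_\alpha(x,y)\mu(y)\le 1$, so both coefficients are nonnegative. Since $C$ is a normal contraction with $C(0)=0$, we have $|C(f(x)) - C(f(y))|\le |f(x)-f(y)|$ and $|C(f(x))|\le |f(x)|$, and the inequality $q^{(\alpha)}(C\circ f)\le q^{(\alpha)}(f)$ follows term by term. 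I expect the main obstacle to be the justification of the kernel representation of $q^{(\alpha)}$ and the manipulation of the (possibly infinite) sums — in particular verifying absolute convergence and that the positivity/sub-Markov bounds on $\alpha G_\alpha$ really translate into the stated sign conditions on the coefficients — rather than any conceptual difficulty; everything else is a direct transcription of the classical Beurling--Deny argument.
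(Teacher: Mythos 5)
Your proposal is correct, but it takes a different route from the paper: the paper does not prove the proposition at all, it simply invokes \cite[Theorem~XIII.51]{RSIV}, whereas you give a self-contained argument. Your cycle (iii) $\Rightarrow$ (i) $\Rightarrow$ (ii) $\Rightarrow$ (iii) is sound. The step (ii) $\Rightarrow$ (iii) via the variational characterization of $u=\alpha G_\alpha f$ as the unique minimizer of $h\mapsto q(h)+\alpha\|h\|^2-2\alpha\langle f,h\rangle$ and the pointwise monotonicity of $t\mapsto t^2-2f(x)t$ on $[1,\infty)$ is the standard and correct argument, and the hypothesis of (ii) is exactly what is needed to compare $q(u\wedge 1)$ with $q(u)$. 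The step (iii) $\Rightarrow$ (i) via the approximating forms is also correct, and here you gain something from discreteness that the general $L^2$-theory in \cite{RSIV} has to work harder for: on a countable set the resolvent automatically has a nonnegative symmetric kernel, the sub-Markov bound $\alpha\sum_y g_\alpha(x,y)\mu(y)\le 1$ follows by monotone convergence from (iii) applied to indicators of finite sets, and all the double sums converge absolutely (use $(f(x)-f(y))^2\le 2f(x)^2+2f(y)^2$ together with the sub-Markov bound), so the termwise comparison under a normal contraction is fully justified. What your proof buys is an elementary argument adapted to the discrete setting; what the citation buys is brevity and a statement valid for general $\sigma$-finite measure spaces.

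Two small repairs. First, in (i) $\Rightarrow$ (ii) the contraction $C(t)=(\mathrm{Re}\,t)\vee 0$ produces $f_+$, not $|f|$; but $z\mapsto |z|$ is itself a normal contraction of $\IC$ (it vanishes at $0$ and satisfies the reverse triangle inequality), so the first Beurling--Deny criterion follows directly, and $f\wedge 1$ for nonnegative $f$ comes from the metric projection onto the closed unit disc, as you indicate. Second, your displayed kernel formula for $q^{(\alpha)}$ is the one for the kernel of $G_\alpha$, not of $\alpha G_\alpha$; with your stated normalization the factors of $\alpha$ are off. This is purely notational and does not affect the argument.
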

\begin{proof}
 This is   contained in \cite[Theorem~XIII.51]{RSIV}.
\end{proof}

The following lemma applies to Dirichlet forms but it can also be used for forms that are not closed.
 
\begin{lemma}\label{lemma:postivity of form}
 Let $q$ be a nonnegative quadratic form on $\ell^2(X,\mu)$  such that for each nonnegative $f \in D(q)$ we have $f \wedge 1 \in D(q)$ and $q(f \wedge 1) \leq q(f)$. Let $f,g \in D(q)$ nonnegative with $g \leq 1$ and $g = 1$ on $\{f > 0\}$. Then $q(f,g) \geq 0$.
\end{lemma}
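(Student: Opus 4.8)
The plan is to probe $q$ along the ray $g + t f$, $t \ge 0$, which after cutting off at height $1$ stays inside the cone of nonnegative functions bounded above by $1$, and then to read off the sign of the cross term from the unit-contraction hypothesis. First I would record that $g + tf \in D(q)$: the domain of a quadratic form is a linear subspace (by the parallelogram identity and $q \ge 0$ one has $q(g+tf) \le 2q(g) + 2t^2 q(f) < \infty$), and $g + tf \ge 0$ since $f, g \ge 0$ and $t \ge 0$.

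The crux is the pointwise identity $(g + tf) \wedge 1 = g$ for every $t \ge 0$. On $\{f > 0\}$ we have $g = 1$, so $g + tf = 1 + tf \ge 1$ and the cut-off equals $1 = g$; on $\{f = 0\}$ we have $g + tf = g \le 1$, so the cut-off again equals $g$. Feeding this into the hypothesis $q\big((g+tf)\wedge 1\big) \le q(g + tf)$ gives $q(g) \le q(g + tf)$ for all $t \ge 0$. Expanding the right-hand side through the (Hermitian) sesquilinear form obtained from $q$ by polarization,
$$q(g + tf) = q(g) + 2t\,\mathrm{Re}\, q(f,g) + t^2 q(f) \qquad (t \in \R),$$
so $0 \le 2t\,\mathrm{Re}\, q(f,g) + t^2 q(f)$ for all $t > 0$; dividing by $t$ and letting $t \to 0+$ yields $\mathrm{Re}\, q(f,g) \ge 0$. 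Since every form to which this lemma is applied is real on $\ell^2(X,\mu)$ — e.g. a form satisfying the first Beurling-Deny criterion is real by Proposition~\ref{propostion:beurling deny}, and Dirichlet forms satisfy it — the number $q(f,g)$ coincides with its real part, so $q(f,g) \ge 0$.

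There is no genuine obstacle in this argument; the only steps demanding a moment's care are the verification of the identity $(g + tf)\wedge 1 = g$, which is exactly where all three hypotheses on $f$ and $g$ (namely $f \ge 0$, $g \le 1$, and $g \equiv 1$ on $\{f > 0\}$) enter, and the concluding observation that $q(f,g)$ is real, so that ``$\mathrm{Re}\, q(f,g) \ge 0$'' upgrades to ``$q(f,g) \ge 0$''.
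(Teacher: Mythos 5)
Your proposal is correct and is essentially the paper's own argument: perturb $g$ to $g+\varepsilon f$, observe $(g+\varepsilon f)\wedge 1 = g$ thanks to $g\le 1$, $g\equiv 1$ on $\{f>0\}$ and $f\ge 0$, invoke the cut-off hypothesis to get $q(g)\le q(g+\varepsilon f)$, expand via polarization, divide by $\varepsilon$ and let $\varepsilon\to 0+$. Your extra remark that one a priori only obtains $\mathrm{Re}\,q(f,g)\ge 0$ and that realness of the form (as in the setting where the lemma is applied) upgrades this to $q(f,g)\ge 0$ is a careful touch the paper leaves implicit.
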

 \begin{proof}
  Let $\varepsilon > 0$. The properties of $f$ and $g$ imply
  $$q(g) = q((g + \varepsilon f) \wedge 1) \leq q(g + \varepsilon f) = q(g) + 2\varepsilon(f,g) + \varepsilon^2(g).$$
  Dividing by $\varepsilon$ and letting $\varepsilon \to 0+$ yields the claim.
 \end{proof}

\bibliographystyle{plain}
 
\bibliography{literatur}

\end{document}